\newtheorem{lem}{Lemma}[section]
\newtheorem{df}[lem]{Definition}
\newtheorem{cor}[lem]{Corollary}
\newtheorem{thm}[lem]{Theorem}
\newtheorem{prop}[lem]{Proposition}
\theoremstyle{remark}
\newtheorem{rem}[lem]{Remark}
\newcommand{\GL}{\mathrm{GL}}
\newcommand{\sO}{\ensuremath{\mathscr{O}}}
\newcommand{\sW}{\ensuremath{\mathscr{W}}}
\newcommand{\Q}{\QQ}
\newcommand{\Nbar}{\overline{N}}
\renewcommand{\AA}{{\mathbb A}}
\def\id{\mathrm{id}}
\def\Ind{\mathrm{Ind}}
\def\Hom{\mathrm{Hom}}
\def\for{\mathrm{for}}
\def\Lie{\mathrm{Lie}}
\def\Sp{\mathrm{Sp}}
\def\Res{\mathrm{Res}}
\newcommand{\CC}{{\mathbb C}}
\newcommand{\DD}{{\mathbb D}}
\newcommand{\FF}{{\mathbb F}}
\newcommand{\GG}{{\mathbb G}}
\newcommand{\QQ}{{\mathbb Q}}
\newcommand{\RR}{{\mathbb R}}
\newcommand{\ZZ}{{\mathbb Z}}
\newcommand{\cB}{{\mathcal B}}
\newcommand{\cG}{{\mathcal G}}
\newcommand{\cH}{{\mathcal H}}
\newcommand{\cL}{{\mathcal L}}
\newcommand{\cM}{{\mathcal M}}
\newcommand{\cN}{{\mathcal N}}
\newcommand{\cO}{{\mathcal O}}
\newcommand{\cP}{{\mathcal P}}
\newcommand{\cT}{{\mathcal T}}
\newcommand{\frakl}{\mathfrak{l}}
\newcommand{\frakn}{\mathfrak{n}}
\newcommand{\fraku}{\mathfrak{u}}
\newcommand{\Qbar}{\overline{\Q}}
\newcommand{\Qpbar}{\Qbar_p}
\newcommand{\wtK}{\widetilde{K}}
\tikzset{
  column sep/.code=\def\pgfmatrixcolumnsep{\pgf@matrix@xscale*(#1)},
  row sep/.code   =\def\pgfmatrixrowsep{\pgf@matrix@yscale*(#1)},
  matrix xscale/.code=%
    \pgfmathsetmacro\pgf@matrix@xscale{\pgf@matrix@xscale*(#1)},
  matrix yscale/.code=%
    \pgfmathsetmacro\pgf@matrix@yscale{\pgf@matrix@yscale*(#1)},
  matrix scale/.style={/tikz/matrix xscale={#1},/tikz/matrix yscale={#1}}}
\def\pgf@matrix@xscale{1}
\def\pgf@matrix@yscale{1}
\DeclareMathOperator{\supp}{supp}
\title{The cohomology of $p$-adic distribution representations}
\begin{document}
\author{Weibo Fu}
\address{Department of Mathematics, Princeton University, Princeton, NJ, USA.}
\email{wfu@math.princeton.edu}
\date{March,~2022}

\maketitle
\begin{abstract}
We give a generalization of Kostant's theorem on Lie algebra cohomology of finite dimensional highest weight representations to some infinite dimensional cases over a $p$-adic family of highest weight distribution representations.
For proving this, we develop a theory of eigen orthonormalizable Banach representations of $p$-adic torus over an affinoid algebra, and we construct eigen orthonormalizable weight completions of the distribution representations.
\end{abstract}
\tableofcontents

\section{Introduction}\label{intro}
Let $p$ be a prime number, let $G$ be either the general linear group $\GL_{2n}$ or the symplectic group $\Sp_{2n}$ or a unitary group over $\ZZ_p$ with a chosen Borel pair $(B,T)$. 
We use $I$ to denote the Iwahori subgroup of $G(\ZZ_p)$ associated to $(B, T)$, consisting of matrices congruent to $B(\FF_p)$ modulo $p$.

Let $N$ be a unipotent subgroup of $G$ with Lie algebra $\fraku$ of $N(\ZZ_p)$.
It corresponds to a Levi decomposition $P = LN$ for a parabolic subgroup $P$ and Levi subgroup $L$ of $G$.
Let $I_L := I \cap L(\ZZ_p)$ be the Iwahori subgroup of $L(\ZZ_p)$ for the Levi $L$.
For an algebraic representation $V$ of $\GL_n(\ZZ_p)$ and $\frakn := \Lie(N(\ZZ_p))$, the Lie algebra cohomology $H^\ast(\frakn, V)$ has a representation structure of $L(\ZZ_p)$.
Working over the complex numbers, Kostant's work on Lie algebra cohomology \cite{Kos61} directly extends to this setting, and completely describes $H^\ast(\frakn, V)$ in terms of irreducible representations of $I_L$ (as Lie algebra representations in \cite{Kos61}) for any finite dimensional algebraic representation $V$ of $I \subset \GL_n(V)$.

Let $\sW$ be the weight space of $T(\ZZ_p)$ over $\QQ_p$, parametrizing locally analytic characters of $T(\ZZ_p)$. 
And let $\Omega$ be an affinoid subdomain of $\sW$, there is a universal character $\chi_{\Omega} : T(\ZZ_p) \to \sO(\Omega)^\times$.
There is a countable filtration $\{I^s\}$ of open normal subgroups for $I$ such that $I^s$ consists of matrices in $I$ congruent to the identity matrix modulo $p^s$.
Let $N_I$ be the $\ZZ_p$-points of unipotent radical of $B$ and $$\Ind^s_\Omega:=\{f: I \rightarrow \mathscr{O}(\Omega), f \text { analytic on each } I^s ~ \text{coset}, $$ $$ f(g t n)=\chi_{\Omega}(t) f(g) ~ \forall n \in N_I, t \in T(\ZZ_p), g \in I\},$$
for some big enough $s$ depending on $\Omega$.

$\Ind^s_{\Omega}$ is a Banach representation of $I$ over $\sO(\Omega)$.
Let $\DD^s_\Omega$ be the $\sO(\Omega)$-linear Banach dual of $\Ind^s_{\Omega}$.
The so called distribution representation $\DD^s_\Omega$ interpolates $p$-adic analogue of Verma modules, and therefore finite dimensional representations.
This distribution representation plays a significant role in $p$-adic automorphic forms such as in \cite{AS}, \cite{Urb11}, \cite{AIP15}, \cite{Han17}.

Suppose $P$ is a Siegel parabolic subgroup of $G$ containing $B$ and whose unipotent radical $N$ is abelian.
For example, if $G = \GL_{2n}$, $B$ is chosen to be the upper triangular Borel and $P$ is chosen to be the block upper triangular parabolic with zero lower left $n \times n$ block:
\[
\left[ 
\begin{array}{c|c} 
  \ast & \ast \\ 
  \hline 
  0 & \ast 
\end{array} 
\right].
\]
Let $w$ be a relative Weyl group element for $G$ with respect to $P$ of length $l(w)$.
Let $N_w := N(\ZZ_p) \cap wIw^{-1}$.
Let $\DD^s_{w,\Omega}$ be the representation of $wIw^{-1}$ obtained from $\DD^s_\Omega$ by conjugation of $w$ (\S \ref{la rep}).
By choosing a representative in the class, we will consider certain $w$ such that the Iwahori group $I_L$ for $L$ is contained in $wIw^{-1}$.
A useful example is that \[ w = \begin{pmatrix} 1 & 0 & 0 & 0 \\ 0 & 0 & 1 & 0 \\ 
0 & 1 & 0 & 0 \\ 0 & 0 & 0 & 1 \end{pmatrix} \in \GL_4(\ZZ_p), \\ \]
Iwahori subgroups are associated to the standard upper triangular Borels of both $\GL_2 \times \GL_2 \subset \GL_4$.
In the present article, we aim to analyze cohomology of $\DD^s_{w,\Omega}$ with respect to an open subgroup of $N(\ZZ_p)$ through the Lie algebra cohomology $H^\ast(\frakn, \DD^s_{w,\Omega})$. 
The cohomology $H^\ast(\frakn, \DD^s_{w,\Omega})$ admits a natural action of $I_L$.
We extract a $I_L$-direct summand of this cohomology in Thm \ref{pppp}, which interpolates distribution representation of the Levi subgroup.
\begin{thm}\label{intro 1}
There is a natural $I_L$-equivariant direct summand $$i : \DD_{w \cdot \Omega}^s \hookrightarrow H^{l(w)}(\frak{n}, \DD^s_{w,\Omega})^{N_w}.$$ 
Here $\DD_{w \cdot \Omega}^s$ is the $\sO(\Omega)$-linear Banach dual of
$$\Ind^s_{w\cdot\Omega}:=\{f: I_L \rightarrow \mathscr{O}(\Omega), f \text { analytic on each } I_L^s-\operatorname{coset}, $$ $$ f(g t n)=\chi_{\Omega}(w^{-1}tw)\cdot (w\delta-\delta)(t) f(g) ~ \forall n \in N_p, t \in T_p, g \in I_p\},$$ where $I_L^s$ is an open subgroup of $I_L$ depending on $s$ and $\delta$ is half sum of positive roots with respect to $(B, T)$.
\end{thm}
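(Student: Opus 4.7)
The plan is to compute $H^{l(w)}(\frakn, \DD^s_{w,\Omega})$ via the Chevalley–Eilenberg complex $C^\bullet := \DD^s_{w,\Omega} \otimes_{\sO(\Omega)} \wedge^\bullet \frakn^\vee$, whose differential reduces to a Koszul-type map since $\frakn$ is abelian (because $P$ is Siegel). This complex carries a joint action of $T(\ZZ_p) \cap L(\ZZ_p)$: through restriction of the $wIw^{-1}$-action on $\DD^s_{w,\Omega}$ and through the adjoint action on $\wedge^\bullet \frakn^\vee$. The splitting $\frakn = \frakn_w \oplus \frakn'_w$, with $\frakn_w = \Lie(N_w)$ and $\frakn'_w$ an $l(w)$-dimensional complement spanned by the root spaces sent negative by $w^{-1}$, allows me to identify the $N_w$-invariants of the Chevalley complex as a subcomplex built only from $\wedge^\bullet(\frakn'_w)^\vee$.

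To construct $i$, I fix a generator $\omega_w$ of the one-dimensional $\sO(\Omega)$-module $\wedge^{l(w)}(\frakn'_w)^\vee$, and note that its adjoint $T(\ZZ_p) \cap L(\ZZ_p)$-character equals $(w\delta - \delta)$ by the standard Kostant-type identity for sums of roots. Viewing $\DD^s_{w\cdot\Omega}$ as the $T$-eigen-component of $\DD^s_{w,\Omega}$ of character $\chi_\Omega(w^{-1}\,\cdot\, w)$, the assignment $\mu \mapsto \mu \otimes \omega_w$ lands in $C^{l(w)}$ with torus-eigen-character precisely $\chi_\Omega(w^{-1}\,\cdot\, w) \cdot (w\delta - \delta)$, matching the definition of $\Ind^s_{w \cdot \Omega}$. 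The element $i(\mu)$ is a cocycle because it is top-degree within the $\wedge^\bullet(\frakn'_w)^\vee$ subcomplex and the contributions from the $\frakn_w$-part vanish on $N_w$-invariants; the $I_L$-equivariance (including its unipotent part) then follows formally from how the twist is absorbed into $\omega_w$.

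For the direct summand property, I invoke the eigen orthonormalizable Banach representation theory developed earlier in the paper: applied to $\DD^s_{w,\Omega}$ over $\sO(\Omega)$, it yields a continuous $\sO(\Omega)$-linear projection onto the single $T$-weight eigencomponent of character $\chi_\Omega(w^{-1}\,\cdot\, w)$, topologically identified with $\DD^s_{w\cdot\Omega}$. Pairing with $\omega_w$ and passing to the cokernel of $d$ then provides an explicit splitting of $i$, which in particular forces $i$ to be injective. The principal difficulty is producing such a projection that is $\sO(\Omega)$-orthonormalizable, compatible with the Koszul differential in degree $l(w)$, and preserves the affinoid Banach structure across the cohomological computation; this is precisely where the eigen orthonormalizable machinery is indispensable, since a naive algebraic weight decomposition would not descend to an $\sO(\Omega)$-linear continuous splitting in the Banach setting. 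The bulk of the argument therefore funnels through that apparatus.
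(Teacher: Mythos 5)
Your proposal has the right broad shape (Chevalley--Eilenberg complex, a map into degree $l(w)$ built from the top wedge of a complementary copy of $\frakn$, an eventual appeal to the eigen orthonormalizable machinery), but several of the concrete mechanisms you invoke do not hold as stated, and the hardest part of the paper's argument is absent.

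First, the identification ``$\DD^s_{w\cdot\Omega}$ as the $T$-eigen-component of $\DD^s_{w,\Omega}$ of character $\chi_\Omega(w^{-1}\cdot w)$'' is false. By Proposition~\ref{weights in Ds1}, the $\chi_\Omega^w$-weight space inside the completion $\DD^{s,1}_{w,\Omega}$ has \emph{finite} rank (equal to the index of $w\Nbar_I^s w^{-1}$ in $w\Nbar_I w^{-1}$), whereas $\DD^s_{w\cdot\Omega}$ is an infinite-dimensional Banach $\sO(\Omega)$-module carrying infinitely many $T^s$-weights of its own. So the map $\mu\mapsto\mu\otimes\omega_w$ based on that identification does not produce the claimed embedding $i:\DD^s_{w\cdot\Omega}\hookrightarrow\wedge^{l(w)}\frakn^\ast\otimes\DD^s_{w,\Omega}$. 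The actual embedding $\tilde i$ in the paper is obtained by dualizing a surjection $\wedge^{l(w)}\frakn\otimes\Ind^s_{w,\Omega}\twoheadrightarrow\Ind^s_{L,w\cdot\Omega}$, which composes restriction to $I_L$ (via the $p$-adic manifold decomposition of Lemma~\ref{decomp wN}) with Mackey's tensor product formula and a $B_L$-filtration of $\wedge^{l(w)}\frakn$ whose top quotient carries weight $-(w\delta-\delta)$. A single weight-space projection does not see this structure.

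Second, you apply the eigen orthonormalizable machinery directly ``to $\DD^s_{w,\Omega}$,'' but $\DD^s_{w,\Omega}$ is \emph{not} (bounded) eigen orthonormalizable -- this is precisely why the auxiliary module $\DD^{s,1}_{w,\Omega}$ is introduced (see the remark following Proposition~\ref{CS1}). Related to this, passing to $N_w$-invariants of the Chevalley--Eilenberg \emph{complex} and claiming it becomes the subcomplex built from $\wedge^\bullet(\frakn'_w)^\vee$ is not what the comparison Theorem~\ref{group lie comparison} supplies; that theorem identifies $H^q(\Gamma,V)$ with the $N$-invariants of the \emph{cohomology}, $H^q(\frakn,V)^N$, not with the cohomology of an invariant subcomplex. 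The cocycle condition $d_{l(w)}\circ\tilde i=0$ is a genuine statement that the paper proves by a weight-counting argument (part (1) of Proposition~\ref{inj to coh}), not something that falls out of a ``top degree of a subcomplex'' remark.

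Third, and most importantly, the heart of the proof is missing. The chain-level section $\tilde p$ (the analogue of your ``pairing with $\omega_w$'') is explicitly \emph{not} $I_L$-equivariant in the paper, and equivariance of the induced map $p$ on cohomology is the main difficulty. The paper establishes it by (a) passing to the completions $\DD^{s,1}$, where the differentials and group action are \emph{nice} in the sense of Definition~\ref{nice}; (b) using the Casselman--Osborne infinitesimal character argument to compute $\ker(d_{l(w)}^1)$ for \emph{generic} weights $\lambda$, proving $\ker(d_{l(w)}^1)=\DD^{s,1}_{L,w\cdot\lambda}\oplus\widehat{\overline{\mathrm{Im}(d_{l(w)-1}^1)}}$ (Proposition~\ref{ker dlw}, Theorem~\ref{vv}); and (c) a Zariski-density argument over $\Omega$ to upgrade generic-fiber equivariance to equivariance of $p$ over all of $\Omega$. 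Your proposal treats the equivariance of the splitting as formal, which it is not, and none of (a)--(c) appears. Without this, you have at best an $\sO(\Omega)$-linear section, not an $I_L$-equivariant direct summand.
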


The direct summand $\DD_{w \cdot \Omega}^s$ is a distribution module of $I_L$. In particular, it interpolates locally algebraic representations of $I_L$.
Our result can be regarded as a generalization of Kostant's theorem in a $p$-adic family.
Here we give a sketch of proof in \S \ref{N coho}. 

Using Chevalley–Eilenberg complex $\bigwedge^\bullet \frakn^\ast \otimes \DD^s_{w,\Omega}$ to compute the Lie algebra cohomology, we explicitly construct a $I_L$-equivariant inclusion $\DD_{w \cdot \Omega}^s \hookrightarrow \bigwedge^{l(w)} \frakn^\ast \otimes \DD^s_{w,\Omega}$ using Mackey's tensor product theorem and Kostant's theorem.
We prove this map induces an inclusion $\DD_{w \cdot \Omega}^s \hookrightarrow H^{l(w)}(\frak{u}, \DD^s_{w,\Omega})^{N_w}$ when passing to cohomology.
For proving the splitting of the inclusion, we use the infinitesimal character method of \cite{CO}.
And we develop a theory of eigen orthonormalizable Banach representations of $T(\ZZ_p)$ (which plays the role of weights in classical representation theory over a field) in section \S \ref{Banach rep}, together with some explicit computations and $p$-adic analysis for $G$ in \S \ref{analysis}.

More specifically, we construct certain completions of these distribution representations.
\begin{thm}
There are continuous equivariant embeddings for some open subgroups of the corresponding Iwahori groups
\[ \DD^s_{w,\Omega} \hookrightarrow \DD^{s,1}_{w,\Omega}, ~ \DD_{w \cdot \Omega}^s \hookrightarrow \DD_{w \cdot \Omega}^{s,1} \]
such that the images are dense.
Moreover, these auxiliary representations are bounded eigen orthonormalizable (\S \ref{Banach rep}, Def \ref{cons}) for an open subgroup $T^s \subset T(\ZZ_p)$.
\end{thm}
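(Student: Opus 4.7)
The plan is to construct explicit orthonormal weight bases for both representations, then define the auxiliary completions as Banach completions with respect to relaxed norms on these bases. I treat $\DD^s_{w,\Omega}$ first; the case of $\DD_{w\cdot\Omega}^s$ is strictly parallel, with $I$ replaced by $I_L$ and the character twisted by $(w\delta - \delta)$. The Iwahori factorization of $wIw^{-1}$ identifies $\Ind^s_{w,\Omega}$ with a space of $\sO(\Omega)$-valued functions on $\bar{N}_w := wIw^{-1} \cap \bar{N}$, coset-wise analytic with respect to an open subgroup of $\bar{N}_w$ coming from $wI^sw^{-1}$. In root coordinates $(x_\alpha)_\alpha$ on $\bar{N}_w$, the Mahler monomials $\mathbf{1}_\gamma \cdot \prod_\alpha \binom{x_\alpha - c_{\gamma,\alpha}}{n_\alpha}$, indexed by cosets $\gamma$ and multi-indices $\bn \in \NN^{d}$, form an $\sO(\Omega)$-orthonormal basis; their topological duals give an orthonormal basis of $\DD^s_{w,\Omega}$.

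Next I would analyze the torus action. The $B$-equivariance unfolds into the following: left translation by $t \in T(\ZZ_p)$ on a function of $\bar{n} \in \bar{N}_w$ is the composite of conjugation $\bar{n} \mapsto t^{-1}\bar{n}t$ with scalar multiplication by $\chi_\Omega(t^{-1})$. On a pure monomial $x^{\bn}$ this acts by the character $\chi_\Omega(t^{-1}) \cdot \prod_\alpha \alpha(t^{-1})^{n_\alpha}$. Restricting to an open subgroup $T^s \subset T(\ZZ_p)$ deep enough that $T^s$-conjugation preserves each analyticity coset in $\bar{N}_w$, the conjugation operator on each coset's Mahler basis is upper triangular in the multi-index degree with the above diagonal characters. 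A coset-wise Gauss elimination, recorded as an upper triangular change of basis, converts the Mahler basis into a genuine $T^s$-eigenbasis without destroying orthonormality; dualizing yields an eigen orthonormal basis of $\DD^s_{w,\Omega}$.

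Finally, I would define $\DD^{s,1}_{w,\Omega}$ as the Banach completion of $\DD^s_{w,\Omega}$ with respect to a weaker sup norm on this formal eigenbasis (e.g.\ relaxing the uniform coset decay). By construction $\DD^{s,1}_{w,\Omega}$ is bounded eigen orthonormalizable for $T^s$ in the sense of Definition \ref{cons}; the inclusion $\DD^s_{w,\Omega} \hookrightarrow \DD^{s,1}_{w,\Omega}$ is a continuous embedding with dense image, and the eigenvalues are bounded by compactness of $T^s$ and continuity of $\chi_\Omega$. Equivariance holds for any open subgroup of $wIw^{-1}$ that stabilizes the relaxed norm on the refined basis.

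The main obstacle is the triangularization in the second step: one needs, for $T^s$ sufficiently small, that $T^s$-conjugation not only preserves each analyticity coset of $\bar{N}_w$ but also respects the Mahler filtration by multi-index degree on that coset, \emph{uniformly} in $\gamma$, so that the resulting change of basis is coset-wise and globally bounded. This uniformity is precisely what converts the abstract eigenvector calculation into a bounded norm equivalence, and its verification requires the explicit $p$-adic analytic control on how conjugation translates cosets and mixes Mahler polynomials developed in Section \ref{analysis}.
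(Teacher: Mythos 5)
Your high-level picture---that $\DD^{s,1}$ arises as a completion of $\DD^s$ with respect to a weaker norm on an eigenbasis---is consistent with the paper's Remark following Proposition~\ref{CS1}. But the proposed realization has two substantive errors and one significant omission.

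First, Mahler binomials $\mathbf{1}_\gamma\prod_\alpha\binom{x_\alpha-c_{\gamma,\alpha}}{n_\alpha}$ are an orthonormal basis for the \emph{continuous} functions $C^0(\bar N_w,\sO(\Omega))$ under the sup norm, not for the $s$-analytic induction $\Ind^s_{w,\Omega}$ under the Gauss norm. The transition between $\binom{x}{n}$ and $x^n$ involves $n!$ and is not an isometry; $\Ind^s_{w,\Omega}$ sits inside $C^0$ as a dense subspace with a strictly stronger norm, and its orthonormal basis consists of the $\ZZ_p$-coordinate monomials $f^\circ_{g,\underline a}$ restricted to cosets (Remark~\ref{ano basis}). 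So your step~1 starts from the wrong Banach space.

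Second, the proposed triangularization cannot succeed in the form claimed. Even granting the correct monomial basis, the $T^s$-eigenvectors you would construct do \emph{not} form an orthonormal basis of $\DD^s_{w,\Omega}$; the paper explicitly records that $\DD^s_{w,\chi}$ is \emph{not} bounded eigen orthonormalizable (remark after Proposition~\ref{CS1}), which is precisely the motivation for introducing $\DD^{s,1}$. What the paper actually does is different and hinges on Section~\ref{Weight spaces}: the universal character $\chi_\Omega|_{T^s}$ is decomposed as a product of $(\sigma,E)$-analytic characters (Corollary~\ref{unique ana decom}, which rests on the isogeny Theorem for $\prod_\sigma \sW_\Gamma^\sigma\to\sW_\Gamma$), and the $T^s$-eigenbasis is the family of Galois-twisted monomials $f^\sigma_{g,\underline a^\sigma_E,\underline a^\sigma_K}$ built from the root coordinates on $\cO_E^{d_e}\times\cO_K^{d_k}$. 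This is already a $T^s$-eigenbasis --- no triangularization is needed --- but it is \emph{not} orthonormal in $C^{s,an}$ when $E\neq\QQ_p$: the transition matrix $M=(\sigma_i u_j)$ between $z$-coordinates and $\sigma x$-coordinates is integral with nonzero but possibly non-unit determinant, so the span-with-convergent-coefficients $C^{s,1}$ is a proper dense subspace of $C^{s,an}$. It is this gap, not a "relaxed coset decay," that $\DD^{s,1}$ captures upon dualizing. Your computation of the $T^s$-action on a "pure monomial $x^{\bn}$" only produces a locally $\QQ_p$-analytic character when $E=\QQ_p$; for general $E$ the Galois decomposition is essential and is the content your proposal misses.

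Finally, the equivariance of $\DD^s\hookrightarrow\DD^{s,1}$ for the open subgroup $I^s_0$ (Proposition~\ref{aux mod}) is not a formal consequence of "stabilizing the relaxed norm." It requires the explicit $p$-adic analysis of Lemma~\ref{adm int} showing that the Iwahori decomposition maps $\hat g_n,\hat g_t$ are overconvergent with integral coefficients, which is what guarantees that left translation by elements of $I^s_0$ preserves the Galois-twisted monomial span $C^{s,1}$. This is precisely the "uniform control" you flag as the main obstacle, but it must be carried out in the paper's coordinates, not on a Mahler basis.
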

The construction will be given in \S \ref{auxiliary modules}, and \S \ref{analysis} will prove $\DD^{s,1}_{w,\Omega}$, $\DD_{w \cdot \Omega}^{s,1}$ are actually group representations.
The eigen orthonormalizable notions introduced in \S \ref{Banach rep} are generalizations of the weight space decomposition in classical Lie theory.
And the notion of orthonormalizable Banach module was already well considered and studied by, for example, \cite{Col97} and \cite{Buz07}.

For proving Thm \ref{intro 1}, we also need to use a Zariski topology argument for the weight space.
We prove distribution constituent $\DD_{w \cdot \Omega}^s$ ``almost" equals to the whole cohomology $H^\ast(\frakn, \DD^s_{w,\Omega})^{N_w}$ when specializes to a generic weight in $\Omega$.
The statement of Thm \ref{intro 1} is the best possible in the sense that for any other cohomological degree $\ast \neq l(w)$, $H^\ast(\frakn, \DD^s_{w,\Omega})^{N_w}$ is ``almost" vanishing generically by considering the infinitesimal character on $H^\ast(\frakn, \DD^{s,1}_{w,\Omega})^{N_w}$. 

And since we are dealing with Iwahori groups over a general $p$-adic field, we also need to establish some results for the weight spaces of the torus, which we think may be of independent interests.

If $E$ is a finite extension of $\QQ_p$, $\sigma : E \hookrightarrow \Qpbar$, and $\Gamma$ is a compact profinite $E$-analytic abelian group, we use $\sW_\Gamma$ to denote the weight space parametrizing continuous/$\QQ_p$-locally analytic characters of $\Gamma$, 
$\sW_\Gamma^\sigma$ to denote the weight space parametrizing $(\sigma, E)$-analytic 
locally analytic characters of $\Gamma$.
\begin{thm}
There is a decomposition of the weight space $\sW_\Gamma$ in terms of $\sW_\Gamma^\sigma$:
$$\prod \sigma: \prod_{\sigma: E \hookrightarrow K} \mathscr{W}_{\Gamma}^{\sigma} \xrightarrow{m} \mathscr{W}_{\Gamma}$$ up to isogeny, i.e., $\prod \sigma$ is surjective with the kernel being a zero-dimensional Zariski closed subspace. Moreover, points of the kernel correspond to smooth characters.
\end{thm}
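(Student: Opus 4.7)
The approach is to reduce to a concrete description of both weight spaces via the logarithm, then analyze the multiplication map $m$ on the level of Lie algebra derivatives. First I would pass to an open subgroup $\Gamma'\subset\Gamma$ that is uniform pro-$p$, so that $\log:\Gamma'\xrightarrow{\sim}\Lie(\Gamma')\cong\sO_E^d$ with $d=\dim_E\Lie(\Gamma)$; restriction identifies $\sW_{\Gamma}$ and $\sW_{\Gamma}^\sigma$ with quotients of $\sW_{\Gamma'}$ and $\sW_{\Gamma'}^\sigma$ by the finite group $\Gamma/\Gamma'$, and this finite \'etale quotient affects neither the surjectivity of $m$ nor the zero-dimensionality of its kernel. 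On $\Gamma'$, a $\QQ_p$-locally analytic character is pinned down by its derivative $d\chi\in\Hom_{\QQ_p}(\Lie(\Gamma'),K)$ for $K$ a sufficiently large finite extension of $\QQ_p$ containing the image of every $\sigma$, and $\chi$ is $(\sigma,E)$-analytic precisely when $d\chi$ is $\sigma$-linear.

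For the surjectivity of $m$, the key input is the canonical decomposition
$$E\otimes_{\QQ_p}K\;\cong\;\prod_{\sigma:E\hookrightarrow K}K,$$
which induces a $K$-linear isomorphism
$$\bigoplus_{\sigma}\Hom_{E,\sigma}(\Lie(\Gamma'),K)\;\xrightarrow{\sim}\;\Hom_{\QQ_p}(\Lie(\Gamma'),K).$$
Any $d\chi$ decomposes uniquely as $d\chi=\sum_\sigma d\chi_\sigma$ with each $d\chi_\sigma$ being $\sigma$-linear. Exponentiating on a sufficiently deep open subgroup produces $(\sigma,E)$-analytic characters $\chi_\sigma$ whose product has derivative $d\chi$, so $\chi\cdot\prod_\sigma\chi_\sigma^{-1}$ is smooth; this residual smooth character can be absorbed into a smooth twist of any single $\chi_\sigma$. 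Performing this in families on an admissible affinoid cover of $\sW_{\Gamma'}$ (the Berthelot-style stratification by rate of analyticity) yields the surjectivity of $m$ as a morphism of rigid analytic groups.

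The kernel analysis then becomes clean. If $(\chi_\sigma)\in\prod_\sigma\sW_\Gamma^\sigma(K)$ satisfies $m((\chi_\sigma))=1$, taking derivatives gives $\sum_\sigma d\chi_\sigma=0$ in $\Hom_{\QQ_p}(\Lie(\Gamma'),K)$, and by the direct-sum decomposition above each $d\chi_\sigma$ must vanish. Hence each $\chi_\sigma$ is smooth, i.e.\ a torsion character, cutting out a discrete (hence zero-dimensional Zariski closed) subgroup of $\sW_\Gamma^\sigma$. Therefore the kernel lies inside a product of such discrete subspaces, is itself zero-dimensional Zariski closed, and its $K$-points are exactly the tuples of smooth characters multiplying to the trivial character.

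The main obstacle is the surjectivity step rather than the kernel description: it must be carried out rigid-analytically in families, but convergence of the exponential is only valid on shrinking open subgroups, so the decomposition $d\chi=\sum_\sigma d\chi_\sigma$ must be glued across a compatible admissible cover of $\sW_{\Gamma'}$ while carefully tracking the locally constant but globally nontrivial smooth component. The essential ingredient that makes the gluing go through is that the Berthelot-style strata admit $\sigma$-analytic analogues whose product maps to them in a manner compatible with the Lie algebra decomposition above.
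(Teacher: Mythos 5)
Your proposal is essentially the paper's argument, phrased in coordinate-free language. The decomposition $E\otimes_{\QQ_p}K\cong\prod_\sigma K$ that you invoke to split $d\chi$ into $\sigma$-linear pieces is exactly what the paper does concretely in Lemma~\ref{p-Jacobian}: the matrix $M=(\sigma_i u_j)$ and its inverse carry out that tensor decomposition by hand, and the constant $c_E=|\det M|p^{-1}$ controls when $\exp$ converges so that $\chi_\sigma:=\exp(p_\sigma)^{\sigma(\cdot)}$ is well defined. Your uniform pro-$p$ reduction and the paper's reduction to $\Gamma=\cO_E^d$ via the finite \'etale map $\mathscr{W}_\Gamma\to\mathscr{W}_{\cO_E^d}$ play the same role, and the kernel analysis (derivatives sum to zero $\Rightarrow$ each vanishes $\Rightarrow$ each factor is smooth) is identical.

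The one place where you overcomplicate things is the worry about gluing across a Berthelot-style cover. The paper sidesteps this entirely by arguing on the functor of points: for an arbitrary $K$-affinoid algebra $R$ and an arbitrary continuous $\chi:\cO_E\to R^\times$, it produces a factorization $\chi=\prod_\sigma\chi^\sigma$ (after first observing $\mathscr{W}_{\cO_E}\twoheadrightarrow\mathscr{W}_{p^n\cO_E}$ is surjective, restricting so $\exp$ converges, factoring there, then lifting each piece and absorbing the smooth discrepancy). Since $\mathscr{W}_\Gamma$ and $\mathscr{W}_\Gamma^\sigma$ represent the corresponding functors, surjectivity of the natural transformation immediately gives surjectivity of $m$ as a map of rigid spaces — no admissible cover or $\sigma$-analytic stratification is needed. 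You may want to make this functorial step explicit rather than attempting the gluing you flag as ``the main obstacle.''
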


Our motivation for studying this problem is however derived from a global Langlands correspondence perspective. 

We refer to $X^H$ as the symmetric space for a connected linear algebraic group $H$ over $F$. For example, if $H = \GL_{n,F}$, one can take $X^H = \GL_n(F_\infty)/K_\infty \RR^\times$ for $K_\infty \subset \GL_n(F_\infty)$ a maximal compact subgroup.
If $F$ is CM with a totally real field $F^+$, $L:=\Res_{F/F^+} \GL_{n,F}$ appears as a Levi of a Siegel parabolic subgroup $P$ of a quasi-split unitary group $G:=U(n,n)_{F/F^+}$.
$X^{G}$ has complex dimension $d = [F^+:\QQ]n^2$.
Given a certain good level subgroup $\wtK \subset G(\AA_{F^+}^\infty)$ and $K = \wtK \cap G(\AA_{F^+}^\infty)$, the locally symmetric space $X_{\wtK}=G(F^+)\backslash G(\AA_{F^+}^\infty) \times X^{G} / \wtK$ admits the Borel-Serre compatification $\overline{X}_{\wtK}$ (\cite{BS73}) and the Borel-Serre boundary $\partial X_{\wtK}:=\overline{X}_{\wtK}-X_{\wtK}$.
There is a stratum $X^P_{\wtK} \subset \partial X_{\wtK}$ together with a torus fibration \[ X^P_{\wtK} \stackrel{\pi}{\twoheadrightarrow} X_K=L(F^+) \backslash L(\AA_{F^+}^\infty) \times X^L / K. \]
Given a representation $V$ of $\wtK$, there is a local system $\underline{V}$ on $X_{\wtK}$ associated to $V$.
Consider the local system on $X^P_{\wtK}$ obtained by composition of pushforward and restriction via $X^P_{\wtK} \hookrightarrow \overline{X}_{\wtK} \hookleftarrow X_{\wtK}$, which is still denoted as $\underline{V}$.
There is a Hecke-equivariant long exact sequence
\begin{eqnarray}\label{S1}
\ldots\to H_c^i(X_{\wtK},\underline{V})\to H^i(X_{\wtK},\underline{V})\to H^i(\partial X_{\wtK}, \underline{V})\to \ldots\ ,
\end{eqnarray}
and a Hecke-equivariant Leray spectral sequence 
\begin{eqnarray}\label{S2}
H^i(X_K, R^j\pi_\ast \underline{V}) \Rightarrow H^{i+j}(X^P_{\wtK}, \underline{V}).\end{eqnarray}
If we take $V$ to be $\DD^s_\Omega$, the Leray spectral sequence naturally leads one to analyze the sheaf $R^j\pi_\ast \underline{\DD^s_\Omega}$ on $X_{I_pK^p}$.

In this article, we also prove a comparison \S \ref{Kos comp} between discrete group cohomology and the locally analytic cohomology introduced by \cite{Koh11} using the Koszul resolution.
Let $N \simeq \ZZ_p^n = \oplus_{i=1}^n \ZZ_p e_i$ be a compact $p$-adic analytic abelian group, with Lie algebra $\frak{n}$ over $\QQ_p$.  
Let $\Gamma = \oplus_{i=1}^n \ZZ e_i \subset N$ be a finitely generated free abelian group, dense in $N$.
\begin{thm}\label{group lie comparison}
There are natural isomorphisms
$$H^q(\Gamma, V) \simeq H^q(\frak{n}, V)^N$$ for all $q \geq 0$ and any complete Hausdorff locally convex $\QQ_p$-vector space $V$ with a separately continuous action of the distribution algebra $D(N, \QQ_p)$.
\end{thm}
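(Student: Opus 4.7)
The plan is to realize both sides via Koszul-type complexes on $V$ inside the distribution algebra $D(N, \QQ_p)$ and to construct an explicit chain map between them. Since $\Gamma \simeq \ZZ^n$ is free abelian, $H^q(\Gamma, V)$ is computed by the Koszul complex $K_Y^\bullet = (\bigwedge^\bullet (\QQ_p^n)^\ast \otimes V, d_Y)$ with differential induced by $Y_i := [e_i] - 1 \in \QQ_p[\Gamma]$; similarly $H^q(\frak{n}, V)$ is computed by the Chevalley--Eilenberg complex $K_X^\bullet$ with differential induced by the primitive elements $X_i$. The key analytic input is the function $h(T) := \log(1+T)/T = \sum_{k \geq 0} (-1)^k T^k/(k+1)$, which converges on the open unit disk and defines an element $h(Y_i) \in D(N, \QQ_p)$ acting separately continuously on $V$; crucially, $Y_i h(Y_i) = \log(1+Y_i) = X_i$ in $D(N, \QQ_p)$, so the $D(N, \QQ_p)$-equivariant map
\[\psi \colon K_Y^\bullet \to K_X^\bullet, \qquad \psi(e_{i_1}^\ast \wedge \cdots \wedge e_{i_q}^\ast \otimes v) := e_{i_1}^\ast \wedge \cdots \wedge e_{i_q}^\ast \otimes h(Y_{i_1}) \cdots h(Y_{i_q}) v\]
is a chain map. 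The standard Koszul identity $Y_j = d_Y \iota_j + \iota_j d_Y$ (where $\iota_j$ contracts along $\partial/\partial e_j^\ast$) shows that $N$ acts trivially on $H^q(K_Y^\bullet)$, so the induced map $\bar\psi$ automatically lands in $H^q(\frak{n}, V)^N$.

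To prove $\bar\psi$ is an isomorphism, I first handle the case $n = 1$ by constructing an explicit two-sided inverse. Since $XV = Y h(Y) V \subseteq YV$, there is a natural projection $\rho \colon Y^{-1}(XV)/XV \to V/YV$; the identity $h(Y) - 1 \in Y \cdot D(N, \QQ_p)$ gives $h(Y)v - v \in YV$ for every $v$, yielding $\rho \circ \bar\psi = \mathrm{id}$. For the reverse composition, the crucial observation is that $\ker Y \subseteq h(Y) V$: the ideal $Y \cdot D(N, \QQ_p)$ annihilates $\ker Y$, so every element of $D(N, \QQ_p)$ acts on $\ker Y$ through its augmentation value, whence $h(Y)v = h(0)v = v$ for $v \in \ker Y$. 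This lets me represent any class in $Y^{-1}(XV)/XV$ as $h(Y)w$ modulo $XV$, giving $\bar\psi \circ \rho = \mathrm{id}$ after a short computation using $(h(T)-1)/T \in D(N,\QQ_p)$.

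For general $n$, I will proceed by induction on $n$ using the Hochschild--Serre spectral sequences associated to $\Gamma = \Gamma' \times \ZZ e_n$ with $\Gamma' := \bigoplus_{i < n} \ZZ e_i$, and $\frak{n} = \frak{n}' \oplus \QQ_p X_n$; since $\ZZ e_n$ and $\QQ_p X_n$ both have cohomological dimension $1$, each spectral sequence is two-column and degenerates at $E_2$, giving a short exact sequence on $H^q$. The inductive hypothesis identifies $H^q(\Gamma', V) \simeq H^q(\frak{n}', V)^{N'}$, and the $n = 1$ base case applied to $H^q(\frak{n}', V)^{N'}$---which inherits a separately continuous $D(N_n, \QQ_p)$-action since $N$ is abelian---assembles into the desired isomorphism for $\Gamma$. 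The main technical obstacle is ensuring the $N$-invariants functor interacts well enough with the Lie-algebra Hochschild--Serre sequence for the iso to lift from $E_2$-pages to abutments, which requires tracking connecting morphisms and the $D(N)$-equivariance of $\psi$ carefully throughout.
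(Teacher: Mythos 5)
Your approach is genuinely different from the paper's and, if it could be completed, would be more self-contained. The paper never compares the Koszul complex $K_Y^\bullet$ to the Chevalley--Eilenberg complex $K_X^\bullet$ directly: it observes (Lemma~\ref{reg seq D}, Theorem~\ref{Koszul}) that the \emph{same} Koszul complex $K(T_1,\dots,T_n)$ on $T_i = e_i - 1$ is a free resolution of $K$ over both $K[\Gamma]$ and over $D(N,K)$, so that $\mathrm{Ext}^q_{K[\Gamma]}(K,V) \simeq \mathrm{Ext}^q_{D(N,K)}(K,V)$ because the two Hom-complexes are literally equal after the base change $K(T_i)_{/K[\Gamma]} \otimes_{K[\Gamma]} D(N,K) \simeq K(T_i)_{/D(N,K)}$, and then cites Kohlhaase's Theorems 4.8 and 4.10 for $\mathrm{Ext}^q_{D(N,K)}(K,V) \simeq H^q_{an}(N,V) \simeq H^q(\frakn,V)^N$. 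You instead try to reprove that last identification by hand via the chain map $\psi$ built from $h(Y_i) = \log(1+Y_i)/Y_i$. Your $n=1$ base case is essentially sound, but several of the facts you use silently --- that the augmentation ideal of $D(\ZZ_p,K)$ is $(Y)$, that $\mu - 1 \in (Y)$ for every Dirac distribution $\mu \in N$ (needed to upgrade the Koszul-homotopy triviality of the $\Gamma$-action on $H^q(K_Y^\bullet)$ to triviality of the $N$-action, and to identify $(V/XV)^N$ with $Y^{-1}(XV)/XV$), and that $V^N = \ker Y$ --- rest on exactly the regular-sequence / Amice--Fourier input of the paper's Lemma~\ref{reg seq D} and need to be spelled out, not asserted.

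The genuine gap is in the inductive step, and you have located it correctly but underestimated its seriousness. First, $(-)^N$ is only left exact, so applying it to the Lie-algebra Hochschild--Serre short exact sequence $0 \to H^1(\QQ_p X_n, H^{q-1}(\frakn', V)) \to H^q(\frakn, V) \to H^0(\QQ_p X_n, H^q(\frakn', V)) \to 0$ yields only a left-exact sequence; the right-hand surjection need not survive, and without it you cannot match this against the degenerate group-cohomology Hochschild--Serre sequence term by term. Second, and more fundamentally, to invoke your $n=1$ case on $H^q(\frakn',V)^{N'}$ you need that space to be a complete Hausdorff locally convex $\QQ_p$-vector space carrying a separately continuous $D(\ZZ_p e_n, \QQ_p)$-action, but $H^q(\frakn',V)$ is $\ker d/\mathrm{Im}\,d$ for a complex of topological vector spaces, and $\mathrm{Im}\,d$ need not be closed, so the quotient can fail to be Hausdorff (let alone complete); the hypotheses of your own theorem are not inherited by the inductive intermediary. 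This is precisely the difficulty Kohlhaase's framework of relative homological algebra (\cite{Koh11}, \S 3--4) is built to circumvent, and the paper's choice to cite his Theorems 4.8 and 4.10 rather than reprove them is not a shortcut but a technical necessity.
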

As an application of the comparison theorem, we can relate the local systems $R^j\pi_\ast \underline{\widetilde{\DD}^s_\Omega}$ on the locally symmetric space $X_{I_p K^p}$ to $$H^j_{an}(N_w, \DD^s_{w,\Omega}) \simeq H^j(\frak{n}, \DD^s_{w,\Omega})^{N_w}.$$

The works of \cite{AS}, \cite{Urb11}, \cite{Han17} use overconvergent cohomology of locally symmetric space of a reductive group with the distribution representation coefficient to construct eigenvarieties.
Among reductive groups, eigenvarieties for groups with discrete series are easiest to study. We refer the reader to \cite{Urb11} for example.
The quasi-split unitary group $G$ is such an example.
But for a general $\GL_{n,F}$, the eigenvarieties are rather mysterious, we do not even know their dimensions. 
To better understand these eigenvarieties, we may want to relate the overconvergent cohomology for $\GL_{n,F}$ to the overconvergent cohomology for $G$.

Theorem \ref{intro 1} and Theorem \ref{group lie comparison} together provide the distribution representation coefficient of $\GL_{n,F}$ as a direct summand of $R^j\pi_\ast \underline{\widetilde{\DD}^s_\Omega}$ for suitable degree shift $j$.
Therefore sequences (\ref{S1}), (\ref{S2}) realize this relation between $G$ and $L$ when we take $V$ to be $\DD^s_\Omega$.
We hope our paper offers some foundations and stimulations in the future studies of this direction for functoriality of eigenvarieties between unitary groups and general linear groups.

\subsection*{Acknowledgments}
I heartily thank my advisor Richard Taylor for suggesting me to think about this problem with his numerous discussions and constant encouragement.
I would also like to thank Brian Conrad, Yiwen Ding, Jan Kohlhaase, Lue Pan, Jun Su for answering related questions and helpful discussions.

\section{Notation}\label{Notation}
\hfill\break

If $H$ is a quasi-split reductive group over a field $k$ which splits over a Galois extension $k'/k$ and $S$ is a maximal $k$-split torus in $H$. Let $T=Z_H(S) \subset H$ be the centralizer of $S$ as a 
maximal torus contained in a fixed Borel subgroup $B$, then we write $$W(H, S):=N_H(S)/Z_H(S)=N_H(S)/T$$ for the relative Weyl group. 
It is a constant \'etale group scheme over $k$ by \cite[Lem P.4.1]{Con17}, hence we will identify it with its $k$-points. 
Let $$W(H, T):=N_H(T)/Z_H(T)=N_H(T)/T$$ be the Weyl group for $H$ as a \'etale group scheme over $k$. 
Note that these two notions coincide for split groups. In the quasi-split case, $W(H, S)$ is embedded into $W(H, T)$ as a \'etale subgroup scheme over $k$ and is furthermore identified with $W(H, T)(k)$.  
Length functions for either relative or absolute Weyl groups are defined using the Coxeter group structures for each of them and they are usually different functions when restricted to $W(H, S)$. .

If $P \subset H$ is a parabolic subgroup which contains $B$ with its unipotent radical $N$, then there is a unique Levi subgroup $L \subset P$ which contains $T$. 
We write $W_P(H, S)$ and $W_P(H, T)$ for the corresponding Weyl groups of this Levi subgroup, which can be identified with subgroups of $W(H, S)$ and $W(H, T)$. 

We can form $$X^\ast(T):=\Hom(T \times_k k', \GG_m)$$ as the algebraic characters group of $T$. 
The absolute Weyl group $W(H, T)(k')$ acts on $X^\ast(T)$. Similarly, $W(H,S)$ acts on $$X^\ast(S):=\Hom(S,\GG_m).$$ 
The absolute root system $\Delta(H, T) \subset X^\ast(T)$ and relative root system $\Delta(H, S) \subset X^\ast(S)$ are stable respectively under actions of $W(H, T)(k')$ and $W(H, S)$. With respect to $B$, $$\Delta(H, T)=\Delta^+(H, T) \sqcup \Delta^-(H, T), ~ \Delta(H, S)=\Delta^+(H, S) \sqcup \Delta^-(H, S).$$
If $P$ is a parabolic subgroup of $H$ which contains $B$, then $B \cap L$ is a Borel subgroup of $L$ and we have similar notions $\Delta^{(\pm)}(L, T)$, $\Delta^{(\pm)}(L, S)$ for $L$. The sets
\[ W^P(H, T) = \{ w \in W(H, T)(k) \subset W(H, T)(k') \mid w^{-1}(\Delta^+(L, T)) \subset \Delta^+(H, T) \} \]
\[ W^P(H, S) = \{ w \in W(H, S) \mid w^{-1}(\Delta^+(L, S)) \subset \Delta^+(H, S)\} \]
are respectively parabolic quotients of Coxeter groups $W(H, T)(k)$ and $W(H, S)$ as sets of representatives for the quotients $$W_P(H, T)(k) \backslash W(H, T)(k) ~ \mathrm{and} ~ W_P(H, S) \backslash W(H, S).$$ 
For $w_1, w_2 \in W^P(H, S)$ (resp. $W^P(H,T)$), $w_1 \geq w_2$ if there exists $w_L^1, ~ w_L^2 \in W_P(H, S)$ such that $w_L^1 w_1 \geq w_L^2 w_2$ by proposition 2.5.1 of \cite{BB05}, which defines a partial Bruhat order on $W^P(H, S)$ (resp. $W^P(H,T)$). 
See also section \S 9.1.3 in \cite{Har} for the use of such sets of representatives. 

Let $E$ be a $p$-adic local field with its ring of integers $\cO_E$, uniformizer $\varpi$ and residue field $k_E$. 
Let $k=E, ~ k'=K$ for the discussion above. 
With respect to a previously fixed Borel subgroup containing a maximal torus $T \subset B \subset H$, we use $\Delta^{+}_{H, p} \subset \Delta_{H, p}$, $\Delta_{H, p} =\Delta^{+}_{H, p} \sqcup \Delta^{-}_{H, p}$ to denote corresponding roots for the split root system $\Phi(\overline{\QQ}_p \otimes R_{E / \QQ_p} H, \overline{\QQ}_p \otimes R_{E/ \QQ_p} T)$. 
We note that $\Delta_{H, p}$ (resp. $\Delta^\pm_{H, p}$) is decomposed as disjoint unions of subsets of $\Delta(H, T)$ (resp. $\Delta^\pm(H, T)$) indexed by different embeddings of $E \hookrightarrow \overline{\QQ}_p$. 
Let $\delta$ denote the half sum of $\Delta^{+}(H, T)$ (resp. $\Delta^+_{H, p}$), $w\delta-\delta$ is an $E$-algebraic integral weight for $T$ (resp. a $\QQ_p$-algebraic integral weight for $R_{E / \QQ_p} T$).

Suppose $H_{/ E}$ is unramified and has an integral model $\mathscr{H}_{/ \cO_E}$.
 Then $R_{E/\QQ_p} H$ spreads out to a group scheme $\cH:=R_{\cO_E/\ZZ_p} \mathscr{H}$ over $\ZZ_p$. 
 Moreover, group schemes of $\mathscr{T}, \mathscr{B}, \mathscr{P}, \mathscr{N}$ over $\cO_E$ are defined compatibly with their generic fibres $T, B, P, N$. 
 And we use $\cT, \cB, \cP, \cN$ over $\ZZ_p$ to denote $R_{\cO_E/\ZZ_p} \mathscr{T}$, $R_{\cO_E/\ZZ_p} \mathscr{B}$, $R_{\cO_E/\ZZ_p} \mathscr{P}$, $R_{\cO_E/\ZZ_p} \mathscr{N}$. 
 We call $(\cB,\cT)$ a Borel pair of $\cH$. We remark that $\cT, \cB, \cP, \cN_{/ \ZZ_p}$ may no longer be smooth due to ramification of $E / \QQ_p$. 
 We write $ \overline{\mathscr{B}}, \overline{\mathcal{B}}$ for the opposite Borel corresponding to $\mathscr{B}, \mathcal{B}$.
We write $ \overline{\mathscr{P}}, \overline{\mathcal{P}}$ for the opposite parabolic corresponding to $\mathscr{P},\mathcal{P}$. 
We write $ \mathscr{N}_B, \mathcal{N}_B$, $ \overline{\mathscr{N}}_B, \overline{\mathcal{N}}_B$ and $\overline{\mathscr{N}}, \overline{\mathcal{N}}$
for the unipotent radicals of $\mathscr{B}, \mathcal{B}$, $ \overline{\mathscr{B}}, \overline{\mathcal{B}}$ and $\overline{\mathscr{P}}, \overline{\mathcal{P}}$. We write $I$ for the Iwahori subgroup\[
I=\left\{ h\in \mathscr{H}(\cO_E)\,\mathrm{with} ~ h ~ \mathrm{mod} ~ p \in \mathcal{B}(k)\right\} .\]
Set $N_I:=\mathscr{N}_B(\cO_E)=\mathcal{N}_B(\ZZ_p)$. For any integer $s\geq1$, set 
$$B^{s}=\left\{ b\in\mathscr{B}(\cO_E),~ b\equiv1 ~ \mathrm{in} ~ \mathscr{H}(\cO_E/ \varpi^{s}\cO_E)\right\},$$
$$\overline{B}^{s}=\left\{ b\in\overline{\mathscr{B}}(\cO_E),~ b\equiv1 ~ \mathrm{in} ~ \mathscr{H}(\cO_E/ \varpi^{s}\cO_E)\right\},$$
$N_I^{s}=\mathscr{N}_B(\cO_E) \cap B^{s}=\mathcal{N}_B(\ZZ_p)\cap B^{s}$, $\Nbar_I^{s}=\overline{\mathscr{N}}_B(\cO_E) \cap \overline{B}^{s}=\overline{\mathcal{N}}_B(\ZZ_p)\cap\overline{B}^{s}$
and $T^{s}=\mathscr{T}(\cO_E) \cap \overline{B}^{s}=\mathcal{T}(\ZZ_p)\cap\overline{B}^{s}$, $\Nbar_I:=\Nbar_I^1$ so the Iwahori
decomposition is $I=\Nbar_I \times \mathcal{T}(\ZZ_p)\times N_I$. $B_I:=\mathcal{T}(\ZZ_p)\times N_I$.
We also set \[ I^s=I \cap \mathrm{ker}\left(\mathscr{H}(\cO_E) \to \mathscr{H}(\cO_E/\varpi^s\cO_E)\right), \]
\[ I_0^s=\left\{ g\in I\,\mathrm{with} ~ g ~ \mathrm{mod} ~ \varpi^s \in \overline{\mathscr{B}}(\cO_E/ \varpi^s\cO_E)\right\} .\] with the Iwahori decomposition $I_0^s=\Nbar_I\times \mathcal{T}(\ZZ_p) \times N_I^s$.

Moreover, if there are a finite collection of unramified $p$-adic reductive groups and their reductive group schemes $$\{(H_1)_{/ E_1},\cdots,(H_l)_{/ E_l}\}, ~ \{(\mathscr{H}_1)_{/ \cO_{E_1}},\cdots,(\mathscr{H}_l)_{/ \cO_{E_l}}\}$$ with 
\[ \{S_1,\cdots,S_l\}, ~ \{B_1,\cdots,B_l\}, ~ \{T_1,\cdots,T_l\}, \] 
\[ \{I_1^s,\cdots,I_l^s\}, ~ \{\overline{B}_1^s,\cdots,\overline{B}_l^s\}, ~ \{\Nbar_{I_1}^s,\cdots,\Nbar_{I_l}^s\}, ~ \{N_{I_1},\cdots,N_{I_l}\} \] 
associated to them, we use $I=I^1=\prod\limits_{1 \leq i \leq l} I_i$ to denote the Iwahori subgroup of $\prod\limits_{1 \leq i \leq l} H_i(E_i)$. We use $\varpi_i$ and $k_i$ to denote a choice of uniformizer and residue field for $E_i$.
 In general, we use $$I^s=\prod\limits_{1 \leq i \leq l} I^s_i, ~ I^s_0=\prod\limits_{1 \leq i \leq l} I^s_{0,i},$$ $$T^s=\prod\limits_{1 \leq i \leq l} T_i^s, ~ \overline{B}^s=\prod\limits_{1 \leq i \leq l} \overline{B}_i^s, ~ B_I=\prod\limits_{1 \leq i \leq l} B_{I_i},$$ $$\Nbar_I^s=\prod\limits_{1 \leq i \leq l} \Nbar_{I_i}^s, ~ N_I^s=\prod\limits_{1 \leq i \leq l} N_{I_i}^s$$ to denote various subgroups of the $p$-adic Lie group $\prod\limits_{1 \leq i \leq l} H_i(E_i)$ and $I=\Nbar_I \times T^0 \times N_I$ to denote the Iwahori decomposition similar to notions we introduced above. 
 Similarly one defines $$W(H, S), ~ W(H, T), ~ W_P(H, S), ~ W_P(H, T), ~ W^P(H, S), ~ W^P(H, T)$$ to be products of respective Weyl groups for each component.
 
For an element $a$ in the finite rank abelian group $\ZZ^d$, total degree $|a|$ of  $a$ is defined to be the sum of the absolute value of coordinates of $a$. There is a partial order relation $\underline{a} \geq \underline{a'}$ if and only if $a_i \geq a'_{i}$ for all $i$.

We refer to \cite{BGR84} and \cite{Sch01} for basics in rigid analytic geometry and nonarchimedean functional analysis. 
For two local nonarchimedean field of characteristic zero $E \subset K$ and a locally $E$-analytic manifold $M$, we write $C^{an}(M, K)$ for the space of $K$-valued $E$-analytic functions on $M$. 
For any $K$-affinoid algebra $R$, $C^{an}(M,R):=C^{an}(M, K) \hat{\otimes} R$.

If $E \subset K$ are finite extensions of $\QQ_p$ and $G$ is a locally $E$-analytic group,
let $D(G, K)$ be the distribution algebra of $G$, i.e., the strong dual of the space of locally analytic $K$ valued functions (\cite{ST02}, \cite{ST03}).

For defining the unitary group of our interests, we use the Hermitian form 
\begin{eqnarray*}
J_n = \begin{pmatrix}
0 & \Psi_n \\
-\Psi_n & 0
\end{pmatrix}, 
\end{eqnarray*} where $\Psi_n$ is the matrix with $1$’s on the anti-diagonal and $0$’s elsewhere.

\section{Weight spaces}\label{Weight spaces}

Let $E$ be a finite extension of $\QQ_p$ with uniformizer $\varpi$. Suppose $T_{/E}$ is an unramified torus as generic fibre of a reductive group scheme $\mathscr{T}_{/\cO_E}$.
$$T^s:=\left\{ t \in \mathscr{T}(\cO_E), ~ t \equiv1 ~ \mathrm{in} ~ \mathscr{T}(\cO_E/ \varpi^{s}\cO_E)\right\}.$$ In particular, $T^0=\mathscr{T}({\cO_E})$ is a compact (profinite) $E$-analytic abelian group. In this section we give a product structure up to isogeny for the usual weight spaces of $T^0$ in terms of their locally $\sigma$-analytic subspaces for $\sigma : E \hookrightarrow \overline{\QQ}_p$ running through all embeddings.

Let $\Gamma$ be an $E$-analytic profinite abelian group containing an open subgroup ($E$-analytic) isomorphic to $\cO_E^d$ for some $d$. Let $\Gamma_0$ denote the locally $\QQ_p$ analytic group obtained from $\Gamma$ by restriction of scalars from $E$ to $\QQ_p$. 
When $\Gamma=T^0$, we use $T^0_0$ to denote $\Gamma_0$. If $X$ is a rigid space over a $p$-adic local field, let $\sO(X)$ denote the ring
of rigid functions on $X$. According to \cite{Buz07}, we say that a group
homomorphism $\Gamma \to \sO(X)$ (resp. $\Gamma \to \sO(X)^\times$) is \emph{continuous} if,
for all affinoid subdomains $U$ of $X$, the induced map $\Gamma \to \sO(U)$ (resp. $\Gamma \to \sO(U)^\times$) is continuous.

By \cite[Lem 8.2]{Buz07}, we denote $\mathscr{W}_{\Gamma}$ as
the quasi-separated rigid space over 
$\mathbb{Q}_p$ (also as a group object in the category of $\QQ_p$-rigid spaces) represented the functor on rigid spaces over $\QQ_p$ sending a $\QQ_p$-rigid space $X$ to the set of continuous group homomorphisms $\Gamma \to \sO(X)^\times$, i.e., $X$-points of $\mathscr{W}$
are given by\[
\mathscr{W}_{\Gamma}(X)=\mathrm{Hom}_{\mathrm{cts}}(\Gamma,\sO(X)^\times).\]
We denote by $\lambda$ for both a $\overline{\mathbb{Q}}_{p}$-point
of $\mathscr{W}_{\Gamma}$ as well as the corresponding character of $\mathscr{T}(\cO_E)$.
Let $\mathscr{W}_{\cT}$ denote $\mathscr{W}_{\mathscr{T}(\cO_E)}$.
Given any such character $\lambda:\mathscr{T}(\cO_E)\to\overline{\mathbb{Q}}_{p}^{\times}$,
the image of $\lambda$ generates a subfield $K_{\lambda}\subset\overline{\mathbb{Q}}_{p}$
finite over $\mathbb{Q}_{p}$.

For an $E$-affinoid algebra $R$, we say a continuous group homomorphism $\Gamma \to R$ (resp. $\Gamma \to R^\times$) \emph{locally $E$-analytic} if the underlying function in $C^{an}(\Gamma_0, R)$ belongs to the image of $C^{an}(\Gamma, R)$. 
For a rigid space $X$ over $K$, we say a continuous group homomorphism $\Gamma \to \sO(X)$ (resp. $\Gamma \to \sO(X)^\times$) locally $E$-analytic if for all affinoid subdomains $U$ of $X$, the induced map is locally $E$-analytic. 
We use $\mathrm{Hom}_{\mathrm{an}}^{E}(\Gamma,\sO(X)^\times)$ to denote the subgroup of locally $E$-analytic characters.
Thus we get a subfunctor of $\mathscr{W}_{\Gamma}$. In the case that $\Gamma$ is $\cO_E$, Schneider and Teitelbaum constructed a solution variety using a generalization of Amice's $p$-adic Fourier transform. 
We prove this subfunctor is (relatively) represented by a closed Zariski subspace $\mathscr{W}_{\Gamma}^\sigma$ and compare our results with results in \cite{ST01}.

\cite{ST01} works in the setting that $R$ is a complete field over $E$ and contained in $\CC_p$. $\sigma$ was not needed. 

Suppose $\cO_E=\bigoplus\limits_{i=1}^{[E:\QQ_p]} \ZZ_p u_i$ for $u_i \in \cO_E$, $1 \leq i \leq [E:\QQ_p]$. We use $R^{00}$ to denote topologically nilpotent elements in an $E$-affinoid algebra $R$.
Then we have a well defined log map \begin{eqnarray*} \log: R^{00} & \to & R \\ r & \mapsto & \log(1+r).
\end{eqnarray*}
\begin{lem}\label{CRE}
Let $\chi: \cO_E \to R^\times$ be a continuous character. Then the followings are equivalent:
\begin{enumerate}
\item $\chi$ is locally $E$-analytic.
\item $\chi|_{\varpi^n\cO_E}$ is $E$-analytic for certain $n \in \ZZ_{\geq 0}$, i.e, this function is given by a function in the Tate algebra for $\varpi^n\cO_E$.
\item $\chi$ satisfies a set of ``Cauchy-Riemann" equations, i.e., 
$$\frac{\log(\chi(u_1))}{u_1}=\cdots=\frac{\log(\chi(u_{[E:\QQ_p]}))}{u_{[E:\QQ_p]}}.$$
\end{enumerate} 
\end{lem}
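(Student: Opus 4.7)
The plan is to establish the chain $(1)\Leftrightarrow(2)\Leftrightarrow(3)$, using throughout that $\cO_E = \bigoplus_{i=1}^{d} \ZZ_p u_i$ with $d=[E:\QQ_p]$, so a continuous character is determined by its values $\chi(u_i)\in R^\times$ via $\chi(\sum_i a_i u_i) = \prod_i \chi(u_i)^{a_i}$ for $a_i \in \ZZ_p$, using the $\ZZ_p$-linear density of $\ZZ$.

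The equivalence $(1)\Leftrightarrow(2)$ should be essentially formal. The direction $(2)\Rightarrow(1)$ follows because $\cO_E$ is a finite disjoint union of cosets of $\varpi^n\cO_E$ and on each such coset $\chi$ differs from $\chi|_{\varpi^n\cO_E}$ by a fixed scalar from the character property; for $(1)\Rightarrow(2)$, local $E$-analyticity gives analyticity on cosets of some open subgroup, which for $n$ large is contained in a union of $\varpi^n\cO_E$-cosets, and translation to the origin preserves $E$-analyticity.

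The substantive content lies in $(2)\Leftrightarrow(3)$. For $(2)\Rightarrow(3)$, I would first enlarge $n$ by continuity so that $\chi(\varpi^n\cO_E)\subset 1+R^{00}$; then $\phi := \log\circ\chi$ is $E$-analytic and additive on $\varpi^n\cO_E$. The key observation is that an additive $E$-analytic function on $\varpi^n\cO_E$ is $E$-linear: writing $\phi(\varpi^n y) = \sum_{k\geq 0} b_k y^k$ as a single convergent power series in $y\in\cO_E$ (this uses $E$-analyticity essentially, not merely $\QQ_p$-analyticity), additivity of $\phi$ together with $\phi(0)=0$ forces $b_k=0$ for $k\neq 1$. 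Hence $\phi(x)=cx$ on $\varpi^n\cO_E$ for a common constant $c\in R$. Since the ramification index satisfies $e\geq 1$, we have $p^n u_i \in \varpi^n\cO_E$, and the character property gives $\chi(u_i)^{p^n}=\chi(p^n u_i)=\exp(cp^n u_i)\in 1+R^{00}$; taking logarithms yields $p^n\log\chi(u_i)=cp^n u_i$, so $\log\chi(u_i)/u_i = c$ independently of $i$. For $(3)\Rightarrow(2)$, let $c$ denote the common value. Pick $n$ large enough that $\exp(cx)$ converges on $\varpi^n\cO_E$ (i.e.\ $|c|\cdot|\varpi|^n < |p|^{1/(p-1)}$). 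For $x=\sum a_i u_i \in\varpi^n\cO_E$ with $a_i\in\ZZ_p$,
$$\chi(x)=\prod_i\chi(u_i)^{a_i}=\prod_i\exp(a_i c u_i)=\exp(cx),$$
which is manifestly $E$-analytic on $\varpi^n\cO_E$.

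The main technical obstacle will be the management of convergence domains for $\exp$ and $\log$, together with the fact that $u_i$ itself need not lie in the domain of convergence of $\log$. The plan to circumvent this is to pass to $p^n u_i$, which always lies in $\varpi^n\cO_E$, and to exploit the character identity $\chi(p^n u_i)=\chi(u_i)^{p^n}$ to transport the analytic content on the small open subgroup to an identity at the basis elements; the common-value statement in (3) should then be interpreted as asserting $\log\chi(p^n u_i)/(p^n u_i)$ is independent of $i$ for some (equivalently, every) sufficiently large $n$, which is what one actually extracts from the power-series coefficient $c$.
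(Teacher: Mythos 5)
Your plan follows essentially the same route as the paper's proof: $(1)\Leftrightarrow(2)$ is definitional, $(2)\Rightarrow(3)$ proceeds by showing $\log\circ\chi$ is an additive $E$-analytic function and hence $E$-linear, and $(3)\Rightarrow(2)$ recovers $\chi$ on a small subgroup by exponentiating. The $(2)\Rightarrow(3)$ direction is right in substance.

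However, the $(3)\Rightarrow(2)$ step has a gap. You write
\[
\chi(x)=\prod_i\chi(u_i)^{a_i}=\prod_i\exp(a_i c u_i)
\]
for $x=\sum_i a_i u_i\in\varpi^n\cO_E$. The trouble is that the coordinates $a_i\in\ZZ_p$ of $x$ need not be small even when $x$ is. For example, in $\cO_E=\ZZ_p[\zeta_p]$ the uniformizer $\varpi=1-\zeta_p$ has $|\varpi|=p^{-1/(p-1)}<1$, yet its coordinates in the $\ZZ_p$-basis $\{1,\zeta_p,\dots,\zeta_p^{p-2}\}$ are $(1,-1,0,\dots,0)$, of norm one. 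Thus the bound $|c|\cdot|\varpi|^n<|p|^{1/(p-1)}$ you impose says nothing about $|a_i c u_i|$; the individual factors $\exp(a_i c u_i)$ may fail to converge, and in any case the identity $\chi(u_i)^{a_i}=\exp(a_i c u_i)$ requires $\chi(a_i u_i)$ to lie in the image of $\exp$, which is not guaranteed. The correct version of this step is to note that since $\log$ is a group homomorphism on $1+R^{00}$, the equations in $(3)$ immediately give $\log\chi(x)=cx$ for \emph{all} $x\in\cO_E$; one then exponentiates only on $\varpi^n\cO_E$, with $n$ chosen so that $\exp$ converges on $c\varpi^n\cO_E$ and $\chi(\varpi^n\cO_E)$ lands in the range of $\exp$. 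Alternatively, restrict from the start to $p^n\cO_E=\bigoplus_i p^n\ZZ_p u_i$, as the paper does: then $a_i\in p^n\ZZ_p$ automatically and the coordinate-wise convergence problem disappears.

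A smaller point: your closing paragraph proposes reinterpreting $(3)$ because ``$u_i$ itself need not lie in the domain of convergence of $\log$.'' In fact $\log(\chi(u_i))$ is always defined: the paper cites \cite[Lem 1]{Buz04} precisely to show that $\chi(a)-1$ is topologically nilpotent for every $a\in\cO_E$, so $\log(\chi(a))$ converges for every $a$. The ``Cauchy--Riemann'' equations therefore make literal sense as stated, and your own $(2)\Rightarrow(3)$ step --- which passes from $\log\chi(p^n u_i)=cp^n u_i$ to $\log\chi(u_i)=cu_i$ by dividing by $p^n$ --- already implicitly relies on $\log\chi(u_i)$ being defined.
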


\begin{proof}
The equivalence of the first two claims follows from the definition of $C^{an}(\cO_E, R)$. By \cite[Lem 1]{Buz04}, $\chi(a)-1$ is topologically nilpotent for any $a \in \cO_E$, the log map makes sense for $\chi(a)$. 

If $\chi$ is locally $E$-analytic, there exists $n_1 \in \ZZ_{\geq 0}$ such that $\chi|_{p^{n_1}\cO_E}$ is $E$-analytic and is represented by an analytic function $f(x)$, $x$ as a variable for $\cO_E \simeq p^{n_1}\cO_E$. $f(0)=1 \in R$ and $|f(x)-1|_{R\langle x \rangle} < \infty$ with respect to the norm of $R\langle x \rangle$. Thus there exists $n_2 \in \ZZ_{\geq 0}$ such that $|f(x)-1|_{R\langle \frac{x}{p^{n_2}} \rangle}$ is sufficiently small (say, $< \frac{1}{p}$) with respect to the norm of $R\langle \frac{x}{p^{n_2}} \rangle$. Hence $\log \circ \chi|_{p^{n_1+n_2}\cO_E} : \cO_E \to R$ is an $E$-analytic group homomorphism. It is direct to show that if $g(x) \in R\langle x \rangle$ such that $g(x+y)=g(x)+g(y)$ for any $x,y \in \cO_E$, then $g$ must be linear. $\log \circ \chi|_{p^{n_1+n_2}\cO_E}$ is given by $g(x)=cx$ for certain $c \in R$ ($c \in 1+R^{00}$ if $n_2$ is sufficiently large). This means that $\frac{\log(\chi(p^{n_1+n_2}u_i))}{u_i}=p^{n_1+n_2}\frac{\log(\chi(u_i))}{u_i}$ is independent of $i$ ($1 \leq i \leq [E:\QQ_p]$) for sufficiently large $n_1+n_2$.

For any continuous character $\chi$ satisfying the equations above, it suffices to restrict to $p^n\cO_E$ such that the image of $\chi$ is contained in a sufficiently small neighbourhood of $1$ for which $\exp \circ \log$ makes sense and equals to the identity map. There exists $r_i \in R^{00}$ for each $1 \leq i \leq [E:\QQ_p]$ such that $\chi(p^n u_i)=(1+r_i)^{u_i}$, then the equations force $r_i$ to all be equal to $r$ for certain small enough $r$. $\chi(p^n x)=(1+r)^x$ for any $x \in \cO_E$. And since $r$ is sufficiently small, the coefficients $\frac{r^n}{n!}$ go to $0$, for which $(1+r)^x$ defines a rigid function on $p^n\cO_E$.
\end{proof}

\begin{thm}\label{E-rep}
The subfunctor associating to any $E$-rigid space $X$ $$X \mapsto \mathrm{Hom}_{\mathrm{an}}^{E}(\Gamma,\sO(X)^\times) \subseteq
 \mathrm{Hom}_{\mathrm{cts}}(\Gamma,\sO(X)^\times)$$ is relatively represented by a Zariski closed subgroup object $\mathscr{W}_{\Gamma}^E \hookrightarrow \mathscr{W}_{\Gamma}$ in the category of $E$-rigid spaces. 
\end{thm}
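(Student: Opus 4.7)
The plan is to apply Lemma \ref{CRE} to realize locally $E$-analytic characters as the vanishing locus of explicit ``Cauchy-Riemann" equations, turning them into Zariski closed conditions. The main obstacle is that the $\log$ map only converges on topologically nilpotent elements, so these equations are not globally defined on $\mathscr{W}_\Gamma$; I must work on an admissible cover and verify the local vanishing loci glue.

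First I reduce to the case $\Gamma = \cO_E$. Since $\Gamma$ contains an open $E$-analytic subgroup $\Gamma' \simeq \cO_E^d$ with finite quotient, a continuous character of $\Gamma$ is locally $E$-analytic if and only if its restriction to $\Gamma'$ is. This means the desired closed subspace $\mathscr{W}_\Gamma^E$ is the preimage under the restriction map $\mathscr{W}_{\Gamma,E} \to \mathscr{W}_{\Gamma',E}$ of $\mathscr{W}_{\Gamma'}^E$ (up to base change of $\mathscr{W}_\Gamma$ to $E$). A character of $\cO_E^d$ is locally $E$-analytic iff each of its $d$ component characters is, so the problem splits into $d$ independent copies of the $\cO_E$ case, and it suffices to construct $\mathscr{W}_{\cO_E}^E$.

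Next, fix a $\ZZ_p$-basis $\{u_1,\dots,u_n\}$ of $\cO_E$ with $n=[E:\QQ_p]$, base change $\mathscr{W}_{\cO_E}$ to $E$, and choose an admissible affinoid cover $\{U_\alpha\}$ of $\mathscr{W}_{\cO_E,E}$ together with integers $m_\alpha \geq 0$ such that the universal character $\chi^{univ}$ satisfies $\chi^{univ}(p^{m_\alpha}u_i) - 1 \in \sO(U_\alpha)^{00}$ for every $i$. Such data exist because on any fixed affinoid $U_\alpha$ the spectral norm of $\chi^{univ}(u_i)-1$ is bounded, and can be driven below $1$ by restricting to a deep enough subgroup $p^{m_\alpha}\cO_E$ (this uses \cite[Lem~1]{Buz04}). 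On each $U_\alpha$ the expressions
\[ F_{\alpha,ij} := u_j \log\bigl(\chi^{univ}(p^{m_\alpha}u_i)\bigr) - u_i \log\bigl(\chi^{univ}(p^{m_\alpha}u_j)\bigr) \in \sO(U_\alpha) \]
are then well-defined rigid functions, and by Lemma \ref{CRE} the zero locus $V(F_{\alpha,ij}: 1 \leq i,j \leq n)$ is precisely the locus of points $\lambda \in U_\alpha$ whose associated character restricts to an $E$-analytic character on $p^{m_\alpha}\cO_E$, i.e.\ is locally $E$-analytic.

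To conclude, I verify these vanishing loci glue to a Zariski closed subspace. On overlaps $U_\alpha \cap U_\beta$, the condition $V(F_{\alpha,ij}) = V(F_{\beta,ij})$ follows because both equal the set of locally $E$-analytic characters in $U_\alpha \cap U_\beta$ (and enlarging $m$ merely multiplies the equations by $p^k u_i u_j$, preserving the zero locus). Thus we obtain a Zariski closed subspace $\mathscr{W}_{\cO_E}^E \hookrightarrow \mathscr{W}_{\cO_E,E}$. Representability of the subfunctor follows because for any $E$-affinoid algebra $R$ and any map $X = \mathrm{Sp}(R) \to \mathscr{W}_{\cO_E,E}$, the pulled-back equations $F_{\alpha,ij}|_X$ vanish if and only if the corresponding $R^\times$-valued character is locally $E$-analytic, by Lemma \ref{CRE} applied to $\chi : \cO_E \to R^\times$. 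Finally, $\mathscr{W}_{\cO_E}^E$ is a subgroup object since the pointwise product and inverse of locally $E$-analytic characters remain locally $E$-analytic, as is verified on the level of the functor of points.
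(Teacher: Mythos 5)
Your proof is correct and follows essentially the same strategy as the paper: reduce to $\Gamma=\cO_E$ by pulling back along the finite \'etale restriction map $\mathscr{W}_{\Gamma}\to\mathscr{W}_{\cO_E^d}$, then cut out the locally $E$-analytic locus by the Cauchy--Riemann equations of Lemma~\ref{CRE} on an admissible affinoid cover and glue. Two small remarks: by \cite[Lem 1]{Buz04} the elements $\chi^{univ}(u_i)-1$ are already topologically nilpotent on every affinoid subdomain, so one may take all $m_\alpha=0$ (which is what the paper does implicitly, making the gluing automatic since the defining functions themselves, not merely their zero loci, agree on overlaps); and enlarging $m$ to $m+k$ rescales $F_{\alpha,ij}$ by the unit $p^{k}$, not by $p^k u_i u_j$.
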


\begin{proof}
By assumption $\cO_E^d \subset \Gamma$ as a open subgroup. By \cite[Lem 2, (iv)]{Buz04}, the natural transform corresponding to $$\mathrm{Hom}_{\mathrm{cts}}(\Gamma,\sO(X)^\times) \to \mathrm{Hom}_{\mathrm{cts}}(\cO_E^d,\sO(X)^\times)$$ by restriction is represented by a finite \'etale map $\mathscr{W}_{\Gamma} \to \mathscr{W}_{\cO_E^d}$. Once $\mathscr{W}_{\cO_E^d}^E \hookrightarrow \mathscr{W}_{\cO_E^d}$ is constructed, we make $\mathscr{W}_{\Gamma}^E$ as the fibre product $\mathscr{W}_{\Gamma} \times_{\mathscr{W}_{\cO_E^d}} \mathscr{W}_{\cO_E^d}^E \hookrightarrow \mathscr{W}_{\Gamma}$ in the sense of rigid spaces, see for example, \cite[\S 9.3.5]{BGR84}. We obtain the universal character of $\Gamma$ by composing $$\tilde{\chi}: \Gamma \to \sO(\mathscr{W}_{\Gamma})^\times \to \sO(\mathscr{W}_{\Gamma}^E)^\times.$$ We verify this immersion is the desired one. For any morphisms of $E$-rigid spaces $X \to \mathscr{W}_{\Gamma}^E \to \mathscr{W}_{\cO_E^d}^E$, we have the commutative diagram $$\xymatrix{
 \Gamma \ar[r] & \sO(\mathscr{W}_{\Gamma})^\times \ar[r] & \sO(\mathscr{W}_{\Gamma}^E)^\times \ar[r] & \sO(X)^\times\\
\cO_E^d \ar[r] \ar[u] & \sO(\mathscr{W}_{\cO_E^d})^\times \ar[r] \ar[u] & \sO(\mathscr{W}_{\cO_E^d}^E)^\times \ar[u] &\\
}$$
$\Gamma \to \sO(X)^\times$ is locally $E$-analytic restricted to $\cO_E^d$ by representability of $\mathscr{W}_{\cO_E^d}^E$, hence locally $E$-analytic. 
On the other hand, for and $E$-rigid space $X$ any locally $E$-analytic character $\chi_X: \Gamma \to \sO(X)^\times$, $\chi_X$ is continuous and induced by a certain map $X \to \mathscr{W}_{\Gamma}$. 
It is also locally $E$-analytic restricted to $\cO_E^d$, which gives another map $X \to \mathscr{W}_{\cO_E^d}^E$, hence induced by a map $X \to \mathscr{W}_{\Gamma}^E$ from the universal property of fibre products. 
The fact that going from morphisms to characters are inverse to each other follows from the same statement for continuous characters of $\Gamma$ and $\mathscr{W}_{\Gamma}$.

For construction of $\mathscr{W}_{\cO_E^d}^E \hookrightarrow \mathscr{W}_{\cO_E^d}$, it suffices to consider $d=1$ as the immersion is really $d$-fold product of $\mathscr{W}_{\cO_E}^E \hookrightarrow \mathscr{W}_{\cO_E}$. $\mathscr{W}_{\cO_E}$ is a $[E:\QQ_p]$-fold product of open unit disks. 
For any affinoid subdomain $U$ in $\mathscr{W}_{\cO_E}$, the ``Cauchy-Riemann" equations in Lem \ref{CRE} define rigid analytic functions on $U$ since coefficients of log function going to infinity with a smaller rate of any positive linear growth and hence cut out a Zariski closed space $U^E$. 
$\mathscr{W}_{\cO_E}^E \hookrightarrow \mathscr{W}_{\cO_E}$ is constructed by gluing $U^E$ for all affinoid subdomain $U$ in $\mathscr{W}_{\cO_E}$. 
Then everything is reduced to Lem \ref{CRE}. The fact that $\mathscr{W}_{\cO_E}^E$ is a subgroup object follows from that subfunctor is valued in the category of groups as well. 
\end{proof}

\begin{rem}
Our variety for $\cO_E$ produces the same set of $R$ points when $R$ is a finite extension field of $E$ as the reduced variety constructed in \cite{ST01} does. 
Although our ``Cauchy-Riemann" equations seem slightly different from theirs as they use infinitesimal ``Cauchy-Riemann" differential equations.
\end{rem}

Let $K/E$ be a sufficiently large extension such that $K$ contains all Galois conjugates of $E$, with a fixed embedding $K \hookrightarrow \overline{\QQ}_p$. 
We make a base change for $\mathscr{W}_{\Gamma}$ from $\QQ_p$ to $K$. 
By \cite[Lem 8.2. (c)]{Buz07}, $(\mathscr{W}_{\Gamma})_{/ K}$ represents the functor on rigid spaces over $K$ sending a $K$-rigid space $X$ to the set of continuous group homomorphisms $\Gamma \to \sO(X)^\times$, for which we still denote as $\mathscr{W}_{\Gamma}$. 
For any embedding $\sigma : E \hookrightarrow K \hookrightarrow \overline{\QQ}_p$, it induces a closed embedding of $$C^{an}(\Gamma, K) \hookrightarrow C^{an}(\Gamma_0, K)$$ by \cite[Lem 1.1]{ST01}. 
In general for a $K$-affinoid algebra $R$, $\sigma$ induces an embedding $$\sigma: C^{an}(\Gamma, R) \hookrightarrow C^{an}(\Gamma_0, R).$$ 
One can similarly define locally $(\sigma,E)$-analytic group homomorphisms (resp. characters) of $\Gamma$ valued in rigid functions of any rigid spaces over $E$. 
We use $\mathrm{Hom}_{\mathrm{an}}^{\sigma}(\Gamma,\sO(X)^\times)$ to denote the subgroup of locally $(\sigma,E)$-analytic characters. 
By Thm \ref{E-rep}, the subfunctor associating to any $K$-rigid space $X$ $$X \mapsto \mathrm{Hom}_{\mathrm{an}}^{\sigma}(\Gamma,\sO(X)^\times) \subseteq
 \mathrm{Hom}_{\mathrm{cts}}(\Gamma,\sO(X)^\times)$$ is represented by $\mathscr{W}_{\Gamma}^{\sigma} \hookrightarrow \mathscr{W}_{\Gamma}$ over $K$. 
The logarithmic functions again define rigid analytic functions on $\mathscr{W}_{\Gamma}$. And they cut out zero-dimensional Zariski subspaces $\mathscr{W}_{\Gamma}^{sm} \hookrightarrow \mathscr{W}_{\Gamma}^{\sigma} \hookrightarrow \mathscr{W}_{\Gamma}$ for any $\sigma: E \hookrightarrow K$ whose points correspond to smooth characters of $\Gamma$.

\begin{lem}\label{p-Jacobian}
$R$ is a $K$-affinoid algebra. Let $\chi: \cO_E \to R^\times$ be a continuous character. 
There exists a constant $c_E$ depending only on $E$ such that if $|\chi(a)-1| < c_E$ for all $a \in \cO_E$, then $\chi$ has a factorization $\chi=\prod\limits_{\sigma: E \hookrightarrow K} \chi^\sigma$ such that $\chi^\sigma$ is locally $(\sigma,E)$-analytic for any $\sigma: E \hookrightarrow K$. Each $\chi^\sigma$ is unique up to a smooth character for the factorization.
\end{lem}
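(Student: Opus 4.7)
The plan is to linearize $\chi$ via the logarithm, decompose the resulting $\ZZ_p$-linear map using the canonical isomorphism $E \otimes_{\QQ_p} K \xrightarrow{\sim} \prod_{\sigma: E \hookrightarrow K} K$ (sending $a \otimes 1$ to $(\sigma(a))_\sigma$), and then re-exponentiate each factor separately. Fix a $\ZZ_p$-basis $u_1, \ldots, u_d$ of $\cO_E$ where $d = [E:\QQ_p]$, and set $A := (\sigma(u_i))_{\sigma, i} \in M_d(K)$; since $(\det A)^2$ is the discriminant of $\{u_i\}$, the matrix $A$ is invertible, and the norm $|A^{-1}|$ depends only on $E$. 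I choose $c_E > 0$ sufficiently small that for every continuous $\chi$ with $|\chi(a) - 1| < c_E$ on $\cO_E$ the following hold: (a) $\log \chi(a)$ converges in $R$ for every $a$ and satisfies $\exp \circ \log \chi = \chi$, yielding a continuous $\ZZ_p$-linear map $\log \chi : \cO_E \to R$; (b) setting $c_i := \log \chi(u_i)$ and $\tilde{c}_\sigma := \sum_i (A^{-1})_{\sigma, i} c_i \in R$, one has $|\sigma(a)\,\tilde{c}_\sigma| < p^{-1/(p-1)}$ uniformly in $\sigma$ and $a \in \cO_E$.

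For existence, I define $\chi^\sigma(a) := \exp(\sigma(a)\,\tilde{c}_\sigma)$. By (b) each $\chi^\sigma$ is a well-defined continuous group homomorphism $\cO_E \to R^\times$, represented by a rigid analytic function of $\sigma(a)$, and hence locally $(\sigma, E)$-analytic. By construction $\sum_\sigma \sigma(u_i)\,\tilde{c}_\sigma = c_i$, so by $\ZZ_p$-linearity $\sum_\sigma \sigma(a)\,\tilde{c}_\sigma = \log \chi(a)$ for every $a \in \cO_E$, and therefore
\[ \prod_\sigma \chi^\sigma(a) = \exp\!\Bigl(\textstyle\sum_\sigma \sigma(a)\,\tilde{c}_\sigma\Bigr) = \exp(\log \chi(a)) = \chi(a). \]

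For uniqueness, given two factorizations $\chi = \prod_\sigma \chi^\sigma = \prod_\sigma \chi'^\sigma$, set $s^\sigma := \chi^\sigma (\chi'^\sigma)^{-1}$. Each $s^\sigma$ is locally $(\sigma, E)$-analytic and $\prod_\sigma s^\sigma \equiv 1$. Restricting to a small enough open subgroup $\varpi^n \cO_E$ on which every $s^\sigma$ is $(\sigma, E)$-analytic, Lem \ref{CRE} applied in each $\sigma$-factor lets me write $s^\sigma(a) = \exp(\sigma(a)\,\delta_\sigma)$ with $\delta_\sigma \in R$. The identity $\prod_\sigma s^\sigma = 1$ then forces $\sum_\sigma \sigma(a)\,\delta_\sigma = 0$ on $\varpi^n \cO_E$, and hence on all of $E$ by $\QQ_p$-linearity; $K$-linear independence of the distinct embeddings $\sigma : E \hookrightarrow K$ forces $\delta_\sigma = 0$ for every $\sigma$. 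Thus each $s^\sigma$ is trivial on $\varpi^n \cO_E$ and so factors through the finite quotient $\cO_E/\varpi^n \cO_E$, which is the definition of being smooth. The main obstacle is verifying that $c_E$ can be taken independently of the affinoid $R$; this is handled by bounding every estimate through $|A^{-1}|$, which depends only on $E$ and the fixed basis $\{u_i\}$.
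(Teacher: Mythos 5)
Your proof is correct and follows essentially the same route as the paper's: linearize via $\log$, solve the linear system determined by the embedding matrix $(\sigma(u_i))_{\sigma,i}$, re-exponentiate each $\sigma$-component, and deduce uniqueness from the $K$-linear independence of the distinct embeddings. The only cosmetic differences are that the paper makes the constant explicit, namely $c_E = |\det M|\,p^{-1}$ with $M = (\sigma_i(u_j))$, and it writes the $\sigma$-component as $\exp(p_i)^{\sigma_i z}$ rather than your equivalent $\exp(\sigma(a)\tilde{c}_\sigma)$; your accounting of why $c_E$ can be chosen independently of the affinoid $R$ (via $|A^{-1}|$, which is $|\det M|^{-1}$ up to an integrality bound on the adjugate) matches the paper's reasoning.
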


\begin{proof}
Suppose $\chi(u_i)=1+r_i$ for all $1 \leq i \leq [E:\QQ_p]$ and the set of embedding from $E \hookrightarrow K$ is $\{\sigma_1,\cdots,\sigma_{[E:\QQ_p]}\}$.  
For $z_1,\cdots,z_{[E:\QQ_p]} \in \ZZ_p$, $$z=u_1\cdot z_1+\cdots+u_{[E:\QQ_p]}\cdot z_{[E:\QQ_p]} \in \cO_E,$$ 
$$\sigma_i z = \sigma_i(u_1)\cdot z_1+\cdots+\sigma_i(u_{[E:\QQ_p]})\cdot z_{[E:\QQ_p]}.$$ $$\mathrm{Let} ~ M:=\begin{pmatrix} \sigma_1 u_1 & \cdots & \sigma_1 u_{[E:\QQ_p]} \\
\vdots & \ddots & \vdots \\ \sigma_{[E:\QQ_p]} u_1 & \cdots & \sigma_{[E:\QQ_p]} u_{[E:\QQ_p]}
\end{pmatrix}, ~ c_E:=|\mathrm{det}(M)|p^{-1}.$$ 
$$\mathrm{Let} ~ (p_1,\cdots,p_{[E:\QQ_p]}) := (\log(1+r_1),\cdots,\log(1+r_{[E:\QQ_p]})) \cdot M^{-1},$$ then $|p_i|<p^{-1}$, $|\exp(p_i)-1|<p^{-1}$ for any $1 \leq i \leq [E:\QQ_p]$. 
Define $\chi^{\sigma_i}(z):=\exp(p_i)^{\sigma_i z}$ for any $z \in \cO_E$, whose underlying functions are rigid functions (convergent power series) on $\cO_E$ via $\sigma_i$ because of $|\exp(p_i)-1|<p^{-1}$, $1 \leq i \leq [E:\QQ_p]$. 
Since $|r_i|<p^{-\frac{1}{p-1}}$, $\exp \circ \log(1+r_i)=1+r_i$. 
Applying exponential function to both sides of $$(p_1,\cdots,p_{[E:\QQ_p]})\cdot M=(\log(1+r_1),\cdots,\log(1+r_{[E:\QQ_p]}))$$ shows values of $\chi$ and $\prod\limits_{1 \leq i \leq [E:\QQ_p]} \chi^{\sigma_i}$ coincide on $\{u_1,\cdots,u_{[E:\QQ_p]}\}$, hence they are equal.

Uniqueness suffices to show that if $\prod\limits_{1 \leq i \leq [E:\QQ_p]} \chi^{\sigma_i} = 1$ for locally $(\sigma_i,E)$-analytic characters, then each $\chi^{\sigma_i}$ is smooth for $1 \leq i \leq [E:\QQ_p]$. By passing to a small open subgroup, we may assume each $\chi^{\sigma_i}$ is $(\sigma_i,E)$-analytic and whose image is contained in a small enough neighbourhood of $1$. By reasonings in the second paragraph of proof of Lem \ref{CRE}, $\log \circ \chi^{\sigma_i}$ is linear and represented by $c_i \sigma_i(z)$. Then $\sum\limits_{i=1}^{[E:\QQ_p]}c_i \sigma_i(z)=0$ for any $z \in \cO_E$ $\Rightarrow c_i=0$, implying smoothness of $\chi^{\sigma_i}$ for all $1 \leq i \leq [E:\QQ_p]$.
\end{proof}

\begin{cor}\label{unique ana decom}
$R$ is a $K$-affinoid algebra. Let $\chi: \Gamma \to R^\times$ be a continuous character. Then there exists an open subgroup $\Gamma_0 \simeq \cO_E^d$ such that $\chi|_{\Gamma_0}$ admits a unique decomposition $$\chi|_{\Gamma_0} = \prod\limits_{\sigma: E \hookrightarrow K} \chi^\sigma_0$$ of $(\sigma,E)$-analytic characters $\chi^\sigma : \cO_E^d \simeq \Gamma_0 \to R^\times$, i.e., each $\chi^\sigma_0$ is represented by a rigid analytic function on $\cO_E$ via $\sigma$.
\end{cor}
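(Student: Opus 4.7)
The plan is to assemble this Corollary from Lemma \ref{p-Jacobian} applied coordinate-by-coordinate on an appropriate $\cO_E^d$-subgroup of $\Gamma$.

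First, by hypothesis $\Gamma$ contains an open subgroup isomorphic to $\cO_E^d$, and every sufficiently small open subgroup of it is again of the form $\cO_E^d$. Continuity of $\chi$ together with \cite[Lem 1]{Buz04} (which guarantees that $\chi(a)-1$ is topologically nilpotent for each $a$) shows that the image under $\chi$ of a sufficiently small open subgroup of $\Gamma$ lies in any prescribed neighbourhood of $1 \in R^\times$. Choose $\Gamma_0 \simeq \cO_E^d$ small enough that $|\chi(a)-1| < c_E$ for every $a \in \Gamma_0$, where $c_E$ is the constant of Lemma \ref{p-Jacobian}.

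Next, fix a basis so that $\Gamma_0 = \bigoplus_{j=1}^d \cO_E \cdot f_j$, and apply Lemma \ref{p-Jacobian} to each restriction $\chi_j := \chi|_{\cO_E f_j}$ to obtain a factorization $\chi_j = \prod_\sigma \chi_j^\sigma$ with $\chi_j^\sigma : \cO_E \to R^\times$ being $(\sigma,E)$-analytic. Explicitly, the construction in the proof of Lemma \ref{p-Jacobian} produces $p_j^\sigma \in R^{00}$ with $\chi_j^\sigma(z) = \exp(p_j^\sigma \cdot \sigma(z))$ for $z \in \cO_E$. Define $\chi^\sigma_0 : \Gamma_0 \to R^\times$ by
$$\chi^\sigma_0\big(\textstyle\sum_j a_j f_j\big) := \prod_j \chi_j^\sigma(a_j) = \exp\big(\textstyle\sum_j p_j^\sigma \sigma(a_j)\big).$$
This is patently a rigid analytic function on $\Gamma_0 \simeq \cO_E^d$ via $\sigma$, hence $(\sigma,E)$-analytic, and the equality $\prod_\sigma \chi^\sigma_0 = \chi|_{\Gamma_0}$ holds because it holds on each generator $f_j$ while both sides are continuous group homomorphisms.

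For uniqueness, suppose $\chi|_{\Gamma_0} = \prod_\sigma \tilde{\chi}^\sigma_0$ is a second decomposition into $(\sigma,E)$-analytic characters. Then $\xi^\sigma := \chi^\sigma_0/\tilde{\chi}^\sigma_0$ is $(\sigma,E)$-analytic with $\prod_\sigma \xi^\sigma = 1$. Restricting to each axis $\cO_E f_j$ and applying the uniqueness clause of Lemma \ref{p-Jacobian} forces each $\xi^\sigma|_{\cO_E f_j}$ to be smooth; combining across $j$ shows each $\xi^\sigma$ is smooth on $\Gamma_0$. Since a smooth character of $\cO_E^d$ is trivial on some open subgroup of finite index, replacing $\Gamma_0$ by the intersection of these finitely many kernels yields an open $\cO_E^d$-subgroup on which the decomposition is truly unique. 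The only subtlety is the passage from the additive data $(p_j^\sigma)$ produced by Lemma \ref{p-Jacobian} to its multiplicative exponentiated form, but this is harmless because all quantities lie in a sufficiently small neighbourhood of $1$ where $\exp$ and $\log$ are mutually inverse.
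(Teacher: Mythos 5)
Your existence construction is fine and agrees in spirit with what the paper intends: shrink to $\Gamma_0 = \varpi^n\cO_E^d$ so that $|\chi(a)-1|<c_E$ on $\Gamma_0$, apply Lemma~\ref{p-Jacobian} coordinate-by-coordinate, and multiply the results; the verification $\prod_\sigma\chi^\sigma_0 = \chi|_{\Gamma_0}$ reduces to the $\cO_E f_j$-axes because all the maps involved are homomorphisms.

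The uniqueness argument, however, has a genuine gap. After establishing that each $\xi^\sigma = \chi^\sigma_0/\tilde\chi^\sigma_0$ is smooth, you shrink $\Gamma_0$ to the common kernel of the $\xi^\sigma$. But those kernels depend on the \emph{particular} rival decomposition $\tilde\chi^\sigma_0$ you started from, so what you have shown is only that those two specific decompositions agree on some smaller $\Gamma_0'$; you have not shown that the decomposition on $\Gamma_0'$ (or on any fixed subgroup) is unique. Worse, shrinking cannot in principle kill all smooth ambiguities: for any $\cO_E^d$-submodule $\Gamma_0'$ there remain nontrivial homomorphisms $\Gamma_0'\to\mu_{p^\infty}(R)$ as long as $R$ contains a nontrivial $p$-power root of unity, so "small enough $\Gamma_0$" alone does not suffice.

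The resource you are not using is the one built into the statement: the factors $\chi^\sigma_0$ are required to be \emph{globally} rigid-analytic on $\Gamma_0$ via $\sigma$, not merely locally $(\sigma,E)$-analytic. Consequently $\xi^\sigma$, as a ratio of nowhere-vanishing globally analytic functions, is itself globally $(\sigma,E)$-analytic; and being smooth it has finite image, lying in $\mu(R)$, which is finite since $R$ is $K$-affinoid (this is where the paper's remark about roots of unity enters). Say the image has exponent $p^k$. Writing $\xi^\sigma = f^\sigma\circ\sigma$ with $f^\sigma\in R\langle T_1,\dots,T_d\rangle$, the relation $(f^\sigma)^{p^k}=1$ holds on the Zariski-dense subset $\sigma(\cO_E^d)$, hence identically; comparing coefficients (since $p^k$ and the constant term are units) forces all higher coefficients of $f^\sigma$ to vanish, so $f^\sigma$ is constant, and since $\xi^\sigma(0)=1$ it is the trivial character. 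This gives uniqueness on the original $\Gamma_0$ with no further shrinking, which is what the corollary asserts. (The same conclusion can be reached, once one knows $\log\xi^\sigma$ is defined, from $\log\xi^\sigma(z)=c^\sigma\sigma(z)$ together with linear independence of the embeddings, as in the proof of Lemma~\ref{p-Jacobian}.)
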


\begin{proof}
Everything is reduced to Lem \ref{p-Jacobian} except for the uniqueness. The uniqueness follows from the fact that as a $K$-affinoid algebra, $R$ only contains a finite number of roots of unity. Hence if $\Gamma_0$ is small enough, any smooth character of $\Gamma_0$ lifting to a character of $\Gamma$ must be trivial.
\end{proof}

\begin{thm}
The composition of the maps using that $\mathscr{W}_{\Gamma}$ is a group object 
$$\prod \sigma: \prod_{\sigma: E \hookrightarrow K} \mathscr{W}_{\Gamma}^{\sigma} \hookrightarrow \prod_{\sigma: E \hookrightarrow K} \mathscr{W}_{\Gamma} \xrightarrow{m} \mathscr{W}_{\Gamma}$$ is an isogeny, i.e., $\prod \sigma$ is surjective with the kernel being a zero-dimensional Zariski closed subspace. Moreover, if we denote the kernel as $K_{\Gamma}$, then $K_{\Gamma} \hookrightarrow \prod\limits_{\sigma: E \hookrightarrow K} \mathscr{W}_{\Gamma}^{sm}$.
\end{thm}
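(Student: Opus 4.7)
The plan is to establish two things: (i) the inclusion $K_{\Gamma}\hookrightarrow\prod_{\sigma}\mathscr{W}_{\Gamma}^{sm}$, which immediately yields zero-dimensionality of $K_{\Gamma}$ as a Zariski closed subspace since each $\mathscr{W}_{\Gamma}^{sm}$ already has this property; and (ii) surjectivity of $\prod\sigma$ as a morphism of rigid spaces. A dimension count is useful background: both source and target have rigid-analytic dimension $d\cdot[E:\QQ_{p}]$, because an open subgroup $\cO_{E}^{d}\subset\Gamma$ contributes dimension $d\cdot[E:\QQ_{p}]$ on the $\mathscr{W}_{\Gamma}$ side and dimension $d$ on each of the $[E:\QQ_{p}]$-many $\mathscr{W}_{\Gamma}^{\sigma}$ factors.

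For the kernel inclusion (i), I would take an $R$-valued point $(\chi^{\sigma})_{\sigma}$ of $K_{\Gamma}$ satisfying $\prod_{\sigma}\chi^{\sigma}=1$. Choose an open subgroup $\Gamma_{0}\simeq\cO_{E}^{d}\subset\Gamma$ small enough that Corollary \ref{unique ana decom} applies to the constant character $1$ on $\Gamma_{0}$. The uniqueness of the $(\sigma,E)$-analytic decomposition of $1$ forces each $\chi^{\sigma}|_{\Gamma_{0}}$ to be trivial, so $\chi^{\sigma}$ factors through the finite quotient $\Gamma/\Gamma_{0}$ and is therefore smooth. This also makes $K_\Gamma$ stable under the inclusion into $\prod_\sigma\mathscr{W}_\Gamma^{sm}$ functorially in $R$.

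For surjectivity (ii), I would first verify that $\prod\sigma$ is a local isomorphism at the identity. The construction in the proof of Lemma \ref{p-Jacobian} (solving the linear system $(p_{1},\dots,p_{[E:\QQ_p]})=(\log(1+r_{1}),\dots,\log(1+r_{[E:\QQ_p]}))\cdot M^{-1}$ and setting $\chi^{\sigma_{i}}(z)=\exp(p_{i})^{\sigma_{i}z}$) is visibly rigid-analytic in the parameters $r_{i}$ describing $\chi$, so it glues to a rigid morphism providing a two-sided inverse to $\prod\sigma$ on the admissible open $\{\chi : |\chi(a)-1|<c_{E}~\text{for all}~a\in\Gamma_{0}\}$. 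Translating by the group law of $\mathscr{W}_{\Gamma}$ shows that the image contains the entire identity component. To hit the remaining connected components, I observe that any smooth character $\chi\in\mathscr{W}_{\Gamma}^{sm}$ already lies in the image via the point $(\chi,1,\dots,1)$, which is legitimate because a smooth character is $(\sigma,E)$-analytic for every $\sigma$; combined with the fact that the components of $\mathscr{W}_{\Gamma}$ are indexed by characters of $\Gamma/\Gamma_{0}$ via the finite \'etale cover $\mathscr{W}_{\Gamma}\to\mathscr{W}_{\Gamma_{0}}$ of \cite[Lem 2]{Buz04}, i.e., by smooth characters, this exhausts $\mathscr{W}_{\Gamma}$.

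The main obstacle I expect is the rigid-analytic gluing of the inverse map in (ii) across affinoid subdomains of the neighborhood of identity; this requires the pointwise constructions $r_{i}\mapsto\log(1+r_{i})$, $(\log(1+r_{j}))\mapsto(p_{i})$, and $p_{i}\mapsto\exp(p_{i})$ from the proof of Lemma \ref{p-Jacobian} to all be rigid-analytic on the admissible open in question. This follows from the Tate-algebra convergence of $\log$ on the topologically nilpotent elements of any $K$-affinoid $R$ and of $\exp$ on a small enough neighborhood of $0$, together with the invertibility of $M$ over $K$; the compatibility between the restriction maps $\mathscr{W}_{\Gamma}\to\mathscr{W}_{\Gamma_{0}}$ and $\mathscr{W}_{\Gamma}^{\sigma}\to\mathscr{W}_{\Gamma_{0}}^{\sigma}$ established in the proof of Theorem \ref{E-rep} lets one pass from $\Gamma_0$ back up to $\Gamma$.
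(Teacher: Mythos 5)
Your treatment of the kernel $K_{\Gamma}\hookrightarrow\prod_{\sigma}\mathscr{W}_{\Gamma}^{sm}$ is essentially the same as the paper's: both ultimately rest on the uniqueness part of Lemma \ref{p-Jacobian} (which is also what underlies Corollary \ref{unique ana decom}), namely that if $\prod_{\sigma}\chi^{\sigma}=1$ for locally $(\sigma,E)$-analytic $\chi^{\sigma}$ then each $\chi^{\sigma}$ is smooth. That part is fine, though you should note that you also need to shrink $\Gamma_{0}$ until each $\chi^{\sigma}|_{\Gamma_{0}}$ is \emph{globally} analytic, not just locally so, before the uniqueness in the corollary applies.

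The surjectivity argument has a genuine gap at the step ``translating by the group law of $\mathscr{W}_{\Gamma}$ shows that the image contains the entire identity component.'' What you actually establish is that the image of $\prod\sigma$ is a subgroup containing an admissible open neighborhood of the identity, and that $\prod\sigma$ is étale everywhere. Neither of these forces the image to exhaust the identity component: an étale group homomorphism between connected rigid groups with trivial (hence zero-dimensional) kernel can fail to be surjective. The simplest counterexample is multiplication by $p$ on $\widehat{\GG}_{a}$, $T\mapsto pT$ on the open unit disc, which is étale with trivial kernel but whose image is the disc of radius $|p|$. Likewise, the open subdisc $\{|T|\leq r\}$ is a proper open subgroup of the open unit disc under the $\widehat{\GG}_{m}$ group law for any $r\leq 1$, so ``open subgroup of the identity component'' does not imply ``all of the identity component.''

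What rescues surjectivity in the present situation is the $p$-divisible structure of $\mathscr{W}_{\Gamma}$, which the paper uses explicitly. Given an arbitrary $\chi:\cO_{E}\to R^{\times}$, one restricts to $p^{n}\cO_{E}$ for $n$ large, where Lemma \ref{p-Jacobian} provides a factorization $\chi|_{p^{n}\cO_{E}}=\prod_{\sigma}\chi_{n}^{\sigma}$; one then lifts each $\chi_{n}^{\sigma}$ along the restriction $\mathscr{W}_{\cO_{E}}^{\sigma}\to\mathscr{W}_{p^{n}\cO_{E}}^{\sigma}$, which is surjective precisely because $(1+r)^{1/p}-1$ is again topologically nilpotent for topologically nilpotent $r\in R$ (possibly after a finite enlargement of $R$); finally the discrepancy $\chi^{-1}\prod_{\sigma}\chi_{\circ}^{\sigma}$ is smooth and can be absorbed. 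Your argument needs this bootstrap; the local-isomorphism-plus-translation observation alone does not close the proof. Your final paragraph on smooth characters exhausting the components is a correct supplement, but only once surjectivity onto the identity component has actually been established by the lifting argument.
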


\begin{proof}
As in the proof of Thm \ref{E-rep}, we have the pullback diagram 
$$\begin{tikzcd}
K_{\Gamma} \ar[r, hook] \ar[d] & \prod\limits_{\sigma: E \hookrightarrow K} \mathscr{W}_{\Gamma}^{\sigma} \ar[r, hook] \ar[d] & \prod\limits_{\sigma: E \hookrightarrow K} \mathscr{W}_{\Gamma} \ar[r] \ar[d] & \mathscr{W}_{\Gamma} \ar[d]\\
K_{\cO_E^d} \ar[r, hook] & \prod\limits_{\sigma: E \hookrightarrow K} \mathscr{W}_{\cO_E^d}^{\sigma} \ar[r, hook] & \prod\limits_{\sigma: E \hookrightarrow K} \mathscr{W}_{\cO_E^d} \ar[r] & \mathscr{W}_{\cO_E^d}\\
\end{tikzcd}$$ and the bottom line is a $d$-fold product. It suffices to prove the case $\Gamma=\cO_E$. 
For surjectivity, it boils down to show that for any $K$-affinoid algebra $R$ any a continuous character $\chi: \cO_E \to R^\times$, $\chi=\prod\limits_{\sigma: E \hookrightarrow K} \chi^\sigma$ for locally $(\sigma,E)$-analytic characters $\chi^\sigma$. 
$\mathscr{W}_{\cO_E} \to \mathscr{W}_{p\cO_E}$ (resp. $\mathscr{W}_{\cO_E}^{\sigma} \to \mathscr{W}_{p\cO_E}^{\sigma}$) is surjective since for any $K$-affinoid algebra $R$ and a topologically nilpotent element $r \in R$, $(1+r)^{\frac{1}{p}}-1$ are again topologically nilpotent (one may need to enlarge the ring $R$ to include this element). 
If we prove this factorization exists when $\chi$ restricts to some $p^n\cO_E$, $n \in \ZZ_{\geq 0}$, namely, $\chi|_{p^n\cO_E}=\prod\limits_{\sigma: E \hookrightarrow K} \chi_n^\sigma$, we pick $\chi_\circ^\sigma: \cO_E \to R^\times$ lifting each $\chi_n^\sigma$, then $\chi^{-1} \prod\limits_{\sigma: E \hookrightarrow K} \chi_\circ^\sigma$ is smooth, hence locally $(\sigma,E)$-analytic for any $\sigma$. 
By passing to a sufficiently small open subgroup as we do in Lem \ref{CRE}, we can assume the image of $\chi$ is contained in a small enough neighbourhood of $1$ in $R$. Then everything is reduced to Lem \ref{p-Jacobian}.
\end{proof}

Given an affinoid subdomain $\Omega \subset \mathscr{W}_{\cT}$, we write $\chi_{\Omega}:\mathscr{T}(\cO_E)\to \mathscr{O}(\Omega)^{\times}$ for the universal (continuous) character it determines. 
In the case $T$ comes from a maximal torus of a reductive group $H$ with the relative Weyl group $W(H, S)$, we make the following notation. 
For a general weight valued in an affinoid algebra $R$, $$\lambda : \mathscr{T}(\cO_E) \to R^\times, \lambda^w(t):=\lambda(w^{-1}tw),$$ $$w\cdot\lambda(t):= \lambda^w(t)\cdot (w\delta-\delta)(t)$$ for $w \in W(H, S)$ and all $t \in \mathscr{T}(\cO_E)$.

\section{Locally analytic distributions}\label{la rep}
\hfill\break
Consider a 
 reductive group $H$ over a $p$-adic local field $E$ along with various subgroups and an Iwahori subgroup $I$ defined in \S \ref{Notation}. 
Let $K/E$ be an unramified quadratic extension. 

Suppose for each $s \geq 1$, there exists and we fix an analytic isomorphism $\psi_{I,K}^s: \cO_E^{d_e} \times \cO_K^{d_k} \simeq \Nbar_I^s$, \mbox{$d=d_e+2d_k=\mathrm{dim} ~ \mathscr{N}_B$} 
 such that each coordinate corresponds to a negative root in $\Delta^-(H, S)$.  
 \footnote{One can certainly generalise this assumption. However, to save notation and words we decide not to do so. Also, the most complicated case of considerations will be the quasi-split unitary group for an unramified extension $K/E$ in \S \ref{analysis}, which satisfies the assumption.} 
An explicit analytic (or strict analytic in the sense of \cite[\S 3.3]{AS}) structure for relevant groups will be given in \S \ref{analysis}.
Namely, let \[x=\left((x_E^1,\cdots,x_E^{d_e}),(x_K^1,\cdots,x_K^{d_k})\right) \in \cO_E^{d_e} \times \cO_K^{d_k},\] for any $t \in \mathscr{T}(\cO_E)$ and characters $\{\delta_1^{E-},\cdots,\delta_{d_e}^{E-},\delta^{K-}_1,\cdots,\delta^{K-}_{d_k}\}$, 
$$t \cdot x=txt^{-1}=\left((\delta_1^{E-}(t)x_E^1,\cdots,\delta_{d_e}^{E-}(t)x_E^{d_e}),(\delta^{K-}_1(t)x_K^1,\cdots,\delta^{K-}_{d_k}(t)x_K^{d_k})\right).$$ 
Choose an isomorphism $\imath_{K/E}: \cO_K \simeq \cO_E^2$, which amounts to an isomorphism $\psi^s_I := \psi_{I,K}^s \circ (\mathrm{id},(\imath_{K/E}^{-1})^{d_k}) : \cO_E^d \simeq \Nbar_I^s$. 
 
We furthermore assume that the transition function between $\psi^1_I$ and $\psi^s_I$ is given by 
 \begin{eqnarray*}
 (\psi^1_I)^{-1} \circ \psi^s_I : \cO_E^d \xrightarrow{\sim} \Nbar_I^s & \hookrightarrow &\Nbar_I^1 \xrightarrow{\sim} \cO_E^d \\
 (x_1,\cdots,x_d) & \mapsto & (\varpi^{s-1}x_1,\cdots,\varpi^{s-1}x_d).
 \end{eqnarray*} We will verify this key assumption for our interested groups in \S \ref{analysis}.
 
We choose an isomorphism $\imath_E: \cO_E \simeq \ZZ_p^{[E:\QQ_p]}$, which leads to an isomorphism $\psi^s_{I, p}:=\psi^s_I \circ (\imath_E^{-1})^{d}: \ZZ_p^{d[E:\QQ_p]} \simeq \Nbar_I^s$.

For example, if $H$ is a symplectic group over $E$, \[ d_k=0, d=d_e=\mathrm{dim} \: \cN_B=n^2. \] 
If $H$ is a quasi-split unitary group with the skew anti-diagonal Hermitian form (\S \ref{analysis}), \[ d_e=n, d_k=n(n-1), d=d_e+2d_k=\mathrm{dim} \: \mathscr{N}_B=n(2n-1). \] 

For Iwahori subgroup $I$ of a finite product of $p$-adic groups $\prod\limits_{1 \leq i \leq l} H_i(E_i)$ considered in \S \ref{Notation}, we fix isomorphisms $\psi^s_{I_i}:\cO^{d_i}_{E_i} \simeq \Nbar_{I_i}^s$ with respect to root decomposition coordinates as well as $\imath_{E_i}: \cO_{E_i} \simeq \ZZ_p^{[E_i:\QQ_p]}$, $\psi^s_{I_i, p}: \psi^s_{I_i} \circ \imath_{E_i}^{-1} : \ZZ_p^{d_i[E_i:\QQ_p]} \simeq \cO_{E_i}$ for $1 \leq i \leq l$ and $\psi^s_{I, p}: \prod\limits_{1 \leq i \leq l} \psi^s_{I_i} \circ \imath_{E_i}^{-1} : \ZZ_p^d \simeq \Nbar_I^s$ for $d=\sum\limits_{1 \leq i \leq l} d_i[E_i:\QQ_p]$ as above.

\begin{df}
If $R$ is any $\mathbb{Q}_p$-Banach
algebra and $s$ is a positive integer, the module $C^{s,an}(\Nbar_I,R)$
of $s$-locally analytic $R$-valued functions on $\Nbar_I$
is the $R$-module of continuous functions $f:\Nbar_I \to R$
such that \[
f\left(x\psi_{I,p}^{s}(z_{1},\dots,z_{d})\right):\ZZ_p^{d}\to R\]
is given by an element of the $m$-variables Tate algebra $T^{I}_{R}=R\left\langle z_{1},\dots,z_{d}\right\rangle $
for any fixed $x\in \Nbar_I$.
\end{df}

Let $\left\Vert \cdot \right\Vert _{T^{I}_{R}}$ denote the Gauss
norm on the Tate algebra, the norm $\left\Vert f(x\psi_{I,p}^{s})\right\Vert _{T^{I}_{R}}$
depends only on the image of $x$ in $\Nbar_I^{1} / \Nbar_I^{s}$,
and the formula\[
\left\Vert f\right\Vert _{s}=\mathrm{sup}_{x\in \Nbar_I^{1}}\left\Vert f(x\psi^{s}_{I,p})\right\Vert _{T^{I}_{R}}\]
defines a Banach $R$-module structure on $C^{s,an}(\Nbar_I,R)$,
with respect to which the canonical inclusion $C^{s,an}(\Nbar_I,R) \subset C^{s+1,an}(\Nbar_I,R)$ is compact. For a proof of compactness, see for example, \cite[Lem 3.2.2]{Urb11}.

For each $s\geq1$ and $w \in \prod\limits_{1 \leq i \leq l} W^{P_i}(H_i, S_i)=W^P(H, S)$, we define $$\psi_{w,\underline{K}}^s:=\prod\limits_{1 \leq i \leq l} \psi_{w_iI_iw_i^{-1}, K_i}^s : \prod\limits_{1 \leq i \leq l} \cO_{E_i}^{d_{e,i}} \times \cO_{K_i}^{d_{k,i}} \simeq w\Nbar_I^{s}w^{-1},$$ $$\psi_w^s:=\prod\limits_{1 \leq i \leq l} \psi_{w_iI_iw_i^{-1}}^s : \prod\limits_{1 \leq i \leq l} \cO_{E_i} \simeq w\Nbar_I^{s}w^{-1},$$ $$\psi_{w,p}^{s}:=\psi_{wIw^{-1},p}^s: \ZZ_p^{d}\simeq w\Nbar_I^{s}w^{-1}.$$ This is just the case where the usual Borel $B$ is replaced by $wBw^{-1}$. We have a parallel definition as above. 
\begin{df}
If $R$ is any $\mathbb{Q}_p$-Banach
algebra and $s$ is a positive integer, the module $C^{s,an}(w\Nbar_Iw^{-1},R)$
of $s$-locally analytic $R$-valued functions on $w\Nbar_Iw^{-1}$
is the $R$-module of continuous functions $f:w\Nbar_Iw^{-1} \to R$
such that \[
f\left(x\psi_{w,p}^{s}(z_{1},\dots,z_{d})\right):\ZZ_p^{d}\to R\]
is given by an element of the $d$-variables Tate algebra $T^w_{I,R}=R\left\langle z_{1},\dots,z_{d}\right\rangle $
for any fixed $x\in w\Nbar_Iw^{-1}$.
\end{df}
Similarly we use $\left\Vert \cdot \right\Vert _{T^w_{I,R}}$ to denote the Gauss
norm on $T^w_{I,R}$, 
and the norm\[
\left\Vert f\right\Vert _{s}=\mathrm{sup}_{x\in w\Nbar_Iw^{-1}}\left\Vert f(x\psi_{w,p}^{s})\right\Vert _{T^w_{I,R}}\]
defines a Banach $R$-module structure on $C^{s,an}(w\Nbar_Iw^{-1},R)$.

For the Iwahori subgroup $I$ of $\prod\limits_{1 \leq i \leq l} H_i(E_i)$ and for a spherically complete field $K \supset E_1,\cdots, E_l$ with respect to a nonarchimedean valuation extending the ones on $E_1,\cdots,E_l$, $D(I, K)$ is the distribution algebra of the Iwahori group $I$.
Let $\Omega$ be an irreducible Zariski closed subspace of an affinoid subdomain of $\mathscr{W}_{T^0}=\prod\limits_{1 \leq i \leq l} \mathscr{W}_{T^0_i}$.

For an analytic Banach $\mathscr{O} (\Omega)[B_I]$-module $V$, where $B_I$ is the Borel subgroup of $I$, $T$ is the diagonal torus of $I$ and $N_I, \Nbar_I$ are the corresponding unipotent and opposite unipotent of $I$, we can define the locally analytic induction of $V$ from $B_I$ to $I$ as follows:
\begin{multline*}
\mathrm{Ind}_{I}^{s} (V)=\{ f : I \to V,\,  f\,\mathrm{analytic\, on\, each\,} I^{s} -\mathrm{coset}, 
f(gb)=b\cdot f(g)\,\forall b \in B_I, \, g\in I\}.
\end{multline*}  
Recall $A=\sO(\Omega)$, for definition of $V$-valued analyticity, we mean $f|_{i \cdot I^s} \in C^{an}(i\cdot I^s, A) \hat{\otimes}_A V$ for every $i \in I/I^s$ and $C^{an}(i\cdot I^s, A)$ stands for the space of $A$-valued analytic functions on $i\cdot I^s$, or in other words, copies of Tate algebras corresponding to $i \cdot I^s$ as each $i\cdot I^s$ is isomorphic to a finite copies of $(\ZZ_p)^m$.
\begin{eqnarray*}
\mathrm{Ind}_{I}^{s} (V) & \simeq & C^{s,an}(\Nbar_I, V) \simeq C^{s,an}(\Nbar_I, \sO(\Omega)) \hat{\otimes}_{\sO(\Omega)} V\\
f & \mapsto & f|_{\Nbar_I},\end{eqnarray*}
and we regard $\mathrm{Ind}_{I}^{s} (V)$ as a Banach $\mathscr{O} (\Omega)$-module
via pulling back the Banach module structure on $C^{s,an}(\Nbar_I, V)$
under this isomorphism. The rule $(f|\gamma)(g)=f(\gamma g)$ gives
$\mathrm{Ind}_{I}^{s} (V)$ the structure of a continuous right $\mathscr{O} (\Omega)[I]$-module. 

We define the Banach dual, i.e., bounded $\sO(\Omega)$ linear functionals, as a Banach $\mathscr{O} (\Omega)[\Delta^+_H]$-module \begin{eqnarray*}
\mathbb{D}_I^{s}(V) & := & \mathcal{L}_{\mathscr{O}(\Omega)}(\mathrm{Ind}_{I}^{s}(V),\mathscr{O}(\Omega)).
\end{eqnarray*} The action of $\sO(\Omega)[\Delta^+_H]$ for $\mathbb{D}_{I}^{s}(V)$ is induced from that of $\mathrm{Ind}_{I}^{s} (V)$. In particular, for a character $\chi$ of $T^0$ valued in $\sO(\Omega)^\times$,
\begin{multline*}
\mathrm{Ind}_{I,\chi}^{s}:=\{ f : I \to \sO(\Omega),\, f\,\mathrm{analytic\, on\, each\,}I^s-\mathrm{coset}, \\
f(gtn)=\chi(t)f(g)\,\forall n\in N_I,\, t\in \cT(\ZZ_p),\, g\in I\} , 
\end{multline*} $$\mathbb{D}_{I,\chi}^{s}:=\mathcal{L}_{\mathscr{O}(\Omega)}(\mathrm{Ind}_{I, \chi}^{s},\mathscr{O}(\Omega)).$$ 

We define $\chi_{\Omega}: T^0 \to \sO(\Omega)^\times$ to be the universal (continuous) character of $T^0$ for $\Omega \subset \mathscr{W}_{T^0}$.
We define $s[\Omega]$ as the minimal integer such that $\chi_{\Omega}|_{T_{i}^{s[\Omega]}}$ satisfies Cor \ref{unique ana decom} (uniquely decomposes as a product of analytic characters) for all $1 \leq i \leq l$. For any positive integer $s$, the Weyl group element $w \in W^{P}(H, T)$ and $wIw^{-1}$, we make the following definitions.
\begin{multline*}
\mathrm{Ind}_{w, \chi}^{s}:=\mathrm{Ind}_{wIw^{-1}}^s(\chi)=\{ f : wIw^{-1} \to \sO(\Omega),\, f\,\mathrm{analytic\, on\, each\,}w I^s w^{-1}-\mathrm{coset}, \\
f(gtn)=\chi(t)f(g)\,\forall n\in w\mathscr{N}_B(\mathcal{O}_E)w^{-1},\, t\in \cT(\ZZ_p),\, g\in I\} .
\end{multline*}
By the Iwahori decomposition for $wIw^{-1}$, restricting an element $f \in \mathrm{Ind}_{w, \chi}^{s}$
to $w\Nbar_Iw^{-1}$ induces an isomorphism \begin{eqnarray*}
\mathrm{Ind}_{w, \chi}^{s} & \simeq & C^{s,an}(w\Nbar_Iw^{-1}, \mathscr{O} (\Omega))\\
f & \mapsto & f|_{w\Nbar_Iw^{-1}},\end{eqnarray*} 
and we regard $\mathrm{Ind}_{w, \chi}^{s}$ as a Banach $\mathscr{O} (\Omega)$-module
via pulling back the Banach module structure on $C^{s,an}(w\Nbar_Iw^{-1}, \mathscr{O} (\Omega))$
under this isomorphism. The rule $(f|\gamma)(g)=f(\gamma g)$ gives
$\mathrm{Ind}_{w, \chi}^{s}$ the structure of a continuous right $\mathscr{O} (\Omega)[wIw^{-1}]$-module. We define the Banach dual as a Banach $\mathscr{O} (\Omega)[wIw^{-1}]$-module \begin{eqnarray*}
\mathbb{D}_{w,\chi}^{s} & := & \mathcal{L}_{\mathscr{O}(\Omega)}(\mathrm{Ind}_{w, \chi}^{s},\mathscr{O}(\Omega)).
\end{eqnarray*} The action of $\sO(\Omega)[\Delta^+_H]$ for $\mathbb{D}_{w,\chi}^{s}$ is induced from that of $\mathrm{Ind}_{w, \chi}^{s}$.
When $s \geq s[\Omega]$, we set $\mathrm{Ind}_{w, \Omega}^{s}:=\mathrm{Ind}_{w, \chi^{w}_{\Omega}}^{s}$ and $\mathbb{D}_{w,\Omega}^{s} := \mathbb{D}_{w,\chi_{\Omega}^{w}}^{s}$, where $\chi_{\Omega}^{w}(t)=\chi_{\Omega}(w^{-1}tw)$ for $\forall t \subset \mathcal{T}(\mathcal{O}_E)$. In particular, we define $\DD^s_\Omega:=\DD^s_{id,\Omega}$.

\section{$p$-adic Banach representations over $T_0$}\label{Banach rep}
\hfill\break

In this section, $T_0$ is meant to be a $p$-adic compact torus. More specifically, $T_0 \subset (\cO_E^\times)^{n}$ is a closed subgroup for a $p$-adic local field $E$. In application we will set $T_0 \subset \prod\limits_{1 \leq i \leq l} (\cO_{E_i})^{n_i}$ to be a compact open subgroup of finite index. For understanding the structure of the distribution modules better, we need to establish some general results on $T_0$-Banach representations.

Let $A$ be an affinoid algebra over finite extensions $K/\mathbb{Q}_p$, where $K$ contains all conjugates of $E$.
\begin{df}
\label{cons}
Let $\imath : S \subset \mathbb{Z}^l_{\leq 0}$ be a subset of $l$ copies of non-positive integers. 
For $\forall s \in S$, $V_s$ is a finite free $A$-module with unitary Banach representation structure of $T_0$, with norm $| \cdot |_s$. 
Moreover, the $T_0$ action on $V_s$ is via a locally analytic character $\chi_s : T_0 \to A^\times$. 
For different $s$, $\chi_s$ are different. $$(\prod_{s \in S} V_s)^c := \{ v \in \prod_{s \in S} V_s \big| ~ \mathrm{for} ~ \forall \varepsilon > 0, \exists N>0, ~ \mathrm{s. t.} ~ |(v)_s|_s \leq \varepsilon ~ \mathrm{if} ~ \underset{ 1 \leq j \leq l}{\mathrm{max}} ~ \imath(s)_j > N  \}$$
$$(\displaystyle\prod_{s \in S} V_s)^b := \{ v \in \prod_{s \in S} V_s \big| ~ \underset{s \in S}{\mathrm{max}} ~ |(v)_s|_s < \infty  \}$$ We equip both with the norm $|\cdot|:=\underset{s \in S}{\mathrm{max}} ~ |\cdot|_s$. Thus $(\displaystyle\prod_{s \in S} V_s)^c$ and $(\displaystyle\prod_{s \in S} V_s)^b$ are unitary Banach representations of $T_0$ over $A$ and $V_s \hookrightarrow (\displaystyle\prod_{s \in S} V_s)^c, V_s \hookrightarrow (\displaystyle\prod_{s \in S} V_s)^b$ as the $\chi_s$ isotypic part of $(\displaystyle\prod_{s \in S} V_s)^c, (\displaystyle\prod_{s \in S} V_s)^b$. 
We call a $T_0$ Banach $A$-module \emph{convergent eigen orthonormalizable} if it is isomorphic to some $(\displaystyle\prod_{s \in S} V_s)^c$, \emph{bounded eigen orthonormalizable} if it is isomorphic to some $(\displaystyle\prod_{s \in S} V_s)^b$.
\end{df}
\begin{rem}
$V_s$ can be zero for $s \in S$. $(\displaystyle\prod_{s \in S} V_s)^c \hookrightarrow (\displaystyle\prod_{s \in S} V_s)^b$ as a closed subunitary $T_0$-Banach representation over $A$ with the induced subnorm. 
The latter notion is dual to the first one, the exact meaning will be clear shortly afterwards. In the remaining section, $(\displaystyle\prod_{s \in S} V_s)^c$ is convergent eigen orthonormalizable and $(\displaystyle\prod_{s \in S} V_s)^b$ is bounded eigen orthonormalizable.
\end{rem}

For two Banach $A$-modules $M,N$, the $A$-module $\cL_A(M,N)$ of continuous $A$-linear homomorphisms from $M$ to $N$ is then also a Banach $A$-module. For a Banach $A$-module $M$, we use $M^\ast$ to denote $\cL(M, A)$, the Banach dual of $M$.

For a given $(\displaystyle\prod_{s \in S} V_s)^c$ and assignment $s \mapsto V_s$ for $s \in S$, we define the bounded (resp. convergent) eigen orthonormalizable Banach representation $(\displaystyle\prod_{s \in S} V^\ast_s)^b$ (resp. $(\displaystyle\prod_{s \in S} V^\ast_s)^c$) constructed above in Def \ref{cons} to be construction associated to the dual assignment $s \mapsto V^\ast_s$ for $s \in S$.

\begin{lem}
\label{dual}
$(\displaystyle\prod_{s \in S} V^\ast_s)^b \simeq \cL_A((\displaystyle\prod_{s \in S} V_s)^c, A)$ as $T_0$-unitary Banach representation.
\end{lem}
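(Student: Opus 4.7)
The plan is to construct an explicit natural pairing map
\[ \Phi : (\prod_{s \in S} V^\ast_s)^b \to \cL_A((\prod_{s \in S} V_s)^c, A), \quad (\phi_s)_s \mapsto \bigl( (v_s)_s \mapsto \textstyle\sum_{s \in S} \phi_s(v_s) \bigr), \]
and prove that it is a $T_0$-equivariant isometric bijection. The sum defining $\Phi((\phi_s))((v_s))$ converges in $A$ because $|\phi_s(v_s)| \leq |\phi_s|_s \cdot |v_s|_s$, where $|\phi_s|_s$ is uniformly bounded by the $b$-condition and $|v_s|_s$ tends to $0$ at infinity by the $c$-condition. This already makes $\Phi$ a well-defined continuous $A$-linear map with operator norm at most $\sup_s |\phi_s|_s$.

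First I would verify that $\Phi$ is an isometry. The inequality $\|\Phi((\phi_s))\| \leq \sup_s |\phi_s|_s$ is immediate from the estimate above. Conversely, since the norm on $(\prod_s V_s)^c$ is $\sup_s |\cdot|_s$, each inclusion $V_s \hookrightarrow (\prod_s V_s)^c$ (as the $\chi_s$-isotypic component) is an isometric embedding, and evaluating $\Phi((\phi_s))$ on a vector supported purely in position $s$ simply recovers $\phi_s$. Hence $\|\Phi((\phi_s))\| \geq |\phi_s|_s$ for every $s$, giving the reverse inequality.

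Next I would establish bijectivity. Given $\psi \in \cL_A((\prod_s V_s)^c, A)$, define $\phi_s := \psi|_{V_s} \in V_s^\ast$; the isometric embedding just used gives $|\phi_s|_s \leq \|\psi\|$, so $(\phi_s) \in (\prod_s V^\ast_s)^b$. The key observation is that every $v = (v_s) \in (\prod_s V_s)^c$ is literally the convergent sum $\sum_{s \in S} v_s$ in the Banach space $(\prod_s V_s)^c$: the partial sums indexed by finite $F \subset S$ form a Cauchy net since the tail $v - \sum_{s \in F} v_s$ has norm $\sup_{s \notin F} |v_s|_s$, which tends to $0$ precisely by the $c$-condition. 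By continuity, $\psi(v) = \sum_s \phi_s(v_s) = \Phi((\phi_s))(v)$, which proves surjectivity of $\Phi$; injectivity is clear since an element of $(\prod_s V^\ast_s)^b$ is determined by its components.

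Finally, the $T_0$-equivariance is a formal check using that each $V_s$ (and hence $V_s^\ast$) is the $\chi_s$-isotypic piece on both sides, and the pairing is built diagonally and so respects the eigencharacter decomposition. The subtle point in the proof is the convergent-sum representation of elements of $(\prod_s V_s)^c$, which is exactly where the $c$-condition (rather than the mere $b$-condition) is crucial; this asymmetry between the two constructions is what produces the duality in the stated direction and fails in the opposite one.
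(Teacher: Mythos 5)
Your proof is correct and takes essentially the same route as the paper: the paper simply cites the standard nonarchimedean duality $c_0^\ast \simeq l^\infty$ and applies it to the constructions $(\prod_s V_s)^c$ and $(\prod_s V_s^\ast)^b$, while you spell out the proof of that duality in the present (slightly more general, finite-free-component) setting by constructing the explicit pairing and verifying it is an isometric $T_0$-equivariant bijection. The convergent-sum representation $v = \sum_s v_s$ in $(\prod_s V_s)^c$ that you isolate is precisely the $c_0$-property the paper's citation rests on.
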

\begin{proof}
It is a standard fact in nonarchimedean functional analysis that the Banach dual of $c_0$ is $l^\infty$, where $c_0$ is the space of sequences converging to $0$ and $l^\infty$ is the space of bounded sequences. Apply this fact to our $(\displaystyle\prod_{s \in S} V_s)^c$ and $(\displaystyle\prod_{s \in S} V_s)^b$ as $c_0$ and $l^\infty$.
\end{proof}
\begin{rem}
In $(\displaystyle\prod_{s \in S} V^\ast_s)^b$, each $s \mapsto V_s^\ast$ for the assignment with the natural dual norm. 
The dual of $(\displaystyle\prod_{s \in S} V_s)^b$ is not of the form $(\displaystyle\prod_{s \in S} V^\ast_s)^c$ as we know for Banach spaces over spherically complete fields, infinite dimensional Banach spaces are never reflexive.
\end{rem}
In applications, $(\displaystyle\prod_{s \in S} V_s)^c$ is the $s$-analytic induction up to tensoring a finite dimensional vector space. 
And $(\displaystyle\prod_{s \in S} V_s)^b$ is the $s$-distribution dual to the $s$-analytic induction up to tensoring a finite dimensional vector space. 
We have the following simple statement.
\begin{lem}
\label{eigenweight}
Assume $A$ is an integral domain. If $v \in (\displaystyle\prod_{s \in S} V_s)^b$ (resp. $(\displaystyle\prod_{s \in S} V_s)^c$) is an eigenvector for the $T_0$-action, then $v \in V_s$ for some $s \in S$, the action is given by $\chi_s$.
\end{lem}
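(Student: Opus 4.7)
The plan is to argue componentwise. Write $v = (v_s)_{s \in S}$, viewing $v$ as an element of the ambient product $\prod_{s \in S} V_s$ (the bounded/convergent condition just restricts which sequences are allowed, but the coordinate projections still make sense). Let $\chi : T_0 \to A^\times$ be the eigencharacter so that $t \cdot v = \chi(t) v$ for all $t \in T_0$. Since $T_0$ acts on each $V_s$ by the scalar $\chi_s(t)$ and the action on the product is coordinate-wise by construction in Def \ref{cons}, the eigenvector relation reads
\[
\chi_s(t) v_s = \chi(t) v_s \quad \text{for every } s \in S \text{ and } t \in T_0.
\]

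The key algebraic step is to promote the pointwise equality $(\chi(t) - \chi_s(t)) v_s = 0$ from an annihilation of a particular element to an equality of scalars. I would fix any $s$ with $v_s \neq 0$, choose a free $A$-basis $e_1, \dots, e_n$ of $V_s$, and expand $v_s = \sum c_i e_i$ with some $c_i \neq 0$. Then $(\chi(t) - \chi_s(t)) c_i = 0$ in $A$ for all $i$; the integral domain hypothesis on $A$ forces $\chi(t) = \chi_s(t)$ for all $t \in T_0$, i.e.\ $\chi = \chi_s$ as characters.

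Finally I would invoke the distinctness of the $\chi_s$ assumed in Def \ref{cons}: if both $v_{s_1}$ and $v_{s_2}$ were nonzero for $s_1 \neq s_2$, the previous step would yield $\chi_{s_1} = \chi = \chi_{s_2}$, a contradiction. Hence at most one component $v_s$ is nonzero, so $v \in V_s$, and the action on $v$ is given by $\chi_s$ as claimed. There is essentially no serious obstacle here; the only substantive point is the passage from "scalar annihilates a nonzero vector in $V_s$" to "the scalar is zero," which is exactly where the assumption that $A$ is an integral domain and $V_s$ is finite free enters.
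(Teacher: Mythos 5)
Your proposal is correct and matches the paper's proof: both express $v=(v_s)$, use the fact that $T_0$ acts coordinate-wise via $\chi_s$, deduce $(\chi_s(t)-\chi(t))v_s=0$, and invoke the integral domain hypothesis together with the assumed distinctness of the $\chi_s$ to conclude only one component is nonzero. Your additional step of expanding $v_s$ in a free $A$-basis to reduce ``scalar annihilates a nonzero element of a finite free module'' to ``scalar is zero'' is a welcome small amplification of a point the paper leaves implicit.
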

\begin{proof}
$v=(v_s)$. If $v_s \in V_s$ and $v_{s'} \in V_{s'}$ are both non-zero for $s \neq s'$, $t\cdot v=\chi_v(t)\cdot v$ for any $t \in T_0$ and a character $\chi_v : T_0 \to A^\times$. Thus $$(\chi_s(t)-\chi_v(t))\cdot v_s=(\chi_{s'}(t)-\chi_v(t))\cdot v_{s'}=0$$ for any $t \in T_0$. Since $A$ is an integral domain and $v_s, v_{s'}$ are non-zero, $\chi_s=\chi_v=\chi_{s'}$, which is a contradiction. 
\end{proof}

\begin{df}
\label{nice}
We call a continuous linear map $f: (\displaystyle\prod_{s \in S} V_s)^b \to (\displaystyle\prod_{s' \in S'} V_{s'})^b$ \emph{nice} between two bounded eigen orthonormalizable $A$-modules if for any $\hat{v}=(\hat{v}_s) \in (\displaystyle\prod_{s \in S} V_s)^b$, $\sum_{s \in S}(f|_{V_{s}}(\hat{v}_{s}))_{s'}$ converges for all $s' \in S'$ and moreover we have $$(f(\hat{v}))_{s'}=\sum_{s \in S}(f|_{V_s}(\hat{v}_s))_{s'}, \mathrm{for} ~ f(\hat{v}) \in (\displaystyle\prod_{s' \in S'} V_{s'})^b, \forall s' \in S'.$$
\end{df}
We will show that the differentials in the Chevalley–Eilenberg complex and Iwahori group action for $\mathbb{D}^{s,1}_{w,\chi}$ are nice.

\begin{lem}
\label{dual nice}
Let $$f: (\displaystyle\prod_{s_1 \in S_1} V_{1,s_1})^c \to (\displaystyle\prod_{s_2 \in S_2} V_{2,s_2})^c$$ be a $A$-linear continuous map between two convergent eigen orthonormalizable $A$-modules as in Def \ref{cons}. Then the dual $A$-linear map $$f^\ast: (\displaystyle\prod_{s_2 \in S_2} V_{2,s_2}^\ast)^b \to (\displaystyle\prod_{s_1 \in S_1} V_{1,s_1}^\ast)^b$$ of $f$ between bounded eigen orthonormalizable $A$-modules is nice.
\end{lem}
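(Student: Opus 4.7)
The plan is to unravel the definitions using Lemma \ref{dual} and then verify niceness directly by evaluating everything on elements of the finite free modules $V_{1,s_1}$. First, I would identify $(\prod_{s_2 \in S_2} V_{2,s_2}^\ast)^b$ with $\cL_A((\prod_{s_2 \in S_2} V_{2,s_2})^c, A)$ via Lemma \ref{dual}, so that an element $\hat{w} = (\hat{w}_{s_2})$ is genuinely a continuous $A$-linear functional on $(\prod_{s_2} V_{2,s_2})^c$ whose restriction to $V_{2,s_2}$ is $\hat{w}_{s_2}$. Under this identification, $f^\ast(\hat{w})(v) = \hat{w}(f(v))$ for $v \in (\prod_{s_1} V_{1,s_1})^c$, and for a single component $\phi \in V_{2,s_2}^\ast$ (extended by zero on all other components), $f^\ast(\phi)$ evaluated on $v \in V_{1,s_1}$ equals $\phi(f(v)_{s_2})$.

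Next I would verify convergence of $\sum_{s_2 \in S_2} \bigl(f^\ast|_{V_{2,s_2}^\ast}(\hat{w}_{s_2})\bigr)_{s_1}$ in the Banach module $V_{1,s_1}^\ast$. The component $\bigl(f^\ast|_{V_{2,s_2}^\ast}(\hat{w}_{s_2})\bigr)_{s_1}$ is the functional $v \mapsto \hat{w}_{s_2}(f(v)_{s_2})$, so its operator norm is bounded by $|\hat{w}_{s_2}| \cdot \sup_{|v| \leq 1} |f(v)_{s_2}|_{s_2}$. Since $\hat{w}$ lies in the bounded product, $\sup_{s_2} |\hat{w}_{s_2}|$ is finite; the main point is that the second factor tends to $0$ as $\imath(s_2)$ tends to infinity. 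This uses finite freeness of $V_{1,s_1}$: choosing an $A$-basis $e_1, \dots, e_m$, each $f(e_i) \in (\prod_{s_2} V_{2,s_2})^c$ so $|f(e_i)_{s_2}|_{s_2} \to 0$, and a max-norm estimate for $v = \sum a_i e_i$ with $|v| \leq 1$ gives $|f(v)_{s_2}|_{s_2} \leq \max_i |f(e_i)_{s_2}|_{s_2}$. This yields the uniform decay of the operator norms, which is the main obstacle — namely, upgrading pointwise decay of $f(v)_{s_2}$ to uniform decay in $v$, and it is precisely the finiteness hypothesis on $V_{1,s_1}$ that saves us.

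Finally, to check the equality $(f^\ast(\hat{w}))_{s_1} = \sum_{s_2} \bigl(f^\ast|_{V_{2,s_2}^\ast}(\hat{w}_{s_2})\bigr)_{s_1}$ in $V_{1,s_1}^\ast$, it suffices to evaluate both sides on an arbitrary $v \in V_{1,s_1}$. On the left we get $\hat{w}(f(v))$. Since $f(v) \in (\prod_{s_2} V_{2,s_2})^c$, the sum $\sum_{s_2} f(v)_{s_2}$ converges to $f(v)$ in $(\prod_{s_2} V_{2,s_2})^c$ in the nonarchimedean sense, and continuity of $\hat{w}$ gives $\hat{w}(f(v)) = \sum_{s_2} \hat{w}_{s_2}(f(v)_{s_2})$, which is exactly the right-hand side evaluated at $v$. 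Combining this identity with the norm convergence from the previous step establishes that $f^\ast$ satisfies Definition \ref{nice}.
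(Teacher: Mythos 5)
Your proof is correct and takes essentially the same approach as the paper: both exploit the finite free structure of $V_{1,s_1}$ (finitely many basis elements $e_i$), the fact that each $f(e_i)$ lies in the convergent product so its $s_2$-components tend to zero, and the boundedness of the family $\hat{w}_{s_2}$ to obtain uniform decay of $\sup_{|v|\le 1}|\hat{w}_{s_2}(f(v)_{s_2})|$ and hence convergence of the series. Your final step, spelling out $\hat{w}(f(v))=\sum_{s_2}\hat{w}_{s_2}(f(v)_{s_2})$ via nonarchimedean convergence of $\sum_{s_2}f(v)_{s_2}$ to $f(v)$ and continuity of $\hat{w}$, is a slightly more explicit version of the paper's remark that "the reasoning above also shows" the identity, but it is the same argument.
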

\begin{proof}
Let $\imath_1: S_1 \subset \ZZ^{l_1}_{\leq 0}$, $\imath_2: S_2 \subset \ZZ^{l_2}_{\leq 0}$. For any finite free Banach $A$-module $M$ with basis $e_1,\cdots,e_m$, norm on $M$ is equivalent to the norm with orthonormalizable basis $e_1,\cdots,e_m$. For any $s_1 \in S_1$, we can assume that $V_{1,s_1}$ has orthonormalizable basis $e^{s_1}_1,\cdots,e^{s_1}_{m_{s_1}}$. Pick any $\hat{v}=(\hat{v}_{s_2}) \in (\displaystyle\prod_{s_2 \in S_2} V_{2,s_2}^\ast)^b$, $(f^\ast|_{V_{2,s_2}^\ast}(\hat{v}_{s_2}))_{s_1} \to 0$ as $\imath_2(s_2) \to \infty$ since for any $\varepsilon>0$, there exists $N_2>0$ such that $$|(f^\ast|_{V_{2,s_2}^\ast}(\hat{v}_{s_2}))_{s_1}(e^{s_1}_i)|=|\hat{v}_{s_2}(f(e^{s_1}_i)_{s_2})| < \varepsilon$$ whenever $\imath_2(s_2) > N_2$, $1 \leq i \leq m_{s_1}$, that means $|(f^\ast|_{V_{2,s_2}^\ast}(\hat{v}_{s_2}))_{s_1}(v_{s_1})| < \varepsilon$ for all $v_{s_1} \in V_{1,s_1}$ such that $|v_{s_1}| \leq 1$ and $|\imath_2(s_2)| > N_2$. Hence $\sum_{s_2 \in S_2} (f^\ast|_{V_{2,s_2}^\ast}(\hat{v}_{s_2}))_{s_1}$ exists and the reasoning above also shows that $f^\ast(\hat{v}_2)_{s_1}=\sum_{s_2 \in S_2} (f^\ast|_{V_{2,s_2}^\ast}(\hat{v}_{s_2}))_{s_1}$ for all $s_1 \in S_1$.
\end{proof}

\begin{lem}
\label{ext of t0}
Suppose $T$ is an abelian group with a finite index subgroup $T_0$.
Let $(\displaystyle\prod_{s \in S} V_s)^c$ be convergent eigen orthonormalizable. 
Moreover, $T$ acts on $(\displaystyle\prod_{s \in S} V_s)^c$ compatibly with $T_0$.  
Then the induced action of $T$ on $(\displaystyle\prod_{s \in S} V^\ast_s)^b \simeq \cL_A((\displaystyle\prod_{s \in S} V_s)^c, A)$ is given by $$(t\cdot\hat{v}^\ast)_s=t\cdot\hat{v}^\ast_s,$$ for any $t \in T, \hat{v}^\ast \in (\displaystyle\prod_{s \in S} V^\ast_s)^b$. In particular, $t : (\displaystyle\prod_{s \in S} V^\ast_s)^b \to (\displaystyle\prod_{s \in S} V^\ast_s)^b$ is a nice automorphism for any $t \in T$.  
\end{lem}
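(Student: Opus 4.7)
The plan is to first verify that the $T$-action preserves each eigen-summand $V_s$ of $(\prod_{s \in S} V_s)^c$, and then to read off both the given formula and niceness from a direct unwinding of the Banach duality of Lemma \ref{dual}. First I would fix $t \in T$, $s \in S$, and $v \in V_s$, and check that $tv$ is still a $\chi_s$-eigenvector for $T_0$: compatibility of the $T$- and $T_0$-actions combined with commutativity of $T$ gives $t_0(tv) = t(t_0 v) = \chi_s(t_0)\,tv$ for every $t_0 \in T_0$. By Lemma \ref{eigenweight} (applied in the setting where $A$ is an integral domain, as for affinoid subdomains of an irreducible $\Omega$), any such eigenvector must already lie in $V_s$, so $tV_s \subset V_s$, and using $t^{-1} \in T$ one gets $tV_s = V_s$.

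Next I would unwind the contragredient action under the identification $(\prod_{s \in S} V^*_s)^b \simeq \cL_A((\prod_{s \in S} V_s)^c, A)$ of Lemma \ref{dual}. The pairing sends $\hat{v}^* = (\hat{v}^*_s)$ to the functional $v = (v_s) \mapsto \sum_s \hat{v}^*_s(v_s)$, which converges because $v_s \to 0$ and the $\hat{v}^*_s$ are uniformly bounded. The induced $T$-action is $(t\hat{v}^*)(v) = \hat{v}^*(t^{-1}v)$, and since the previous step shows $t^{-1}$ preserves each $V_s$, this equals $\sum_s \hat{v}^*_s(t^{-1}v_s) = \sum_s (t\hat{v}^*_s)(v_s)$. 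Matching against the pairing on the dual side gives the claimed formula $(t\hat{v}^*)_s = t\hat{v}^*_s$.

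Finally, niceness in the sense of Definition \ref{nice} drops out of this formula at once: for any $\hat{v}^* = (\hat{v}^*_s) \in (\prod_{s \in S} V^*_s)^b$ and any $s' \in S$, the sum $\sum_{s \in S}(t|_{V^*_s}(\hat{v}^*_s))_{s'}$ has $s$-th summand lying inside $V^*_s$ and therefore with vanishing $s'$-coordinate unless $s = s'$. The sum thus collapses to its single non-zero term $t\hat{v}^*_{s'}$, which by the previous paragraph equals $(t\hat{v}^*)_{s'}$; convergence is trivial. Applying the same argument to $t^{-1}$ shows $t$ is a nice automorphism.

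The only step of substance is the stability $tV_s \subset V_s$, which rests on Lemma \ref{eigenweight} and hence on the integral-domain hypothesis on $A$; once this is in hand, the rest is a formal manipulation of the explicit Banach pairing. I do not anticipate analytic difficulties, since continuity of $t$ on each $V_s$ is inherited from continuity on the whole space.
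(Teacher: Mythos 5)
Your proof is correct and takes essentially the same approach as the paper: first establish that $T$ preserves each $V_s$, then unwind the $c_0$/$l^\infty$ duality pairing. The paper's own proof simply asserts that $T$-stability of each $V_s$ "is clear since $T$ is abelian and the action is compatible with restriction to $T_0$," then records the one-line chain $(t\cdot\hat{v}^\ast)(v_s)=\hat{v}^\ast(t^{-1}v_s)=\hat{v}^\ast_s(t^{-1}v_s)=t\cdot\hat{v}^\ast_s(v_s)$ without explicitly discharging the niceness claim.

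You add two things the paper suppresses. First, you make the stability $tV_s\subset V_s$ rest explicitly on Lemma \ref{eigenweight}, and correctly flag that this step uses that $A$ is an integral domain — an assumption that is implicit in the paper (it holds for $A=\sO(\Omega)$ with $\Omega$ irreducible, the only case used, but is not stated in Lemma \ref{ext of t0} itself). Second, you actually verify niceness in the sense of Definition \ref{nice}, observing that the sum $\sum_{s}(t|_{V^\ast_s}(\hat{v}^\ast_s))_{s'}$ degenerates to a single term because $t$ preserves each $V^\ast_s$; the paper states this consequence but does not check it. Both additions are harmless and arguably make the argument cleaner to read, so your write-up is a faithful (slightly more pedantic) elaboration of the intended proof.
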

\begin{proof}
Notice that the equality makes sense only if we know $T$ maps $V_s$ to $V_s$ for any $s \in S$, which is clear since $T$ is abelian and the action is compatible with restriction to $T_0$. $$(t\cdot\hat{v}^\ast)(v_s)=\hat{v}^\ast(t^{-1}\cdot v_s)=\hat{v}^\ast_s(t^{-1}\cdot v_s)=t\cdot\hat{v}^\ast_s(v_s)$$ for any $t\in T, s\in S, v_s \in V_s$.
\end{proof}

\begin{lem}
\label{rep of t}
Assume $A=K$. If $\tilde{V}$ is a closed sub $T_0$-Banach representation of $(\displaystyle\prod_{s \in S} V_s)^c$, then there exists an assignment of subspaces $\tilde{V}_s \subset V_s$ to each $s \in S$, with the induced subspace norms, such that $\tilde{V} \simeq (\displaystyle\prod_{s \in S} \tilde{V}_s)^c$. Moreover, $(\displaystyle\prod_{s \in S} V_s)^c/\tilde{V} \simeq (\displaystyle\prod_{s \in S} V_s/\tilde{V}_s)^c$ as $T_0$-Banach representations, where $(\displaystyle\prod_{s \in S} V_s)^c/\tilde{V}$ is equipped with the complete quotient norm of $(\displaystyle\prod_{s \in S} V_s)^c$ and $(\displaystyle\prod_{s \in S} V_s/\tilde{V}_s)^c$ is the Banach representation associated to the assignment $s \mapsto V_s/\tilde{V}_s$ by our construction in Definition \ref{cons}. Each $V_s/\tilde{V}_s$ is equipped with the complete quotient norm of $V_s$. 
\end{lem}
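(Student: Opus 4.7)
The plan is to define $\tilde V_s := \tilde V \cap V_s$ for each $s \in S$, where $V_s$ is viewed as the $\chi_s$-isotypic subspace of $(\prod_{s \in S} V_s)^c$. By Lemma~\ref{eigenweight} this is intrinsically the $\chi_s$-eigenspace of $T_0$ acting on $\tilde V$, so each $\tilde V_s$ is automatically closed in $V_s$ and carries the induced subspace norm. The first half of the claim then reduces to the equality $\tilde V = (\prod_s \tilde V_s)^c$ inside $(\prod_s V_s)^c$. The inclusion $(\prod_s \tilde V_s)^c \subseteq \tilde V$ is immediate: every element of the left-hand side is a norm-convergent sum of elements of $\tilde V_s \subseteq \tilde V$, and $\tilde V$ is closed.

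The heart of the argument is the reverse inclusion: for every $v = (v_s) \in \tilde V$ and every $s_0 \in S$, the component $v_{s_0}$ (viewed in $(\prod_s V_s)^c$ via the isotypic embedding) already lies in $\tilde V$. My plan is to approximate the projection onto the $\chi_{s_0}$-isotypic piece by Lagrange-type operators in the group algebra $K[T_0]$, which visibly preserve $\tilde V$. For a finite subset $S' \subseteq S$ with $s_0 \in S'$, and for each $s' \in S' \setminus \{s_0\}$ a choice of $t_{s'} \in T_0$ with $\chi_{s_0}(t_{s'}) \neq \chi_{s'}(t_{s'})$, set
\[
\alpha_{S',s_0} := \prod_{s' \in S' \setminus \{s_0\}} \frac{t_{s'} - \chi_{s'}(t_{s'})}{\chi_{s_0}(t_{s'}) - \chi_{s'}(t_{s'})} \in K[T_0].
\]
This operator acts as the identity on $V_{s_0}$, kills $V_{s'}$ for $s' \in S' \setminus \{s_0\}$, and acts on $V_s$ for $s \notin S'$ by a scalar $\alpha_{S',s_0}(\chi_s)$ coming from the same formula with $t_{s'}$ replaced by $\chi_s(t_{s'})$, so that
\[
\alpha_{S',s_0} \cdot v \;=\; v_{s_0} + \sum_{s \notin S'} \alpha_{S',s_0}(\chi_s)\, v_s \;\in\; \tilde V.
\]
Exhausting $S$ by a sequence $S'_N$ chosen adaptively to the decay $|v_s|_s \to 0$ for the given $v$, I aim to show the tail $\sum_{s \notin S'_N} \alpha_{S'_N,s_0}(\chi_s)\, v_s$ tends to zero in norm; then $v_{s_0}$ is a norm-limit of elements of $\tilde V$ and, by closedness, lies in $\tilde V \cap V_{s_0} = \tilde V_{s_0}$.

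For the quotient statement, once the decomposition $\tilde V \simeq (\prod_s \tilde V_s)^c$ is established, the functoriality of the construction of Definition~\ref{cons}, combined with the universal property of the quotient norm on each $V_s / \tilde V_s$, produces a natural isometric $T_0$-equivariant identification $(\prod_s V_s)^c / \tilde V \simeq (\prod_s V_s / \tilde V_s)^c$, where $V_s / \tilde V_s$ carries its complete quotient norm and the induced $\chi_s$-action.

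The main obstacle will be the middle step above. Since two distinct continuous unitary characters $\chi_{s_0}, \chi_{s'}$ can agree to arbitrarily high $p$-adic precision in $K$, the denominators $|\chi_{s_0}(t_{s'}) - \chi_{s'}(t_{s'})|$ are not uniformly bounded below as $s'$ varies, and the operator norm of $\alpha_{S'_N, s_0}$ may a priori grow without bound as $N \to \infty$. The key resource is the freedom to choose each $t_{s'} \in T_0$ so as to nearly maximize $|\chi_{s_0}(t_{s'}) - \chi_{s'}(t_{s'})|$, together with an enumeration of the support of $v$ by decreasing $|v_s|_s$, so that a diagonal choice of $S'_N$ forces the growth of $|\alpha_{S'_N, s_0}(\chi_s)|$ to be outpaced by the decay of the tail of $v$. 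Once this quantitative balancing is carried out the remaining steps are essentially formal.
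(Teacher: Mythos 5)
Your basic strategy matches the paper's: reduce to showing each eigencomponent $v_{s_0}$ of $v\in\tilde V$ lies in $\tilde V$, by approximating the isotypic projection with operators in $K[T_0]$ of the form $\frac{t-\chi'(t)}{\chi_{s_0}(t)-\chi'(t)}$, and you also correctly identify the danger point (non-uniformly small denominators). However, there is a genuine gap in the proposed fix, and it sits precisely where you admit uncertainty. The paper does not form a single batch Lagrange product; it eliminates components one at a time, and crucially it does not choose the auxiliary group element $t$ to maximize the denominator. Instead it fixes one topological generator $t_{l+1}$ at a time and, at each inner step, chooses the component $s_{N,l}$ to be annihilated so that $|\chi_{s_{N,l}}(t_{l+1})-\chi_{s}(t_{l+1})|$ is \emph{maximal} among all surviving components $s'$ (with $s$ the target). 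Then the ultrametric inequality gives
\[
\left|\frac{\chi_{s_{N,l}}(t_{l+1})-\chi_{s'}(t_{l+1})}{\chi_{s_{N,l}}(t_{l+1})-\chi_{s}(t_{l+1})}\right|
\le \max\!\left(1,\frac{|\chi_{s'}(t_{l+1})-\chi_{s}(t_{l+1})|}{|\chi_{s_{N,l}}(t_{l+1})-\chi_{s}(t_{l+1})|}\right)=1
\]
simultaneously for every remaining $s'$, so each step is non-expansive and the resulting sequence stays in the closed unit ball of $\tilde V$.

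Your choice (pick $t_{s'}$ to nearly maximize $|\chi_{s_0}(t_{s'})-\chi_{s'}(t_{s'})|$, and enumerate $S$ by decreasing $|v_s|$) does not yield this control. The factor acting on an out-of-batch component $V_s$ is $\frac{\chi_{s}(t_{s'})-\chi_{s'}(t_{s'})}{\chi_{s_0}(t_{s'})-\chi_{s'}(t_{s'})}=1+\frac{\chi_{s}(t_{s'})-\chi_{s_0}(t_{s'})}{\chi_{s_0}(t_{s'})-\chi_{s'}(t_{s'})}$, whose absolute value equals $\frac{|\chi_{s}(t_{s'})-\chi_{s_0}(t_{s'})|}{|\chi_{s_0}(t_{s'})-\chi_{s'}(t_{s'})|}$ whenever the numerator dominates, and nothing in your scheme bounds the numerator. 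If infinitely many $\chi_{s'}$ cluster toward $\chi_{s_0}$ (denominators $\to 0$) while some $\chi_{s}$ stay at bounded distance, then $|\alpha_{S'_N,s_0}(\chi_s)|$ grows multiplicatively and can outrun any prescribed decay of $|v_s|_s$; ordering by $|v_s|$ does not help because the growth rate of the scalar on a fixed residual $s$ is unrelated to the size of $v_s$. The missing idea is exactly the paper's: always subtract off the component that is \emph{farthest from the target eigenvalue}, which converts the ultrametric inequality into a genuine non-expansiveness estimate on all other components at once; this is what makes the iterative scheme (with a fixed generator and adaptively chosen $s_{N,l}$) converge, whereas a batch operator with per-factor choices of $t$ cannot enforce it. You should also note that, after producing an eigenvector for all topological generators $t_1,\dots,t_n$ with fixed $s$-component, you invoke Lemma~\ref{eigenweight} to conclude it lies in a single $V_s$; and the final quotient-isometry, which you wave through, the paper proves by an explicit two-sided construction verifying $\inf_{v\in\tilde V}\sup_s|x_s+v_s|=\sup_s\inf_{v_s\in\tilde V_s}|x_s+v_s|$, which is a nontrivial equality (it uses that the $v_s^0$ realizing componentwise infima have $|v_s^0|\le|x_s|$, hence $(v_s^0)_s$ lies in $(\prod\tilde V_s)^c$).
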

\begin{proof}
$\tilde{V} \cap \bigoplus\limits_{s \in S}V_s$ is a sub representation of $\bigoplus\limits_{s \in S}V_s$, which must be $\bigoplus\limits_{s \in S}\tilde{V}_s$ for an assignment of subspaces $\tilde{V}_s \subset V_s$. We need to prove $\tilde{V} \simeq (\displaystyle\prod_{s \in S} \tilde{V}_s)^c$ for such a sub assignment $s \mapsto \tilde{V}_s$. It is enough to prove $\tilde{V} \subset \overline{\bigoplus\limits_{s \in S}\tilde{V}_s}$. It suffices to prove that for $\forall v \in \tilde{V}, v=(v_s) \in (\displaystyle\prod_{s \in S} V_s)^c$, if $0 \neq v_s \in V_s$, then $v_s \in \tilde{V}$. Given any $s \in S$, there are only finitely many $s' \in S$ such that $|v_{s'}| > |v_s|$. Without loss of generality, assume $|v_s|_s=|v|$, i.e., $v_s$ is the largest component vector of $v$. Suppose $T_0$ is topologically finitely generated by \{$t_1,\cdots,t_n$\}. By Lem \ref{eigenweight}, it suffices to inductively construct $\{v_1,\cdots,v_n\} \subset \tilde{V}$ such that $(v_i)_s=v_s$ and $t_j\cdot v_i=\chi_s(t_j)v_i$ for $j \leq i$. 

Assume we have already constructed \{$v_1,\cdots,v_l$\} ($l$ can be $0$). Again we inductively construct a sequence $v_l^N$ ($N \geq 0$) converging to our desired $v_{l+1}$ with $v_l^0:=v_l$. 
Assume we have constructed \{$v_l^0,\cdots,v_l^N$\}. $\imath: S \hookrightarrow \mathbb{Z}_{\leq 0}^l$ (recall that total degree $|s|$ of $s \in S$ is defined to be the sum of the absolute value of coordinates of $\imath(s)$). 
Set $S_l^N:=\{s' \in S \big| (v_l^N)_{s'}\neq 0\}$. 
Now either $\chi_{s'}(t_{l+1})=\chi_s(t_{l+1})$ for $\forall s' \in S_l^N$, or we can pick an element $s_{N,l} \in S_l^N$ such that $|s_{N,l}|$ is minimal among those of all such elements satisfying the property that $$|\chi_{s_{N,l}}(t_{l+1})-\chi_s(t_{l+1})| \geq |\chi_{s'}(t_{l+1})-\chi_s(t_{l+1})|$$ for $\forall s' \in S_l^N$. 
If the former case occurs, we set $v_{l+1}:=v_l^N$ and stop constructing the sequence. 
Otherwise set $$v_l^{N+1}:=(\chi_{s_{N,l}}(t_{l+1})-\chi_s(t_{l+1}))^{-1}\cdot(\chi_{s_{N,l}}(t_{l+1}) v_l^N-t_{l+1}\cdot v_l^N).$$ $v_l^{N+1}$ satisfies that 
\begin{eqnarray*}
(v_l^{N+1})_s &=& v_s, \\ 
|v_l^{N+1}-v_s| & \leq & |v_l^N-v_s|, \\ 
 S_l^{N+1} & \subset & S_l^N \backslash \{s_{N,l}\} ~ ~ \left((v_l^{N+1})_{s_{N,l}}=0\right). 
\end{eqnarray*} 
If $S_l^N$ is finite, then $v_{l+1}$ is obtained within finitely many steps. 
If $S_l^N$ is infinite, the latter cases always happen, we claim the existence of $\lim\limits_{N \to \infty} v_l^N$. 
For any $s' \in S_l^0$ such that $\chi_{s'}(t_{l+1}) \neq \chi_s(t_{l+1})$, there exists $N'>0$ such that $|s_{N,l}|$ is greater then the maximal coordinate of $|s|$ for all $N>N'$. $$|(v_l^{N+1})_{s'}| \leq |\varpi|\cdot |(v_l^N)_{s'}|$$ for all $N>N'$, which means that $s'$ component of $v_l^N \mapsto v_l^{N+1}$ is a contraction for all $N>N'$. 
Such components clearly go to $0$. For $s' \in S_l^0$ such that $\chi_{s'}(t_{l+1}) = \chi_s(t_{l+1})$, $(v_l^N)_{s'}=(v_l)_{s'}$ for any $N \geq 0$. 
Set $v_{l+1}:=\lim\limits_{N \to \infty} v_l^N$ for this case. 
We see from the construction $(v_{l+1})=v_s, t_j\cdot v_{l+1}=\chi_s(t_j)v_{l+1}$ for $j \leq l+1$, which completes the induction, hence the proof of the first part.

For the second part of the lemma, we construct the following natural map 
\begin{eqnarray*}
(\displaystyle\prod_{s \in S} V_s)^c/\tilde{V} & \to & (\displaystyle\prod_{s \in S} V_s/\tilde{V}_s)^c \\
(x_s) & \mapsto & (\overline{x}_s)  , ~ ~ ~ ~ x_s \in V_s, ~ ~ \overline{x}_s \in V_s/\tilde{V}_s.
\end{eqnarray*} This map makes sense and is well defined thanks to the first part of the lemma. It is continuous since $|\overline{x}_s| \leq |x_s|$ for each $s \in S$. Given $\overline{x}_s \in V_s/\tilde{V}_s$, there exists $x_s^0 \in V_s$ such that $|x_s^0|=|\overline{x}_s|$. We construct the inverse map
\begin{eqnarray*}
(\displaystyle\prod_{s \in S} V_s/\tilde{V}_s)^c & \to & (\displaystyle\prod_{s \in S} V_s)^c/\tilde{V} \\
(\overline{x}_s) & \mapsto & (x_s^0).
\end{eqnarray*} Note if $|x^0_s+v^0_s|=|x^0_s|$ for some $v_s^0 \in \tilde{V}_s$, $|v^0_s| \leq \mathrm{max}(|x^0_s+v^0_s|,|x^0_s|)=|\overline{x}_s|$. Thus the inverse map is well defined.
We know the two norms on both sides induced from the isomorphism are equivalent since one is stronger than another.
To prove $(\displaystyle\prod_{s \in S} V_s)^c/\tilde{V}$ and $(\displaystyle\prod_{s \in S} V_s/\tilde{V}_s)^c$ have the same norm, it is translated to prove for $\forall x \in (\displaystyle\prod_{s \in S} V_s)^c$, $$\inf_{v \in \tilde{V}}\sup_{s \in S}|x_s+v_s|=\sup_{s \in S}\inf_{v_s \in \tilde{V}_s}|x_s+v_s|.$$
Fix a $v \in \tilde{V}$, $$\sup_{s \in S}|x_s+v_s| \geq \sup_{s \in S}\inf_{v'_s \in \tilde{V}_s}|x_s+v'_s|,$$ which gives "$\geq$". 
For each $s \in S$, choose $v_s^0 \in \tilde{V}_s$ minimizing $|x_s+v_s^0|$, then $|v_s^0| \leq |x_s|$ by the same non-Archimedean triangle inequality. 
Let $v^0=(v^0_s)_s \in \tilde{V}=(\displaystyle\prod_{s \in S} \tilde{V}_s)^c$, $$\sup_{s \in S}|x_s+(v^0)_s|=\sup_{s \in S}|x_s+v^0_s|=\sup_{s \in S}\inf_{v_s \in \tilde{V}_s}|x_s+v_s|,$$ yielding "$\leq$".
\end{proof}

There is a \^{} completion operation on closed $T_0$-subrepresentations of $(\displaystyle\prod_{s \in S} V_s)^b$ (and $(\displaystyle\prod_{s \in S} V_s)^c$).
We regard $(\displaystyle\prod_{s \in S} V_s)^c$ as a closed subrepresentation of $(\displaystyle\prod_{s \in S} V_s)^b$.
By the previous lemma, $\tilde{V} \cap (\displaystyle\prod_{s \in S} V_s)^c \simeq (\displaystyle\prod_{s \in S} \tilde{V}_s)^c$ for the assignment $s \mapsto \tilde{V}_s$. We set the \^{} completion of $\tilde{V}$ to be $(\displaystyle\prod_{s \in S} \tilde{V}_s)^b$ for this assignment in the sense of Def \ref{cons}. 

\begin{rem}
It seems that in general $\tilde{V}$ is not necessarily contained in $\hat{\tilde{V}}$.
\end{rem}

\begin{lem}
\label{l11}
Assume $A=K$. Let $\tilde{V}$ be a closed $T_0$-representation of $(\displaystyle\prod_{s \in S} V_s)^b$. The followings are equivalent: 
\begin{enumerate}
\item $\tilde{V} \subset \hat{\tilde{V}}$.
\item For any $\tilde{v}=(\tilde{v}_s)_s \in \tilde{V}$, if $\tilde{v}_s \neq 0$, then $\tilde{v}_s \in \tilde{V}$.
\end{enumerate}
\end{lem}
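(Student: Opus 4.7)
The plan is to unwind the definition of the completion $\hat{\tilde{V}}$ and reduce both implications to the identification $\tilde{V}_s = \tilde{V} \cap V_s$. By the construction preceding this lemma, $\hat{\tilde{V}} = (\prod_{s \in S} \tilde{V}_s)^b$, where the family $\{\tilde{V}_s\}$ is the one produced by Lemma \ref{rep of t} applied to the closed $T_0$-subrepresentation $\tilde{V} \cap (\prod_{s \in S} V_s)^c$ of $(\prod_{s \in S} V_s)^c$. The first thing I would do is verify the identification $\tilde{V}_s = \tilde{V} \cap V_s$: tracing the proof of Lemma \ref{rep of t}, the subspaces $\tilde{V}_s$ arise by intersecting $\tilde{V} \cap (\prod V_s)^c$ with the algebraic direct sum $\bigoplus_{s \in S} V_s$, and since $V_s$ already sits inside $(\prod V_{s'})^c$ as its $\chi_s$-isotypic line, the intersection simplifies to $\tilde{V} \cap V_s$.

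For the implication $(1) \Rightarrow (2)$, the strategy is immediate once the identification is in place. Take any $\tilde{v} = (\tilde{v}_s)_s \in \tilde{V} \subset \hat{\tilde{V}} = (\prod \tilde{V}_s)^b$. Then by the defining condition of the bounded product, each component $\tilde{v}_s$ already lies in $\tilde{V}_s = \tilde{V} \cap V_s$, and in particular in $\tilde{V}$, which gives condition $(2)$.

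For the converse $(2) \Rightarrow (1)$, fix any $\tilde{v} = (\tilde{v}_s)_s \in \tilde{V}$. Each nonzero component $\tilde{v}_s$ lies in $\tilde{V}$ by $(2)$ and also belongs to $V_s$ by construction, hence $\tilde{v}_s \in \tilde{V} \cap V_s = \tilde{V}_s$. Since $\tilde{v}$ already sits inside $(\prod V_s)^b$, the supremum $\sup_{s} |\tilde{v}_s|_s$ is automatically finite, so $\tilde{v}$ meets the defining condition for $(\prod \tilde{V}_s)^b = \hat{\tilde{V}}$, which yields $\tilde{V} \subset \hat{\tilde{V}}$.

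I do not expect any serious obstacle: the only genuine content is the bookkeeping identification $\tilde{V}_s = \tilde{V} \cap V_s$ (where one must be slightly careful that Lemma \ref{rep of t} applies to the subrepresentation of the \emph{convergent} space $(\prod V_s)^c$ rather than to $\tilde{V}$ itself). Once this is pinned down, both implications reduce to the observation that condition $(2)$ is precisely the assertion that every element of $\tilde{V}$ has components lying in the subspaces $\tilde{V}_s$, which is exactly what membership in the bounded product $\hat{\tilde{V}}$ encodes, boundedness being automatic from $\tilde{V} \subset (\prod V_s)^b$.
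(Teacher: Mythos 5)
Your proof is correct and takes essentially the same approach as the paper: both rest on the identification $\tilde{V}_s = \tilde{V}\cap V_s$ coming from Lemma~\ref{rep of t} and then compare componentwise against $\hat{\tilde{V}}=(\prod_{s}\tilde{V}_s)^b$. The only cosmetic differences are that you make the identification $\tilde{V}_s=\tilde{V}\cap V_s$ explicit (the paper uses it implicitly), and you argue $(1)\Rightarrow(2)$ directly whereas the paper phrases it as a contradiction.
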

\begin{proof}
$(2) \Rightarrow (1)$ is obvious. $(1) \Rightarrow (2)$: If there exists $\tilde{v}=(\tilde{v}_s)_s \in \tilde{V}$ such that $\tilde{v}_s \neq 0$ and $\tilde{v}_s \notin \tilde{V}$. 
Then for $\hat{\tilde{V}}=(\displaystyle\prod_{s \in S} \tilde{V}_s)^b$, we have $\tilde{v}_s \notin \tilde{V}_s$, which implies that $\tilde{v}=(\tilde{v}_s)_s \notin \hat{\tilde{V}}$.
\end{proof}

\begin{lem}
\label{direct sum}
Assume $A=K$. $V_1, V_2$ are both closed $T_0$-representations of $(\displaystyle\prod_{s \in S} V_s)^b$. Suppose $p: (\displaystyle\prod_{s \in S} V_s)^b \to V_1$ is a continuous projection and $(V_1 \cap (\displaystyle\prod_{s \in S} V_s)^c) \oplus (V_2 \cap (\displaystyle\prod_{s \in S} V_s)^c)=(\displaystyle\prod_{s \in S} V_s)^c$, then $$(\displaystyle\prod_{s \in S} V_s)^b=\hat{V_1} \oplus \hat{V_2}.$$
\end{lem}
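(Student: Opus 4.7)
The plan is to identify the weight decompositions of $V_1$ and $V_2$, show that together they algebraically split each weight space $V_s$, then promote this to a topological decomposition on the bounded product by using the projection $p$ to bound the weight-space projections uniformly.

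First, I would apply Lemma \ref{rep of t} to the closed sub-$T_0$-Banach representations $V_i \cap (\prod_{s \in S} V_s)^c$ of $(\prod_{s \in S} V_s)^c$ to write $V_i \cap (\prod_{s \in S} V_s)^c \simeq (\prod_{s \in S} V_{i,s})^c$ for closed subspaces $V_{i,s} \subset V_s$ (with $i=1,2$). By the definition of the hat-completion given after Lemma \ref{l11}, $\hat{V}_i = (\prod_{s \in S} V_{i,s})^b$.

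Next, I would extract the $\chi_s$-weight spaces on both sides of the hypothesis $(V_1 \cap (\prod V_s)^c) \oplus (V_2 \cap (\prod V_s)^c) = (\prod V_s)^c$. By Lemma \ref{eigenweight}, the $\chi_s$-eigenspace of $(\prod V_s)^c$ is $V_s$ and that of $V_i \cap (\prod V_s)^c \simeq (\prod V_{i,s})^c$ is $V_{i,s}$. Taking the $\chi_s$-eigenspaces of a direct sum yields the algebraic direct sum $V_s = V_{1,s} \oplus V_{2,s}$ for every $s \in S$; let $\pi^i_s: V_s \to V_{i,s}$ denote the associated (continuous, since $V_s$ is finite-dimensional) projections.

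The main obstacle is establishing the uniform operator-norm bound $\sup_{s \in S} \|\pi^i_s\| < \infty$, since without it one cannot patch the weight-wise splittings into continuous maps on $(\prod V_s)^b$. This is exactly what the given continuous projection $p$ is there for: by $T_0$-equivariance of $p$ (which comes from working in the category of $T_0$-Banach representations), for any $v_s \in V_s \subset (\prod V_s)^b$ the element $p(v_s)$ is a $\chi_s$-weight vector in $V_1$, hence lies in $V_1 \cap V_s = V_{1,s}$ by Lemma \ref{eigenweight}. Thus $p|_{V_s} = \pi^1_s$, which gives $\|\pi^1_s\| \leq \|p\|$ uniformly in $s$, and $\pi^2_s = \mathrm{id}_{V_s} - \pi^1_s$ is then uniformly bounded as well.

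Finally, these uniform bounds let me define continuous $T_0$-equivariant maps $\pi^i: (\prod V_s)^b \to (\prod V_{i,s})^b = \hat{V}_i$ by $(\pi^i((v_s)))_t = \pi^i_t(v_t)$. One checks $\pi^1 + \pi^2 = \mathrm{id}$ and that $\pi^i$ restricts to the identity on $\hat{V}_i$ and to zero on $\hat{V}_j$ for $j \neq i$ (using $V_{1,s} \cap V_{2,s} = 0$ weight-by-weight for injectivity of the sum), yielding the desired topological direct sum decomposition $(\prod_{s \in S} V_s)^b = \hat{V}_1 \oplus \hat{V}_2$.
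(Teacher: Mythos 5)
Your proof follows essentially the same approach as the paper's: apply Lemma~\ref{rep of t} to extract the weight-wise decomposition $V_s = V_{1,s}\oplus V_{2,s}$, then use continuity and $T_0$-equivariance of $p$ to obtain a uniform bound on the weight-space projectors $\pi^1_s$ and patch them into a decomposition of the bounded product. One small caveat you share with the paper's own proof (which writes $v_{1,s}=p(v_s)$): the identification $p|_{V_s}=\pi^1_s$ needs not only $p(V_s)\subseteq V_{1,s}$ (which $T_0$-equivariance and Lemma~\ref{eigenweight} give) but also that $p$ vanishes on $V_{2,s}$; this extra condition is implicit in the statement of the lemma and is satisfied where it is applied (Theorem~\ref{vv}), since there $\tilde{p}$ annihilates $\overline{\mathrm{Im}(d_{l(w)-1}^1)}$ by Proposition~\ref{inj to coh}.
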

\begin{proof}
$V_1, V_2$ are both closed in $(\displaystyle\prod_{s \in S} V_s)^b$, so are $V_1 \cap (\displaystyle\prod_{s \in S} V_s)^c, V_2 \cap (\displaystyle\prod_{s \in S} V_s)^c$ in $(\displaystyle\prod_{s \in S} V_s)^c$. By the Lem \ref{rep of t}, $V_i \cap (\displaystyle\prod_{s \in S} V_s)^c=(\displaystyle\prod_{s \in S} V_{i, s})^c$ for the assignment $s \mapsto V_{i,s}$ ($i=1,2$) such that $V_{1,s} \oplus V_{2,s}=V_s$ for every $s \in S$. If $v \in \hat{V_1} \cap \hat{V_2}$, its $s$-component $v_s \in V_{1,s} \cap V_{2,s}=0$, so $v=0, \hat{V_1} \cap \hat{V_2}=0$. And for any $v \in (\displaystyle\prod_{s \in S} V_s)^b$, $v_s=v_{1,s}+v_{2,s}$ for each $s \in S$ and unique $v_{1,s} \in V_{1,s}, v_{2,s} \in V_{2,s}$. Since $p$ is continuous, there exists a constant $c$ such that $$|v_{1,s}|_s=|v_{1,s}|^S=|p(v_s)| \leq c\cdot |v_s|_s$$ $$|v_{2,s}|_s=|v_{2,s}|^S=|v_s-p(v_s)| \leq c\cdot |v_s|_s.$$ $|(v_{i,s})|_s$ are bounded since $|v_s|_s$ are bounded. $(v_{i,s})_s \in V_{i,s}$ for $i=1,2$.
\end{proof}

For two bounded eigen orthonormalizable $(\displaystyle\prod_{s_1 \in S_1} V_{1, s_1})^b, (\displaystyle\prod_{s_2 \in S_2} V_{2, s_2})^b$, we define $\mathrm{Hom}_{\mathrm{nice}}((\displaystyle\prod_{s_1 \in S_1} V_{1, s_1})^b, (\displaystyle\prod_{s_2 \in S_2} V_{2, s_2})^b)$ to be the space of all nice maps from $(\displaystyle\prod_{s_1 \in S_1} V_{1, s_1})^b$ to $(\displaystyle\prod_{s_2 \in S_2} V_{2, s_2})^b$.
\begin{lem}
\label{limit nice}
$$\mathrm{Hom}_{\mathrm{nice}}((\displaystyle\prod_{s_1 \in S_1} V_{1, s_1})^b, (\displaystyle\prod_{s_2 \in S_2} V_{2, s_2})^b) \hookrightarrow \mathrm{Hom}_{\mathrm{cont}}((\displaystyle\prod_{s_1 \in S_1} V_{1, s_1})^b, (\displaystyle\prod_{s_2 \in S_2} V_{2, s_2})^b)$$ is a closed $A$-linear subspace.
\end{lem}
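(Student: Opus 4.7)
The plan is to verify two properties: $A$-linearity of $\mathrm{Hom}_{\mathrm{nice}}$ and its closedness in the operator norm topology of $\mathrm{Hom}_{\mathrm{cont}}$. The $A$-linearity is straightforward: if $f, g$ are nice and $\alpha, \beta \in A$, then for each $s' \in S_2$ the sums $\sum_{s_1}(f|_{V_{1,s_1}}(\hat{v}_{s_1}))_{s'}$ and $\sum_{s_1}(g|_{V_{1,s_1}}(\hat{v}_{s_1}))_{s'}$ converge, hence so does their $A$-linear combination, and it equals $((\alpha f + \beta g)(\hat{v}))_{s'}$ by linearity of the $s'$-projection. The real content is the closedness claim, which I would prove by taking a sequence $f_n$ of nice maps converging in operator norm to some continuous $f$, and checking directly that $f$ is nice.

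Fix $\hat{v} = (\hat{v}_{s_1}) \in (\prod_{s_1} V_{1,s_1})^b$ and $s' \in S_2$. The key estimate is that for each $s_1$, regarding $\hat{v}_{s_1}$ as an element of the ambient space of norm $\leq \|\hat{v}\|$, one has
\[
\bigl|((f - f_n)|_{V_{1,s_1}}(\hat{v}_{s_1}))_{s'}\bigr| \;\leq\; \|f - f_n\|\cdot \|\hat{v}\|,
\]
\emph{uniformly in $s_1$}. Given $\varepsilon > 0$, choose $n$ with $\|f - f_n\|\cdot\|\hat{v}\| < \varepsilon$; since $f_n$ is nice, the terms $(f_n|_{V_{1,s_1}}(\hat{v}_{s_1}))_{s'}$ have norm $<\varepsilon$ outside a finite subset $F \subset S_1$. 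The non-archimedean triangle inequality then forces $|(f|_{V_{1,s_1}}(\hat{v}_{s_1}))_{s'}| < \varepsilon$ for $s_1 \notin F$, which establishes convergence of $\sum_{s_1}(f|_{V_{1,s_1}}(\hat{v}_{s_1}))_{s'}$.

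For the identity $(f(\hat{v}))_{s'} = \sum_{s_1}(f|_{V_{1,s_1}}(\hat{v}_{s_1}))_{s'}$, I would pass to the limit in the identity $(f_n(\hat{v}))_{s'} = \sum_{s_1}(f_n|_{V_{1,s_1}}(\hat{v}_{s_1}))_{s'}$: the left side converges to $(f(\hat{v}))_{s'}$ by continuity of the $s'$-component projection, while the right side converges to $\sum_{s_1}(f|_{V_{1,s_1}}(\hat{v}_{s_1}))_{s'}$ because both infinite sums converge and their difference is bounded termwise by $\|f - f_n\|\cdot\|\hat{v}\|$, whose non-archimedean sup tends to $0$. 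The main obstacle is precisely this uniform-in-$s_1$ tail control, but it is supplied automatically by operator norm convergence together with the fact that each $\hat{v}_{s_1}$, viewed inside $(\prod_{s_1} V_{1,s_1})^b$, has norm at most $\|\hat{v}\|$.
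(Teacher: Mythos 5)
Your proof is correct and takes essentially the same approach as the paper: use niceness of the $f_n$ to control the tails, use operator-norm convergence together with the fact that the $s'$-component projection is norm-nonincreasing and $|\hat{v}_{s_1}| \leq \|\hat{v}\|$ to transfer the Cauchy criterion from $f_n$ to $f$, then pass to the limit in the defining identity. Your final limit-exchange step is packaged a bit more directly (bounding $\bigl|\sum_{s_1}((f-f_n)|_{V_{1,s_1}}(\hat{v}_{s_1}))_{s'}\bigr|$ by $\|f-f_n\|\cdot\|\hat{v}\|$ via the ultrametric inequality, so the two convergent sums themselves converge to each other), whereas the paper writes out a chain of double limits splitting the sum into a head and a uniformly small tail, but the underlying estimate is identical.
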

\begin{proof}
It is clear that from the definition of nice map that \[\mathrm{Hom}_{\mathrm{nice}}((\displaystyle\prod_{s_1 \in S_1} V_{1, s_1})^b, (\displaystyle\prod_{s_2 \in S_2} V_{2, s_2})^b)\] is a linear subspace. 
For closeness, let \{$f_i$\} be a sequence of nice maps between $(\displaystyle\prod_{s_1 \in S_1} V_{1, s_1})^b$ and $(\displaystyle\prod_{s_2 \in S_2} V_{2, s_2})^b$ converging to \[f \in \mathrm{Hom}_{\mathrm{cont}}((\displaystyle\prod_{s_1 \in S_1} V_{1, s_1})^b, (\displaystyle\prod_{s_2 \in S_2} V_{2, s_2})^b). \] $\imath_1: S_1 \hookrightarrow \mathbb{Z}_{\leq 0}^l$, total degree $|s_1|$ of $s_1 \in S_1$ is sum of the absolute value of coordinates of $\imath_1(s_1)$. 
For any nonzero $v \in (\displaystyle\prod_{s_1 \in S_1} V_{1, s_1})^b, s_2 \in S_2, \varepsilon>0$, there exists $N_0 >0$ such that $|f-f_n| < \varepsilon |v|^{-1}$ for all $n \geq N_0$, and there exists $N_1 > 0$ such that for any finite set \{$s_{n_1},\cdots,s_{n_m}$\} satisfying $|s_{n_k}|>N_1$, $\sum\limits_{1 \leq k \leq m} (f_{N_0}(v_{s_{n_k}}))_{s_2}<\varepsilon$ since \{$f_i$\} are nice. By non-Archimedean triangle inequality, $\sum\limits_{1 \leq k \leq m} (f(v_{s_{n_k}}))_{s_2}<\varepsilon$ for any finite set \{$s_{n_1},\cdots,s_{n_m}$\} satisfying $|s_{n_k}|>N_1$, $\sum\limits_{s_1 \in S_1} f(v_{s_1})_{s_2}$ converges for all $s_2 \in S_2$ and \begin{eqnarray*}
\sum\limits_{s_1 \in S_1} f(v_{s_1})_{s_2} &=& \lim_{n \to \infty} \sum_{s_1 \in S_1, |s_1| \leq n} f(v_{s_1})_{s_2} \\ &=& \lim_{n \to \infty} \lim_{i \to \infty} \sum_{s_1 \in S_1, |s_1| \leq n} f_i(v_{s_1})_{s_2} \\
&=& \lim_{n \to \infty} (\lim_{i \to \infty} \sum_{s_1 \in S_1} f_i(v_{s_1})_{s_2} - \lim_{i \to \infty} \sum_{s_1 \in S_1, |s_1|>n} f_i(v_{s_1})_{s_2}) \\
&=& \lim_{n \to \infty} (\lim_{i \to \infty} f_i(v)_{s_2} - \lim_{i \to \infty} \sum_{s_1 \in S_1, |s_1|>n} f_i(v_{s_1})_{s_2}) \\
&=& f(v)_{s_2} - \lim_{n \to \infty} \lim_{i \to \infty} \sum_{s_1 \in S_1, |s_1|>n} f_i(v_{s_1})_{s_2} \\
&=& f(v)_{s_2}.
\end{eqnarray*}  The last equation holds since for any $\varepsilon>0$, there exists $N_1>0$ as we chose before such that $\sum\limits_{s_1 \in S_1, |s_1|>N_1} f_i(v_{s_1})_{s_2} < \varepsilon$ independent of $f_i$.
\end{proof}

\begin{lem}
\label{nice1}
Assume $A=K$. The kernel of a $T_0$-equivariant nice $f: (\displaystyle\prod_{s_1 \in S} V_{1, s_1})^b \to (\displaystyle\prod_{s_2 \in S} V_{2, s_2})^b$ is also of the form $(\displaystyle\prod_{s \in S} V_s)^b$. Moreover, $\overline{\mathrm{Im}(f)} \subset \widehat{\overline{\mathrm{Im}(f)}}$.
\end{lem}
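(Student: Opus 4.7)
The plan is to reduce the lemma to the decomposition of $f$ into its restrictions to the individual weight pieces $V_{1,s}$, exploiting niceness together with $T_0$-equivariance.

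First I would observe that $T_0$-equivariance forces $f$ to send each $V_{1,s}$ into $V_{2,s}$: for any $v \in V_{1,s}$, $f(v)$ is a $\chi_s$-eigenvector in the target, so Lem \ref{eigenweight} (applicable because $A=K$ is a domain) together with the fact that the characters appearing in the target are pairwise distinct pins $f(v)$ down to the summand $V_{2,s}$ (understood as zero if $\chi_s$ does not appear in the indexing of the target, which is permitted by the remark after Def \ref{cons}). Using niceness to compute components, for any $v=(v_s)\in (\prod_{s\in S}V_{1,s})^b$ and any $s'\in S$,
\[
f(v)_{s'} \;=\; \sum_{s\in S}\bigl(f|_{V_{1,s}}(v_s)\bigr)_{s'} \;=\; f|_{V_{1,s'}}(v_{s'}),
\]
since $f|_{V_{1,s}}(v_s)\in V_{2,s}$ contributes only when $s=s'$.

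Setting $W_s := \ker(f|_{V_{1,s}})\subseteq V_{1,s}$, a finite-dimensional (hence finite free over $K$) subspace on which $T_0$ acts through $\chi_s$, I would then read off $\ker f = (\prod_{s\in S} W_s)^b$ in the sense of Def \ref{cons}. One inclusion is immediate from the component formula above; for the converse, any tuple $(v_s)$ with $v_s\in W_s$ sitting inside the ambient bounded product is itself bounded, and maps componentwise to zero by the same formula. This realizes $\ker f$ as a bounded eigen orthonormalizable $T_0$-representation.

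For the second assertion, the strategy is to verify condition (2) of Lem \ref{l11} for $\overline{\mathrm{Im}(f)}$. For $v = f(u) \in \mathrm{Im}(f)$ with $u=(u_s)$, the component formula yields $v_{s'} = f(\iota_{s'}(u_{s'}))$, where $\iota_{s'}:V_{1,s'}\hookrightarrow (\prod_s V_{1,s})^b$ is the canonical inclusion. Thus every nonzero $s'$-component of $v$, viewed as an element of the target through $V_{2,s'}\hookrightarrow (\prod_{s_2}V_{2,s_2})^b$, already lies in $\mathrm{Im}(f)$. Passing to the closure is straightforward: the coordinate projection-then-inclusion map $\pi_{s'}:(\prod_{s_2}V_{2,s_2})^b\to(\prod_{s_2}V_{2,s_2})^b$ has operator norm at most $1$, so if $v_n\to v$ with $v_n\in\mathrm{Im}(f)$, then $v_{s'} = \lim \pi_{s'}(v_n)\in \overline{\mathrm{Im}(f)}$. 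Lem \ref{l11} then delivers $\overline{\mathrm{Im}(f)}\subseteq \widehat{\overline{\mathrm{Im}(f)}}$.

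The only delicate point is the bookkeeping about character matching between the indexing sets of domain and target; this is not a real obstacle, since the remark after Def \ref{cons} allows us to pad either side by zero weight spaces so that both are indexed by a common $S$. Once that is done, every step above is a direct consequence of the component formula that niceness provides, together with the finite-dimensionality of each $V_{1,s}$ over the field $K$.
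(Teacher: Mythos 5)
Your proof is correct and follows essentially the same route as the paper's: exploit niceness together with the $T_0$-weight decomposition over the field $K$. The one genuine improvement in presentation is that you state explicitly the ``diagonality'' fact — that $T_0$-equivariance plus Lem~\ref{eigenweight} forces $f(V_{1,s})\subseteq V_{2,s}$ (or $0$), hence $f(v)_{s'}=f|_{V_{1,s'}}(v_{s'})$ — whereas the paper uses this implicitly: it is needed both for the step ``$f(x)\neq 0$ for $x\notin(\prod\widetilde{V_{1,s}})^b$'' (otherwise the nice sum $\sum_s (f|_{V_{1,s}}(x_s))_{s'}$ could in principle cancel) and for the identity $\overline{\mathrm{Im}(f)}\cap V_{2,s_2}=\mathrm{Im}(f)(V_{1,s_2})$. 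Once diagonality is on the table, your direct identification $\ker f=(\prod_s\ker(f|_{V_{1,s}}))^b$ neatly bypasses the detour through Lem~\ref{rep of t} and the \^{} completion that the paper takes for the kernel, and your reduction of the second claim to condition (2) of Lem~\ref{l11} via the coordinate projection $\pi_{s'}$ (norm $\le 1$, and $\pi_{s'}(\mathrm{Im}(f))\subseteq\mathrm{Im}(f)$ by diagonality) is exactly the content of the paper's claimed equality of intersections, phrased slightly more cleanly. The remark about padding indexing sets by zero weight spaces is correct and worth keeping, since the lemma statement does not literally force the characters $\chi_{1,s}$ and $\chi_{2,s}$ to match across source and target.
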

\begin{proof}
The kernel of $f|_{(\displaystyle\prod_{s_1 \in S} V_{1, s_1})^c}$ is of the form $(\displaystyle\prod_{s_1 \in S} \widetilde{V_{1, s_1}})^c$ by the Lem \ref{rep of t}.
Since $f$ is nice, $f(\widehat{(\displaystyle\prod_{s_1 \in S} \widetilde{V_{1, s_1}})^c})=f((\displaystyle\prod_{s_1 \in S} \widetilde{V_{1, s_1}})^b)=0$. 
Again by the niceness of $f$, $f(x) \neq 0$ for $x \not\in (\displaystyle\prod_{s_1 \in S} \widetilde{V_{1, s_1}})^b$. 
For the inclusion, it suffices to prove $$\widehat{\overline{\mathrm{Im}(f)}}=(\displaystyle\prod_{s_2 \in S} \mathrm{Im}(f)(V_{1,s_2}))^b.$$ 
We have $\mathrm{Im}(f)(V_{1, s_1}) \subset \mathrm{Im}(f)$ for any $s_1 \in S$. 
The equality follows from $\overline{\mathrm{Im}(f)} \cap V_{2, s_2}=\mathrm{Im}(f)(V_{1, s_2})$ for any $s_2 \in S$.
\end{proof}

For a finite free $T_0$-eigen orthonormalizable $A$-module $M \simeq \displaystyle\prod_{s_m \in S_M} M_{s_m}$ ($S_M$ can be chosen as a finite set), $M \otimes_A (\displaystyle\prod_{s \in S} V_s)^c$ (resp. $M \otimes_A (\displaystyle\prod_{s \in S} V_s)^b$) is again equipped with the same structure we will explain in the following.
Moreover, the Banach norm on $M \otimes_A V$ is defined to be the norm for $\mathcal{L}_A(M^\ast, V)$ for a Banach space $V$, where $V$ can be $(\displaystyle\prod_{s \in S} V_s)^c, (\displaystyle\prod_{s \in S} \tilde{V}_s)^c, V_s$ in our setting. 

More specifically, if $M \simeq \bigoplus\limits_{m \in S_M} M_m$, $V_l:=\bigoplus\limits_{\substack{m \in S_M, s \in S \\ s+m=l }} M_m \otimes V_s$,
 $$M \otimes_A (\displaystyle\prod_{s \in S} V_s)^b \simeq \{ v \in \prod_{(s+m) \in L} V_{s+m} \big| ~ \underset{s \in S, m\in S_M}{\mathrm{max}} |(v)_{s+m}|_{s+m} < \infty  \},$$ $$\bigoplus_{m \in S_M} x_m \otimes v^m \mapsto \sum_{m \in S_M} (x_m \otimes (v^m)_{l-m})_l,$$ where $L := S \times S_M / \{ (s,m) \sim (s',m') ~ \mathrm{whenever} ~ \chi_s\cdot\chi_m=\chi_{s'}\cdot\chi_{m'}\}$ and $s+m := (s,m) \in L$. 
For any $l \in L$ and $m \in S_M$, there is at most one $l-m$ in $S$ such that $(l-m,m) = l$. 
Similarly for $l \in L$ and $s \in S$, there is at most one $l-s$ in $S_M$ such that $(s,l-s) = l$. 
We can similarly define $M \otimes_A (\displaystyle\prod_{s \in S} V_s)^c$.

\begin{lem}
\label{tensor nice}
For a $A$-linear map $g: M_1 \to M_2$ between two finite free eigen orthonormalizable $A$-modules, 
if $f: (\displaystyle\prod_{s_1 \in S_1} V_{1, s_1})^b \to (\displaystyle\prod_{s_2 \in S_2} V_{2, s_2})^b$ is nice, then $$M_1 \otimes (\displaystyle\prod_{s_1 \in S_1} V_{1, s_1})^b \xrightarrow{g \otimes f} M_2 \otimes (\displaystyle\prod_{s_2 \in S_2} V_{2, s_2})^b$$ is nice.
\end{lem}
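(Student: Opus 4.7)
The plan is to reduce the claim directly to the niceness of $f$ by exploiting the finite rank of $M_1, M_2$. Decompose $M_1 = \bigoplus_{m \in S_{M_1}} M_{1,m}$ and $M_2 = \bigoplus_{m' \in S_{M_2}} M_{2,m'}$ as finite direct sums of character eigenspaces, and choose bases $\{e_i\}_{i=1}^p$ of $M_1$ and $\{e'_j\}_{j=1}^q$ of $M_2$ of eigenvectors compatible with these decompositions, so that $e_i \in M_{1,m_i}$ and $e'_j \in M_{2,m'_j}$. Writing $g(e_i) = \sum_{j=1}^q g_{ij}\, e'_j$ as a finite $A$-linear combination turns the problem into bookkeeping between the tensor decompositions of Definition~\ref{cons} and the chosen bases.

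Any $\hat w \in M_1 \otimes (\prod_{s_1} V_{1, s_1})^b$ has the unique expression $\hat w = \sum_{i=1}^p e_i \otimes v_i$ with $v_i \in (\prod_{s_1} V_{1, s_1})^b$, and
$$(g \otimes f)(\hat w) = \sum_{i, j} g_{ij}\, e'_j \otimes f(v_i),$$
which is a finite sum. For any $l_2 \in L_2$, its $l_2$-component equals $\sum_{(i,j,s_2):\, m'_j + s_2 = l_2} g_{ij}\, e'_j \otimes (f(v_i))_{s_2}$. Applying the niceness of $f$ to each $v_i$ rewrites $(f(v_i))_{s_2}$ as $\sum_{s_1 \in S_1}(f((v_i)_{s_1}))_{s_2}$ and yields
$$((g \otimes f)(\hat w))_{l_2} = \sum_{(i,j,s_2):\, m'_j + s_2 = l_2}\, \sum_{s_1 \in S_1} g_{ij}\, e'_j \otimes (f((v_i)_{s_1}))_{s_2}. \qquad (\ast)$$

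On the other side, the $l_1$-component of $\hat w$ is $\hat w_{l_1} = \sum_{(i, s_1):\, m_i + s_1 = l_1} e_i \otimes (v_i)_{s_1}$, so the $l_2$-component of $(g \otimes f)|_{V_{1, l_1}}(\hat w_{l_1})$ reads $\sum_{(i,j,s_1,s_2):\, m_i + s_1 = l_1,\, m'_j + s_2 = l_2} g_{ij}\, e'_j \otimes (f((v_i)_{s_1}))_{s_2}$. Summing over $l_1 \in L_1$, each pair $(i, s_1)$ lies in a unique class $l_1 = m_i + s_1$, so the sum collapses to $\sum_{(i,j,s_1,s_2):\, m'_j + s_2 = l_2} g_{ij}\, e'_j \otimes (f((v_i)_{s_1}))_{s_2}$; interchanging the finite $(i,j)$-sum with the $s_1$-sum recovers $(\ast)$, giving the niceness identity.

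The convergence half of niceness reduces in exactly the same way: because the $(i,j)$ index set is finite, the convergence of $\sum_{l_1}((g\otimes f)|_{V_{1,l_1}}(\hat w_{l_1}))_{l_2}$ is controlled termwise by the convergence of $\sum_{s_1}(f((v_i)_{s_1}))_{s_2}$ in the finite-dimensional space $V_{2, s_2}$, which is exactly the niceness hypothesis for $f$. The main bookkeeping obstacle is simply keeping straight the equivalence classes defining $L_1, L_2$ out of $S_{M_i} \times S_i$, but finiteness of $S_{M_1}$ and $S_{M_2}$ renders all the index exchanges formal, so I do not anticipate any substantive difficulty beyond organising notation.
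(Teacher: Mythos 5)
Your proof is correct and takes essentially the same approach as the paper: the paper fixes a single eigen element $m \in M_{1,k}$ and performs the identical component-by-component computation, implicitly invoking linearity in $m$ to cover general elements of $M_1 \otimes (\prod_{s_1} V_{1,s_1})^b$, where you make the basis decomposition $\hat{w} = \sum_i e_i \otimes v_i$ explicit. Both arguments reduce the identity and the convergence to niceness of $f$ using finiteness of $S_{M_1}$ and $S_{M_2}$, so the two are the same proof written at different levels of explicitness.
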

\begin{proof}
Let 
\[ M_1 \simeq \displaystyle\prod_{s_{m_1} \in S_{M_1}} M_{s_{m_1}}, ~ M_2 \simeq \displaystyle\prod_{s_{m_2} \in S_{M_2}} M_{s_{m_2}}, \] \[ M_1 \otimes_A (\displaystyle\prod_{s_1 \in S_1} V_{1, s_1})^b \simeq (\displaystyle\prod_{l_1 \in L_1} V_{l_1})^b, ~ M_2 \otimes_A (\displaystyle\prod_{s_2 \in S_2} V_{2, s_2})^b \simeq (\displaystyle\prod_{l_2 \in L_2} V_{l_2})^b. \] 
Here we use the same notation for $L_1, L_2$ as in the previous discussion before the current lemma. 

For any eigen element $m \in V_k$ for $k \in S_{M_1}$ and $\hat{v}_1 \in (\displaystyle\prod_{s_1 \in S_1} V_{1, s_1})^b$, $g(m)$ can be expressed as $g(m)=\sum\limits_{k_2 \in S_{M_2}} m_{k_2},$ where $m_{k_2} \in V_{k_2}$. We have
\begin{eqnarray*}
(g \otimes f)(m \otimes \hat{v}_1)_{l_2} & = & ((\sum\limits_{k_2 \in S_{M_2}}  m_{k_2}) \otimes f(\hat{v}_1))_{l_2} \\ & = & (\sum\limits_{k_2 \in S_{M_2}} m_{k_2} \otimes f(\hat{v}_1)_{l_2-k_2})_{l_2} \\
& = & \sum\limits_{k_2 \in S_{M_2}} m_{k_2} \otimes \sum_{s_1 \in S_1}(f|_{V_{s_1}}((\hat{v}_1)_{s_1}))_{l_2-k_2} \\
& = & \sum\limits_{k_2 \in S_{M_2}} m_{k_2} \otimes \sum_{l_1 \in L_1}(f|_{V_{l_1-k}}((\hat{v}_1)_{l_1-k}))_{l_2-k_2} \\
& = & (\sum_{l_1 \in L_1}(g \otimes f)(m \otimes (\hat{v}_1)_{l_1-k}))_{l_2}.
\end{eqnarray*} The fourth equality above is a substitution of variables from $s_1$ to $l_1-k$.
\end{proof}

\begin{lem}
\label{extend nice}
Assume $A=K$. Let $f: (\displaystyle\prod_{s_1 \in S_1} V_{1, s_1})^b \to (\displaystyle\prod_{s_2 \in S_2} V_{2, s_2})^b$ be a nice map. 
If $$W_1 \hookrightarrow (\displaystyle\prod_{s_1 \in S_1} V_{1, s_1})^b, W_2 \hookrightarrow (\displaystyle\prod_{s_2 \in S_2} V_{2, s_2})^b$$ are closed $A[T_0]$ subrepresentations inside $(\displaystyle\prod_{s_1 \in S_1} V_{1, s_1})^b, (\displaystyle\prod_{s_2 \in S_2} V_{2, s_2})^b$ \linebreak such that $f(W_1) \subset W_2$. 
Assume $W_2 \subset \widehat{W}_2$, then $f(\widehat{W_1}) \subset \widehat{W_2}$.
\end{lem}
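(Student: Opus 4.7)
The plan is to argue componentwise along the weight decomposition, using Lemma~\ref{l11} to translate the hypothesis $W_2 \subset \widehat{W_2}$ into a concrete statement about weight-space components. By Lemma~\ref{rep of t}, write $W_1 \cap (\prod_{s_1 \in S_1} V_{1,s_1})^c \simeq (\prod_{s_1 \in S_1} \widetilde{V}_{1,s_1})^c$, so that $\widehat{W_1} = (\prod_{s_1 \in S_1} \widetilde{V}_{1,s_1})^b$, and analogously set $\widetilde{V}_{2,s_2} = W_2 \cap V_{2,s_2}$ so that $\widehat{W_2} = (\prod_{s_2 \in S_2} \widetilde{V}_{2,s_2})^b$. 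Observe that each weight space $\widetilde{V}_{1,s_1}$ lies inside $W_1$, and that the hypothesis $W_2 \subset \widehat{W_2}$ is equivalent by Lemma~\ref{l11} to the statement that every nonzero weight-component of an element of $W_2$ again lies in $W_2$, hence in $\widetilde{V}_{2,s_2}$.

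Next I would fix $\hat{w}_1 = (\hat{w}_{1,s_1})_{s_1} \in \widehat{W_1}$ and note that each component $\hat{w}_{1,s_1} \in \widetilde{V}_{1,s_1} \subset W_1$, so the hypothesis $f(W_1) \subset W_2$ yields $f(\hat{w}_{1,s_1}) \in W_2$ for every $s_1$. Applying the above reformulation of $W_2 \subset \widehat{W_2}$ to this element, every weight component $f(\hat{w}_{1,s_1})_{s_2} \in V_{2,s_2}$ lies in $\widetilde{V}_{2,s_2}$.

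Then invoking niceness of $f$ from Definition~\ref{nice}, for every $s_2 \in S_2$ the identity $f(\hat{w}_1)_{s_2} = \sum_{s_1 \in S_1} (f|_{V_{1,s_1}}(\hat{w}_{1,s_1}))_{s_2}$ holds as a convergent series in $V_{2,s_2}$. Since $A = K$ is a field and $V_{2,s_2}$ is finite free over $K$, every $K$-subspace of $V_{2,s_2}$ is automatically closed, so $\widetilde{V}_{2,s_2}$ is closed in $V_{2,s_2}$ and the limit $f(\hat{w}_1)_{s_2}$ lies in $\widetilde{V}_{2,s_2}$. Boundedness of the family $(f(\hat{w}_1)_{s_2})_{s_2}$ is inherited from $f(\hat{w}_1) \in (\prod_{s_2 \in S_2} V_{2,s_2})^b$, so $f(\hat{w}_1) \in (\prod_{s_2 \in S_2} \widetilde{V}_{2,s_2})^b = \widehat{W_2}$, which is the desired conclusion.

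The main conceptual point is the coupling of the two structural inputs: niceness alone only lets us reconstruct $f(\hat{w}_1)$ as a convergent sum of images of single weight vectors of $\hat{w}_1$, while $W_2 \subset \widehat{W_2}$, read via Lemma~\ref{l11}, is exactly what forces each such image to remain weight-decomposable inside $W_2$. I do not expect a serious obstacle beyond tracking these two inputs carefully; the finite freeness of each $V_{2,s_2}$ over the field $K$ removes any subtle topological issue about the closedness of $\widetilde{V}_{2,s_2}$.
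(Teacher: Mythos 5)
Your proof is correct and follows essentially the same path as the paper's: decompose $W_1$ via Lemma~\ref{rep of t}, note each weight component of $\hat{w}_1 \in \widehat{W_1}$ lies in $W_1$, push through $f$ and apply Lemma~\ref{l11} (via $W_2 \subset \widehat{W_2}$) to land the weight components in $W_2$, then invoke niceness plus boundedness to conclude. The only cosmetic difference is that you justify the passage to the limit $f(\hat{w}_1)_{s_2} \in W_{2,s_2}$ by finite-dimensionality of $W_{2,s_2}$ over $K$, whereas the paper appeals directly to closedness of $W_2$; both yield the same conclusion.
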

\begin{proof}
We have $\widehat{W_1} \simeq (\displaystyle\prod_{s_1 \in S_1} W_{1, s_1})^b, \widehat{W_2} \simeq (\displaystyle\prod_{s_2 \in S_2} W_{2, s_2})^b$. 
For $\forall \hat{w}_1 \in \widehat{W_1}$, $s_2 \in S_2$ $$(f(\hat{w}_1))_{s_2}=\sum_{s_1 \in S_1}(f|_{V_{s_1}}((\hat{w}_1)_{s_1}))_{s_2}.$$ 
As $W_1 \cap (\displaystyle\prod_{s_1 \in S_1} V_{1, s_1})^c$ being convergent eigen orthonormalizable by Lem \ref{rep of t}, 
$$(\hat{w}_1)_{s_1} \in W_1 \cap (\displaystyle\prod_{s_1 \in S_1} V_{1, s_1})^c \subset W_1.$$ 
Each term $(f|_{V_{s_1}}((\hat{w}_1)_{s_1}))_{s_2} \in W_2$ by Lem \ref{l11} as $W_2 \subset \widehat{W}_2$. 
The sum converges in $(\displaystyle\prod_{s_2 \in S_2} V_{2, s_2})^b$ and bounded uniformly for all $s_2$, thus in $W_2$. $f(\hat{w}_1) \in \widehat{W_2}$.
\end{proof}

\section{Some auxiliary modules and their duals}\label{auxiliary modules}
\hfill\break
We use the same notation as in \S \ref{Notation}, \S \ref{la rep}. And let $T^0=\mathscr{T}(\cO_E)$ with $T^s \subset T^0$.
In the remaining paper we view the right $I$-modules $\Ind^s_I(V)$ and $\DD^s_I(V)$ introduced in \S \ref{la rep} as left $I$-modules in the usual way. 
Let $\mathfrak{n}, \overline{\frakn}$ be the Lie algebra of $N, \overline{N}$ over $\QQ_p$. 
All the tensor products are defined over $\QQ_p$. 
Let $A=\sO(\Omega)$ for some irreducible Zariski closed subspace of an affinoid subdomain $\Omega \subset \mathscr{W}$ ($A$ is an integral domain). 

We assume in this section $\mathscr{H}_{/\cO_E}=\GL_n$, $\mathrm{Sp}_{2n}$ or a unitary group associated to $J_n$ and an unramified extension $K/E$. For an Iwahori subgroup of $\mathscr{H}(\cO_E)$, 
$$I=\Nbar_I \times T^0 \times N_I,$$ let $\Delta^+_{H}, \Delta^-_{H}$ denote positive roots and negative roots respectively viewed over $E$. Every factor of a product of different $I$ satisfies the assumption for $\mathscr{H}$ as above. In particular, for each Iwahori factor, we use the coordinates of $\Nbar_I^s$ as in the beginning of \S \ref{la rep} which will be constructed for each case of $\mathscr{H}$ in \S \ref{analysis}.

Assume that $A=\sO(\Omega)$ is an $E$-Banach algebra containing a large enough $p$-adic field such that $A$ contains all Galois conjugates of $E$ and the splitting field of $H$.
For later applications, we introduce a Banach space of functions on $\Nbar_I \simeq \cO_E^d$. 
For $x \in \Nbar_I$, we use $x_1,\cdots,x_d$ to denote the coordinates of $x$ via the isomorphism above.
A basis of \[ C^{s,1}_{I,\chi}:=C^{s,1}(I, A) \subset C^{s,an}(\Nbar_I, A)=\Ind^s_{I,\chi}\] as representation of $I$ for this case is given by  \[
f^\sigma_{g, \underline{a}^\sigma}(x):= \begin{cases} 
      (\sigma x_1)^{a_1^\sigma}\cdots (\sigma x_d)^{a_d^\sigma} & x \in g\cdot \Nbar_I^s \\
      0 & x \not\in g\cdot \Nbar_I^s,
   \end{cases}
\] for $\sigma$ running through embedding from $E$ to $\overline{\QQ}_p$ and $\underline{a}^\sigma$ running through $\ZZ^{d}_{\geq 0}$. 
Recall that there is a partial order relation $\underline{a} \geq \underline{a'}$ if and only if $a_i \geq a'_{i}$ for all $i$. 
$C^{s,1}(\Nbar_I, A)$ is defined to be the subspace of $C^{s,an}(\Nbar_I, A)$ whose elements can be expressed as the sum of $f^\sigma_{g,\underline{a}^\sigma}$ with coefficients tending to zero.
Or equivalently, $C^{s,1}(\Nbar_I, A)$ consists of (globally) analytic functions on $\Nbar_I$ multiplied by characteristic functions on $\Nbar^s_I$-cosets.

In the case of unitary group for an unramified quadratic extension $K/E$, there is another basis which is eigenbasis for $T^s$ action with respect to the identification $\Nbar_I \simeq \cO_E^{d_e} \times \cO_K^{d_k}$ which is equivalent to the basis above by
\[
f^{\sigma}_{g, \underline{a}^\sigma_E, \underline{a}^\sigma_K}(x):= \begin{cases} 
      (\sigma x_E^1)^{a_{E,1}^{\sigma}}\cdots (\sigma x_E^{d_e})^{a_{E,d_e}^{\sigma}}\cdot (\sigma x_K^1)^{a_{K,1}^{\sigma}}\cdots (\sigma x_K^{d_k})^{a_{K,d_k}^{\sigma}} & x \in g\cdot \Nbar_I^s \\
      0 & x \not\in g\cdot \Nbar_I^s,
   \end{cases}
\] 
for $\sigma$ running through embedding from $K$ to $\overline{\QQ}_p$ and $\underline{a}^\sigma_E=(a_{E,1}^\sigma,\cdots,a_{E,d_e}^\sigma)$ running through $\ZZ^{d_e}_{\geq 0}$, $\underline{a}^\sigma_K=(a_{K,1}^\sigma,\cdots,a_{K,d_k}^\sigma)$ running through $\ZZ^{d_k}_{\geq 0}$. 

To save spaces, we introduce the following notation:
$$\delta^{E+}_i=(\delta^{E-}_i)^{-1},\delta^{K+}_i=(\delta^{K-}_i)^{-1},$$
$$\delta^{E\pm}(t)x_E:=(\delta^{E\pm}_1(t)x_E^1, \cdots, \delta^{E\pm}_{d_e}(t)x_E^{d_e}),$$ $$\delta^{K\pm}(t)x_K:=(\delta^{K\pm}_1(t)x_K^1, \cdots, \delta^{K\pm}_{d_e}(t)x_K^{d_k}),$$ $$\sigma\delta^{E\pm}(t):=\sigma\delta^{E\pm}_1(t)^{a_{E,1}^\sigma}\cdots \sigma\delta^{E\pm}_{d_e}(t)^{a_{E,d_e}^\sigma},$$ $$\sigma\delta^{K\pm}(t):=\sigma\delta^{K\pm}_1(t)^{a_{K,1}^\sigma}\cdots \sigma\delta^{K\pm}_{d_k}(t)^{a_{K,d_k}^\sigma}.$$

  $$t\cdot f^{\sigma}_{g, \underline{a}^\sigma_E, \underline{a}^\sigma_K}=\chi(t)^{-1} \cdot \sigma\delta^{E+}(t) \cdot \sigma\delta^{K+}(t) \cdot f^{\sigma}_{g, \underline{a}^\sigma_E, \underline{a}^\sigma_K}$$ since for any $x \in g\cdot \Nbar_I^s$ and any $t \in T^s$, 
\begin{eqnarray*}
t \cdot f^{\sigma}_{g, \underline{a}^\sigma_E, \underline{a}^\sigma_K}(x)  & = & f^\sigma_{g, \underline{a}^\sigma}(t^{-1}x) \\
 & = & f^{\sigma}_{g, \underline{a}^\sigma_E, \underline{a}^\sigma_K}((t^{-1}xt)t^{-1}) \\
 & = & \chi(t)^{-1}\cdot f^{\sigma}_{g, \underline{a}^\sigma_E, \underline{a}^\sigma_K}(t^{-1}xt) \\
 & = & \chi(t)^{-1}\cdot f^{\sigma}_{g, \underline{a}^\sigma_E, \underline{a}^\sigma_K} (\delta^{E-}(t^{-1})x_E,\delta^{K-}(t^{-1})x_K) \\
 & = & \chi(t)^{-1}\cdot f^{\sigma}_{g, \underline{a}^\sigma_E, \underline{a}^\sigma_K}(\delta^{E+}(t)x_E,\delta^{K+}(t)x_K)) \\
 & = & \chi(t)^{-1} \cdot \sigma\delta^{E+}(t) \cdot \sigma\delta^{K+}(t) \cdot f^{\sigma}_{g, \underline{a}^\sigma_E, \underline{a}^\sigma_K}(x).
 \end{eqnarray*}
A general $t \in T^0$ permutes different $f^{\sigma}_{g, \underline{a}^\sigma_E, \underline{a}^\sigma_K}$ and $f^{\sigma}_{g', \underline{a}^\sigma_E, \underline{a}^\sigma_K}$ for $g'=tgt^{-1}$ up to a scalar twist.

\begin{lem}\label{equ basis}
The bases $\{f^\sigma_{g, \underline{a}^\sigma}\}$ and $\{f^{\sigma}_{g, \underline{a}^\sigma_E, \underline{a}^\sigma_K}\}$ generate the same subspace in $C^{s,an}(\Nbar_I,A)$. 
\end{lem}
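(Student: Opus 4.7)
The plan is to work cosetwise, since both systems are supported on $\Nbar_I^s$-cosets and it suffices to show that on a fixed coset $g\cdot\Nbar_I^s$ the two systems of monomial functions generate the same Tate algebra of analytic $A$-valued functions. Via $\psi^s_I$ the coset is identified with $\cO_E^{d_e}\times\cO_K^{d_k}$ (with $K$-coordinates further identified with $\cO_E^2$ through $\imath_{K/E}$), and the two monomial systems only differ in how they use the $K$-coordinates, so I will focus on a single $K$-coordinate $x_K\in\cO_K$ and then tensor the conclusion over $i=1,\ldots,d_k$ and multiply in the common $E$-factors. For the $E$-coordinates $x_E^i$ the two bases agree: for every $\sigma\colon E\hookrightarrow\overline{\QQ}_p$ and its two extensions $\sigma_1,\sigma_2\colon K\hookrightarrow\overline{\QQ}_p$ we have $\sigma_1(x_E^i)=\sigma_2(x_E^i)=\sigma(x_E^i)$, so both labelings give the same spanning set on the $E$-factors.

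For a single $K$-coordinate, I would choose an $\cO_E$-basis $\{1,\omega\}$ of $\cO_K$ (possible since $K/E$ is unramified), such that the reduction $\bar\omega$ generates the residue extension $k_K/k_E$. Writing $\imath_{K/E}(x_K)=(y_1,y_2)$ with $x_K=y_1+y_2\omega$, the two $K$-extensions $\sigma_1,\sigma_2$ of $\sigma\colon E\hookrightarrow\overline{\QQ}_p$ give
\[
\begin{pmatrix}\sigma_1(x_K)\\ \sigma_2(x_K)\end{pmatrix}=\begin{pmatrix}1&\sigma_1(\omega)\\ 1&\sigma_2(\omega)\end{pmatrix}\begin{pmatrix}\sigma(y_1)\\ \sigma(y_2)\end{pmatrix}.
\]
The determinant $\sigma_1(\omega)-\sigma_2(\omega)$ has reduction $\bar\omega-\overline{\bar\omega}\ne 0$ in $\overline{k_E}$ by the choice of $\omega$, hence is a unit in $\cO_{\overline{\QQ}_p}$. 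Thus the matrix above and its inverse both have entries of absolute value $\le 1$, so over any $A$ containing the Galois conjugates of $E$ (equivalently of $K$, in our setting) the two $A$-modules spanned respectively by $\{\sigma(y_1)^a\sigma(y_2)^{m-a}\}_{a\le m}$ and $\{\sigma_1(x_K)^a\sigma_2(x_K)^{m-a}\}_{a\le m}$ coincide as $A$-submodules of the Tate algebra, with a norm-preserving change-of-basis.

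Having established this in each fixed total degree, I would then glue: for multi-degree $(\underline a_E^{\sigma},\underline a_K^{\sigma})$ and similarly $\underline a^{\sigma}$, monomials of any bounded total degree in one system are $A$-linear combinations of finitely many monomials of the same (or bounded) total degree in the other, with all coefficients of absolute value $\le 1$. This implies that passing to the Banach completion (series with coefficients tending to zero) both systems define the same closed $A$-submodule of $C^{s,an}(\Nbar_I,A)$. Reassembling the cosets $g\cdot\Nbar_I^s$ finishes the proof.

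The only delicate step is the integrality of the change-of-basis matrix, which reduces to verifying that $\sigma_1(\omega)-\sigma_2(\omega)$ is a unit; this is the one place where the unramifiedness of $K/E$ is genuinely used and where the choice of $\omega$ lifting a generator of $k_K/k_E$ matters. Everything else is linear algebra and bookkeeping over cosets and embeddings.
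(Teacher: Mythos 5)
Your proof is correct and takes essentially the same route as the paper: both reduce to showing that the $2\times 2$ transition matrix between the two $K$-coordinate monomial systems is invertible over $\cO_K$, which holds because $K/E$ is unramified, and then conclude by the same finite-degree change-of-basis and Banach-completion argument. You make the unit-determinant check explicit by choosing $\omega$ to lift a generator of the residue extension $k_K/k_E$, whereas the paper works with the general $\imath_{K/E}$-basis $\{u_1,u_2\}$ and asserts $\GL_2(\cO_K)$-invertibility directly from unramifiedness.
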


\begin{proof}
Suppose $\imath_{K/E}$ gives rise to $\cO_K = \cO_E u_1 \oplus \cO_E u_2$ for $u_1, u_2 \in \cO_K$. By transitions of coordinates introduced in \ref{la rep}, $x_K^i=u_1\cdot x_{d_e+2i-1}+u_2 \cdot x_{d_e+2i}$, $\sigma x_K^i=\sigma u_1 \cdot \sigma x_{d_e+2i-1} + \sigma u_2 \cdot \sigma x_{d_e+2i}$. Moreover, there are two lifts of embedding $\tilde{\sigma}_1, \tilde{\sigma}_2: K \hookrightarrow \overline{\QQ}_p$ for each $\sigma: E \hookrightarrow \overline{\QQ}_p$. Then
$$\begin{pmatrix} \tilde{\sigma}_1 x_K^i \\ \tilde{\sigma}_2 x_K^i
\end{pmatrix}=\begin{pmatrix}
\tilde{\sigma}_1 u_1 & \tilde{\sigma}_1 u_2 \\ \tilde{\sigma}_2 u_1 & \tilde{\sigma}_2 u_2
\end{pmatrix}\cdot\begin{pmatrix} \sigma x_{d_e+2i-1} \\ \sigma x_{d_e+2i}
\end{pmatrix},$$ $\begin{pmatrix}
\tilde{\sigma}_1 u_1 & \tilde{\sigma}_1 u_2 \\ \tilde{\sigma}_2 u_1 & \tilde{\sigma}_2 u_2
\end{pmatrix}^{-1} \in \GL_2(\cO_K)$ since $K/E$ is unramified.  
Change of variables says that $f^{\sigma}_{g, \underline{a}^\sigma_E, \underline{a}^\sigma_K}$ (resp. $f^\sigma_{g, \underline{a}^\sigma}$) is the sum of the first basis $\{f^\sigma_{g, \underline{a}^\sigma}\}$ (resp. $\{f^{\sigma}_{g, \underline{a}^\sigma_E, \underline{a}^\sigma_K}\}$) with integral coefficients. 
\end{proof}

For Iwahori subgroup $I$ of a finite product of $p$-adic groups $\prod\limits_{1 \leq i \leq l} H_i(E_i)$ considered in \S \ref{Notation}, let $\chi: T^0=\prod\limits_{1 \leq i \leq l} T^0_i \to A^\times$ be a locally analytic character, where $A=\sO(\Omega)$, $\Omega$ is an irreducible Zariski closed subspace of an affinoid subdomain of the weight space of $T^0$. 
And recall that $\chi_{\Omega}:T^0 \to A^\times$ is the universal character of $T^0$ for $\Omega$. 
We keep assuming that $A$ contains all Galois conjugates of $E_i$ and the splitting field of $H_i$ for $1 \leq i \leq l$.

We use variables $x^i=(x^i_1,\cdots,x^i_{d_i})$ to denote coordinates for $\psi^1_{I_i}$ introduced in \S \ref{la rep}.
$C^{s,1}(\Nbar_I, A)$ is defined to be the subspace of $C^{s,an}(\Nbar_I, A)$ whose elements can be expressed as sum of $ 
f^{\underline{\sigma}}_{g,\underline{a}^{\underline{\sigma}}}$ with coefficients tending to zero, indexed by $g \in \Nbar_I^1 / \Nbar_I^s \simeq \prod\limits_{1 \leq i \leq l} \Nbar_{I_i}^1 / \Nbar_{I_i}^s$, $\underline{\sigma}=(\sigma_1,\cdots,\sigma_l)$ running through all possible embeddings $\prod\limits_{1 \leq i \leq l} E_i \to (\overline{\QQ}_p)^l$, $\underline{a}^{\underline{\sigma}}=\prod\limits_{1 \leq i \leq l} \underline{a}^{\sigma_i}$ running through $\prod\limits_{1 \leq i \leq l} \ZZ_{\geq 0}^{d_i}$ and 
\[
f^{\underline{\sigma}}_{g, \underline{a}^{\underline{\sigma}}}(x):= \begin{cases} 
    \prod\limits_{1 \leq i \leq l}  (\sigma_i x^i_1)^{a_1^{\sigma_i}}\cdots (\sigma_i x^i_{d_i})^{a_{d_i}^{\sigma_i}} & x \in g\cdot \Nbar_I^s \\
      0 & x \not\in g\cdot \Nbar_I^s.
   \end{cases}
\]
Like $l=1$ case, there is another equivalent basis which is eigenbasis for $T^s=\prod\limits_{1 \leq i \leq l} T^s_i$ action by
\[
f^{\sigma}_{g, \underline{a}^\sigma_E, \underline{a}^\sigma_K}(x):= \begin{cases} 
    \prod\limits_{1 \leq i \leq l}  (\sigma x_{E_i}^1)^{a_{E_i,1}^{\sigma}}\cdots (\sigma x_{E_i}^{d_{e,i}})^{a_{E_i,d_{e,i}}^{\sigma}} & \\
    \cdot \prod\limits_{1 \leq i \leq l} (\sigma x_{K_i}^1)^{a_{K_i,1}^{\sigma}}\cdots (\sigma_{K_i} x_{K_i}^{d_{k,i}})^{a_{K_i,d_{k_i}}^{\sigma}} & x \in g\cdot \Nbar_I^s \\
      0 & x \not\in g\cdot \Nbar_I^s,
   \end{cases}
\] 
for $\sigma$ running through embedding from $K$ to $\overline{\QQ}_p$ and $\underline{a}^\sigma_E= \prod\limits_{1 \leq i \leq l} (a_{E_i,1}^\sigma,\cdots,a_{E_i,d_{e,i}}^\sigma)$ running through $\prod\limits_{1 \leq i \leq l} \ZZ^{d_{e,i}}_{\geq 0}$, $\underline{a}^\sigma_K= \prod\limits_{1 \leq i \leq l} (a_{K_i,1}^\sigma,\cdots,a_{K_i,d_{k,i}}^\sigma)$ running through $\ZZ^{d_{k,i}}_{\geq 0}$.
We make the following notation:
$$\sigma\delta^{E\pm}(t):=\prod\limits_{1 \leq i \leq l} \sigma\delta^{E_i+}_1(t)^{a_{E_i,1}^\sigma}\cdots \sigma\delta^{E_i+}_{d_{e,i}}(t)^{a_{E,d_{e,i}}^\sigma},$$ $$\sigma\delta^{K\pm}(t):=\prod\limits_{1 \leq i \leq l} \sigma\delta^{K_i+}_1(t)^{a_{K_i,1}^\sigma}\cdots \sigma\delta^{K_i+}_{d_{k,i}}(t)^{a_{K_i,d_{k,i}}^\sigma}.$$ Then similar calculations show
$$t\cdot f^{\sigma}_{g, \underline{a}^\sigma_E, \underline{a}^\sigma_K}=\chi(t)^{-1} \cdot \sigma\delta^{E+}(t) \cdot \sigma\delta^{K+}(t) \cdot f^{\sigma}_{g, \underline{a}^\sigma_E, \underline{a}^\sigma_K}$$ for any $x \in g \cdot \Nbar_I^s$ and any $t \in T^s=\prod\limits_{1 \leq i \leq l} T^s_i$.
The proof of Lem \ref{equ basis} shows that these two bases define the same subspace $C^{s,1}(\Nbar_I,A)$.
For an analytic Banach $\mathscr{O} (\Omega)[B_I]$-module $V$,
\begin{eqnarray*}
\mathrm{Ind}_{I}^{s} (V) & \simeq & C^{s,an}(\Nbar_I, V)\\
f & \mapsto & f|_{\Nbar_I}.\end{eqnarray*} 

Similarly we define $$C^{s,1}(\Nbar_I, V):=C^{s,1}(\Nbar_I, A) \hat{\otimes}_{A} V \subset C^{s,an}(\Nbar_I, A) \hat{\otimes}_{A} V \simeq C^{s,an}(\Nbar_I, V). $$ 
If $V$ is finite free eigen orthonormalizable with eigenbasis $e_1,\cdots, e_m$ for $T^0$ (notions of being convergent or bounded eigen orthonormalizable coincide for finite rank case), we will see in Prop \ref{CS1} this auxiliary module $C^{s,1}(\Nbar_I, V)$ has the advantageous structure being convergent eigen orthonormalizable in Def \ref{cons} with the basis $f^\sigma_{g,\underline{a}^\sigma} \otimes e_i$. 
In particular, $C^{s,1}(\Nbar_I, V)$ is orthonormalizable in the sense of \cite{Buz07} with the basis $f^\sigma_{g,\underline{a}^\sigma} \otimes e_i$.

Pick a Weyl group element $w=(w_1,\cdots,w_l) \in \prod\limits_{1 \leq i \leq l} W(H_i, S_i)$ and set \[ N_w:=w I w^{-1} \cap \prod\limits_{1 \leq i \leq l} N(E_i), ~~ \overline{N}_w:= w I w^{-1} \cap \prod\limits_{1 \leq i \leq l} \overline{N}(E_i). \] 
For the $wIw^{-1}$-representation $\Ind^s_{w,\chi} = \Ind^s_{wIw^{-1}}(\chi) \simeq C^{s,an}(w\Nbar_I w^{-1})$ defined in \S \ref{la rep}, we can similarly define \[ C^{s,1}_{w,\chi} := C^{s,1}(w\Nbar_I w^{-1}, A) \subset C^{s,an}(w\Nbar_I w^{-1}, A), \]
with orthonormalizable basis $f^{\sigma}_{g, \underline{a}^\sigma_E, \underline{a}^\sigma_K}$ with respect to $w\Nbar_I w^{-1} \simeq \cO_E^{d_e} \times \cO_K^{d_k}$ and indexed by $g \in w\Nbar_I w^{-1} / w\Nbar_I^s w^{-1} \simeq \Nbar_I / \Nbar_I^s$, embeddings $\sigma$ and degrees $\underline{a}^\sigma_E, \underline{a}^\sigma_K$.

 We use $\underline{\sigma x}^w(t)$ to denote $$\prod\limits_{1 \leq i \leq r}  \sigma_i(\delta^{E_i+}_1)^{w_i}(t_i)^{a_1^{\sigma_i}} \cdots \sigma_i(\delta^{E_i+}_{d_{e,i}})^{w_i}(t_i)^{a_d^{\sigma_i}} \cdot \prod\limits_{1 \leq i \leq r}  \sigma_i(\delta^{K_i+}_1)^{w_i}(t_i)^{a_1^{\sigma_i}} \cdots \sigma_i(\delta^{K_i+}_{d_{k,i}})^{w_i}(t_i)^{a_d^{\sigma_i}}.$$ 
 
Then similar calculations show
  $$t\cdot f^{\sigma}_{g, \underline{a}^\sigma_E, \underline{a}^\sigma_K}=\chi^w(t)^{-1} \cdot\underline{\sigma x}^w(t) \cdot f^{\sigma}_{g, \underline{a}^\sigma_E, \underline{a}^\sigma_K}$$ for any $x \in g\cdot w\overline{N_I}^{s}w^{-1}$ and any $t \in T^s$.

\begin{prop}
\label{CS1}
Let $V$ be a finite free eigen orthonormalizable Banach $A[B_I]$-module. 
The $A[T^s]$-module $C^{s,1}(\Nbar_I, V)$ is convergent eigen orthonormalizable, i.e. $C^{s,1}(\Nbar_I, V)$ is isomorphic to a $(\displaystyle\prod_{s \in S} V_s)^c$ in Def \ref{cons} as Banach $A[T^s]$ module with endowed norm. 
There is a natural continuous embedding of $$C^{s,1}(\Nbar_I, V) \hookrightarrow C^{s,an}(\Nbar_I, V)$$  of Banach $A$-modules with dense image.
\end{prop}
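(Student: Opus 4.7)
The plan is to exhibit $C^{s,1}(\Nbar_I, V)$ as a convergent eigen orthonormalizable module in the sense of Def \ref{cons} using the explicit basis $\{f^\sigma_{g, \underline{a}^\sigma_E, \underline{a}^\sigma_K} \otimes e_i\}$, where $\{e_i\}$ is a $T^0$-eigenbasis of $V$ with characters $\chi_{e_i}$. First I combine the character calculation displayed just before Lem \ref{equ basis} with $t \cdot e_i = \chi_{e_i}(t) e_i$ to see that each basis element is a $T^s$-eigenvector with character depending only on $(\sigma, \underline{a}^\sigma_E, \underline{a}^\sigma_K, i)$ --- the coset label $g$ drops out because $T^s$ normalises each $\Nbar_I^s$-coset by conjugation, so the action permutes basis vectors only within a single coset up to the computed scalar.

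Second, I group basis elements by the resulting $T^s$-character: for each such character $\chi_s$ let $V_s$ be the $A$-submodule spanned by the basis vectors carrying $\chi_s$. The claim is that $V_s$ is finite free over $A$. The labels $\sigma$, $i$, $g$ each range over finite sets, and for fixed $(\sigma, i)$ the assignment $(\underline{a}^\sigma_E, \underline{a}^\sigma_K) \mapsto \chi_{e_i}^{-1} \cdot \sigma\delta^{E+} \cdot \sigma\delta^{K+}$ is injective since the root characters $\delta^{E+}_j, \delta^{K+}_j$ are multiplicatively $\ZZ$-independent in the continuous character group of $T^s$ (they are restrictions of the roots of the regular torus in a reductive group), and this independence is preserved by any fixed embedding $\sigma$. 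Hence each $T^s$-character is hit by finitely many labels, making $V_s$ of finite rank.

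Third, I embed $S := \{\chi_s\}$ into $\ZZ_{\leq 0}^l$ (with $l$ large enough to record the finitely many auxiliary labels $\sigma, i$) by sending a representative tuple to $-(\underline{a}^\sigma_E, \underline{a}^\sigma_K)$ padded with entries distinguishing the finite index data, so that the tail condition in Def \ref{cons} corresponds exactly to $|\underline{a}^\sigma_E| + |\underline{a}^\sigma_K| \to \infty$. By construction, $C^{s,1}(\Nbar_I, V)$ consists of basis expansions whose coefficients tend to zero in precisely this sense, and its norm is the supremum of the coefficient absolute values because $\{f^\sigma_{g,\underline{a}^\sigma_E,\underline{a}^\sigma_K}\}$ is an orthonormal basis of $C^{s,1}(\Nbar_I, A)$ and $\{e_i\}$ is an orthonormal basis of $V$. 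This yields an isometric $A[T^s]$-linear isomorphism $C^{s,1}(\Nbar_I, V) \simeq (\prod_{s \in S} V_s)^c$.

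For the continuous dense embedding into $C^{s,an}(\Nbar_I, V)$, continuity is immediate: each $f^\sigma_{g, \underline{a}^\sigma_E, \underline{a}^\sigma_K}$, expressed in the scaled Tate coordinates $z_j$ of $\psi^s_{I,p}$ on its supporting coset, becomes a polynomial whose coefficients have absolute value $\leq 1$ (being products of $\sigma g_i$ and powers of $\varpi^{s-1}$), so the identity inclusion has operator norm $\leq 1$. For density, on each $\Nbar_I^s$-coset the polynomials in the $z_j$ are dense in the Tate algebra; the change of basis from $(z_j)_j$ to $(\sigma x_i)_{\sigma, i}$ is the Vandermonde-type matrix coming from the embeddings of $\cO_E$ (or $\cO_K$) into $\overline{\QQ}_p$, which is invertible over $A$ since $A$ contains all Galois conjugates. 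Hence the $A$-span of $\{f^\sigma_{g,\underline{a}^\sigma_E,\underline{a}^\sigma_K}\}$ is already dense in $C^{s,an}(\Nbar_I, A)$, and tensoring with the finite free module $V$ preserves this density. The main obstacle I anticipate is the clean bookkeeping of step two: verifying that after any fixed embedding $\sigma$ the finitely many negative-root characters remain multiplicatively independent so that $V_s$ is genuinely finite rank, and organising the resulting labels into an index set $S \subset \ZZ^l_{\leq 0}$ compatibly with the convergence condition in Def \ref{cons}.
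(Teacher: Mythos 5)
Your overall strategy — use the explicit eigenbasis $\{f^\sigma_{g,\underline{a}^\sigma_E,\underline{a}^\sigma_K}\otimes e_i\}$, group by $T^s$-character, and argue density through the invertibility of the Vandermonde-type matrix of Galois conjugates — is the same as the paper's, so the overall architecture is sound. However, there is a genuine error in your step two, and a looser spot in step three.

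The claim that the root characters $\delta^{E+}_j,\delta^{K+}_j$ are multiplicatively $\ZZ$-independent is false, and so is the resulting injectivity of $(\underline{a}^\sigma_E,\underline{a}^\sigma_K)\mapsto \sigma\delta^{E+}\cdot\sigma\delta^{K+}$. These characters are all the positive roots (one per coordinate of $\Nbar_I^s$), of which there are $\dim\mathscr{N}_B$ many, living in a character lattice of much smaller rank. Already for $\GL_3$ one has $e_1-e_3=(e_1-e_2)+(e_2-e_3)$, so the assignment is not injective. The conclusion you want — that each $V_s$ is finite free — is nevertheless correct, but for a different reason: the positive roots all have strictly positive height, so any algebraic weight $\mu$ equals $\sum a_j\alpha_j$ with $a_j\in\ZZ_{\geq 0}$ only for finitely many $\underline{a}$ (bounded by the height of $\mu$). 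You should replace ``injective by independence'' with ``finite-to-one by positivity of heights.'' This is the multiplicity statement that Prop.\ \ref{weights in Ds1} records; composing with a fixed embedding $\sigma$ does not change it.

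A secondary issue: in step three you assert that $\{f^\sigma_{g,\underline{a}^\sigma_E,\underline{a}^\sigma_K}\}$ is an orthonormal basis, which is how you obtain both the sup-norm identity and the uniqueness of coefficient expansions. But this family is not the canonical orthonormal basis of $C^{s,an}(\Nbar_I,A)$ (that one, denoted $f^\circ_{g,\underline{a}}$ in Rem.\ \ref{ano basis}, is built from the $\ZZ_p$-coordinates $z_j$). The paper gets around this in two moves: it shows that the transition from $f^\sigma$ to $f^\circ$ is lower-triangular with integral coefficients (giving continuity of the inclusion and that convergent $f^\sigma$-sums do land in $C^{s,an}$), and it proves injectivity of $(\prod_s V_s)^c\to C^{s,an}(\Nbar_I,V)$ by invoking Lem.\ \ref{rep of t} to reduce a would-be relation to a finite linear combination of the $f^\sigma$, which is visibly nonzero. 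The isometry with $(\prod_s V_s)^c$ is then a definition — the Banach structure on $C^{s,1}$ is \emph{declared} to be the one transported from $(\prod_s V_s)^c$ — rather than a comparison with the subspace norm from $C^{s,an}$. Your writeup should be careful to separate the continuity of the inclusion (which you argue correctly via integral coefficients) from any claimed isometry, and should supply the injectivity/uniqueness argument rather than assert orthonormality.
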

We defer the proof to section \S \ref{analysis}.

 \begin{rem}
We dualize $C^{s,1}(\Nbar_I,V)$ to get $\DD^{s,1}_I(V):=\cL_A(C^{s,1}(\Nbar_I,V), A)$. 
As the image of $C^{s,1}(\Nbar_I, V)$ in $C^{s,an}(\Nbar_I, V)$ is dense by Prop \ref{CS1}, we have a continuous embedding of $\DD^s_I(V) \hookrightarrow \DD^{s,1}_I(V)$. 
In particular, $$\DD_{w, \chi}^s \hookrightarrow \DD_{w, \chi}^{s,1}:=\cL_A(C^{s,1}_{w,\chi}, A).$$
The motivation for introducing $\DD_{w, \chi}^{s,1}$ is that $\DD_{w, \chi}^s$ itself is not bounded eigen orthonormalizable, and $\DD_{w, \chi}^{s,1}$ can be regarded as a completion of $\DD_{w, \chi}^s$ with respect to $T^0$.
\end{rem}
\begin{prop}\label{D1 beo}
Let $A=\sO(\Omega)$ in the Def \ref{cons}. The $A[T]$-module $\DD_{w, \chi}^{s,1} \simeq (\displaystyle\prod_{s \in S} V_s)^b$ for some $S \subset \mathbb{Z}^m_{\leq 0}$ is bounded eigen orthonormalizable as $T$-unitary Banach representation.  \end{prop}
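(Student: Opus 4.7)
The proposition is a formal consequence of the convergent eigen orthonormalizable structure on the predual supplied by Proposition \ref{CS1}, the duality Lemma \ref{dual}, and the extension-of-torus-action Lemma \ref{ext of t0}. I would carry it out in three steps.

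\emph{Step 1 (predual structure).} By the analogue of Proposition \ref{CS1} for the conjugated Iwahori group $wIw^{-1}$ and its opposite unipotent $w\Nbar_I w^{-1}$ (the analytic coordinates on $w\Nbar_I w^{-1}$ are exactly those introduced at the start of \S\ref{la rep}), the Banach $A[T^s]$-module $C^{s,1}_{w,\chi}$ is convergent eigen orthonormalizable. Concretely, the eigenbasis $\{f^{\sigma}_{g,\underline{a}^\sigma_E,\underline{a}^\sigma_K}\}$ constructed in \S\ref{auxiliary modules} satisfies
\[
t\cdot f^{\sigma}_{g,\underline{a}^\sigma_E,\underline{a}^\sigma_K}
=\chi^{w}(t)^{-1}\cdot\underline{\sigma x}^{w}(t)\cdot f^{\sigma}_{g,\underline{a}^\sigma_E,\underline{a}^\sigma_K}, \qquad t\in T^s.
\]
Grouping the (finitely many) basis vectors that share a common eigencharacter—obtained by varying the coset representative $g$—into a finite-free summand $V_s$, one obtains $C^{s,1}_{w,\chi}\simeq \bigl(\prod_{s\in S} V_s\bigr)^c$ in the sense of Def \ref{cons}.

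\emph{Step 2 (dualize).} Since $\DD^{s,1}_{w,\chi}:=\cL_A(C^{s,1}_{w,\chi},A)$, Lemma \ref{dual} gives
\[
\DD^{s,1}_{w,\chi}\simeq \Bigl(\prod_{s\in S} V_s^{\ast}\Bigr)^b,
\]
which is bounded eigen orthonormalizable by definition, with $T^s$-eigencharacters inverse to those on the predual. To embed $S$ inside $\mathbb{Z}^m_{\leq 0}$, send each eigencharacter class to the negative of its exponent vector $-(\underline{a}^\sigma_E,\underline{a}^\sigma_K)$ concatenated across the embeddings $\sigma$; the non-negativity of the $a_i^\sigma$ places the image in $\mathbb{Z}^m_{\leq 0}$, and the inversion built into dualization justifies the sign convention chosen in Def \ref{cons}.

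\emph{Step 3 (upgrade to $T$).} Here $T$ is the compact torus $\mathscr{T}(\mathcal{O}_E)=T^0$, which acts on $C^{s,1}_{w,\chi}$ compatibly with the $T^s$-action via the embedding $T^0\subset wIw^{-1}$, and $T^0/T^s$ is finite. By Lemma \ref{ext of t0}, the induced dual $T^0$-action on $\DD^{s,1}_{w,\chi}$ is given fiberwise on the $V_s^\ast$ (with $T^0$ permuting the summands by characters in the manner computed in \S\ref{auxiliary modules}), is nice, and preserves the bounded eigen orthonormalizable structure. This yields the stated isomorphism as an $A[T]$-module.

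\textbf{Main obstacle.} The heavy lifting is in Step 1, which is entirely contained in Proposition \ref{CS1} (deferred to \S\ref{analysis}); granted that, the present proposition is largely bookkeeping. The one subsidiary check I expect to verify carefully is that distinct tuples $(\sigma,\underline{a}^\sigma_E,\underline{a}^\sigma_K)$ yield distinct continuous characters of $T^s$, so that the grouping $s\mapsto V_s$ is well-defined and $\chi_s\neq\chi_{s'}$ for $s\neq s'$. This reduces to the linear independence of the weights $\sigma\delta^{E+}_i$ and $\sigma\delta^{K+}_i$ as $\sigma$ runs over embeddings into $\overline{\mathbb{Q}}_p$ and $i$ runs over the relevant roots, which is standard and comes from the distinctness of the embeddings on the principal units together with root-space independence.
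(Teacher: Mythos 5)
Your proof matches the paper's own argument: identify $\DD^{s,1}_{w,\chi}$ as the Banach dual of $C^{s,1}_{w,\chi}$, apply Proposition \ref{CS1} to see the predual is convergent eigen orthonormalizable, then apply Lemma \ref{dual}. Your Step 3, invoking Lemma \ref{ext of t0} to supply the full $T$-action rather than only the $T^s$-action, is not spelled out in the paper's two-line proof but is legitimate bookkeeping that the phrase ``as $T$-unitary Banach representation'' quietly requires; the subsidiary injectivity check you flag at the end (distinct tuples give distinct $T^s$-characters) is likewise absorbed into the construction of the index set $S$ inside the proof of Proposition \ref{CS1}. One small clarification: there is no ``inversion built into dualization'' on the index set — by construction the dual assignment $s \mapsto V_s^\ast$ reuses the \emph{same} $S \subset \mathbb{Z}^m_{\leq 0}$, and the embedding $\imath$ already carries the exponent vector $(\underline{a}^\sigma_E,\underline{a}^\sigma_K)$ to its negative; dualizing replaces each $V_s$ by $V_s^\ast$ but does not re-index.
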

\begin{proof}
Note that $\DD_{w, \chi}^{s,1}  =  \mathcal{L}_A(\mathrm{Ind}_{w, \chi}^{s,1}, A)$ and $\mathrm{Ind}_{w, \chi}^{s,1}  \simeq  C^{s,1}_{w,\chi} \subset C^{s,an}(w\Nbar_Iw^{-1}, A)$, we see from proposition \ref{CS1} $\mathrm{Ind}^{s,1}_{w, \chi}$ is of the form $(\displaystyle\prod_{s \in S} V_s)^c$. Now the conclusion follows from the Lem \ref{dual}.
\end{proof}
 We give a basis of $\DD_{w, \chi}^{s,1}$ by 
$$f^{\vee,\sigma}_{g, \underline{a}^\sigma_E, \underline{a}^\sigma_K}(f^{\sigma'}_{g', \underline{a'}^{\sigma'}_E, \underline{a'}^{\sigma'}_K}) := \begin{cases}
1 & g^{-1}g' \in w\Nbar_I^{s}w^{-1}, \sigma=\sigma' ~ \mathrm{and} ~ \underline{a}^\sigma_E=\underline{a'}^{\sigma'}_E, \underline{a}^\sigma_K=\underline{a'}^{\sigma'}_K\\
0 & \mathrm{otherwise}.
\end{cases}$$ 
And by the Lem \ref{dual}, $\DD_{w, \chi}^{s,1}$ is bounded eigen orthonormalizable, i.e., an element of $\DD_{w, \chi}^{s,1}$ can be expressed as a sum of $f^{\vee,\sigma}_{g, \underline{a}^\sigma_E, \underline{a}^\sigma_K}$ with bounded coefficients. And
\begin{eqnarray*} t \cdot f^{\vee,\sigma}_{g, \underline{a}^\sigma_E, \underline{a}^\sigma_K}=\chi^w(t) \cdot \underline{\sigma x}^w(t)^{-1} \cdot f^{\vee,\sigma}_{g, \underline{a}^\sigma_E, \underline{a}^\sigma_K} \end{eqnarray*} for $t \in T^s$.

\begin{prop}
\label{weights in Ds1}
The weights appearing in $\mathbb{D}_{w,\chi}^{s,1}$ 
are exactly those of the form $\chi^w \cdot \mu^-$, where $\mu^-$ is an algebraic nonpositive weight with respect to $\prod\limits_{1 \leq i \leq l} w_i\cN_{B_i}w_i^{-1}$, i.e. $\mu^-$ can be written as a nonnegative integral linear combination of negative roots in $\sqcup_{1 \leq i \leq l} w_i\Delta^-_{H_i,p}$ with respect of the Borel pairs $(w_i\cB_i w_i^{-1}, w_i\cT_i w_i^{-1})$ and 
$w_i\Delta_{H_i,p} = w_i\Delta^+_{H_i,p}  \sqcup w_i\Delta^-_{H_i,p}$ for $1 \leq i \leq l$. 
Moreover, the multiplicity of $\chi^w$ ($\mu^- $ trival) in $\mathbb{D}_{w,\chi}^{s,1}$ equals to the index of $w\Nbar_I^sw^{-1} \trianglelefteq w\Nbar_Iw^{-1}= \prod\limits_{1 \leq i \leq l} |\overline{\mathscr{N}}_{B_{i}}(\cO_{E_{i}}/\varpi_{i}^{s-1} \cO_{E_i})|$. 
Especially the highest weights correspond to $\prod\limits_{1 \leq i \leq r} \overline{\mathscr{N}}_{B_{v_i}}(\cO_{F^+_{v_i}}/\varpi_{v_i}^{s-1} \cO_{F^+_{v_i}})$.
\end{prop}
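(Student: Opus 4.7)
The plan is to read off the weight decomposition directly from the explicit eigenbasis of $\mathbb{D}_{w,\chi}^{s,1}$ displayed just before the proposition. Proposition \ref{D1 beo} together with the $T^s$-action formula
\[
t\cdot f^{\vee,\sigma}_{g,\underline{a}^\sigma_E,\underline{a}^\sigma_K}=\chi^w(t)\cdot\underline{\sigma x}^w(t)^{-1}\cdot f^{\vee,\sigma}_{g,\underline{a}^\sigma_E,\underline{a}^\sigma_K}\qquad(t\in T^s)
\]
exhibits the $f^{\vee,\sigma}_{g,\underline{a}^\sigma_E,\underline{a}^\sigma_K}$'s as a topological eigenbasis. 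By Lemma \ref{eigenweight}, any $T^s$-eigenvector in a bounded eigen orthonormalizable $A$-module (with $A=\mathscr{O}(\Omega)$ an integral domain) lies in a single isotypic component, so the set of $T^s$-weights of $\mathbb{D}_{w,\chi}^{s,1}$ coincides with the set of characters $\chi^w\cdot\underline{\sigma x}^w(\,\cdot\,)^{-1}$ obtained as $(\sigma,\underline{a}^\sigma_E,\underline{a}^\sigma_K)$ ranges.

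Next I would translate $\underline{\sigma x}^w(\cdot)^{-1}$ into root-theoretic language. By construction, each factor $\sigma\bigl((\delta^{E_i+}_j)^{w_i}\bigr)$ (and the analogous $K_i$-factor) is the $\sigma$-component of the $\mathbb{Q}_p$-algebraic positive root $w_i\cdot\delta^{E_i+}_j$ of $w_i\mathcal{T}_iw_i^{-1}$ on $\mathrm{Lie}(w_i\mathcal{N}_{B_i}w_i^{-1})$, lying in the $\sigma$-block of $w_i\Delta^+_{H_i,p}$ from \S\ref{Notation}. Since the exponents lie in $\mathbb{Z}_{\geq 0}$, the character $\underline{\sigma x}^w(\cdot)^{-1}$ is exactly a nonnegative integral combination of roots in $\sqcup_{1\leq i\leq l}w_i\Delta^-_{H_i,p}$, i.e.\ an algebraic nonpositive weight $\mu^-$ with respect to $\prod_i w_i\mathcal{N}_{B_i}w_i^{-1}$; conversely every such $\mu^-$ is realized by expanding $-\mu^-$ on the positive root basis (indexed by embeddings) and reading off the corresponding tuple $(\sigma,\underline{a}^\sigma_E,\underline{a}^\sigma_K)$. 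This establishes the first claim.

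For the multiplicity statement I would specialize to $\mu^-=0$, which forces $\underline{a}^\sigma_E=0=\underline{a}^\sigma_K$ for every $\sigma$. The resulting basis vectors $f^{\vee}_{g,0,0}$ are independent of $\sigma$, since the underlying induction function $f_{g,0,0}$ is simply the characteristic function of the coset $g\cdot w\Nbar_I^s w^{-1}$, so they are canonically indexed by the coset space $w\Nbar_I^s w^{-1}\backslash w\Nbar_I w^{-1}$. Using the coordinate descriptions $\psi^1_w,\psi^s_w$ of $w\Nbar_I w^{-1}, w\Nbar_I^s w^{-1}$ from \S\ref{la rep} together with the transition assumption that $(\psi^1_w)^{-1}\circ\psi^s_w$ is multiplication by $\varpi^{s-1}$, this coset space has cardinality $\prod_{1\leq i\leq l}|\overline{\mathscr{N}}_{B_i}(\mathcal{O}_{E_i}/\varpi_i^{s-1}\mathcal{O}_{E_i})|$, as claimed. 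The last sentence on ``highest weights'' is then immediate since $0$ is the unique maximum among nonpositive weights for $\prod_i w_i\mathcal{N}_{B_i}w_i^{-1}$.

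The principal (and rather mild) bookkeeping obstacle is matching the $T$-characters appearing on the $\psi^s_{I,K}$-coordinates (spelled out at the start of \S\ref{la rep}) with the embedding-indexed decomposition of the split root system $\Delta_{H_i,p}$ of $R_{E_i/\mathbb{Q}_p}H_i$ recorded in \S\ref{Notation}; once this identification is made, the rest of the argument is a direct computation against the eigenbasis.
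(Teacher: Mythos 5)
Your proposal is correct and follows essentially the same route as the paper's (very terse) proof: read off the $T^s$-weight of each dual basis vector $f^{\vee,\sigma}_{g,\underline a^\sigma_E,\underline a^\sigma_K}$ from the explicit action formula, invoke Lemma~\ref{eigenweight} to rule out any other eigenvectors in the bounded eigen orthonormalizable module, identify $\underline{\sigma x}^w(\cdot)^{-1}$ with nonnegative integral combinations of roots in $\sqcup_i w_i\Delta^-_{H_i,p}$, and read off the highest-weight multiplicity from the $\underline a=\vec 0$ stratum, which is indexed by the coset space $w\Nbar_Iw^{-1}/w\Nbar_I^sw^{-1}$. Your added remark that the $\underline a=\vec 0$ vectors are $\sigma$-independent (the function degenerating to a coset characteristic function) correctly accounts for why the degree-zero part is indexed by cosets alone rather than by pairs $(\sigma,g)$, a point the paper elides.
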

\begin{proof}
The formula before Prop \ref{weights in Ds1} combined with Lem \ref{eigenweight} gives the proof. The highest weights correspond to $\underline{a}=\vec{0}$ and arbitrary $g \in w\Nbar_Iw^{-1}/w\Nbar_I^{s}w^{-1} \simeq \Nbar_I/\Nbar_I^{s} \simeq \prod\limits_{1 \leq i \leq l} |\overline{\mathscr{N}}_{B_{i}}(\cO_{E_{i}}/\varpi_{i}^{s-1} \cO_{E_i})|$.
\end{proof}

\begin{prop}
\label{aux mod}
$C^{s,1}(\Nbar_I, V)$ is $I^s_0$-stable, and it is a locally analytic $I^s_0$ representation. 
$\DD^{s,1}_I(V)$ is a continuous $D(I^s_0, \QQ_p)$ module, and $\DD_I^s(V) \hookrightarrow \DD_I^{s,1}(V)$ is $I^s_0$ equivariant. 
Moreover, any group element of $I^s_0$ induces a nice automorphism of $\DD_I^{s,1}(V)$ in the sense of Def \ref{nice}. 
In particular, these properties hold for $C^{s,1}_{w,\chi}$ as a dense $wI^{s,\circ}w^{-1}$-stable submodule of $C^{s,an}_{w,\chi}$ and $\DD_{w, \chi}^s \hookrightarrow \DD_{w, \chi}^{s,1}$. 
\end{prop}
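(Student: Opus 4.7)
The plan is to exploit the Iwahori decomposition $I^s_0 = \Nbar_I \cdot T^0 \cdot N_I^s$ to verify $I^s_0$-stability and local analyticity of $C^{s,1}(\Nbar_I, V)$ one factor at a time, and then dualize. For $g \in I$ and $\bar{n} \in \Nbar_I$, the Iwahori decomposition $g^{-1}\bar{n} = \bar{n}'(g,\bar{n}) \cdot t(g,\bar{n}) \cdot n'(g,\bar{n})$ governs the action $(g \cdot f)(\bar{n}) = t(g,\bar{n}) \cdot f(\bar{n}'(g,\bar{n}))$ on $\mathrm{Ind}^s_I(V) \simeq C^{s,an}(\Nbar_I, V)$. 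For $g = \bar{m} \in \Nbar_I$ this is just left translation, which permutes the $\Nbar_I^s$-cosets and sends each globally analytic piece $\tilde{f}_{g_0}$ to the globally analytic function $\tilde{f}_{g_0}(\bar{m}^{-1}\cdot)$. For $g = t_0 \in T^0$, conjugation by $t_0$ is an automorphism of $\Nbar_I$ normalising $\Nbar_I^s$, so it permutes the cosets and rescales the polynomial factors, while the $V$-value is multiplied by the $B_I$-character. In both cases $C^{s,1}$ is preserved.

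The delicate case is $g \in N_I^s$. Using $g^{-1}\bar{n} = \bar{n} \cdot (\bar{n}^{-1} g^{-1} \bar{n})$ together with the normality of $I^s$ in $I$ and the inclusion $g \in N_I^s \subset I^s$, one obtains $\bar{n}^{-1}g^{-1}\bar{n} \in I^s = \Nbar_I^s \cdot T^s \cdot N_I^s$; in particular its lower-unipotent part $\bar{h}(g, \bar{n})$ lies in $\Nbar_I^s$ and depends analytically on $\bar{n}$. Consequently $\bar{n}'(g, \bar{n}) = \bar{n} \cdot \bar{h}(g, \bar{n})$ lies in the same $\Nbar_I^s$-coset as $\bar{n}$, so the transformation preserves each coset setwise. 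Composing a globally analytic function with the small analytic perturbation $\bar{n} \mapsto \bar{n}\bar{h}(g,\bar{n})$ and multiplying by the analytic character $t(g, \bar{n})$ again yields a globally analytic function, so $C^{s,1}$ is $N_I^s$-stable. Local analyticity of the $I^s_0$-representation then follows because $\bar{n}'(g,\bar{n})$ and $t(g,\bar{n})$ are jointly analytic in $g \in I^s_0$, and $V$ carries a locally analytic $B_I$-action by assumption.

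To pass to $\DD^{s,1}_I(V)$ I would dualize: continuous $A$-linear duality converts the locally analytic $I^s_0$-representation $C^{s,1}(\Nbar_I, V)$ into a continuous $D(I^s_0, \QQ_p)$-module via the standard correspondence of \cite{ST02, ST03}. The inclusion $\DD^s_I(V) \hookrightarrow \DD^{s,1}_I(V)$ is the transpose of the dense $I^s_0$-equivariant inclusion $C^{s,1}(\Nbar_I, V) \hookrightarrow \mathrm{Ind}^s_I(V)$ supplied by Prop \ref{CS1}, hence is $I^s_0$-equivariant. For niceness, Lemma \ref{dual nice} applies directly: Prop \ref{CS1} identifies $C^{s,1}(\Nbar_I, V)$ as convergent eigen orthonormalizable for $T^s$, and the action of any $g \in I^s_0$ is a continuous $A$-linear endomorphism of this space, so its transpose is a nice automorphism of the bounded eigen orthonormalizable module $\DD^{s,1}_I(V)$. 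The analogous statements for $C^{s,1}_{w,\chi}$ and $\DD^{s,1}_{w,\chi}$ follow by conjugating all Iwahori decompositions by $w$, with $I^s_0$ replaced by $wI^s_0 w^{-1}$. The main obstacle is the $N_I^s$-stability; the identity $g^{-1}\bar{n} = \bar{n} \cdot (\bar{n}^{-1}g^{-1}\bar{n})$ combined with $I^s \trianglelefteq I$ is the essential observation, and crucially uses $g \in N_I^s$ rather than the full $N_I$.
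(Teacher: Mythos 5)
Your global plan --- reduce via the Iwahori decomposition $I^s_0 = \Nbar_I \times T^0 \times N^s_I$ to one factor at a time and then dualize via Lem \ref{dual nice} --- matches the paper's strategy (the paper's Lem \ref{adm int} performs the same factor-by-factor reduction). The observation that $\bar n^{-1}g^{-1}\bar n \in I^s$ by normality of $I^s$, hence $\bar n'(g,\bar n)$ stays in the coset $\bar n\Nbar_I^s$, is correct and is used in the paper implicitly. But there are two genuine gaps in the $N^s_I$ case, and they are precisely the technical heart of the paper's proof.

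First, the assertion that ``composing a globally analytic function with the small analytic perturbation $\bar n \mapsto \bar n\,\bar h(g,\bar n)$ \dots\ again yields a globally analytic function'' is exactly what must be proved, and $\bar h(g,\bar n) \in \Nbar_I^s$ alone does not give it. A function in $C^{s,1}$ on a coset is the restriction of a power series converging on all of $\Nbar_I^1$ in the $x$-coordinates; for the composite with $\hat g_n$ to remain in $C^{s,1}$ the coordinate functions of $\hat g_n$ must themselves be rigid on $\Nbar_I^1$, not merely locally $p$-adic analytic, and a priori the Iwahori-decomposition projection is only analytic on small neighbourhoods. The paper proves the stronger property that $\hat g_n$ is overconvergent with integral coefficients (Lem \ref{adm int}), via a Gram--Schmidt computation carried out in the ring $\cO_E[[\varpi x_1,\ldots,\varpi x_m]]$; combined with Lem \ref{comp adm}, this is what makes the composition close in $C^{s,1}$. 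Your ``small perturbation'' intuition points in the right direction but does not substitute for this. Second, ``multiplying by the analytic character $t(g,\bar n)$'' hides a hypothesis: for $\chi(\hat g_t(\bar n))$ to lie in $C^{s,1}$ one needs $\chi|_{T^s}$ to be globally rigid in the $i^s_T$ coordinates, which is not automatic for a locally analytic character. This is exactly why the standing assumption $s \geq s[\Omega]$ and the unique decomposition of $\chi_\Omega|_{T^s}$ into $(\sigma,E)$-analytic characters (Cor \ref{unique ana decom}) are invoked in the paper's proof. You also implicitly restrict to $\GL_n$: the paper reduces the symplectic and unitary cases to $\GL_{2n}$ via the explicit $i^s_{\GL}$, $p^s_{\GL}$ of \S\ref{analysis}. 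The niceness conclusion via Lem \ref{dual nice} is fine once stability is established.
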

We leave the proof to section \S \ref{analysis}, which reduces to the $\GL_n$ case.

\section{Some $p$-adic Banach analysis statements}\label{analysis}
In this section we include some statements about $p$-adic Banach analysis, mostly in terms of analysing coefficients and coordinates. 

We start from proving the assumptions used in the beginning of \S \ref{la rep} for $H_{/ E}$. We call this choice of coordinates \emph{standard}. In all the cases, our chosen Borel subgroup and Iwahori subgroup are implicit.

When $H=\GL_n$: coordinates of $\Nbar_I^s \times T^0$ correspond to entries of lower triangular matrices of radius $|\varpi|^s$ and diagonal entries of standard $n$-tuples $\cO_E^\times$.

When $H=\mathrm{Sp}_{2n}$: we use \begin{eqnarray*} \psi_T : (\cO_E^\times)^n \simeq T^0 & \hookrightarrow & I \hookrightarrow \GL_{2n}(\cO_E) \\
(x_1^\ast,\cdots,x_n^\ast) & \mapsto & \mathrm{diag}(x_1^\ast,\cdots,x_n^\ast,(x_1^\ast)^{-1},\cdots,(x_n^\ast)^{-1}) \end{eqnarray*} to denote a set of standard coordinates on $T^0$. We embed $\mathscr{H}(\cO_E) \hookrightarrow \GL_{2n}(\cO_E)$ such that  
\begin{eqnarray*}
 \psi^s_I \times \psi_T : \cO_E^d \times (\cO_E^\times)^n \xrightarrow{\sim} \Nbar_I^s \times T^0 & \hookrightarrow & \GL_{2n}(\cO_E) \\
 (x_1,\cdots,x_{n^2}) \times (x_1^\ast,\cdots,x_n^\ast) & \mapsto & \begin{pmatrix} 
 D & 0 \\
 \varpi^s X & D^{-1}
 \end{pmatrix}, \\
 \mathrm{where} ~~ X & = & \begin{pmatrix}
 x_1 & \cdots & x_n \\
 \vdots & \ddots & \vdots \\
 x_{n^2-n+1} & \cdots & x_{n^2}
 \end{pmatrix} \\ D & = & \mathrm{diag}(x_1^\ast,\cdots,x_n^\ast).
 \end{eqnarray*} It is clear that there is a projection $p_{\GL}^s$ realizing the embedding $i_{\GL}^s: \Nbar_I^s \hookrightarrow \Nbar_{I_{\GL_{2n}}}^s$ as a section of it with respect to the standard coordinates of $\Nbar_{I_{\GL_{2n}}}^s$ by simply forgetting coordinates of lower triangular parts of both top left $n \times n$ block and bottom right $n \times n$ block.
 
When $H$ is a unitary group defined by an unramified quadratic extension $K/E$ (Gal($K/E$)$\simeq\{1,c\}$) and Hermitian form 
\[
J_n = \begin{pmatrix}
0 & \Psi_n \\
-\Psi_n & 0
\end{pmatrix}, 
\] where $\Psi_n$ is the matrix with $1$’s on the anti-diagonal and $0$’s elsewhere: we use \begin{eqnarray*} \psi_T : (\cO_K^\times)^n \simeq T^0 & \hookrightarrow & I \hookrightarrow \GL_{2n}(\cO_K) \\
(x_1^\ast,\cdots,x_n^\ast) & \mapsto & \mathrm{diag}(x_1^\ast,\cdots,x_n^\ast,c(x_n^\ast)^{-1},\cdots,c(x_1^\ast)^{-1}) \end{eqnarray*} to denote a set of standard coordinates on $T^0$. We embed $\mathscr{H}(\cO_E) \hookrightarrow \GL_{2n}(\cO_K)$ such that 
\begin{eqnarray*}
 \psi^s_I \times \psi_T : \cO_E^d \times (\cO_K^\times)^n & \xrightarrow{\sim} & \Nbar_I^s \times T^0 \hookrightarrow \GL_{2n}(\cO_K) \\
 \underline{x} \times (x_1^\ast,\cdots,x_n^\ast) & \mapsto & \begin{pmatrix} 
 X & 0 \\
 X' \Psi_n X & \Psi_n ({}^{t}X^c)^{-1} \Psi_n
\end{pmatrix}, \\
 \mathrm{where} ~~ \underline{x} & = & \left((x_E^1,\cdots,x_E^n),(x_K^1,\cdots,x_K^{n(n-1)})\right) \\
  X & = & D + \varpi^s\cdot\begin{pmatrix}
 0 & 0 & \cdots & 0 \\
 x_K^{\frac{n(n-1)}{2}+1} & 0 & \cdots & 0 \\
 \vdots & \vdots & \ddots & \vdots \\
 x_K^{n^2-2n+1} & x_K^{n^2-2n+2} & \cdots & 0
 \end{pmatrix},  \\
 D & = & \mathrm{diag}(x_1^\ast,\cdots,x_n^\ast) \\
 X' & = & \varpi^s(X'_d +X'_{ut} + {}^tX'^c_{ut}), \\
 \mathrm{where} ~~ X'_d & = & \mathrm{diag}(x_E^1,\cdots,x_E^n), \\
 X'_{ut} & = &\begin{pmatrix}
 0 & x_K^1 & \cdots & x_K^{n-2} & x_K^{n-1} \\
 0& 0 & \cdots & x_K^{2n-4} & x_K^{2n-3} \\
 \vdots & \vdots & \ddots & \ddots & \vdots \\
 0 & 0 & \cdots & 0 & x_K^{\frac{n(n-1)}{2}} \\
0 & 0 & \cdots & 0 & 0
 \end{pmatrix}.
 \end{eqnarray*}
Choose any splitting $p_{K/E}: \cO_K \to \cO_E$, we use $\bar{t}$ to denote a map from $n \times n$ matrices over $\cO_K$ to $n \times n$ Hermitian matrices over $\cO_E$  \begin{eqnarray*}
\bar{t}: M_{n \times n}(\cO_K) & \twoheadrightarrow & H_{n \times n}(\cO_K/\cO_E):=\{M \in M_{n \times n}(\cO_K) | M={}^tM^c\} \\
\begin{pmatrix} x_{11} & \cdots & x_{1n} \\ \ddots & \vdots & \ddots \\ x_{n1} & \cdots & x_{nn} \\ \end{pmatrix} & \mapsto & \begin{pmatrix} p_{K/E}(x_{11}) & \cdots & x_{1n} \\ \ddots & \vdots & \ddots \\ c(x_{1n}) & \cdots & p_{K/E}(x_{nn}) \\ \end{pmatrix},
\end{eqnarray*} i.e., $M \mapsto \bar{M}$ such that $\bar{M}_{ij}=\begin{cases} M_{ij} & i<j \\ p_{K/E}(M_{ij}) & i=j \\ c(M_{ji}) & i>j \end{cases}$.
We construct a projection $p_{\GL}^s$ realizing the embedding $i_{\GL}^s: \Nbar_I^s \hookrightarrow \Nbar_{I_{\GL_{2n}}}^s$ as a section with respect to the standard coordinates of $\Nbar_{I_{\GL_{2n}}}^s$ as follows:
\begin{eqnarray*}
p_{\GL}^s: \cO_E^{d'} \xrightarrow{\sim} \Nbar_{I_{\GL_{2n}}}^s & \twoheadrightarrow & \Nbar_I^s \xrightarrow{(\psi^s_I)^{-1}} \cO_E^d \\
I_{2n} + \begin{pmatrix} A & 0 \\ B & C \end{pmatrix} & \mapsto & \begin{pmatrix} A & 0 \\ \bar{t}(BA^{-1}\Psi_n) & \Psi_n({}^tA^c)^{-1}\Psi_n \end{pmatrix}.
\end{eqnarray*} We see coordinate functions of both $i_{\GL}^s$ and $p_{\GL}^s$ are polynomials of \hfill\break $\{x_E^1,\cdots,x_E^n, x_K^1,\cdots,x_K^{n(n-1)}, c(x_K^1),\cdots, c(x_K^{n(n-1)})\}$ with integral coefficients, hence polynomials of variables for $\cO_E^d$.
 
We now give an explicit basis of $C^{s,an}(\Nbar_I,A)$ defined in \ref{la rep}. 
Suppose $\imath_E$ gives rise to $\cO_E = \bigoplus\limits_{1 \leq i \leq [E:\QQ_p]} \ZZ_p u_i$, where $u_1,\cdots,u_{[E:\QQ_p]} \in \cO_E$. 
For each $g \in \Nbar_I/\Nbar_I^s$, we choose a lift of it $\tilde{g} \in \Nbar_I$, $$(\tilde{g}_1,\cdots,\tilde{g}_d):=(\psi^1_{I})^{-1}(\tilde{g}).$$ 
We use $\cG \subset \Nbar_I$ to denote this set of representatives of $\Nbar_I/\Nbar_I^s$ with $\tilde{\id}=\id$.
Moreover, $\psi^1_I: (\tilde{g}_1+\varpi^{s-1}\cO_E,\cdots,\tilde{g}_d+\varpi^{s-1}\cO_E) \xrightarrow{\sim} g\cdot \Nbar_I^s$ through calculations for all cases. For $x \in g \cdot \Nbar_I^s$, 
we change variables and let $$(\psi^1_I)^{-1}(x)=(\tilde{g}_1+\varpi^{s-1}x_1,\cdots,\tilde{g}_d+\varpi^{s-1}x_d),$$ $$x_i=u_1\cdot z_{(i-1)[E:\QQ_p]+1}+\cdots+u_{[E:\QQ_p]}\cdot z_{i[E:\QQ_p]} ~\mathrm{for} ~ 1 \leq i \leq d.$$ 
\begin{rem}\label{ano basis}
We claim that a basis of $C^{s,an}(\Nbar_I, A)$ is given by
$$f^{\circ}_{g,\underline{a}}(x):= \begin{cases} 
      z_1^{a_1}\cdots z_{d[E:\QQ_p]}^{a_{d[E:\QQ_p]}} & x \in g\cdot \Nbar_I^s \\
      0 & x \not\in g\cdot \Nbar_I^s,
   \end{cases}$$ which is indexed by $g \in \Nbar_I/ \Nbar_I^s$ and $\underline{a} \in \ZZ_{\geq 0}^{d[E:\QQ_p]}$.
\end{rem}
   
\begin{proof}   
The rigid analytic functions $C^{an}(\Nbar_I^s, A)$ on $\Nbar_I$ extend to $s$-analytic functions on $\Nbar_I$, $C^{an}(\Nbar_I^s, A) \hookrightarrow C^{s,an}(\Nbar_I, A)$ by defining their values to be $0$ on any other points. 
Then the following decomposition follows from definition: $$C^{s,an}(\Nbar_I, A)=\bigoplus_{g \in \Nbar_I/\Nbar_I^s} g\cdot C^{an}(\Nbar_I^s, A).$$ 
It suffices to argue that for each $g \in \Nbar_I/\Nbar_I^s$, $\{f^\circ_{g,\underline{a}}, \underline{a} \in \ZZ_{\geq 0}^{d[E:\QQ_p]}\}$ is also an orthonormal basis of $g \cdot C^{an}(\Nbar_I^s, A)$. 
The $\cO_E$ coordinates $$(x_1',\cdots,x_d')=(\psi^s_I)^{-1}\left(g^{-1}\psi^1_I(\tilde{g}_1+\varpi^{s-1}x_1,\cdots,\tilde{g}_d+\varpi^{s-1}x_d)\right)$$ of $g\cdot \Nbar_I^s$ correspond to translation of coordinates on $\Nbar_I^s$ by $g$ in terms of $(x_1,\cdots,x_d)$. 
$$x'_i=u_1\cdot z'_{(i-1)[E:\QQ_p]+1}+\cdots+u_{[E:\QQ_p]}\cdot z'_{i[E:\QQ_p]} ~\mathrm{for} ~ 1 \leq i \leq d,$$ a basis for $g\cdot C^{an}(\Nbar_I^s, A)$ is given by monomials of $z'_1,\cdots,z'_{d[E:\QQ_p]}$. 
Translation function of $(\psi^1_I)^{-1} \circ \psi^s_I$ is multiplication by $\varpi^{s-1}$. 
Direct calculations from explicit coordinates for all cases express $(x'_1,\cdots,x'_d)$ (resp. $(x_1,\cdots,x_d)$) in terms of polynomials of $(x_1,\cdots,x_d)$ (resp. $(x'_1,\cdots,x'_d)$) with integral coefficients without constant terms. 
The same statement for transition functions between monomials of $z'_1,\cdots,z'_{d[E:\QQ_p]}$ and $z_1,\cdots,z_{d[E:\QQ_p]}$ holds as well. 
\end{proof}

\begin{proof}[Proof of Proposition \ref{CS1}]
Let $e_1,\cdots,e_m$ be an eigenbasis of $V$ for $T^0$ as free $A$-module, $t \cdot e_i=\chi_i(t) e_i$ for any $t \in T^0$. For the first part, it suffices to show that any sum of $f^{\underline{\sigma}}_{g, \underline{a}^{\underline{\sigma}}} \otimes e_i$ with converging to zero coefficients is a $s$-analytic function on $\Nbar_I$ and for any such sum, $$\sum\limits_{g,\underline{a},\underline{\sigma}} \lambda_{g,\underline{a}}^{\underline{\sigma}} f^{\underline{\sigma}}_{g, \underline{a}^{\underline{\sigma}}}=0 \iff \mathrm{all} ~  \lambda_{g,\underline{a}}^{\underline{\sigma}}=0.$$ We choose $\tilde{g} \in \Nbar_I$ lifting each $g$, $$(\tilde{g}^1,\cdots,\tilde{g}^l):=(\psi^1_{I})^{-1}(\tilde{g}), ~~ \tilde{g}^i=(\tilde{g}^i_1,\cdots,\tilde{g}^i_{d_i}) \in \cO_{E_i}^{d_i}.$$ 
Suppose $\imath_{E_i}$ gives rise to $\cO_{E_i}=\bigoplus\limits_{1 \leq j \leq [E_i:\QQ_p]} \ZZ_p u^i_j$.
We choose coordinates $x^i=(x^i_1,\cdots,x^i_{d_i})$ on $\cO_{E_i}^{d_i}$, $x=(x^1,\cdots,x^l)$ for $\Nbar_I=\prod\limits_{1 \leq i \leq l} \Nbar_{I_i}$ through $\psi^1_I$ as in \S \ref{la rep} and coordinates $z^i_{j,k}$ for $1 \leq i \leq l, 1 \leq j \leq d_i, 1 \leq k \leq [E_i:\QQ_p]$ such that $$x^i=(x^i_1,\cdots,x^i_{d_i})=\big((\tilde{g}^i_1+\varpi_i^{s-1}(u^i_1\cdot z^i_{1,1}+\cdots+u^i_{[E_i:\QQ_p]} \cdot z^i_{1,[E_i:\QQ_p]})),$$ $$\cdots,(\tilde{g}^i_{d_i}+\varpi_i^{s-1}(u^i_1\cdot z^i_{d_i,1}+\cdots+u^i_{[E_i:\QQ_p]}\cdot z^i_{d_i,[E_i:\QQ_p]}))\big).$$
Apply Rem \ref{ano basis} to a product of Iwahori groups so an orthonormalizable basis for $C^{s,an}(N_I, A)$ is given by $$f^{\circ}_{g,\underline{a}}(x):= \begin{cases} 
     \prod\limits_{1 \leq i \leq l} \prod\limits_{1 \leq j \leq d_i} \prod\limits_{1 \leq k \leq [E_i:\QQ_p]} (z^i_{j,k})^{a^i_{j,k}} & x \in g\cdot \Nbar_I^s \\
      0 & x \not\in g\cdot \Nbar_I^s,
   \end{cases}$$ for $g$ running through $\Nbar_I/\Nbar_I^s$, $\underline{a}$ running through $\ZZ_{\geq 0}^{\sum_{1 \leq i \leq l} d_i[E_i:\QQ_p]}$. For each, $1 \leq i \leq l$, we index the set of embedding of $E_i$ to $\overline{\QQ}_p$ as $\{\sigma^i_1,\cdots,\sigma^i_{[E_i:\QQ_p]}\}$. For each $1 \leq i \leq l, 1 \leq j \leq d_i, 1 \leq  k \leq [E_i:\QQ_p]$, there is a $[E_i:\QQ_p] \times [E_i:\QQ_p]$ transition matrix $$M^i_{j,k}:=\begin{pmatrix}
\sigma^i_1 u^i_1 & \cdots & \sigma^i_1 u^i_{[E_i:\QQ_p]} \\
\vdots & \cdots & \vdots \\
\sigma^i_{[E:\QQ_p]} u^i_1 & \cdots & \sigma^i_{[E:\QQ_p]} u^i_{[E_i:\QQ_p]} \\
\end{pmatrix} $$ such that 
$$\begin{pmatrix}
\sigma^i_1 x^i_j \\ \vdots \\ \sigma^i_{[E_i:\QQ_p]} x^i_j \end{pmatrix} = \varpi_i^{s-1} \begin{pmatrix}
\sigma^i_1 u^i_1 & \cdots & \sigma^i_1 u^i_{[E_i:\QQ_p]} \\
\vdots & \cdots & \vdots \\
\sigma^i_{[E:\QQ_p]} u^i_1 & \cdots & \sigma^i_{[E:\QQ_p]} u^i_{[E_i:\QQ_p]} \\
\end{pmatrix}\cdot \begin{pmatrix}
z^i_{j,1} \\ \vdots \\ z^i_{j,[E_i:\QQ_p]}
\end{pmatrix} + \begin{pmatrix} \sigma^i_1 \tilde{g}^i_j \\ \vdots \\\sigma^i_{[E_i:\QQ_p]} \tilde{g}^i_j
\end{pmatrix}.$$
Then we have $$f^{\underline{\sigma}}_{g, \underline{a}^{\underline{\sigma}}}=\sum_{\underline{a'} \leq \underline{a}} t_{g,\underline{a'}}\cdot f^{\circ}_{g,\underline{a'}},$$ 
 for integral transition coefficients $t_{g,\underline{a'}}$. 
This shows that any convergent sum of $f^{\underline{\sigma}}_{g, \underline{a}^{\underline{\sigma}}}$ belongs to $C^{s,an}(\Nbar_I, A)$. 
$$C^{s,an}(\Nbar_I, V) \simeq \bigoplus_{i=1}^m C^{s,an}(\Nbar_I, A) \otimes e_i,$$ any convergent sum of $f^{\underline{\sigma}}_{g, \underline{a}^{\underline{\sigma}}} \otimes e_i$ belongs to $C^{s,an}(\Nbar_I, V)$. 
The claim for density follows from the fact that $\mathrm{det}(M^i_{j,k}) \neq 0$, hence invertible over $A$ for all $1 \leq i \leq l, 1 \leq j \leq d_i, 1 \leq  k \leq [E_i:\QQ_p]$.
Now we form $(\displaystyle\prod_{s \in S} V_s)^c$ for the basis $f^{\sigma}_{g, \underline{a}^\sigma_E, \underline{a}^\sigma_K} \otimes e_i$ such that $(\displaystyle\prod_{s \in S} V_s)^c \to C^{s,an}(\Nbar_I, V)$ is $T^s$ equivariant. We have the commutative diagram \[
\xymatrix{
& C^{s,1}(\Nbar_I, V) \ar@{^{(}->}[rd] & \\
(\displaystyle\prod_{s \in S} V_s)^c \ar[rr] \ar@{->>}[ru] & & C^{s,an}(\Nbar_I, V).\\
} \] The bottom arrow is $T^s$-equivariant and continuous since all $f^{\underline{\sigma}}_{g, \underline{a}^{\underline{\sigma}}}$ are in the unit ball of $C^{s,an}(\Nbar_I, A)$ by the discussion above.  

We claim the arrow $(\displaystyle\prod_{s \in S} V_s)^c \twoheadrightarrow C^{s,1}(\Nbar_I, V)$ is a bijection. Otherwise the kernel of the bottom arrow is nonempty, by the Lem \ref{rep of t}, there exists a weight mapping to zero, i.e., a finite linear combination of $f^{\sigma}_{g, \underline{a}^\sigma_E, \underline{a}^\sigma_K} \otimes e_i$ equals to zero, which implies a finite sum of $f^{\sigma}_{g, \underline{a}^\sigma_E, \underline{a}^\sigma_K}$ equals to zero. The coefficients must be zero.  
We endow the Banach $A$-module structure to $C^{s,1}_{w,\chi}$ from $(\displaystyle\prod_{s \in S} V_s)^c$.
\end{proof}

Let $K/E$ be a finite extension of $p$-adic local fields with the ring of integers $\cO_K / \cO_E$ and uniformizers $\varpi_K, \varpi_E$. 
We call an analytic function $f \in K\langle z_1,\cdots,z_d \rangle$ on $(\cO_K)^d$ \emph{overconvergent} if for $f=\sum\limits_{\underline{a} \in \ZZ^d_{\geq 0}} c_{\underline{a}} z^{\underline{a}}$, there exists $\varepsilon>1$ such that $$\lim_{|a| \to \infty} |c_{\underline{a}}|\cdot\varepsilon^{|a|} < \infty,$$
in which case $f$ is said to be of rate $\varepsilon$.
This is equivalent to that $f$ converges in a larger open ball than the unit ball in $d$ dimensional space over an extension of $K$, or $f$ is represented by a function in a certain formal power series ring over $\cO_K$ with $p$ inverted. 

\begin{df}
An analytic map $g: (\cO_K)^{d_1} \to (\cO_K)^{d_2}$ is said to be \emph{overconvergent} (resp. \emph{overconvergent with integral coefficients}) if all the $d_2$ coordinate functions are so. 
If $d_1=d_2$, $g$ is bijective and $g^{-1}$ is also overconvergent (resp. overconvergent with integral coefficients), $g$ is called an \emph{overconvergent} (resp. \emph{overconvergent with integral coefficients}) isomorphism. 
Moreover, we say $g$ is of rate $\varepsilon$ if the coordinate functions are so.
\end{df}

\begin{rem}
Our notion of overconvergent isomorphism is stronger than the notion ``strict isomorphism" in \cite{AS}, where the authors only require the coordinate functions to be in the Tate algebra of radius $1$.
\end{rem}

\begin{lem}\label{comp adm}
For two analytic maps $g: (\cO_K)^{d_1} \to (\cO_K)^{d_2}$ and $g': (\cO_K)^{d_2} \to (\cO_K)^{d_3}$, if $g$ is overconvergent with integral coefficients of rate $\varepsilon$, then $g' \circ g$ is overconvergent of rate $\varepsilon$.
overconvergent with integral coefficients map $\cO_K^{d_1} \to \cO_K^{d_2}$ pulls back any rigid function (analytic power series with coefficients converging to $0$) on $\cO_K^{d_2}$ to a rigid function on $\cO_K^{d_1}$.
\end{lem}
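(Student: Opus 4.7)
The approach is to reduce everything to Gauss-norm estimates on power series. Writing $g_i(z)=\sum_{\underline{a}}c^i_{\underline{a}}z^{\underline{a}}$ and $g'_j(w)=\sum_{\underline{b}}d^j_{\underline{b}}w^{\underline{b}}$, analyticity of $g'$ on $\cO_K^{d_2}$ gives $|d^j_{\underline{b}}|\to 0$ and $|d^j_{\underline{b}}|\leq 1$, while the hypothesis on $g$ (integral coefficients together with overconvergence of rate $\varepsilon$) is most conveniently rephrased as
\[ \|g_i\|_{\varepsilon} := \sup_{\underline{a}} |c^i_{\underline{a}}|\,\varepsilon^{|\underline{a}|} \leq 1 \]
for every coordinate $i$. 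Geometrically this says that $g$ still takes values in the closed unit polydisk after its domain is enlarged from $\cO_K^{d_1}$ to the polydisk of radius $\varepsilon$.

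Once this rephrasing is in place, the first assertion becomes a short formal computation. Substituting $w=g(z)$ into $g'$ yields $h_j(z) := g'_j(g(z)) = \sum_{\underline{b}} d^j_{\underline{b}}\,g(z)^{\underline{b}}$, well-defined as a power series in $z$ because $|d^j_{\underline{b}}|\to 0$ while the coefficients of each $g^{\underline{b}}$ are bounded uniformly (at worst by $1$, by integrality). Submultiplicativity of the Gauss norm gives $\|g^{\underline{b}}\|_{\varepsilon}\leq\prod_i\|g_i\|_{\varepsilon}^{b_i}\leq 1$, hence $\|h_j\|_{\varepsilon}\leq\sup_{\underline{b}}|d^j_{\underline{b}}|\leq 1$, which is precisely overconvergence of $g'\circ g$ at rate $\varepsilon$. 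For the second assertion, the same bound gives $|[h_j]_{\underline{c}}|\leq\|h_j\|_{\varepsilon}\cdot\varepsilon^{-|\underline{c}|}\leq\varepsilon^{-|\underline{c}|}$, and since $\varepsilon>1$ these coefficients tend to zero as $|\underline{c}|\to\infty$, so $g'\circ g$ is a rigid function on $\cO_K^{d_1}$.

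The only real obstacle is the normalisation encoded in $\|g_i\|_{\varepsilon}\leq 1$: with the weaker reading $|c^i_{\underline{a}}|\,\varepsilon^{|\underline{a}|}\leq C$ for some $C>1$, one would only have $\|g^{\underline{b}}\|_{\varepsilon}\leq C^{|\underline{b}|}$, which can outgrow the decay of $|d^j_{\underline{b}}|$, and the composition would be overconvergent only at some smaller rate depending on $C$ and on the actual radius of analyticity of $g'$. In all applications of the lemma in the paper, in particular for transition maps of the shape $(x_1,\dots,x_d)\mapsto(\varpi^{s-1}x_1,\dots,\varpi^{s-1}x_d)$, one even has $C\leq 1$, so the clean statement proved above applies as written.
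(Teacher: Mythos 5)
Your Gauss-norm argument is correct for the statement you actually prove, and since the paper's proof is no more than ``derived by direct calculations'' this is presumably the intended mechanism. You are also right to flag the normalisation: the paper's definition of ``overconvergent with integral coefficients of rate $\varepsilon$'' only gives $|c^i_{\underline{a}}|\le 1$ together with $C_i:=\sup_{\underline{a}}|c^i_{\underline{a}}|\varepsilon^{|\underline{a}|}<\infty$, and does not force $C_i\le 1$, so the first clause as literally printed is too strong. Concretely, take $g(z)=z+z^2$ (integral, overconvergent of every rate, but with $\|g\|_\varepsilon=\varepsilon^2$) and $g'(w)=\sum_{k\ge0}\varpi^k w^k\colon\cO_K\to\cO_K$; then $|[z^{2m}](g'\circ g)|=|\varpi|^m$, so $g'\circ g$ is overconvergent only of rate $\le|\varpi|^{-1/2}$, not of arbitrary rate.

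Two refinements are worth noting. The \emph{second} clause of the lemma (pullback of rigid functions) holds with no normalisation at all, and this is the form actually used in the proof of Prop.~\ref{aux mod}. Your own inequality $|[z^{\underline{c}}]g^{\underline{b}}|\le\min\bigl(1,\,C^{|\underline{b}|}\varepsilon^{-|\underline{c}|}\bigr)$, where the first bound comes from integrality and the second from overconvergence, is exactly what is needed: for fixed $N$, the terms with $|\underline{b}|<N$ tend to $0$ as $|\underline{c}|\to\infty$ by the second bound, while the terms with $|\underline{b}|\ge N$ are controlled by the first bound together with $|d_{\underline{b}}|\to 0$. Thus $|[z^{\underline{c}}](f\circ g)|\to 0$ for arbitrary $C$. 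Also, your closing remark that $C\le 1$ holds in all applications is a bit optimistic: the coordinate functions of $\hat g_n$, $\hat g_t$ built in Lem.~\ref{adm int} only satisfy $\|\cdot\|_{|\varpi|^{-1}}\le|\varpi|^{-1}$, and the embeddings $i_{\GL}^1$ satisfy $\|\cdot\|_\varepsilon=\varepsilon$; the compositions there go through because the outer map is either a polynomial with integral coefficients (a finite sum) or itself overconvergent of rate $\varepsilon$ (so $|d^j_{\underline{b}}|\le C'\varepsilon^{-|\underline{b}|}$ cancels $\|g^{\underline{b}}\|_\varepsilon\le\varepsilon^{|\underline{b}|}$) --- minor variants of your estimate rather than the $C\le 1$ case.
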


\begin{proof}
These are derived by direct calculations. 
\end{proof}

We choose an isomorphism $\imath_{K/E}: \cO_K \simeq \cO_E^{[K:E]}$. For an analytic map $g: (\cO_K)^{d_1} \to (\cO_K)^{d_2}$, $\imath_{K/E}$ induces 
$$\xymatrix{
g: \cO_K^{d_1} \ar[r] \ar@{}[d]|*[@]{\cong}_{(\imath_{K/E})^{d_1}} & \cO_K^{d_2} \ar@{}[d]|*[@]{\cong}^{(\imath_{K/E})^{d_2}} \\
\tilde{g}: \cO_E^{[K:E]d_1} \ar[r] & \cO_E^{[K:E]d_2}.\\
}$$ 

\begin{prop}\label{bc oc}
If $g$ is overconvergent (resp. overconvergent with integral coefficients) of rate $\varepsilon$, then so is $\tilde{g}$ (resp. overconvergent with integral coefficients).
\end{prop}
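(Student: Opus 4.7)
\medskip

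\noindent\textbf{Proof plan.} The plan is to compute $\tilde g$ explicitly as the composition $\tilde g = \imath_{K/E}^{d_2} \circ g \circ (\imath_{K/E}^{-1})^{d_1}$ and to track how the overconvergence rate and integrality propagate through each of the two stages (substitution and component extraction). Fix an $\cO_E$-basis $u_1, \ldots, u_m$ of $\cO_K$ ($m = [K:E]$) underlying $\imath_{K/E}$, so that $\imath_{K/E}^{-1}(z_1,\ldots,z_m) = \sum_k u_k z_k$. Since $\tilde g$ has $[K:E]d_2$ coordinate functions, one reduces immediately to the case $d_2 = 1$, $g = g_1$.

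For the precomposition stage, substitute $x_i = \sum_k u_k z_{i,k}$ into the expansion $g(x) = \sum_{\underline a} c_{\underline a}\,x^{\underline a}$. Expanding each $x_i^{a_i}$ via the multinomial theorem and regrouping, one obtains $g \circ (\imath_{K/E}^{-1})^{d_1} = \sum_{\underline b} e_{\underline b}\, z^{\underline b}$, where the coefficient $e_{\underline b} \in \cO_K$ (or, after extension of scalars, in the larger disk of radius $\varepsilon$) has the form $e_{\underline b} = \gamma_{\underline a,\underline b}\,c_{\underline a}$ with $\underline a$ uniquely determined by $a_i = \sum_k b_{i,k}$, and $\gamma_{\underline a,\underline b}$ a product of multinomial coefficients and powers of $u_k$'s, hence of absolute value $\le 1$. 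In particular $|\underline a| = |\underline b|$ and $|e_{\underline b}|_K \le |c_{\underline a}|_K$.

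For the postcomposition stage, each $e_{\underline b}$ must be written as $\sum_k u_k\,\tilde e_{\underline b,k}$ with $\tilde e_{\underline b,k}$ in $E$ (or in an extension). In the integral case this is immediate from $\cO_K = \bigoplus_k \cO_E\, u_k$, so $\tilde e_{\underline b,k} \in \cO_E$ automatically, which handles the integral-coefficients claim. For the rate-$\varepsilon$ claim in general, pass to the Galois closure $L$ of $K/E$: the matrix $M = (\sigma(u_k))_{\sigma,k}$ indexed by embeddings $\sigma : K \hookrightarrow L$ is invertible, and $(\tilde e_{\underline b,k})_k = M^{-1}(\sigma(e_{\underline b}))_\sigma$, yielding $|\tilde e_{\underline b,k}| \le \|M^{-1}\|_\infty \cdot |e_{\underline b}|$ with $\|M^{-1}\|_\infty$ a constant depending only on $K/E$. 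Combining with the bound on $e_{\underline b}$ gives $|\tilde e_{\underline b,k}|\cdot \varepsilon^{|\underline b|} \le \|M^{-1}\|_\infty \cdot |c_{\underline a}|_K\cdot \varepsilon^{|\underline a|}$, so the overconvergence of rate $\varepsilon$ passes from $g$ to $\tilde g$.

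The main obstacle is the postcomposition/extraction step: bookkeeping in the substitution step is routine (it only uses the ultrametric inequality and the fact that $|u_k| \le 1$), while extracting $\cO_E$-components requires a genuine input, namely the boundedness of $M^{-1}$ over the Galois closure. Integrality falls out for free from the freeness of $\cO_K$ as an $\cO_E$-module, but the rate-preservation argument requires the separable-hence-discriminant-nonzero structure of $K/E$ (automatic in characteristic zero) so that $M$ is invertible with controlled inverse.
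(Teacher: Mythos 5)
Your proof is correct and follows the same two-stage decomposition as the paper (precompose by $(\imath_{K/E}^{-1})^{d_1}$, expand by the multinomial theorem noting $|\underline a| = |\underline b|$, then postcompose by $\imath_{K/E}^{d_2}$), but the postcomposition step is handled by a genuinely different mechanism. Where you pass to the Galois closure $L$, form the embedding matrix $M = (\sigma(u_k))_{\sigma,k}$, invoke separability for its invertibility, and bound $(\tilde e_{\underline b,k})_k = M^{-1}(\sigma(e_{\underline b}))_\sigma$ by $\|M^{-1}\|_\infty\,|e_{\underline b}|$ (implicitly using that $p$-adic embeddings are isometric), the paper instead works directly with the coordinate projections $p_j : \cO_K \to \cO_E$ of $\imath_{K/E}$ and observes from $\cO_E$-linearity that $|p_j(x)| \le |x|/|\varpi_E|$ for all $x$ (write $x = \varpi_E^m y$ with $y \in \cO_K$, $|y| > |\varpi_E|$). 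Both routes produce a constant depending only on $K/E$, which is all the rate-$\varepsilon$ claim needs; the paper's is more elementary (no Galois closure needed) and treats the integral and rate statements uniformly through the single map $p_j$, whereas you get integrality directly from $\cO_K = \bigoplus_k \cO_E u_k$ and only need the matrix argument for the rate. Note that your $\tilde e_{\underline b,k}$ are in fact exactly the $p_k(e_{\underline b})$ of the paper, so the two arguments differ precisely in how the operator norm of $p_k$ is bounded, not in what is being computed.
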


\begin{proof}
We use $p_j : \cO_K \to \cO_E, 1 \leq j \leq [K:E]$ to denote the $j$-th projection to the $j$-th $\cO_E$ of the map $\imath_{K/E}$. For any $x \in \cO_E$, $x=\varpi_E^n \cdot y$ for some $n \geq 0$, and $y \in \cO_E$ such that $|y|>|\varpi_E|$. 
This map is $\cO_E$-linear and thus has the property that $|p_j(x)| \leq \frac{|x|}{|\varpi_E|}$ for any $x \in \cO_E$. $\imath_{K/E}$ gives a basis $\{ e_1, \cdots, e_{[K:E]} \in \cO_K \}$ for $\cO_K$ such that $z_i=z_i^1 e_1 +\cdots+z_i^{[K:E]} e_{[K:E]}$ for coordinate functions $z_i$ of $\cO_K$ and $\{ z_i^1, \cdots, z_i^{[K:E]}\}$ of $\cO_E^{[K:E]} (1 \leq i \leq d_1)$. 
Consider any coordinate function $f: \cO_K^{d_1} \to \cO_K$ of $g$, $f=\sum\limits_{\underline{a} \in \ZZ^{d_1}_{\geq 0}} c_{\underline{a}} z^{\underline{a}}$, $$p_j \circ f \circ (\imath_{K/E}^{-1})^{[K:E]d_1}=\sum\limits_{\underline{a} \in \ZZ^d_{\geq 0}} p_j(c_{\underline{a}}) \prod_{1 \leq i \leq d_1} \sum_{\underline{j} \in \ZZ_{\geq 0}^{[K:E]}, |j|=a_i} e_i^{\underline{j}} z_i^{\underline{j}}$$ for some $e_i^{\underline{j}} \in \cO_E$. 
The coefficient of each monomial is bounded by $p_j(c_{\underline{a}}) \leq \frac{|c_{\underline{a}}|}{|\varpi_E|}$. 
The same rate $\varepsilon$ of staying bounded for $f$ then works for the coordinate function $p_j \circ f \circ (\imath_{K/E}^{-1})^{[K:E]d_1}$ in terms of new coordinates $\cO_E^{[K:E]d_1}$ as well.
\end{proof}

\begin{rem}
The overconvergent property of $\tilde{g}$ does not depend on choices of $\imath_{K/E}$ by the lemma above since linear isomorphisms are overconvergent.
\end{rem}

For $\mathscr{H}_{/\cO_E}=\GL_n$, $\mathrm{Sp}_{2n}$ or a unitary group associated to $J_n$ and an unramified extension $K/E$. and each $s \geq 1$, we use $i_T^s$ to denote 
\begin{eqnarray*}
i_T^s: \cO_E^n ~ (\mathrm{resp}. ~ \cO_K^n) & \simeq & T^s \\
(y_1,\cdots,y_n) & \mapsto & \psi_T(1+\varpi^s y_1,\cdots,1+\varpi^s y_n).
\end{eqnarray*}

\begin{lem}\label{adm int}
Let $\mathscr{H}_{/\cO_E}=\GL_n$, $\mathrm{Sp}_{2n}$ or a unitary group associated to $J_n$ and an unramified extension $K/E$. We equip $\Nbar_I \subset I$ the coordinates described in the beginning of this section. For any $g \in I$, $g$ induces an automorphism \begin{eqnarray*}
\hat{g}_n: \cO_E^d \xrightarrow{\psi^1_I} \Nbar_I & \hookrightarrow & I = \Nbar_I \times T^0 \times N_I \to \Nbar_I \xrightarrow{(\psi^1_I)^{-1}} \cO_E^d \\
x & \mapsto & g\cdot x \to \mathrm{proj}_1(g \cdot x), \end{eqnarray*} where the second map is projection to the $\Nbar_I$ factor via the Iwahori decomposition. 
Let $I^s_0$ be the subgroup of $I$ defined in \S \ref{Notation}. Let $\{t_1,\cdots,t_l\}$ be a set of representatives of $T^0/T^s$. For $g=\overline{n}_g \cdot t_g \cdot n_g \in I^s_0$, for $\overline{n}_g \in \Nbar_I$, $t_g \in T^0$ (with representative $t_{g_0}$), $n_g \in N_I^s$, $g$ induces a map
 \begin{eqnarray*}
\hat{g}_t: \cO_E^d \xrightarrow{\psi^1_I} \Nbar_I & \hookrightarrow & I_0^s \to T^0 \xrightarrow{\times t_{g_0}^{-1}} T^s \xrightarrow{(i_T^s)^{-1}} \cO_E^n ~ (\mathrm{resp}. ~ \cO_K^n \simeq \cO_E^{2n}) \\
x & \mapsto & g\cdot x \to \mathrm{proj}_2(g \cdot x) \to t_{g_0}^{-1}\mathrm{proj}_2(g \cdot x), \end{eqnarray*} where $\mathrm{proj}_2$ is projection to the $T^0$ factor and the image lands in $t_{g_0}\cdot T^s$.

Coordinate functions of $\hat{g}_n$ and $\hat{g}_t$ are represented by functions $\frac{u}{\varpi}$, with $u \in \cO_E[[(\varpi x_1),\cdots,(\varpi x_m)]], ~ \varpi | u(0)$.
In particular, $\hat{g}_n$ is an overconvergent with integral coefficients isomorphism of rate $|\varpi|^{-1}$ and coordinate functions of $\hat{g}_t$ are overconvergent with integral coefficients of rate $|\varpi|^{-1}$. 
\end{lem}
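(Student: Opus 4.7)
My plan is to reduce all three cases to $\mathscr{H} = \GL_n$ and then analyze the Iwahori $\overline{L}DU$-decomposition of $g \cdot x$ directly. For $\mathrm{Sp}_{2n}$ and the unitary group over the unramified $K/E$, I use the embedding $i_{\GL}^1 : \Nbar_I \hookrightarrow \Nbar_{I_{\GL_{2n}}}$ and its retraction $p_{\GL}^1$ constructed earlier in this section: uniqueness of the Iwahori decomposition in $\GL_{2n}$ combined with $g \cdot x \in \mathscr{H}(\cO_E)$ forces the $\Nbar_{I_{\GL_{2n}}}$-factor of $g \cdot x$ to lie in the image of $i_{\GL}^1$, so $\hat g_n = p_{\GL}^1 \circ \hat g_n^{\GL_{2n}} \circ i_{\GL}^1$, and similarly for $\hat g_t$. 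Since $i_{\GL}^1$ and $p_{\GL}^1$ have polynomial coordinate functions with integral coefficients by their explicit formulas, and Proposition~\ref{bc oc} handles the unramified base change from $\cO_K$ to $\cO_E$ at no cost in rate, Lemma~\ref{comp adm} then reduces the claim to the case $\mathscr{H} = \GL_n$ over $\cO_E$.

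For $\GL_n$, write $x = \psi_I^1(x_1, \ldots, x_d) = I + \varpi X$ with $X$ strictly lower triangular and set $y_i := \varpi x_i$, $Y := \varpi X$, so that the entries of $g \cdot x = g + gY$ are affine in the $y_i$'s with coefficients in $\cO_E$. Each leading principal minor $\Delta_k(g \cdot x)$ is a polynomial in $y$ over $\cO_E$ whose constant term $\Delta_k(g)$ is a unit (as $g \in I$), so $\Delta_k(g \cdot x)^{-1} \in \cO_E[[y_1, \ldots, y_m]]$, and every entry of $\overline{L}(x), D(x), U(x)$ --- being a standard ratio of minors --- lies in $\cO_E[[y]]$. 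For $\hat g_n$, the $(j, k)$-coordinate with $j > k$ equals $\overline{L}_{jk}(g \cdot x)/\varpi$; setting $u := \overline{L}_{jk}(g \cdot x) \in \cO_E[[y]]$, the required $\varpi \mid u(0)$ is immediate from $\overline{L}_{jk}(g) \in \varpi \cO_E$, which simply expresses that the $\Nbar_I$-factor of $g$ has off-diagonal entries in $\varpi \cO_E$ by the definition of $\psi_I^1$.

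For $\hat g_t$ with $g \in I_0^s$, decompose $g$ into $2 \times 2$ blocks of sizes $i$ and $n - i$; by the definition of $I_0^s$, the strictly upper-right block $B$ of $g$ has entries in $\varpi^s \cO_E$, while the top-left block $A$ is invertible in $\cO_E$. A direct block computation gives
\[
\Delta_i(g \cdot x) \;=\; \det(A) \cdot \det\!\bigl(I + Y_A + A^{-1} B Y_C\bigr),
\]
where $Y_A, Y_C$ are the relevant submatrices of $Y$. Because $I + Y_A$ is lower unitriangular of determinant $1$ and $A^{-1} B Y_C$ has entries in $\varpi^s \cO_E[y]$, expanding the right-hand determinant multi-linearly in the $A^{-1} B Y_C$-columns yields $\Delta_i(g \cdot x) = \det(A)(1 + \varpi^s \Psi_i)$ for some $\Psi_i \in \cO_E[[y]]$. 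Taking ratios of consecutive minors gives $D(x)_i = (t_{g_0})_i (t_g')_i (1 + \varpi^s \Psi_i')$ with $\Psi_i' \in \cO_E[[y]]$, and since $(t_g')_i - 1 \in \varpi^s \cO_E$ we obtain $D(x)_i/(t_{g_0})_i - 1 \in \varpi^s \cO_E[[y]]$. Setting $u(y) := (D(x)_i/(t_{g_0})_i - 1)/\varpi^{s-1}$ (so the coordinate $y_i(x) = u(\varpi x)/\varpi$), one has $u \in \varpi \cO_E[[y]] \subset \cO_E[[y]]$ with $\varpi \mid u(0)$. Once the form $u/\varpi$ is established, overconvergence with integral coefficients of rate $|\varpi|^{-1}$ is immediate because the $x^\alpha$-coefficient of $u(\varpi x)/\varpi$ is $c_\alpha \varpi^{|\alpha|-1}$ for some $c_\alpha \in \cO_E$ with $\varpi \mid c_0$; finally $\hat g_n$ is an isomorphism since $\widehat{g^{-1}}_n$ gives a two-sided inverse (by uniqueness of Iwahori decomposition) with the same overconvergent integral form.

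The main technical obstacle is the factor of $\varpi^s$ in $\Delta_i(g \cdot x) = \det(A)(1 + \varpi^s \Psi_i)$: one must observe that the $y_A$-variables alone only contribute the trivial $\det(I + Y_A) = 1$, so any surviving $y$-dependence of $\Delta_i$ must pass through the block $B$, whose built-in $\varpi^s$ divisibility produces the $\varpi^s$ prefactor. The remaining steps are routine bookkeeping on ratios of minors.
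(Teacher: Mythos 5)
Your proposal is correct, and the reduction to $\GL_{2n}$ via $i_{\GL}^1$, $p_{\GL}^1$ together with Lemma~\ref{comp adm} (and Proposition~\ref{bc oc} for the unitary case) is the same move the paper makes. Where you diverge is the $\GL_n$ core of the argument. The paper first strips away the $\Nbar_I \times T^0$-part of $g$ (whose effect is affine with integral coefficients) so as to reduce to $g \in N_I^s$, and then runs an iterative Gram--Schmidt (an Iwahori decomposition in the formal power series ring $\cO_E[[\varpi x_1,\dots,\varpi x_m]]$), tracking the ideals $\frak{m}=(\varpi x_i)$ and $\frak{I}=(\varpi^s)$ through the recursion. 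You instead handle an arbitrary $g \in I$ in one shot via the explicit $\overline{L}DU$-formulas as ratios of leading principal minors: the observation that each $\Delta_k(g\cdot x)$ is a polynomial in $y=\varpi x$ with unit constant term $\Delta_k(g)$ makes the inverse lie in $\cO_E[[y]]$, and the block identity $\Delta_i(g\cdot x)=\det(A)\det(I+Y_A+A^{-1}BY_C)$ with $B\equiv 0 \bmod \varpi^s$ delivers the $1+\varpi^s\Psi_i$ shape needed for $\hat g_t$. This buys you a cleaner, non-iterative argument that does not require the preliminary reduction to $N_I^s$, at the cost of invoking the determinantal formulas for $\overline{L}$ and $D$; the paper's recursion, by contrast, keeps everything at the level of elementary row/column operations and makes the ideal bookkeeping transparent without ever writing down a minor. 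Both routes yield the required form $u/\varpi$ with $u\in\cO_E[[\varpi x]]$ and $\varpi\mid u(0)$, and your closing remarks (passing from this form to rate $|\varpi|^{-1}$, and using $\widehat{g^{-1}}_n$ for the inverse) match the paper's.
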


\begin{proof}
We prove both claims for a fixed $g \in I^s_0$ at the same time, note that for the first claim for $\hat{g}_n$, we just apply the case $I^0_0=I$. We first prove the case $H=\GL_n$. If $g \in \Nbar_I \times T^0 = T^0 \times \Nbar_I$, coordinate functions are linear polynomials with integral coefficients. By the Iwahori decomposition and Lem \ref{comp adm}, it reduces to only consider $$g=\begin{pmatrix} 
    1        &  g_1 & \cdots & \cdots & \vdots      \\
      & 1      & \cdots  & \cdots & \vdots    \\
      &          & \ddots   &       & \vdots     \\
          & \text{\Huge0} &   & 1     & g_m      \\
      &              &   &   & 1 \end{pmatrix} \in N_I^s, ~ m=\sum\limits_{1 \leq i \leq n-1} i=\frac{n(n-1)}{2}.$$ 
      $$\mathrm{Let} ~ x=\begin{pmatrix} 
    1                                    \\
     \varpi x_1 & 1             &   & \text{\Huge0}\\
    \vdots  &    \vdots      & \ddots                \\
      \vdots    & \vdots &   & 1            \\
   \cdots  &       \cdots        &  \cdots &  \varpi x_m & 1 \end{pmatrix} \in \Nbar_I$$ 
   and $x_{1,g},\cdots,x_{m,g}$ be translated coordinate functions $\hat{g}$ of $\Nbar_I$ with respect to left multiplication by $g$.  
   $$x_g=\begin{pmatrix} 
    1                                    \\
     x_{1,g} & 1             &   & \text{\Huge0}\\
    \vdots  &    \vdots      & \ddots                \\
      \vdots    & \vdots &   & 1            \\
    \cdots  &       \cdots        &  \cdots &  x_{m,g} & 1 \end{pmatrix} \in \Nbar_I$$ satisfying $g\cdot x=x_g \cdot t \cdot n_I$, $t \in T^s, n_I \in N_I$. $\Rightarrow$ 
 $$   \begin{pmatrix} 
    1                                    \\
     x_{1,g} & 1             &   & \text{\Huge0}\\
    \vdots  &    \vdots      & \ddots                \\
      \vdots    & \vdots &   & 1            \\
    \cdots  &       \cdots        &  \cdots &  x_{m,g} & 1 \end{pmatrix} \cdot t \cdot n_I= \begin{pmatrix} 
    1        &  g_1 & \cdots & \cdots & \vdots                       \\
      & 1      & \cdots  & \cdots & \vdots \\
      &          & \ddots   &       & \vdots     \\
          & \text{\Huge0} &   & 1     & g_m      \\
      &              &   &   & 1 \end{pmatrix} \cdot \begin{pmatrix} 
    1                                    \\
   \varpi  x_1 & 1             &   & \text{\Huge0}\\
    \vdots  &    \vdots      & \ddots                \\
      \vdots    & \vdots &   & 1            \\
   \cdots  &       \cdots        &  \cdots & \varpi x_m & 1 \end{pmatrix},$$ 
$g_i \in \cO_E$ for $1 \leq i \leq m$, we implement Gram–Schmidt process to the result of right hand side to produce our desired $x_{1,g}, \cdots, x_{m,g}$. 
Now we view all the coefficients as inside the formal power series ring of Tate algebra $\cO_E[[(\varpi x_1),\cdots,(\varpi x_m)]] \subset E \left\langle x_1, \cdots, x_m \right\rangle$. Let $$\frak{I}=(\varpi^s), ~ \frak{m}=(\varpi x_1,\cdots,\varpi x_m) \subset \cO_E[[(\varpi x_1),\cdots,(\varpi x_m)]].$$ 
The matrix $g\cdot x$ has the property that lower triangular entries are in $\frak{m}$, upper triangular entries are in $\frak{I}$ and diagonal entries are in $1+\frak{m}\frak{I}$. 
The Gram–Schmidt process produces $x_g, t, n_I$ such that entries of $x_g$ are in $\frak{m}$, entries of $t$ are in $1+\frak{m}\frak{I}$ and entries of $n_I$ are in $\frak{I}$.
This is true since the right hand side is an upper unipotent matrix mod $\frak{m}$ and every time we want to invert an element, that element must be a diagonal element and always be in $1+\frak{m}\frak{I}$. 
So we obtain all $x_{1,g}, \cdots, x_{m,g} \in \frak{m} \subset \cO_E[[(\varpi x_1),\cdots,(\varpi x_m)]] \subset E \left\langle x_1, \cdots, x_m \right\rangle$ and $\frac{1+\frak{m}\frak{I} - 1}{\varpi^s}=\frak{m}$, which translates to the statement that both $\hat{g}_n$ and $\hat{g}_t$ are overconvergent with integral coefficients. 
\footnote{The Gram–Schmidt process used here can be regarded as an analogue of Iwahori decomposition for the complete local ring $\cO_E[[(\varpi x_1),\cdots,(\varpi x_m)]]$.}
   
For other cases, we have constructed embeddings $i_{\GL}^1: \Nbar_I \hookrightarrow \Nbar_{I_{\GL_{2n}}}$ and as sections of projections $p_{\GL}^1: \Nbar_{I_{\GL_{2n}}} \twoheadrightarrow \Nbar_I$ at the beginning of this section. 
We have seen that coordinate functions of both $i_{\GL}^s$ and $p_{\GL}^s$ ($s \geq 1$) are polynomials of integral coefficients. 
Since $i_{\GL}: I \to I_{\GL_{2n}}$ is compatible with respect to the Iwahori decomposition, we have the following commutative diagram:
$$\xymatrix{
\widehat{i_{\GL}(g)}: \Nbar_{I_{\GL_{2n}}} \ar[r] & I_{\GL_{2n}} \ar[r] & \Nbar_{I_{\GL_{2n}}} \ar@/_/ [d]_{p_{\GL}^1}\\
\hat{g}: \Nbar_I \ar[r] \ar[u]^{i_{\GL}^1} & I \ar[r] \ar[u] & \Nbar_I. \ar@/_/ [u]_{i_{\GL}^1}\\
}$$ 
By Lem \ref{comp adm}, we conclude that $\hat{g}_n=p_{\GL}^1 \circ \widehat{i_{\GL}(g)} \circ i_{\GL}^1$ is overconvergent with integral coefficients. 
Replace $g$ by $g^{-1}$, $\widehat{g^{-1}}_n=\hat{g}_n^{-1}$ is overconvergent with integral coefficients as well. The argument works similarly for $\hat{g}_t$ as well.
\end{proof} 

\begin{proof}[Proof of Proposition \ref{aux mod}]
By Lem \ref{dual nice}, if a prescribed group element induces a nice endomorphism of $\DD^{s,1}_I(V)$ if it stabilizes $C^{s,1}(\Nbar_I, V)$. 
It remains to prove that $C^{s,1}(\Nbar_I, A)$ is $I^s_0$-stable for $I$ being a single factor of Iwahori ($l = 1$ case), as any element of the eigenbasis we introduced is a product of eigenfunctions on $\Nbar_{I_i}$ factors. 
Let $\{t_1,\cdots,t_l\}$ be in Lem \ref{adm int}, $\chi_{\Omega}(t_{g_0}) \in A^\times$. 
For any $f \in C^{s,1}(\Nbar_I, A)$ and $g \in I^s_0$, $$(g^{-1}\cdot f)(x)=f(g\cdot x)=f(\hat{g}_n(x))\chi_{\Omega}(t_{g_0}\hat{g}_t(x)).$$ Since $\hat{g}_n$ is an overconvergent with integral coefficients automorphism by Lem \ref{adm int}, $f(\hat{g}_n \cdot -) \in C^{s,1}(\Nbar_I, A)$ by Lem \ref{comp adm}. 
Recall the assumption on $s$ in \S \ref{la rep} ($s \geq s[\Omega]$) and results in \S \ref{Weight spaces}, $\chi_{\Omega}|_{T^s}$ is uniquely decomposed as $(\sigma,E)$-analytic characters of $i_T^s: \cO_E^n ~ (\mathrm{resp}. ~ \cO_E^{2n}) ~ \simeq T^s$, $\chi_{\Omega}(\hat{g}_t \cdot -) \in C^{s,1}(\Nbar_I, A)$ as well. 

The group multiplication $I \times I \to I$ is analytic, and so is its compositions with $\mathrm{proj}_1,~ \mathrm{proj}_2$: \begin{eqnarray}\label{group mult} I \times \Nbar_I \to I \to \Nbar_I, ~ \hat{m}_T: I^s_0 \times \Nbar_I \to I \to T^0. \end{eqnarray}
The first morphism (\ref{group mult}) induces the pullback $C^{an}(\Nbar_I, A) \to C^{an}(I, \QQ_p) \hat{\otimes} C^{an}(\Nbar_I, A)$, therefore inducing $C^{s,1}(\Nbar_I, A) \to C^{an}(I, \QQ_p) \hat{\otimes} C^{s,1}(\Nbar_I, A)$ since the space of characteristic functions on $\Nbar_I^s$ cosets are preserved by left $I$ translations.
And by results in \S \ref{Weight spaces}, $\chi_{\Omega}$ is $\QQ_p$-analytic on $T^s$, the pullback of this character via $\hat{m}_T$ is represented by a function in $C^{an}(I^s_0, \QQ_p) \hat{\otimes} C^{s,1}(\Nbar_I, A)$ by the same reason.
By definition, the $I^s_0$ action on $C^{s,1}(\Nbar_I, A)$ is analytic. 
By \cite[Cor 5.1.9]{Eme17}, $\DD^{s,1}_I(V)$ is a continuous $D(I^s_0, \QQ_p)$ module.
\end{proof}

\section{Koszul complex and a comparison of $N$ cohomology}\label{Kos comp}
Let $\Gamma \simeq \ZZ^n$ be a finitely generated torsion free abelian group. Let $N \simeq \ZZ_p^n$ be a compact $p$-adic analytic abelian group containing $\Gamma$ as a dense subgroup with the $\ZZ_p$ action via module structure of $\ZZ_p^n$. Let $\frak{n}$ be its Lie algebra over $\QQ_p$. 
We prove $H^\ast(\Gamma, \mathbb{D}_{w,\Omega}^{s}) \simeq H^\ast_{an}(N, \mathbb{D}_{w,\Omega}^{s}) \simeq H^\ast(\mathfrak{n}, \mathbb{D}_{w,\Omega}^{s})^N$ in the sense of Kohlhaase, and the second isomorphism can be deduced by applying a general result in \cite{Koh11}.

The orbits of this action are $1$-parameter subgroups of $N$. Choose a basis of generators $e_1,\cdots,e_n$ of $\Gamma$, $\Gamma \subset \ZZ_p\cdot e_1+\cdots+\ZZ_p\cdot e_n \subset N$ for a dense $\Gamma$, hence $N=\ZZ_p\cdot e_1 \oplus \cdots \oplus \ZZ_p e_n$. 
Therefore we assume $\Gamma < N$ is induced by the standard inclusion $\ZZ \hookrightarrow \ZZ_p$ with (topological) generators $e_1,\cdots,e_n$.

Let $K$ be a finite extension of $\QQ_p$. Let $\cM_N$ denote the category of complete Hausdorff locally convex $K$-vector spaces with the structure of a separately continuous $D(N, K)$-module,
taking as morphisms all continuous $D(N, K)$-linear maps as in \cite{Koh11}.

Let $\mathscr{W}_N \simeq \mathscr{W}_{\ZZ_p}^n$ be the (continuous) weight space of $N$ over $K$. For any $z=(z_1,\cdots,z_n) \in \mathscr{W}_N(K)$, there is a locally $\QQ_p$-analytic $K$-valued character $\kappa_z$ such that $$\kappa_z(a)=\prod_{i=1}^n (1+z_i)^{a_i}, ~~ a=\sum_{i=1}^n a_i e_i \in N.$$
So we have the embedding \begin{eqnarray*}
\mathscr{W}_N(K) & \hookrightarrow & C^{an}(N, K) \\
z & \mapsto & \kappa_z.
\end{eqnarray*} 
Hence any linear form $\lambda \in D(N, K)$ gives rise to the function $$F_\lambda(z):=\lambda(\kappa_z)$$ on $\mathscr{W}_N(K)$ which is called the \emph{Fourier transform} of $\lambda$. 
Moreover, $F_\lambda$ is a rigid function on $\mathscr{W}_N$ with coefficients in $K$. 
Let $\sO(\mathscr{W}_N)$ denote the ring of all $K$-rigid functions on $\mathscr{W}_N$, 
$$\sO(\mathscr{W}_N)=\{ F(T_1,\cdots,T_n)=\sum_{\underline{m} \in \ZZ_{\geq 0}^n} a_{\underline{m}} \underline{T}^{\underline{m}}, ~ a_{\underline{m}} \in K, ~ \mathrm{which ~ converge ~ on} ~ \mathscr{W}_{N_{/\CC_p}}(\CC_p).\}$$

We have the following multivariable Amice's theorem stated as Thm 2.2. in \cite{ST01}.
\begin{thm}[Amice]
The Fourier transform \begin{eqnarray*}
D(N, K) & \xrightarrow{\simeq} & \sO(\mathscr{W}_N) \\
\lambda & \mapsto & F_\lambda
\end{eqnarray*} is an isomorphism of $K$-Fr\'echet algebra.
\end{thm}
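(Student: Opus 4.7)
The plan is to reduce the multivariate statement to the single-variable Amice theorem of ST01 by exploiting the product structure $N \simeq \bigoplus_{i=1}^n \ZZ_p e_i$ and the corresponding identification $\mathscr{W}_N \simeq \mathscr{W}_{\ZZ_p}^n$. Concretely, I would make both sides explicit using the multivariate Mahler basis $\{\binom{a}{\underline{m}} := \prod_{i=1}^n \binom{a_i}{m_i}\}_{\underline{m} \in \ZZ_{\geq 0}^n}$, which is an orthonormal basis of $C(N,K)$ by iterated application of the single-variable Mahler theorem.

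First I would spell out that any continuous $f \in C(N, K)$ has a unique expansion $f(a) = \sum_{\underline{m}} c_{\underline{m}}(f) \binom{a}{\underline{m}}$ with $c_{\underline{m}}(f) \to 0$, and that $f$ is locally analytic iff there exists $r > 1$ with $|c_{\underline{m}}(f)| r^{|\underline{m}|} \to 0$. Dually, any $\lambda \in D(N, K)$ is determined by the collection $a_{\underline{m}}(\lambda) := \lambda(\binom{\cdot}{\underline{m}})$, and continuity on the LF-space $C^{an}(N, K)$ is equivalent to the bound $|a_{\underline{m}}(\lambda)| r^{|\underline{m}|}$ remaining bounded for every $r < 1$. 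Expanding the universal character
$$\kappa_z(a) = \prod_{i=1}^n (1+z_i)^{a_i} = \sum_{\underline{m} \in \ZZ_{\geq 0}^n} \binom{a}{\underline{m}} \underline{z}^{\underline{m}}$$
gives $F_\lambda(z) = \sum_{\underline{m}} a_{\underline{m}}(\lambda) \underline{z}^{\underline{m}}$, so the growth condition on the $a_{\underline{m}}(\lambda)$ translates precisely into the condition that this power series defines an element of $\mathcal{O}(\mathscr{W}_N)$. This identifies the Fourier transform as a bijection of vector spaces, and matches the family of seminorms indexed by $r < 1$ on $D(N,K)$ with the Fréchet seminorms defining $\mathcal{O}(\mathscr{W}_N)$ by sup norms over a cofinal family of affinoid subdomains exhausting $\mathscr{W}_N$.

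For multiplicativity, I would use that $\kappa_z$ is a group homomorphism $N \to K^\times$: by the definition of convolution $(\lambda_1 \ast \lambda_2)(f) = (\lambda_1 \otimes \lambda_2)\bigl((a,b) \mapsto f(a+b)\bigr)$, applying both sides to $\kappa_z$ factors the evaluation and yields $(\lambda_1 \ast \lambda_2)(\kappa_z) = \lambda_1(\kappa_z) \lambda_2(\kappa_z)$, i.e.\ $F_{\lambda_1 \ast \lambda_2} = F_{\lambda_1} \cdot F_{\lambda_2}$. An equivalent and more structural route, which I would keep in reserve if the direct coefficient comparison becomes messy, is to establish the natural Fréchet-algebra isomorphisms $D(N, K) \simeq \widehat{\bigotimes}_{i=1}^n D(\ZZ_p, K)$ and $\mathcal{O}(\mathscr{W}_N) \simeq \widehat{\bigotimes}_{i=1}^n \mathcal{O}(\mathscr{W}_{\ZZ_p})$ with respect to the projective completed tensor product, so that the multivariate Fourier transform factors as the tensor product of single-variable Fourier transforms, reducing the whole statement to the case $n=1$ of ST01.

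The principal technical obstacle is the careful comparison of topologies: one must verify that the family of seminorms arising from the dual of the LF-structure on $C^{an}(N, K)$ matches the Fréchet-space seminorms on $\mathcal{O}(\mathscr{W}_N)$. This is where the multivariate nature genuinely enters, through the fact that the cofinal exhaustion of $\mathscr{W}_N$ by closed polydiscs of radii $r < 1$ must be compared uniformly in all $n$ coordinates to the dual norms obtained from the multivariate Mahler basis. Everything else is formally parallel to the ST01 argument, and once topological bijectivity of $\lambda \mapsto F_\lambda$ is established, the algebra structure is already guaranteed by the character computation above.
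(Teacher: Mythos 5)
The paper does not give a proof of this theorem; it simply quotes it as Theorem~2.2 of \cite{ST01} (attributing the $n=1$ case to Amice). Your proposal is, in effect, a sketch of the argument from that reference: multivariate Mahler expansion, the generating-function identity $\kappa_z(a)=\prod_i(1+z_i)^{a_i}=\sum_{\underline{m}}\binom{a}{\underline{m}}\underline{z}^{\underline{m}}$, duality between the coefficient-decay characterization of local analyticity and the boundedness condition $\sup_{\underline{m}}|a_{\underline{m}}(\lambda)|r^{|\underline{m}|}<\infty$ for every $r<1$, and multiplicativity from the fact that the $\kappa_z$ are group homomorphisms together with the observation that a rigid function on the reduced open polydisc $\mathscr{W}_N$ is determined by its values at classical points. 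The reserve route via $D(\ZZ_p^n,K)\simeq\widehat{\bigotimes}_{i}D(\ZZ_p,K)$ and $\sO(\mathscr{W}_{\ZZ_p^n})\simeq\widehat{\bigotimes}_i\sO(\mathscr{W}_{\ZZ_p})$ is equally valid (both are standard facts about completed tensor products of nuclear Fr\'echet spaces for products of compact groups and products of quasi-Stein spaces, respectively), and is arguably the cleanest way to reduce to the one-variable Amice theorem. In short, your proposal is correct and takes essentially the same approach as the source the paper relies on.
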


One can embed the usual group algebra $K[\Gamma] \subset K[N]$ as dense subalgebras of $D(N, K)$ viewing a group element as a Dirac distribution as in \cite{ST02}. 
Under the Fourier transform,
$$e_i \mapsto 1+T_i, ~~ 1 \leq i \leq n,$$ $$K[\Gamma] \simeq K[T_1,\cdots,T_n,(1+T_1)^{-1},\cdots,(1+T_n)^{-1}] \subset D(N, K).$$

\begin{lem}\label{reg seq D}
$(T_1,\cdots,T_n) \subset K[\Gamma] \subset \sO(\mathscr{W}_{\ZZ_p^n})$ form a regular sequence for both algebras.
\end{lem}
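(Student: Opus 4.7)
The plan is to handle the two rings in parallel by induction on $n$, reducing each case to the elementary observation that dividing by one of the $T_i$ stays within the ring.

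For $K[\Gamma]=K[T_1,\dots,T_n,(1+T_1)^{-1},\dots,(1+T_n)^{-1}]$, I would first note that this is a localization of the polynomial ring $K[T_1,\dots,T_n]$ at the multiplicative set $S$ generated by $1+T_1,\dots,1+T_n$. In the polynomial ring the sequence $(T_1,\dots,T_n)$ is trivially regular, since after modding out the first $k$ variables one simply gets $K[T_{k+1},\dots,T_n]$ in which $T_{k+1}$ is a nonzero-divisor. Because the image of $S$ in $K[T_1,\dots,T_n]/(T_1,\dots,T_n)=K$ equals $\{1\}$, localization does not make the quotient collapse; exactness of localization then preserves regularity of the sequence, and $K[\Gamma]/(T_1,\dots,T_n)\simeq K\neq 0$.

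For $\sO(\mathscr{W}_{\ZZ_p^n})$, I would proceed by induction on $n$, proving that evaluation at $T_1=0$ induces an isomorphism
\[ \sO(\mathscr{W}_{\ZZ_p^n})/(T_1) \xrightarrow{\sim} \sO(\mathscr{W}_{\ZZ_p^{n-1}}). \]
Surjectivity is clear. The nontrivial containment is that the kernel lies in $(T_1)$: if $F=\sum a_{\underline{m}}\underline{T}^{\underline{m}}\in\sO(\mathscr{W}_{\ZZ_p^n})$ satisfies $a_{\underline{m}}=0$ whenever $m_1=0$, then the formal series $G$ obtained by stripping one factor of $T_1$ from each surviving monomial has coefficients forming a subfamily of those of $F$, and so inherits the convergence condition of $F$ on every closed polydisc inside the open unit polydisc. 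Hence $G\in\sO(\mathscr{W}_{\ZZ_p^n})$ and $F=T_1 G$. Iterating this with $T_2,\dots,T_n$ in turn gives regularity and the final quotient $K\neq 0$.

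The main obstacle I expect is the bookkeeping in the rigid case, namely confirming that the ``divided'' series $G$ genuinely lies in $\sO(\mathscr{W}_{\ZZ_p^n})$ and not merely in some larger ring of formal power series. This is the only step that uses the analytic structure beyond purely formal algebra, and it rests on the explicit description of $\sO(\mathscr{W}_N)$ as series convergent on the full open polydisc over $\CC_p$; once that description is unfolded, the inheritance of convergence by $G$ is immediate.
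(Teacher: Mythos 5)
Your argument is correct and follows essentially the same route as the paper: on the rigid side you identify, exactly as the paper does, the kernel of evaluation at $T_1=0$ with the principal ideal $(T_1)$ by observing that dividing a convergent series on the open unit polydisc by $T_1$ still converges there, and then you iterate. For $K[\Gamma]$ you package the step as ``localization preserves regular sequences'' (checking the quotient does not collapse), whereas the paper directly records that $K[\Gamma]/(T_1)$ is again the analogous Laurent ring in one fewer variable; this is a purely cosmetic difference within the same short induction.
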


\begin{proof}
For $K[\Gamma]$, the result follows from $$K[T_1,\cdots,T_n,(1+T_1)^{-1},\cdots,(1+T_n)^{-1}]/(T_1) \simeq K[T_2,\cdots,T_n,(1+T_2)^{-1},\cdots,(1+T_n)^{-1}].$$
For any $\underline{m}=(m_2,\cdots,m_n) \in \ZZ_{\geq 0}^{n-1}$, we use $\underline{\hat{T^1}}^{\underline{m}}$ to denote $T_2^{m_2}\cdots T_n^{a_n}$. We construct an algebra homomorphism \begin{eqnarray*}
\sO(\mathscr{W}_{\ZZ_p^n}) & \twoheadrightarrow & \sO(\mathscr{W}_{\ZZ_p^{n-1}}) \\
\sum_{i=0}^{+ \infty} \sum_{\underline{m} \in \ZZ_{\geq 0}^{n-1}} a_{i,\underline{m}} T_1^i\underline{T}^{\underline{m}} & \mapsto & \sum_{\underline{m} \in \ZZ_{\geq 0}^{n-1}} a_{0,\underline{m}} \underline{T}^{\underline{m}}.
\end{eqnarray*} If we show that the kernel of this map is the principal ideal generated by $T_1$, then we prove the statement inductively since $\sO(\mathscr{W}_{\ZZ_p^n})$ is an integral domain. 
Suppose that $f \in \sO(\mathscr{W}_{\ZZ_p^n})$ maps to $0$, then $\frac{f}{T_1}$ is a power series with coefficients in $K$, it certainly converges on $\mathscr{W}_{\ZZ_p^n}(\CC_p)$ as $f$ converges on it.
\end{proof}

Let $R$ be a commutative ring and a sequence $x_1,\cdots,x_l$ of elements of $R$. There is a so called \emph{Koszul complex} $K(x_1,\cdots,x_l)$ associated to such a sequence.
$$0 \rightarrow \bigwedge^{l} R^l \xrightarrow{d_l} \bigwedge^{l-1} R^l \rightarrow \cdots \rightarrow \bigwedge^{1} R^l \xrightarrow{d_1} R \rightarrow R/(x_1,\cdots,x_l) \rightarrow 0,$$
where $R^l=\bigoplus\limits_{i=1}^{l}Rs_i$ and the differential $d_k$ is given by: for any $1 \leq i_1 < \cdots < i_k \leq l$,
$$d_{k}(s_{i_1} \wedge \cdots \wedge s_{i_k})=\sum_{j=1}^{k}(-1)^{i+1} x_{i_j} s_{i_1} \wedge \cdots \wedge \widehat{s_{i_j}} \wedge \cdots \wedge s_{i_k}.$$
A fundamental theorem for the Koszul complex is:
\begin{thm}[{\cite[Thm 16.1]{Mat89}}]\label{Koszul}
If $(x_1,\cdots,x_l)$ is a regular sequence of elements in $R$, then the Koszul complex $K(x_1,\cdots,x_l)$ is a free resolution of the quotient ring $R/(x_1,\cdots,x_l)$.
\end{thm}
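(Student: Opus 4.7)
The plan is to induct on the length $l$. The base case $l=1$ reduces to the observation that $0 \to R \xrightarrow{x_1} R \to R/(x_1) \to 0$ is exact precisely when $x_1$ is a non-zero-divisor on $R$, which is the definition of a length-one regular sequence.

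For the inductive step, I would exploit the multiplicative structure of Koszul complexes: there is a natural isomorphism
$$K(x_1, \ldots, x_l) \;\cong\; K(x_1, \ldots, x_{l-1}) \otimes_R K(x_l),$$
coming from the decomposition $\bigwedge^k(R^{l-1} \oplus R s_l) \cong \bigwedge^k R^{l-1} \oplus \bigl(\bigwedge^{k-1} R^{l-1} \otimes R s_l\bigr)$ in each degree. Since $K(x_l)$ is the two-term complex $R \xrightarrow{x_l} R$, this identifies $K(x_1, \ldots, x_l)$ with the mapping cone of multiplication by $x_l$ on $K(x_1, \ldots, x_{l-1})$, giving a short exact sequence of complexes
$$0 \to K(x_1, \ldots, x_{l-1}) \to K(x_1, \ldots, x_l) \to K(x_1, \ldots, x_{l-1})[-1] \to 0,$$
whose long exact sequence in homology has connecting map $\pm x_l$.

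By the inductive hypothesis, $H_i(K(x_1, \ldots, x_{l-1})) = 0$ for $i \geq 1$ and $H_0 = R/(x_1, \ldots, x_{l-1})$. Plugging into the long exact sequence: for $i \geq 2$, both flanking homology groups vanish and $H_i(K(x_1, \ldots, x_l)) = 0$ at once; for $i = 1$, the sequence collapses to
$$0 \to H_1(K(x_1, \ldots, x_l)) \to R/(x_1,\ldots,x_{l-1}) \xrightarrow{x_l} R/(x_1,\ldots,x_{l-1}),$$
and the regular sequence hypothesis says precisely that $x_l$ is a non-zero-divisor on the quotient $R/(x_1,\ldots,x_{l-1})$, forcing $H_1 = 0$; for $i = 0$, the cokernel reading of the same sequence identifies $H_0(K(x_1, \ldots, x_l))$ with $R/(x_1,\ldots,x_l)$, as desired.

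The main bookkeeping step is verifying the tensor/cone identification of the Koszul complexes rigorously, comparing differentials on generators with careful sign conventions; once that structural input is in place the argument is essentially formal. Notably, the regular sequence hypothesis is invoked exactly once, at the $i=1$ step, with every other vanishing coming for free from the inductive hypothesis through the long exact sequence.
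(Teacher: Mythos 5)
Your proof is correct, and since the paper simply cites this as Theorem 16.1 of Matsumura and does not reprove it, the relevant comparison is with the cited reference: your induction via the tensor/cone decomposition $K(x_1,\ldots,x_l) \cong K(x_1,\ldots,x_{l-1}) \otimes_R K(x_l)$ and the associated long exact sequence in homology (with connecting map $\pm x_l$) is exactly the standard argument Matsumura gives. The single invocation of the regular-sequence hypothesis at the $H_1$ step, with all higher vanishings falling out of the inductive hypothesis, is faithfully captured.
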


Now we are ready to state and prove the main theorem of this section.

\begin{thm}\label{group lie comparison}
There are natural $K$-linear isomorphisms
$$H^q(\Gamma, V) \simeq H^q(\frak{n}, V)^N$$ for all $q \geq 0$ and any object $V$ of $\cM_N$.
\end{thm}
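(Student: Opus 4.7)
The plan is to realize both sides of the claimed isomorphism as the cohomology of a single explicit complex, built as the Koszul complex on the regular sequence $T_i := e_i - 1$, run simultaneously over the group algebra $K[\Gamma]$ and over the distribution algebra $D(N,K) \simeq \sO(\mathscr{W}_N)$. The second isomorphism $H^q_{an}(N,V) \simeq H^q(\frak{n},V)^N$ is supplied by the general comparison result of \cite{Koh11}, so the core work is to identify $H^q(\Gamma, V)$ with this common value via a Koszul computation.

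First I would invoke Lemma \ref{reg seq D}: the tuple $(T_1, \ldots, T_n)$ is a regular sequence in both $K[\Gamma]$ and $D(N,K)$, and in each case the quotient by the ideal it generates is the trivial module $K$ (the augmentation quotient). Theorem \ref{Koszul} then exhibits the Koszul complex $K(T_1, \ldots, T_n; R)$, for $R = K[\Gamma]$ and for $R = D(N,K)$, as a free $R$-resolution of $K$. Applying $\Hom_{K[\Gamma]}(-, V)$ to the first, and the continuous pairing $\Hom^{\mathrm{cts}}_{D(N,K)}(-, V)$ to the second, produces in both cases the same explicit cochain complex $\mathcal{K}^\bullet(V)$ with
$$ \mathcal{K}^q(V) = \bigwedge\nolimits^{q} K^n \otimes_K V, $$
whose differential acts on the $V$-slot by left multiplication by the $T_i$. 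The agreement holds because $K[\Gamma] \hookrightarrow D(N,K)$ as a dense subalgebra and the $D(N,K)$-action on $V$ is separately continuous, so the $T_i$-action on $V$ is independent of which algebra we read $T_i$ in; and because $\Hom^{\mathrm{cts}}_{D(N,K)}(D(N,K), V) \simeq V$ canonically via evaluation at $1$.

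The $K[\Gamma]$-interpretation then gives $H^q(\mathcal{K}^\bullet(V)) = \Ext^q_{K[\Gamma]}(K, V) = H^q(\Gamma, V)$ by the standard identification of group cohomology of $\Gamma \simeq \ZZ^n$ with Ext over its group algebra. The $D(N,K)$-interpretation gives the continuous Ext group $\Ext^q_{D(N,K)}(K, V)$ in the category $\cM_N$, which I would identify with $H^q_{an}(N,V)$ by realizing locally analytic group cohomology as the corresponding derived functor on $\cM_N$; this identification is implicit in \cite{Koh11}. Chaining
$$ H^q(\Gamma, V) \;=\; H^q(\mathcal{K}^\bullet(V)) \;=\; H^q_{an}(N, V) \;=\; H^q(\frak{n}, V)^N $$
delivers the theorem, and naturality is visible from the construction.

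The hard part will be this final topological identification. To justify that the Koszul resolution of $K$ by free $D(N,K)$-modules truly computes the derived Ext in $\cM_N$ that underlies $H^\bullet_{an}(N, V)$, one must verify that the free objects $D(N,K)^{\binom{n}{q}}$ are acyclic for the continuous Hom pairing with any separately continuous $D(N,K)$-module, and that the Koszul differentials are continuous with respect to the Fr\'echet structure on $D(N,K) \simeq \sO(\mathscr{W}_N)$. Both points should ultimately rest on the separate continuity assumption on $V$ and on the Fourier isomorphism, but this is the step requiring the most care; once it is in hand, the argument matches the purely algebraic Koszul computation on the $\Gamma$-side with the topological Koszul computation on the $N$-side.
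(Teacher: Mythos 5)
Your proposal follows essentially the same route as the paper: exhibit $T_i = e_i - 1$ as a regular sequence in both $K[\Gamma]$ and $D(N,K)$, use the resulting Koszul resolutions of $K$ to compute the respective Ext groups, identify the two via base change along $K[\Gamma]\hookrightarrow D(N,K)$, and then invoke \cite[Thm 4.8, Thm 4.10]{Koh11}. The topological concern you flag at the end is precisely what \cite[Thm 4.8]{Koh11} supplies and the paper does not reprove it either; the only small extra step the paper makes explicit is the initial reduction $H^q(\Gamma,V)=\Ext^q_{\ZZ[\Gamma]}(\ZZ,V)\simeq\Ext^q_{K[\Gamma]}(K,V)$ via flatness of $\ZZ[\Gamma]\hookrightarrow K[\Gamma]$.
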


\begin{proof}
$\ZZ[\Gamma] \hookrightarrow K[\Gamma]$ is flat since flatness is stable under base change $\ZZ \hookrightarrow K$, therefore $$H^q(\Gamma, V)=\mathrm{Ext}^q_{\ZZ[\Gamma]}(\ZZ, V) \simeq \mathrm{Ext}^q_{K[\Gamma]}(K, V).$$ Lem \ref{reg seq D} and \ref{Koszul} enables us to use the Koszul complex $K(T_1,\cdots,T_n)_{/ K[\Gamma]}$ to represent $K$. 
One checks that $$K(T_1,\cdots,T_n)_{/ K[\Gamma]} \otimes_{K[\Gamma]} D(N, K) \simeq K(T_1,\cdots,T_n)_{/ D(N, K)},$$ which again represents $K$ as a $D(N,K)$-module by Lem \ref{reg seq D} and Lem \ref{Koszul}. 
Then $$\mathrm{Ext}^q_{K[\Gamma]}(K, V) \simeq \mathrm{Ext}^q_{D(N, K)}(K, V) \simeq H^q_{an}(N, V) \simeq H^q(\frak{n}, V)^N$$ by \cite[Thm 4.8, Thm 4.10]{Koh11}.
\end{proof}

\begin{rem}
If moreover $V$ has a $A=\sO(\Omega)$-module structure which commutes with the group action, then this isomorphism is as an $A$-linear isomorphism since $A$ action commutes with the $D(N, K)$ action.
\end{rem}

\section{$N$ cohomology of $\mathbb{D}_{w,\Omega}^{s}$}\label{N coho}
We keep the same notation as in the previous sections. 
Let $G_i$ be either $\GL_{2n}$ or $\Sp_{2n}$ over $E_i$ or the unitary group associated to the Hermitian form $J_n$ and an unramified extension $K_i / E_i$ with various subgroups and their derived compact $p$-adic groups as in \ref{Notation}. 
For distinguishing these $p$-adic groups associated with the different algebraic groups, we use subscripts to specify.
For example, the Iwahori subgroup of $G_i(E_i)$ is denoted by $I_{G_i}$.

For the case of $\GL_{2n}$ and $\Sp_{2n}$, we specify each Borel $B_i$ as the upper triangular Borel subgroup, and each $P_i$ to be the standard Siegel parabolic whose lower left $n \times n$ quadrant is zero:
\[
\left[ 
\begin{array}{c|c} 
  \ast & \ast \\ 
  \hline 
  0 & \ast 
\end{array} 
\right].
\]
For the unitary group case, $B_i$ and $P_i$ are taken to be the pullbacks of the upper triangular Borel and standard Siegel parabolic via the standard embedding $G_i \hookrightarrow \Res_{K_i / E_i} \GL_{2n}$.

We make several abbreviations for this section:
$$W_G := \prod\limits_{i=1}^l W(R_{E_i/\QQ_p} G_i, R_{E_i/\QQ_p} T_i)(\overline{\QQ}_p),$$
$$W_L := \prod\limits_{i=1}^l W(R_{E_i/\QQ_p} L_i, R_{E_i/\QQ_p} T_i)(\overline{\QQ}_p),$$  $$W^P:=\prod\limits_{i=1}^l W^{P_i}(G_i, S_i), ~~ w=\prod\limits_{i=1}^l (w_i) \in W^P,$$
$$N:=\prod\limits_{i=1}^l (N_i(E_i) \cap w_i I_{G_i} w_i^{-1}),$$  $$\frak{n}_i:=\mathrm{Lie}(N_i(E_i))_{/ E_i}, ~~ \frak{n}:=\mathrm{Lie}(N)_{/ \QQ_p}=\bigoplus_{i=1}^l \frak{n}_i.$$
As $T^s_{G_i} = T^s_{L_i}$, we use $T^s_i$ to denote both groups. 
$$T^0 := \prod\limits_{i=1}^l T^0_i, ~~ I_G:=\prod\limits_{i=1}^l I_{G_i}, ~~ I^s_G:=\prod\limits_{i=1}^l I^s_{G_i},$$   
$$I^s_{0,G}:=\prod\limits_{i=1}^l I^s_{0,G_i}, ~~ N_G:=\prod\limits_{i=1}^l N_{G_i}, ~~ \Nbar_G:=\prod\limits_{i=1}^l \Nbar_{G_i},$$
$$ \frak{g}_i := \Lie(I_{G_i})_{/\QQ_p}, ~~ \frakn_{b,i} := \Lie(N_{G_i})_{/\QQ_p}, ~~ \bar{\frakn}_{b,i} := \Lie(\Nbar_{G_i})_{/\QQ_p},$$
$$\frak{g}:= \prod\limits_{i=1}^l \frak{g}_i, ~~ \frakn_{b} := \prod\limits_{i=1}^l  \frakn_{b,i}, ~~ \bar{\frakn}_{b} := \prod\limits_{i=1}^l  \bar{\frakn}_{b,i}.$$
For the Levi $L$, we define 
$$I_L:=\prod\limits_{i=1}^l I_{L_i}, ~~ B_L:=T^0\cdot N_L, ~~ I^s_{0,L}:=\prod\limits_{i=1}^l I^s_{0, L_i}, ~~ \frakl:=\Lie(I_L)_{/\QQ_p},$$ $$N_L=\prod\limits_{i=1}^l N_{L_i}, ~~ \overline{N}_L^s=\prod\limits_{i=1}^l \Nbar_{L_i}^s, ~~ \overline{N}_L:=\overline{N}_L^1,$$ 
$$\overline{N}^s_+:=\prod\limits_{i=1}^l w_i \Nbar_{G_i}^s w_i^{-1} \cap N_i(E_i), ~~ \overline{N}^s_-:=\prod\limits_{i=1}^l w_i \Nbar_{G_i}^s w_i^{-1} \cap \overline{N}_i(E_i).$$
There are Iwahori decompositions 
$$I_G = \Nbar_G \times T^0 \times N_G, ~~ I_L = \Nbar_L \times T^0 \times N_L.$$

Let $\Omega$ be an irreducible Zariski closed subspace of an affinoid subdomain of the weight space of $T^0$. 
The integral domain $A=\sO(\Omega)$ contains a big enough $p$-adic field $K$ such that the weight decomposition for Lie algebra exists over $K$. 
The Lie algebra $\frak{g} \otimes_{\QQ_p} K$ decomposes as a direct sum of eigenspaces for $T^0$ action with (positive/negative) roots $\Delta=\Delta^+ \sqcup \Delta^-$. 
Positive (resp. negative) roots correspond to eigenspaces of $\frak{n}_b \otimes_{\QQ_p} K$ (resp. $\overline{\frak{n}}_b \otimes_{\QQ_p} K$). 
Let $\delta$ be the half sum of positive roots and $w\delta-\delta$ is a $\QQ_p$-algebraic character of $T^0$.

There are $A$-modules $\DD^s_{w,\chi} \subset \DD^{s,1}_{w,\chi}$ (in particular, $\DD^s_{w,\Omega} \subset \DD^{s,1}_{w,\Omega}$) for a weight character $\chi: T^0 \to A^\times$ (resp. $\chi_{\Omega}$ for an irreducible affinoid subdomain $\Omega$ of $\mathscr{W}_{T^0}$). 
We remind the readers that we only know $\DD^s_{w,\chi} \subset \DD^{s,1}_{w,\chi}$ as $wI^s_{0,G}w^{-1}$ representations by Prop \ref{aux mod}. 
We still use $\Ind^s_{w,\chi}$ ($\Ind^s_{w,\Omega}$) as in \S \ref{la rep} to denote the same representations for $wI_Gw^{-1}$. 
We use $\Ind^s_{L,w\cdot\Omega}$ for $\Ind^s_{I_L,w\cdot \chi_\Omega}$ and $\DD^s_{L,w\cdot\Omega}$ its $A$-linear Banach dual. 
We use $\Ind^{s,1}_{w,\chi}$, $\Ind^{s,1}_{w,\Omega}$, $\Ind^{s,1}_{L,w\cdot \Omega}$ to denote the auxiliary subrepresentations $C^{s,1}(\Nbar_I, A)$ constructed in \S \ref{auxiliary modules} for various choices of Iwahori and characters (or finite rank $A$ modules of $B_I$).

Let $U(\frak{l})$ be the universal enveloping algebra with $A$-coefficients for $\frak{l}$ with center $Z(\frak{l})$.
For any $Z(\frak{l})$ module $M$ and infinitesimal character $\theta: Z(\frak{l}) \to A$, we set $$M[\theta]:=\{m \in M ~ | ~ (x-\theta(x))^n m=0, ~ \for ~ \mathrm{some} ~ n, ~ \forall x \in Z(\frak{l}) \}.$$

We want to understand the structure of $H^\ast(\frak{n}, \mathbb{D}_{w,\Omega}^{s})^N$ as an $I_L$ representation. 
And we establish the structure of the $N$ cohomology of $\mathbb{D}_{w,\Omega}^{s}$ via $N$ cohomology of the auxiliary module $\mathbb{D}^{s,1}_{w,\chi}$.
We have the following main theorem as below.
\begin{thm}
\label{pppp}
For each $w \in W^P$, let $l(w):=\sum\limits_{i=1}^l [E_i:\QQ_p] l(w_i)$, where the later $l$ refers to the length function on absolute Weyl group $W(G_i, T_i)$. 
The $N$ cohomology $H^{l(w)}_{an}(N, \DD_{w,\Omega}^s) \simeq H^{l(w)}(\mathfrak{n}, \mathbb{D}_{w,\Omega}^{s})^N$ in degree $l(w)$ admits a $I_L$-equivariant direct summand
$$i : \mathbb{D}_{L, w \cdot \Omega}^{s} \hookrightarrow H^{l(w)}(\mathfrak{n}, \mathbb{D}_{w,\Omega}^{s})^N. $$ 
\end{thm}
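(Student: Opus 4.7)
The plan is to produce the map $i$ at the level of Chevalley--Eilenberg cochains using Kostant's classical theorem, and then to split the resulting inclusion by interpreting the Casselman--Osborne infinitesimal character decomposition inside the auxiliary completion $\DD^{s,1}_{w,\Omega}$ via the eigen orthonormalizable framework of \S\ref{Banach rep}.

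First, I would compute $H^\ast(\frakn,\DD^s_{w,\Omega})$ through $\bigwedge^\bullet\frakn^\ast\otimes\DD^s_{w,\Omega}$ and construct an $I_L$-equivariant continuous $A$-linear cochain map
\[
\iota:\DD^s_{L,w\cdot\Omega}\hookrightarrow \bigwedge^{l(w)}\frakn^\ast\otimes\DD^s_{w,\Omega}.
\]
Using the Iwahori decomposition of $wIw^{-1}$ relative to the parabolic, together with Mackey's tensor product theorem, one identifies $\Ind^s_{w,\Omega}$ restricted along $I_L\hookrightarrow wIw^{-1}$ with a locally analytic induction of an $I_L$-representation tensored with transverse factors along $N_w$ and $\overline{N}_w$. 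Specializing $\chi_\Omega$ to an algebraic dominant weight recovers Kostant's explicit model of the $w\cdot\chi$-isotypic component of the top-wedge cochains inside $\bigwedge^{l(w)}\frakn^\ast \otimes V$; this model interpolates across $\Omega$ and produces $\iota$. That $\iota$ lands in $\ker d_{l(w)}$ and avoids $\mathrm{Im}\, d_{l(w)-1}$ is checked by comparing $T^s$-eigenweights against Prop \ref{weights in Ds1}, which gives the injective $i:\DD^s_{L,w\cdot\Omega}\hookrightarrow H^{l(w)}(\frakn,\DD^s_{w,\Omega})^{N_w}$.

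Next, I would build an $I_L$-equivariant projector onto $\mathrm{Im}(i)$ inside the auxiliary cohomology $H^{l(w)}(\frakn,\DD^{s,1}_{w,\Omega})^{N_w}$. By Prop \ref{D1 beo} the module $\DD^{s,1}_{w,\Omega}$ is bounded eigen orthonormalizable for $T^s$, and Prop \ref{aux mod} together with Lem \ref{dual nice} and Lem \ref{tensor nice} imply that the Chevalley--Eilenberg differentials, the $N_w$-action, and the $I_L$-action on $\bigwedge^\bullet\frakn^\ast\otimes\DD^{s,1}_{w,\Omega}$ are nice. The Casselman--Osborne lemma identifies the $Z(\frakl)$-action on the cohomology with the restriction of the $Z(\mathfrak{g})$-action on $\DD^{s,1}_{w,\Omega}$ through the Harish-Chandra homomorphism, so the infinitesimal characters appearing are governed by Weyl translates $w'\cdot\chi_\Omega$ for $w'\in W_G/W_L$. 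For generic weights in $\Omega$ these translates are pairwise distinct, and a Zariski density argument, combined with the eigen-orthonormalizable structure provided by Lem \ref{rep of t} and the splitting result Lem \ref{direct sum}, yields a direct-sum decomposition of $H^{l(w)}(\frakn,\DD^{s,1}_{w,\Omega})^{N_w}$ into $I_L$-stable generalized-infinitesimal-character summands, with the summand indexed by $w\cdot\chi_\Omega$ containing the completion of the image of $\iota$.

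Finally, I would pull the splitting back to $\DD^s_{w,\Omega}$. The inclusion $\DD^s_{w,\Omega}\hookrightarrow\DD^{s,1}_{w,\Omega}$ is dense and $I_L$-equivariant, and Lem \ref{extend nice} ensures that the projector onto the $w\cdot\chi_\Omega$-summand preserves this subspace; Lem \ref{nice1} and Lem \ref{l11} then imply that on the $N_w$-invariant cohomology its image is exactly $\mathrm{Im}(i)$, delivering the required $I_L$-equivariant splitting. The principal technical obstacle is the middle step: classical Casselman--Osborne is a pointwise statement, so upgrading it to a coherent decomposition over the family $\Omega$ requires the bounded eigen orthonormalizable framework of \S\ref{Banach rep} to manufacture generalized eigenspaces as genuine closed direct summands in a Banach family, together with the product weight-space decomposition of \S\ref{Weight spaces} to guarantee that Weyl translates of the universal character stay separated on a Zariski dense open subset.
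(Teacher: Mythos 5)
Your overall strategy—build the cochain-level inclusion via Kostant and Mackey, then split it using Casselman--Osborne infinitesimal characters in the eigen orthonormalizable completion $\DD^{s,1}_{w,\Omega}$, then pull back by density—matches the structure of the paper's proof of Prop \ref{inj to coh}, Prop \ref{ker dlw}, Thm \ref{vv} and the final argument. You have also correctly flagged where the hard work lies. There is, however, a genuine gap in the middle step that would need to be repaired.

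You propose to ``yield a direct-sum decomposition of $H^{l(w)}(\frakn,\DD^{s,1}_{w,\Omega})^{N_w}$ into $I_L$-stable generalized-infinitesimal-character summands'' over $\Omega$, and then extract the projector onto the $w\cdot\chi_\Omega$-summand. But the tools you invoke—Lem \ref{rep of t} and Lem \ref{direct sum}—are stated only for $A=K$ (a single weight), and Lem \ref{direct sum} moreover takes a continuous projection as \emph{input}, so it cannot be used to manufacture one. More seriously, the generalized-infinitesimal-character decomposition does not exist over all of $\Omega$: the Weyl translates $w'\cdot\chi_\Omega$ separate only away from a proper Zariski-closed subset, and on its complement the summands collide, so no coherent direct-sum decomposition over $\sO(\Omega)$ can be produced this way. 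What the paper actually does is construct a section $\tilde p$ over $\Omega$ \emph{explicitly} at the cochain level, dual to $f\mapsto e\otimes\tilde f$, where $e=\wedge_{\alpha\in\Delta^{+,w}}e_\alpha$ is the Kostant lowest-weight vector of $\bigwedge^{l(w)}\frakn$ fixed by $\Nbar_L$ and $\tilde f$ is the pullback of $f$ along the geometric projection $p_L\colon w\Nbar_G w^{-1}\twoheadrightarrow\Nbar_L$ from Lem \ref{decomp wN}; this $\tilde p$ is continuous $\sO(\Omega)$-linear but not equivariant. The Casselman--Osborne argument and Lem \ref{direct sum} are then applied only at generic specializations $\lambda$ to identify the fiberwise kernel $V^{\bot,1}|_\lambda$ with $\widehat{\overline{\mathrm{Im}\,d^1_{l(w)-1}}}/\mathrm{Im}\,d^1_{l(w)-1}$ (Thm \ref{vv}), which is $I^s_{0,L}$-stable by Lem \ref{ccs}; Zariski density then upgrades the \emph{existing} global section $p$ to being $I^s_{0,L}$-equivariant. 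Without the explicit $p$ there is no global object to compare against, so the density argument has nothing to land on.

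A secondary point: you go from an $I^s_{0,L}$-equivariant splitting to an $I_L$-equivariant one implicitly; the paper needs the finite-index averaging step at the end (characteristic zero, $[I_L:I^s_{0,L}]<\infty$), which your writeup should state. To close the main gap, insert the explicit construction of $\tilde p$ and $p$ from $p_L$ and $e$, verify $\tilde p\circ d_{l(w)-1}=0$ as the paper does, and reframe the Zariski-density step as showing that the $\sO(\Omega)$-linear section $p$ kills $I^s_{0,L}\cdot\ker p$.
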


By Harish-Chandra homomorphism (for example \cite[Lem 8.17]{Kna86}), there is an infinitesimal character $\theta^w_\Omega$ of $Z(\frakl)$ on $\DD^s_{L,w\cdot\Omega}$.

We endow $\overline{N}_L^s$ the analytic structure coming from $\GL_n$ case in \S \ref{analysis}. $\overline{N}^s_{\pm}$ is isomorphic to copies of $\ZZ_p$ as $p$-adic Lie groups, we use the canonical analytic structure on it.

\begin{lem}\label{decomp wN}
Fix a $w \in W^P$, $w\Nbar_G^s w^{-1} \simeq \overline{N}^s_+ \times \overline{N}_L^s \times \overline{N}^s_-$ as strict $p$-adic manifolds (\cite[\S 3.3]{AS}) for any $s \in \ZZ_{\geq 1}$.
\end{lem}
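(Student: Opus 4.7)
The plan is to decompose $\Lie(w\overline{N}_G w^{-1})$ into three root-space pieces using the Weyl group hypotheses and then to lift this Lie algebra decomposition to a strict analytic factorization of the group by working inside the matrix model of \S \ref{analysis}. At the Lie algebra level, the condition $w \in W^P$ gives $w^{-1}\Delta^+(L) \subset \Delta^+(G)$, which forces any $\beta \in \Delta^-(G)$ with $w\beta \in \Delta_L$ to satisfy $w\beta \in \Delta^-(L)$. Combined with the hypothesis $I_L \subset wI_G w^{-1}$ (equivalently $\Lie(\overline{N}_L) \subseteq w\overline{\frakn}_b w^{-1}$), this yields the partition
\[
    w\Delta^-(G) \;=\; \bigl(w\Delta^-(G)\cap \Delta_N\bigr) \sqcup \Delta^-(L) \sqcup \bigl(w\Delta^-(G)\cap \Delta_{\overline{N}}\bigr),
\]
whose three pieces are the roots of $\overline{N}_+^s$, $\overline{N}_L^s$, $\overline{N}_-^s$ respectively.

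For the group level, the plan is to embed everything into $\GL_{2n}(\cO_K)$ via \S \ref{analysis} and study the multiplication map $m \colon \overline{N}_+^s \times \overline{N}_L^s \times \overline{N}_-^s \to w\overline{N}_G^s w^{-1}$ sending $(n_+, \ell, n_-) \mapsto n_+ \ell n_-$. Each factor is supported on matrix entries indexing its roots, and those entries lie in $\varpi^s \cO_E$, so products of two such entries sit in $\varpi^{2s}\cO_E \subset \varpi^s \cO_E$; thus $m$ lands in $w\overline{N}_G^s w^{-1}$. Bijectivity and strict analyticity should follow by observing that, in the $\GL_{2n}$ coordinates, the entries of $n_+\ell n_-$ at positions indexing $\bar{\frakn}_+$, $\bar{\frakn}_L$, $\bar{\frakn}_-$ respectively equal the corresponding entries of $n_+$, $\ell$, $n_-$ plus polynomial corrections with integer coefficients in entries of the other factors, each correction involving a product of at least two entries. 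This triangular polynomial structure inverts by recursion on total degree, producing $m^{-1}$ as a polynomial map with integer coefficients, and hence a strict $p$-adic analytic isomorphism in the sense of \cite[\S 3.3]{AS}.

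For $\mathrm{Sp}_{2n}$ and the quasi-split unitary groups attached to an unramified $K/E$, both $\overline{N}_G^s$ and the three candidate factors are cut out from their $\GL_{2n}$ counterparts by the same symplectic or Hermitian conditions, which are linear in the matrix entries and preserved under $m$, so the $\GL_{2n}$ isomorphism restricts. The product case $G = \prod_i G_i$ is handled factor by factor. The main obstacle is the explicit polynomial bookkeeping in the $\GL_{2n}$ step; here the Siegel assumption (so that $N$ is abelian and $L$ normalizes both $N$ and $\overline{N}$) is essential, since it keeps the correction polynomials of low degree and ensures the recursion defining $m^{-1}$ closes after finitely many steps.
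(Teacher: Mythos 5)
Your proposal takes a genuinely different route from the paper. The paper's proof applies the dynamic method of Conrad--Gabber--Prasad (\cite[Prop 2.1.8(3), Prop 2.1.12]{CGP15}) to the smooth solvable group scheme $w\overline{\cN}_B w^{-1}$ with the Siegel parabolic $\cP$ playing the role of $P(\lambda)$; the three factors are precisely the intersections $w\overline{\cN}_B w^{-1} \cap \overline{\cN}$, $\cap \cL$, $\cap \cN$, and the scheme-theoretic isomorphism is automatically given by polynomials over $\cO_E$, hence strict. Your plan instead does an explicit matrix bookkeeping inside $\GL_{2n}$ followed by a restriction argument. The root-system partition at the start is correct and is implicit in both approaches, but the rest has real issues.

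First, the claim that the Siegel hypothesis (abelian $N$) is ``essential'' for the recursion to close is a red herring: the three-factor decomposition of the unipotent radical intersected with an open cell holds for \emph{any} parabolic, and the paper's proof uses the Siegel parabolic only as the $P(\lambda)$ in the dynamic method, not its abelianness. Introducing it as a needed ingredient both overstates the hypotheses and signals that the recursion argument isn't actually understood. Second, the step of passing from $\GL_{2n}$ to $\mathrm{Sp}_{2n}$ and the unitary group by ``the same linear conditions'' is not correct as stated: as \S\ref{analysis} makes explicit, the embeddings $i_{\GL}^s \colon \Nbar_I^s \hookrightarrow \Nbar_{I_{\GL_{2n}}}^s$ are given by nonlinear polynomial expressions (e.g.\ the $X'\Psi_n X$ term in the unitary case), and the three factor subgroups of $w\Nbar_G^s w^{-1}$ are not simply intersections of the corresponding $\GL_{2n}$ factors with a linear subvariety. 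You would need an argument that $m$ and $m^{-1}$ for $G$ are obtained by composing the $\GL_{2n}$ versions with $i^s_{\GL}$ and $p^s_{\GL}$, which requires a compatibility of the three-factor splitting with those maps that you have not checked. Third, the ``recursion on total degree'' that is supposed to produce $m^{-1}$ is only a sketch: it is a plausible way to prove that multiplication on a split unipotent group scheme over $\cO_E$ has a polynomial inverse, but as written it is precisely the content you are trying to establish, and the burden should not be pushed onto ``bookkeeping.'' The CGP15 citation exists exactly to avoid this, and is what gives strictness for free; if you insist on the elementary route, you must carry out the induction uniformly in the group rather than appealing to the Siegel structure.
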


\begin{proof}
It suffices to prove the statement for one group of interests $G$ over $E$. We apply \cite[Prop 2.1.8 (3), Prop 2.1.12]{CGP15} to the solvable group $w \overline{\cN}_{B} w^{-1}$ and the Siegel parabolic group $\cP$ as $P(\lambda)$. 
Together with the fact $w \Nbar_{G}^s w^{-1} \cap L(E)=\Nbar_{L}^s$ for $w \in W^{P}(G, S)$, this gives us the desired bijection.
Note that the (strict) analytic structures on both sides are compatible with the correlating schematic structures and the isomorphism in \cite[Prop 2.1.12]{CGP15} is schematic, the transition isomorphism is actually given by polynomials which is in the Tate algebra, hence strict in the sense of \S 3.3, \cite{AS}.
\end{proof}

We shall use Lem \ref{decomp wN} to give a projection $p_L$ from $w \Nbar_G w^{-1}$ to $\Nbar_L$ \begin{eqnarray*} 
\overline{N}_L & \xrightarrow{i_L} w\Nbar_G w^{-1} \xrightarrow{p_L} & \overline{N}_L \\
x & \mapsto  (\mathrm{id}, x, \mathrm{id})  \mapsto & x
\end{eqnarray*} 
such that $p_L \circ i_L = \id$.

We now construct $i$ and $p$ as claimed in Thm \ref{pppp}.
First, we construct $\tilde{i}$ and $\tilde{p}$ as morphisms of $\sO(\Omega)$-modules for $l(w)$-th term of the chain complex.  
There are natural $I_L$ (resp. $I^s_{0,L}$)-equivariant inclusions \[ \xymatrix{
\tilde{i} : \mathbb{D}_{L, w \cdot \Omega}^{s} \ar@{^{(}->}[r] \ar@{^{(}->}[d] & \bigwedge^{l(w)} \frakn^\ast \otimes \mathbb{D}_{w,\Omega}^{s} \ar@{^{(}->}[d]\\
\tilde{i} : \mathbb{D}_{L, w \cdot \Omega}^{s,1} \ar@{^{(}->}[r] & \bigwedge^{l(w)} \frakn^\ast \otimes \mathbb{D}_{w,\Omega}^{s,1},\\
} \]
which come from the natural $I_L$ (resp. $I^s_{0,L}$)-equivariant surjections by the Banach duality relation 
\[ \xymatrix{
\tilde{i}^\ast : \bigwedge^{l(w)} \frakn \otimes \mathrm{Ind}^s_{w, \Omega} \ar@{->>}[r] & \mathrm{Ind}^s_{L, w \cdot \Omega}\\
\tilde{i}^\ast : \bigwedge^{l(w)} \frakn \otimes \mathrm{Ind}^{s,1}_{w, \Omega} \ar@{->>}[r] \ar@{^{(}->}[u] & \mathrm{Ind}^{s,1}_{L, w \cdot \Omega}. \ar@{^{(}->}[u]\\
} \]
We need two steps to construct the horizontal arrows above. Firstly, note that there is a natural $I_L$-equivariant surjection 
\begin{eqnarray*}
\mathrm{Ind}^s_{w, \Omega} & \twoheadrightarrow & \mathrm{Ind}^s_{L, \chi_{\Omega}^w} \\
f & \mapsto & f|_{I_L}
\end{eqnarray*} 
where $s$-analytic functions restrict to $s$-analytic functions due to Lem \ref{decomp wN} and \cite[Prop 3.3.6]{AS}.

Secondly, we need to use Mackey's tensor product theorem: if $V$ is a finite dimensional representation of $I_L$,
\[ \phi : V \otimes \mathrm{Ind}^s_{L, \chi_{\Omega}^w} \to \mathrm{Ind}^{s,an}_L (V \otimes \chi^w_{\Omega}) \]
\[ [\phi(v \otimes f)](g) = g^{-1} v \otimes f(g) \] 
is an $I_L$-equivariant linear isomorphism.
Therefore there is a $I_L$-filtration on $\bigwedge^{l(w)} \frakn \otimes \mathrm{Ind}^s_{L, \chi_{\Omega}^w}$, which comes from a filtration of $\bigwedge^{l(w)} \frakn \otimes_{\QQ_p} K$ as $B_L$ representations. 

\begin{lem}\label{Kostant tri}
If $w \in W^P$,
$\bigwedge^{l(w)} \frakn^\ast_{/ \QQ_p}$ has a highest weight vector as a $\QQ_p$-algebraic $I_L$-representation, in particular, \[\bigwedge^{l(w)} (\frakn \cap w \overline{\frak{n}}_b w^{-1})^\ast \subset (\bigwedge^{l(w)} \frakn^\ast)^{N_L} \neq 0.\] 
$B_L$ action on such a vector factors through an ($\QQ_p$)-algebraic character of $T^0 \simeq B_L/N_L$ of weight $w\delta-\delta$.
And \[ \bigwedge^{l(w)} \frakn \cap w \overline{\frak{n}}_b w^{-1} \subset (\bigwedge^{l(w)} \frakn)^{\overline{N}_L} \neq 0. \]
\end{lem}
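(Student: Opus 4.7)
The plan is to exhibit the requisite highest-weight line explicitly as the top exterior power of $\frakn_w := \frakn \cap w\overline{\frakn}_b w^{-1}$, and verify all assertions by elementary root-space bookkeeping. First I pass to a sufficiently large $p$-adic field $K$ over which $R_{E_i/\QQ_p}H_i$ splits, with root system $\Delta_{H_i,p}$ consisting of $[E_i:\QQ_p]$ copies of $\Delta(H_i,T_i)$. Let $\Delta^+_{P}$ denote the roots of the split form of $R_{E/\QQ_p}H$ lying in the unipotent radical of $R_{E/\QQ_p}P$. The obvious root-space decompositions give
\[
\frakn_w \otimes_{\QQ_p} K = \bigoplus_{\alpha \in \Phi_w} \mathfrak{g}_\alpha, \qquad \Phi_w := \Delta^+_{P} \cap w\Delta^-_{H,p}.
\]
Because $w \in W^P$ forces $w^{-1}(\Delta^+(L)) \subset \Delta^+$, the set $\Phi_w$ coincides with the full intersection $\Delta^+ \cap w\Delta^-$. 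The Weyl-combinatorial identity $|\Delta^+ \cap w\Delta^-| = l(w^{-1}) = l(w)$ in the split root system then gives $|\Phi_w| = \sum_i [E_i:\QQ_p] l(w_i) = l(w)$; hence $\dim_{\QQ_p}\frakn_w = l(w)$ and $\bigwedge^{l(w)}\frakn_w^*$ is a $\QQ_p$-line.

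Next I check stability under the adjoint actions. For $\alpha \in \Phi_w$ and $\gamma \in \Delta^-(L)$, $[\mathfrak{g}_\gamma, \mathfrak{g}_\alpha]$ is contained in $\mathfrak{g}_{\alpha+\gamma}$ if $\alpha+\gamma$ is a root (necessarily in $\Delta^+_{P}$, since $L$ preserves $\frakn$) and vanishes otherwise; moreover $w^{-1}\alpha, w^{-1}\gamma \in \Delta^-$ (the latter from $w^{-1}(\Delta^-(L)) \subset \Delta^-$) force $w^{-1}(\alpha + \gamma) \in \Delta^-$, so $\alpha+\gamma \in \Phi_w$. Thus $[\Lie(\overline{N}_L), \frakn_w] \subset \frakn_w$, whence $\overline{N}_L$ stabilizes $\frakn_w$ via the adjoint action, and unipotency forces
\[
\bigwedge^{l(w)} \frakn_w \subset (\bigwedge^{l(w)} \frakn)^{\overline{N}_L},
\]
which is the second displayed inclusion. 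The symmetric computation with $\gamma \in \Delta^+(L)$ (so $w^{-1}\gamma \in \Delta^+$) shows that $N_L$ stabilizes the $T^0$-canonical complement $\frakn'_w := \bigoplus_{\alpha \in \Delta^+_P \setminus \Phi_w}\mathfrak{g}_\alpha$.

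Dualizing the $T^0$-split decomposition $\frakn = \frakn_w \oplus \frakn'_w$, $N_L$ preserves the annihilator $\mathrm{Ann}(\frakn'_w) \subset \frakn^*$, canonically identified with $\frakn_w^*$; its top wedge is a $\QQ_p$-line in $\bigwedge^{l(w)}\frakn^*$ fixed by $N_L$, giving the first displayed inclusion. The $T^0$-weight on this line is $-\sum_{\alpha \in \Phi_w}\alpha = w\delta - \delta$ by the standard identity $\delta - w\delta = \sum_{\alpha \in \Phi_w}\alpha$, and this is a $\QQ_p$-algebraic character of $R_{E/\QQ_p}T$ by \S\ref{Notation}; since $N_L$ acts trivially, the $B_L = T^0 N_L$-action on the line factors through this character as claimed, yielding both the highest-weight assertion and the weight formula. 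The only mildly delicate bookkeeping is reconciling the split-form Weyl combinatorics with the excerpt's definition $l(w) = \sum_i[E_i:\QQ_p]l(w_i)$ (via the absolute Weyl groups $W(G_i,T_i)$); beyond that, the proof is a routine root-space calculation together with the standard Kostant identity.
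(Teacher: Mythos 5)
Your argument is correct, and it takes a genuinely different (and arguably cleaner) route than the paper's. The paper establishes the lemma by first computing $\dim_{\QQ_p}(\overline{\frakn}_b \cap w^{-1}\frakn w)=l(w)$ and then citing Kostant's theorem on $H^{l(w)}(\frakn,\mathds{1})^{N_L}$ together with Cartier's multiplicity-one remark for $\bigwedge^{l(w)}\frakn_b^*$, finally transferring from $\frakn_b$ to $\frakn$ using $w\in W^P$; the last claim is obtained by duality $\overline{\frakn}\simeq\frakn^*$. You instead prove everything by direct root-space bookkeeping: you identify $\frakn_w$ with $\bigoplus_{\alpha\in\Phi_w}\mathfrak{g}_\alpha$ where $\Phi_w=\Delta^+_P\cap w\Delta^- = \Delta^+\cap w\Delta^-$ (the equality precisely because $w\in W^P$), check $[\Lie(\overline{N}_L),\frakn_w]\subset\frakn_w$ and $[\Lie(N_L),\frakn'_w]\subset\frakn'_w$ by sign bookkeeping on $w^{-1}(\alpha+\gamma)$, invoke unipotency to force triviality on the resulting stable lines, and read off the weight $w\delta-\delta$ from the standard identity $\delta - w\delta=\sum_{\alpha\in\Phi_w}\alpha$. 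This buys self-containedness: you never need Kostant \cite[Thm 5.14]{Kos61} or Cartier's observation, only the length formula $|\Delta^+\cap w\Delta^-|=l(w)$ and the compatibility of $\Phi_w$ with the restriction-of-scalars decomposition (which you correctly reconcile with the paper's convention $l(w)=\sum_i[E_i:\QQ_p]\,l(w_i)$). What you lose relative to the paper's citations is the multiplicity-one statement from Cartier, but the lemma as stated does not claim uniqueness, so your proof is complete for what is asserted. The one minor point worth making explicit is that the subspace $\bigwedge^{l(w)}(\frakn\cap w\overline{\frakn}_bw^{-1})^\ast$ appearing in the lemma must be read inside $\bigwedge^{l(w)}\frakn^\ast$ via the $T^0$-splitting $\frakn=\frakn_w\oplus\frakn'_w$, i.e.\ as $\bigwedge^{l(w)}\mathrm{Ann}(\frakn'_w)$ — which is exactly the identification you use — since $\frakn_w^\ast$ is naturally a quotient rather than a subspace of $\frakn^\ast$.
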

\begin{proof}
The adjoint representation on the Lie algebra is algebraic, so do its wedge products, restrictions to subgroups and subquotients. 
Our Weyl group element $w$ acts on $\frak{g}$ by conjugation and $$\dim_{\QQ_p} \overline{\frak{n}}_b \cap w^{-1}\frak{n}w=\sum_{i=1}^l \dim_{\QQ_p} \overline{\frak{n}}_{b,i} \cap w_i^{-1}\frak{n}_i w_i=\sum_{i=1}^l [E_i:\QQ_p] l(w_i)=l(w).$$ 
The second equality is due to $\dim_{E_i} \overline{\frak{n}}_{b,i} \cap w_i^{-1}\frak{n}_i w_i=l(w_i)$ since $w_i \in W^{P_i}(G_i, S_i)$ by \cite[chapter VI, \S 1, Cor 2. of Prop. 17]{Bou02}. 

The rest follows from the Kostant's theorem on Lie algebra cohomology of $\frakn$ for trivial representation of $\mathrm{Lie}(I_L)$ when the base field is allowed to be non-algebraically closed (Kostant's theorem applies since the involved representations of $T^0$ or Cartan subalgebra splits as sum of characters over $K$). See \cite{Kos61}, \cite{Car61}, 4.2-4.4 of \cite{CO}.

More specifically, The line $\bigwedge^{l(w)} (\frakn \cap w \overline{\frak{n}}_b w^{-1})^\ast$ gives a highest weight of $H^{l(w)}(\frakn, \mathds{1})^{N_L}$ by \cite[Thm 5.14]{Kos61}.
It should lift to a highest weight of $\bigwedge^{l(w)} \frakn^\ast$.
Cartier further proves that $\bigwedge^{l(w)} (\frakn_b \cap w \overline{\frak{n}}_b w^{-1})^\ast$ occurs in $\bigwedge^{l(w)} \frakn_b^\ast$ with multiplicity one.
As $w \in W^P$, \[ \frakn_b \cap w \overline{\frak{n}}_b w^{-1} = \frakn \cap w \overline{\frak{n}}_b w^{-1}, ~ \bigwedge^{l(w)} \frakn_b^\ast \twoheadrightarrow \bigwedge^{l(w)} \frakn^\ast, \]
we see \[\bigwedge^{l(w)} (\frakn \cap w \overline{\frak{n}}_b w^{-1})^\ast \subset (\bigwedge^{l(w)} \frakn^\ast)^{N_L} \] gives rise to a highest weight.

For the last claim, note that $\overline{\frakn} \simeq (\frakn)^\ast$ as $I_L$-representations. 
Then the same argument applying to $\bigwedge^{l(w)} \overline{\frakn}^\ast$ yields the claim.
\end{proof}

By Lem \ref{Kostant tri}, the algebraic representation $\bigwedge^{l(w)}\frakn \otimes_{\QQ_p} K$ admits a filtration $0 = F_0 \subsetneq N_1 \subsetneq \cdots F_m = \bigwedge^{l(w)}\frakn \otimes_{\QQ_p} K$ as $B_L$ representations such that dim $F_i=i$ and $m=\mathrm{dim}_{\QQ_p} (\bigwedge^{l(w)}\frakn)$.
The $1$-dimensional quotient of the filtration $F_m/F_{m-1}$ in the $N_L$-coinvariants is of algebraic weight $-(w\delta-\delta)$. If we have a short exact sequence of finite dimensional $B_L$ representations over $K$
$$0 \rightarrow U \rightarrow V \rightarrow W \rightarrow 0 ,$$ tensoring it with the rank $1$ $\sO(\Omega)$-module $\chi^w_{\Omega}$, where $B_L$ acts on it through $T^0$ via the character $\chi^w_{\Omega}$, we arrive at $$0 \rightarrow U \otimes_K \chi^w_{\Omega} \rightarrow V \otimes_K \chi^w_{\Omega} \rightarrow W \otimes_K \chi^w_{\Omega} \rightarrow 0.$$ 
By the left exactness of the induction we defined,  
\[
\begin{tikzcd}[matrix scale=0.4, row sep=large, ar symbol/.style = {draw=none,"\textstyle#1" description,sloped},
  isomorphic/.style = {ar symbol={\simeq}}, transform shape, nodes={scale=0.6}]
0 \arrow[r] & \mathrm{Ind}^{s,an}_L (U \otimes \chi^w_{\Omega}) \arrow[rr] \arrow[dr,isomorphic]  &&
\mathrm{Ind}^{s,an}_L (V \otimes \chi^w_{\Omega}) \arrow[dr,isomorphic] \arrow[rr] && \arrow[dr,isomorphic] \mathrm{Ind}^{s,an}_L (W \otimes \chi^w_{\Omega}) & \\
& & C^{s,an}(\overline{N}_L, U \otimes \chi^w_{\Omega}) \arrow[rr] &&
C^{s,an}(\overline{N}_L, V \otimes \chi^w_{\Omega})  \arrow[rr] &&  C^{s,an}(\overline{N}_L, W \otimes \chi^w_{\Omega}) \\
0 \arrow[r] & C^{s,1}(\overline{N}_L, U \otimes \chi^w_{\Omega}) \arrow[rr] \arrow[uu]  && C^{s,1}(\overline{N}_L, V \otimes \chi^w_{\Omega}) \arrow[rr] \arrow[uu]  && C^{s,1}(\overline{N}_L, W \otimes \chi^w_{\Omega})  \arrow[uu] & 
\end{tikzcd}
\]
and the commutativity of the whole diagram follows from the definition of $C^{s,1}$ and Prop \ref{aux mod}.
 
Note that any $K$-linear section $W \rightarrow V$ gives rise to $\sO[{\Omega}]$-linear section $W \otimes \chi^w_{\Omega} \rightarrow V \otimes \chi^w_{\Omega}$, which tells us the surjectivity of $C^{s,an}(\overline{N}_L, V \otimes \chi^w_{\Omega}) \twoheadrightarrow C^{s,an}(\overline{N}_L, W \otimes \chi^w_{\Omega})$. 
This shows the sequence is right exact as well. 
\[ \begin{tikzcd}
0 \rightarrow \mathrm{Ind}^{s,an}_L (U \otimes \chi^w_{\Omega}) \ar[r]                 & \mathrm{Ind}^{s,an}_L (V \otimes \chi^w_{\Omega}) \ar[r]                   & \mathrm{Ind}^{s,an}_L (W \otimes \chi^w_{\Omega})  \rightarrow 0           \\
  0 \rightarrow C^{s,1}(\overline{N}_L, U \otimes \chi^w_{\Omega}) \ar[u,hook] \ar[r] & C^{s,1}(\overline{N}_L, V \otimes \chi^w_{\Omega}) \ar[u,hook] \ar[r] & C^{s,1}(\overline{N}_L, W \otimes \chi^w_{\Omega}) \ar[u,hook]  \rightarrow 0   \\
\end{tikzcd} \]
Let $U$ be $F_{m-1}$, let $V$ be $F_m$, and let $W$ be $F_m/F_{m-1}$, we have 
\[ \xymatrix{
\tilde{i}^\ast : \bigwedge^{l(w)} \frakn \otimes \mathrm{Ind}^s_{w, \Omega} \ar@{->>}[r] & \mathrm{Ind}^{s,an}_L ( (w\delta-\delta) \otimes \chi^w_{\Omega}) = \mathrm{Ind}^s_{L, w \cdot \Omega}\\
\tilde{i}^\ast : \bigwedge^{l(w)} \frakn \otimes \mathrm{Ind}^{s,1}_{w, \Omega} \ar@{->>}[r] \ar@{^{(}->}[u] & \mathrm{Ind}^{s,1}_L ( (w\delta-\delta) \otimes \chi^w_{\Omega}) = \mathrm{Ind}^{s,1}_{L, w \cdot \Omega}. \ar@{^{(}->}[u].\\
} \]
By composing these two $I_L$ (resp. $I^s_{0,L}$)-equivariant natural surjections, we get our desired surjection $\tilde{i}^\ast$, hence $\tilde{i}$. 
From the arguments above one can see that this arrow carries $\bigwedge^{l(w)} \frakn \otimes \mathrm{Ind}^{s,1}_{L, \chi_{\Omega}^w}$ compatibly to $\mathrm{Ind}^{s,1}_{L, w \cdot \Omega}$.

Next, we construct $\tilde{p}$. For weights in $\frakn$, we pick $e_{\alpha}$ for an eigenvector corresponding to $\alpha \in \Delta^+$, $e:=\wedge_{\alpha \in \Delta^{+,w}} ~e_{\alpha} \subset \bigwedge^{l(w)} \frakn$. 
By Lem \ref{Kostant tri}, $e$ is fixed by $\Nbar_L$.

As $w \in W^P$, recall we identify $$\mathrm{Ind}^s_{L, w \cdot \Omega}  \simeq  C^{s,an}(\overline{N}_L, \sO({\Omega})), ~ \mathrm{Ind}^s_{w, \Omega}  \simeq  C^{s,an}(w\Nbar_Gw^{-1}, \sO({\Omega}))$$ as $\sO({\Omega})$ modules. 
$T^0$ acts on $wI_Gw^{-1}$. 
For a function $f$ on $\Nbar_L$, we define $\tilde{f}$ to be pullback of $f$ via the projection $p_L: w\Nbar_Gw^{-1} \twoheadrightarrow \overline{N}_L$ defined after Lem \ref{decomp wN} between strict $p$-adic manifolds (Lem \ref{decomp wN} together with \cite[Prop 3.3.6]{AS} ensure that $\Ind^s_{L,w\Omega} \to \Ind^s_{w,\Omega}$). 

By properly scaling $e$, 
\[ \xymatrix{
\tilde{p}^\ast :  \mathrm{Ind}^s_{L, w \cdot \Omega} \ar@{^{(}->}[r] & \bigwedge^{l(w)} \frakn \otimes \mathrm{Ind}^s_{w, \Omega}\\
\tilde{p}^\ast :  \mathrm{Ind}^{s,1}_{L, w \cdot \Omega} \ar@{^{(}->}[r] \ar@{^{(}->}[u] & \bigwedge^{l(w)} \frakn \otimes \mathrm{Ind}^{s,1}_{w, \Omega} \ar@{^{(}->}[u]\\
} \]

$$f  \mapsto  e \otimes  \tilde{f}$$
 which gives sections of $\tilde{i}^\ast$ as a morphism of $\sO(\Omega)$ Banach modules:
 for $\forall \bar{n} \in \Nbar_L$, the projection of $\bar{n}^{-1}e \otimes \tilde{f}(\bar{n})$ equals to $f(\bar{n})$.

By dualizing $\tilde{p}^\ast$ in the sense of Banach spaces, we obtain the sections $\tilde{p}$ of $\tilde{i}$, 
\[ \xymatrix{
\tilde{p}: \bigwedge^{l(w)} \frakn^\ast \otimes \mathbb{D}_{w,\Omega}^{s} \ar@{->>}[r] \ar@{^{(}->}[d] & \mathbb{D}_{L, w \cdot \Omega}^{s} \ar@{^{(}->}[d]\\
\tilde{p}: \bigwedge^{l(w)} \frakn^\ast \otimes \mathbb{D}_{w,\Omega}^{s,1} \ar@{->>}[r] & \mathbb{D}_{L, w \cdot \Omega}^{s,1}.\\
} \]

\begin{rem}
$\tilde{p}$ is not $I_L$ (resp. $I^s_{0,L}$)-equivariant, although $\tilde{i}$ is. The choice of the projection $p_L$ is not really important for us. 
\end{rem}

We claim that $\tilde{i}$ gives an $I_L$ (resp. $I^s_{0,L}$)-equivariant inclusion from $\mathbb{D}_{L, w \cdot \Omega}^{s}$ to the cohomology (Prop \ref{inj to coh})
\[ \xymatrix{
i: \mathbb{D}_{L, w \cdot \Omega}^{s} \ar@{^{(}->}[r] \ar@{^{(}->}[d] & H^{l(w)}(\mathfrak{n}, \mathbb{D}_{w,\Omega}^{s})^N \ar[d]\\
i: \mathbb{D}_{L, w \cdot \Omega}^{s,1} \ar@{^{(}->}[r] & H^{l(w)}(\mathfrak{n}, \mathbb{D}_{w,\Omega}^{s,1})^N.\\
} \]
Similarly, $p$ is defined on cohomology and induced by $\tilde{p}$. Currently we only know that horizontal arrows
 \[ \xymatrix{
p: H^{l(w)}(\mathfrak{n}, \mathbb{D}_{w,\Omega}^{s}) \ar@{->>}[r] \ar[d] & \mathbb{D}_{L, w \cdot \Omega}^{s} \ar@{^{(}->}[d]\\
p: H^{l(w)}(\mathfrak{n}, \mathbb{D}_{w,\Omega}^{s,1}) \ar@{->>}[r] & \mathbb{D}_{L, w \cdot \Omega}^{s,1}.\\
} \]
 are sections as $\sO(\Omega)$-modules. 
Let $d_k$ be the chain maps of the Chevalley–Eilenberg complexes
\[ \xymatrix{
\bigwedge^k \frakn^\ast \otimes \mathbb{D}_{w,\Omega}^{s} \ar[r]^{d_k \quad} \ar@{^{(}->}[d] & \bigwedge^{(k+1)} \frakn^\ast \otimes \mathbb{D}_{w,\Omega}^{s} \ar@{^{(}->}[d]\\
\bigwedge^k \frakn^\ast \otimes \mathbb{D}_{w,\Omega}^{s,1} \ar[r]^{d_k^1 \quad} & \bigwedge^{(k+1)} \frakn^\ast \otimes \mathbb{D}_{w,\Omega}^{s,1}.\\
} \]
We will study the weights appearing in the bottom complex.

\begin{lem}
\label{nice2}
The chain maps $d_k^1$ of the Chevalley–Eilenberg complex $$\bigwedge^k \frakn^\ast \otimes \mathbb{D}_{w,\Omega}^{s,1} \xrightarrow{d_k^1} \bigwedge^{(k+1)} \frakn^\ast \otimes \mathbb{D}_{w,\Omega}^{s,1}$$ are $I^s_{0,L}$-equivariant and nice.
For any $\delta \in D(I^s_{0,L}, K)$, the $\delta$ action on $\bigwedge^k \frakn^\ast \otimes \mathbb{D}_{w,\Omega}^{s,1}$ is nice.
\end{lem}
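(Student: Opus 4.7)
The plan is to decompose the Chevalley--Eilenberg differential into a combinatorial piece and a Lie-algebraic piece, and verify niceness for each using the closure and tensor-product results of Section \ref{Banach rep}. Fix a $T^0$-eigenbasis $X_1,\dots,X_n$ of $\frakn$ with dual basis $X^1,\dots,X^n$; then $\bigwedge^k \frakn^\ast$ is a finite free $T^0$-eigen orthonormalizable $K$-module, and the differential has the form
\[
d_k^1 \;=\; d_k^{\mathrm{cmb}} \otimes \id_{\DD^{s,1}_{w,\Omega}} \;+\; \sum_{j=1}^{n} \epsilon(X^j) \otimes \rho(X_j),
\]
where $d_k^{\mathrm{cmb}}$ is a $T^0$-equivariant linear map between the finite-dimensional modules $\bigwedge^{\bullet}\frakn^\ast$ encoding the bracket/sign terms, $\epsilon(X^j)$ denotes left wedging by $X^j$, and $\rho$ is the Lie-algebra action coming from the continuous $D(wI^s_0 w^{-1},\QQ_p)$-module structure of $\DD^{s,1}_{w,\Omega}$ provided by Proposition \ref{aux mod}.

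For the first summand, the identity is trivially nice, so Lemma \ref{tensor nice} (with $g=d_k^{\mathrm{cmb}}$ and $f=\id$) yields that $d_k^{\mathrm{cmb}}\otimes\id$ is nice. Each term $\epsilon(X^j) \otimes \rho(X_j)$ is likewise nice once we know $\rho(X_j):\DD^{s,1}_{w,\Omega}\to\DD^{s,1}_{w,\Omega}$ is nice. Thus the crux is establishing niceness of any element of $D(wI^s_0 w^{-1}, \QQ_p)$ acting on $\DD^{s,1}_{w,\Omega}$.

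This is handled by Lemma \ref{limit nice}: nice maps form a closed subspace of $\mathrm{Hom}_{\mathrm{cts}}(\DD^{s,1}_{w,\Omega},\DD^{s,1}_{w,\Omega})$. By Proposition \ref{aux mod}, every group element acts by a nice automorphism, so every finite $K$-linear combination of Dirac distributions is nice. Since finite linear combinations of Dirac distributions are dense in $D(wI^s_0 w^{-1},\QQ_p)$ and the action map into the Banach endomorphism algebra is continuous in the operator norm (which one verifies from the Banach structure of $\DD^{s,1}_{w,\Omega}$ and Proposition \ref{aux mod}), the image of $D$ lies in the closure of the nice maps, hence in the nice maps. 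Applied to $X_j \in \frakn \subset D(wI^s_0 w^{-1},\QQ_p)$ this gives the niceness of each $\rho(X_j)$, proving the first assertion. The second assertion, niceness of $\delta \in D(I^s_{0,L},K) \subset D(wI^s_0 w^{-1},K)$ on $\bigwedge^k \frakn^\ast \otimes \DD^{s,1}_{w,\Omega}$, follows by expanding the coproduct $\Delta(\delta)$: the action on the finite-dimensional $\bigwedge^k \frakn^\ast$ factors through a finite-dimensional quotient, reducing the operator to a finite sum of tensor products each of whose factors is nice by the above, so Lemma \ref{tensor nice} applies term by term.

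Finally, the $I^s_{0,L}$-equivariance of $d_k^1$ is formal: $I_L$ normalizes $N$ and hence $\frakn$, so $I^s_{0,L}$ acts algebraically on $\bigwedge^k \frakn^\ast$ via the coadjoint representation and acts on $\DD^{s,1}_{w,\Omega}$ through the inclusion $I^s_{0,L} \hookrightarrow wI^s_0 w^{-1}$; the standard compatibility of the Chevalley--Eilenberg differential with normalizers then yields the equivariance. The main anticipated obstacle is justifying the density/continuity argument in the \emph{operator-norm} topology, since the distribution action on a locally analytic representation is a priori only separately continuous. If this is not automatic from Proposition \ref{aux mod}, one can bypass it by reducing to explicit estimates on the eigenbasis $f^{\vee,\sigma}_{g,\underline{a}^\sigma_E,\underline{a}^\sigma_K}$ of Section \ref{auxiliary modules}: check directly that each distribution element sends each basis vector to a uniformly bounded linear combination of basis vectors, which is essentially the definition of niceness in Definition \ref{nice}.
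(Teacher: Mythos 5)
Your proposal is essentially correct and follows the same general strategy as the paper, but the two proofs diverge at the first assertion. For the niceness of $d_k^1$, the paper does \emph{not} route through the density argument of Lemma~\ref{limit nice}. Instead it writes $d_k^1=\sum_{\alpha}(-1)^\alpha d_{k,\alpha}^1$ with $d_{k,\alpha}^1(e^\ast\otimes v)=e_\alpha^\ast\wedge e^\ast\otimes e_\alpha\cdot v$ — the bracket term $d_k^{\mathrm{cmb}}$ you introduce is identically zero here, because $N$ is the abelian unipotent radical of a Siegel parabolic — and verifies niceness of each $d_{k,\alpha}^1$ by a direct weight-shift estimate: $e_\alpha$ carries a $T^s$-eigenvector $v$ generating a weight-$\chi_v$ summand to $c(\alpha,v')\,v'$ in the weight-$\chi_v\cdot\alpha$ summand with $|c(\alpha,v')|\le 1$, so Lemma~\ref{tensor nice} gives niceness, and a finite sum of nice maps is nice. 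This entirely sidesteps the operator-norm-convergence subtlety you flag. For the second assertion, the paper does use the density route you propose: $K[wI^s_{0,G}w^{-1}]$ acts nicely on $\DD^{s,1}_I(V)$ (Prop.~\ref{aux mod}, with $V=\bigwedge^k\frakn$ as the finite free eigen orthonormalizable coefficient), it is dense in $D(wI^s_{0,G}w^{-1},K)$ by \cite[Lem~3.1]{ST02}, and niceness passes to the closure by Lemma~\ref{limit nice}; your coproduct expansion is a workable but slightly heavier alternative to this last step.

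Two points to tighten. First, the worry you raise about operator-norm continuity of $D(G,K)\to\cL(W,W)$ is a genuine gap in both your main line and (implicitly) the paper's second-part argument, since Lemma~\ref{limit nice} concerns operator-norm limits while density in $D(G,K)$ is a Fr\'echet-topology statement. It does go through here — separate continuity of $D(G,K)\times W\to W$ with $D(G,K)$ Fr\'echet and $W$ Banach upgrades to joint continuity by Banach--Steinhaus, which is exactly operator-norm continuity of the action map — but you should state that rather than deferring to the eigenbasis fallback. Second, noting that $\frakn$ is abelian would have eliminated $d_k^{\mathrm{cmb}}$ and matched the paper's decomposition exactly; as written your extra term is harmless (it is nice by Lemma~\ref{tensor nice} with $f=\id$) but signals you missed the simplification.
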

\begin{proof}
The differentials are $I^s_{0,L}$-equivariant by the same argument before \cite[Lem 2.1]{CO}.
Pick the weight vectors basis $e_1,\cdots,e_l \in \frak{n}$ and $e_1^\ast,\cdots,e_l^\ast \in \frak{n}^\ast$ such that $e_i^\ast(e_j)=\delta_{ij}$. $d^1_k=\sum_{1 \leq \alpha \leq l} (-1)^{\alpha} \cdot d_{k,\alpha}^1,$ where \begin{eqnarray*}
d_{k,\alpha}^1: \bigwedge^k \frakn^\ast \otimes \mathbb{D}_{w,\Omega}^{s,1} & \to & \bigwedge^{k+1} \frakn^\ast \otimes \mathbb{D}_{w,\Omega}^{s,1} \\ e^\ast \otimes v & \mapsto & e_{\alpha}^\ast\wedge e^\ast\otimes e_{\alpha}\cdot v.
\end{eqnarray*} 
We see $d_{k,\alpha}^1$ is $T^0$-equivariant, hence $d_k^1$. 
Note that $e_{\alpha}$ only shifts weights in $\mathbb{D}_{w,\Omega}^{s,1}$ by $e_{\alpha}$ with a less or equal to $1$ multiplier. 
More explicitly, if $v \in \mathbb{D}_{w,\Omega}^{s,1}$ is a $T^s$-eigenvector with weight $\chi_v$ and $\cO(\Omega) \cdot v$ generates a direct summand of $\chi_v$ part of $\mathbb{D}_{w,\Omega}^{s,1}$, $e_\alpha$ corresponds to weight $\alpha$, then $e_\alpha v=c(\alpha,v') v'$ for some $|c(\alpha,v')| \leq 1$ and a $T^s$-eigenvector $v'$, where $v'$ has weight $\chi_v \cdot \alpha$ and $\cO(\Omega) \cdot v'$ generates a direct summand of $\chi_v \cdot \alpha$ part of $\mathbb{D}_{w,\Omega}^{s,1}$. 
By Lem \ref{tensor nice}, $d_{k,\alpha}^1$ are nice. 
And a sum of nice maps is nice, so is $d^1_k$. 

For the second part, we know $D(wI^s_{0,G}w^{-1}, K)$ acts continuously on $\mathbb{D}_{w,\Omega}^{s,1}$ by Prop \ref{aux mod}. 
And a dense subalgebra $K [wI^s_{0,G}w^{-1}] \subset D(wI^s_{0,G}w^{-1}, K)$ (\cite[Lem 3.1]{ST02}) acts via nice endomorphisms on $\mathbb{D}_{w,\Omega}^{s,1}$. So $D(wI^s_{0,G}w^{-1}, K)$ acts via nice endomorphisms on $\mathbb{D}_{w,\Omega}^{s,1}$ by Lem \ref{limit nice}.
$e_{\alpha} \in D(wI^s_{0,G}w^{-1}, K)$ induces a nice endomorphism on $\mathbb{D}_{w,\Omega}^{s,1}$. 
\end{proof}

We define \begin{eqnarray}\label{add w} \Delta^{+,w}=\{\alpha \in \Delta^+| w^{-1} \alpha \in \Delta^-\}, \text{and} ~ e^\ast := \wedge_{\alpha \in \Delta^{+,w}} ~e^\ast_{\alpha} \end{eqnarray}
as a weight vector of $\wedge^{l(w)} \frak{n}^\ast$.

\begin{prop}
\label{inj to coh}
$\tilde{i}$ induces \[ \xymatrix{
i: \mathbb{D}_{L, w \cdot \Omega}^{s} \ar@{^{(}->}[r] \ar@{^{(}->}[d] & H^{l(w)}(\mathfrak{n}, \mathbb{D}_{w,\Omega}^{s})^N \ar[d]\\
i: \mathbb{D}_{L, w \cdot \Omega}^{s,1} \ar@{^{(}->}[r] & H^{l(w)}(\mathfrak{n}, \mathbb{D}_{w,\Omega}^{s,1})^N,\\
} \] $\tilde{p}$ induces  \[ \xymatrix{
p: H^{l(w)}(\mathfrak{n}, \mathbb{D}_{w,\Omega}^{s}) \ar@{->>}[r] \ar[d] & \mathbb{D}_{L, w \cdot \Omega}^{s} \ar@{^{(}->}[d]\\
p: H^{l(w)}(\mathfrak{n}, \mathbb{D}_{w,\Omega}^{s,1}) \ar@{->>}[r] & \mathbb{D}_{L, w \cdot \Omega}^{s,1}.\\
} \] 
\end{prop}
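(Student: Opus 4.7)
\emph{Plan.} The proposition asserts that $\tilde{i}$ and $\tilde{p}$ descend to well-defined maps on Lie algebra cohomology with $i$ injective and $p$ surjective, in both the $\DD^s$ and the $\DD^{s,1}$ versions. The proof splits into two computational steps plus some formal consequences, and the arguments for the two versions are identical, so I focus on $\DD^s$.

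\emph{Step 1 ($p$ is well-defined).} I would dualize the requirement $\tilde{p}\circ d_{l(w)-1}=0$ to the statement $\partial_{l(w)}\circ\tilde{p}^*=0$ on the induction side. Using the explicit formula $\tilde{p}^*(f)=e\otimes\tilde{f}$ with $e=\bigwedge_{\alpha\in\Delta^{+,w}}e_\alpha$ and $\tilde{f}=f\circ p_L$, the abelianness of $\frakn$ (the Siegel parabolic $P$ has abelian unipotent radical) yields
\[
\partial(e\otimes\tilde{f})\;=\;\sum_{\alpha\in\Delta^{+,w}}(-1)^{i(\alpha)+1}\,(e/e_\alpha)\otimes(e_\alpha\cdot\tilde{f}).
\]
By Lemma~\ref{Kostant tri} combined with Lemma~\ref{decomp wN}, each such $e_\alpha$ lies in $\frakn\cap w\overline{\frakn}_b w^{-1}=\Lie(\Nbar_+^1)$, which is precisely the direction that $p_L\colon w\Nbar_G w^{-1}\twoheadrightarrow\Nbar_L$ kills. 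Hence $e_\alpha\cdot\tilde{f}=0$ for every such $\alpha$, the sum vanishes, and $p$ descends to cohomology.

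\emph{Step 2 ($i$ lands in cocycles).} Equivalently, $\tilde{i}^*\circ\partial_{l(w)+1}=0$. Unwinding the definition of $\tilde{i}^*$ through the Mackey isomorphism followed by the projection $\pi\colon\bigwedge^{l(w)}\frakn\twoheadrightarrow F_m/F_{m-1}$ to the top $B_L$-weight quotient, one finds $\tilde{i}^*(\omega\otimes f)(g)=\pi(g^{-1}\omega)\cdot f(g)$ for $g\in I_L$. Applied to $\partial(X_1\wedge\cdots\wedge X_{l(w)+1}\otimes f)$, one must show the resulting function of $g$ lies in the kernel of $\mathrm{Ind}^s_{L,\chi^w_\Omega}\twoheadrightarrow\mathrm{Ind}^s_{L,w\cdot\Omega}$. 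By $I_L$-equivariance of both $\tilde{i}^*$ and $\partial$ and the Iwahori decomposition of $I_L$, it suffices to verify vanishing on the open cell $\Nbar_L$. There, using that $e$ is $\Nbar_L$-fixed (Lemma~\ref{Kostant tri}) and unwinding the pairing, the computation reduces to a Koszul-style statement: the top weight form $e^*$ is $d$-closed in the trivial-coefficient Chevalley--Eilenberg complex, which is precisely Kostant's theorem identifying a $(w\delta-\delta)$-component in $H^{l(w)}(\frakn,\mathbf{1})^{N_L}$.

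\emph{Step 3 (formal consequences).} Both $i$ and $p$ now exist on cohomology; since $\tilde{p}\circ\tilde{i}=\mathrm{id}_{\DD^s_{L,w\cdot\Omega}}$ at the cochain level, descent gives $p\circ i=\mathrm{id}$, establishing injectivity of $i$ and surjectivity of $p$. The $N$-invariance of the image of $i$ is automatic: since $N$ is abelian, $\mathrm{ad}^*$ acts trivially on $\bigwedge^\bullet\frakn^*$; Cartan's formula $\mathcal{L}_X=d\iota_X+\iota_X d$ then shows $X\in\frakn$ acts trivially on $H^\bullet(\frakn,-)$, hence $N=\exp(\frakn)$ acts trivially on cohomology and $H^{l(w)}(\frakn,\DD^s_{w,\Omega})^N=H^{l(w)}(\frakn,\DD^s_{w,\Omega})$. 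The analogous computations with $\DD^{s,1}$ in place of $\DD^s$ go through verbatim, using Proposition~\ref{aux mod}. The serious difficulty is Step~2: the direct computation of $\tilde{i}^*\circ\partial$ interacts nontrivially with the $B_L$-filtration on $\bigwedge^{l(w)}\frakn$ and with the action of $\frakn$ (not contained in $\frakl$) on $\mathrm{Ind}^s_{w,\Omega}$, and the reduction to Kostant's theorem requires careful weight bookkeeping and crucially uses the hypothesis $w\in W^P$ to ensure the relevant top weight space is one-dimensional.
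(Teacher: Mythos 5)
Your Step~1 (well-definedness of $p$) is essentially the paper's argument, carried out in dualized form, and is fine. The other two steps each have a genuine gap.

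\emph{Step~2.} The proposed reduction to ``$e^\ast$ is $d$-closed in the trivial-coefficient Chevalley--Eilenberg complex'' does not do the work you need, and for a simple reason: because $P$ is Siegel, $\frakn$ is abelian, so the trivial-coefficient differential on $\bigwedge^\bullet\frakn^\ast$ is identically zero. Closedness of $e^\ast$ in that complex is vacuous and is not Kostant's theorem. The actual obstruction lives in the coefficient module: $d_{l(w)}\bigl(\tilde i(\delta)\bigr)$ acquires terms $e_\alpha\cdot\delta$ from the $\frakn$-action on $\DD^s_{w,\Omega}$, and one must show these die after applying $\tilde i^\ast\circ\partial$. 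The paper's argument for part~(1) identifies the one root $e_\alpha$ of $\Delta^+\smallsetminus\Delta^{+,w}$ that can survive the top-weight projection, observes that $e_\alpha\in\Lie(N\cap wN_bw^{-1})$, and then uses that the evaluation $f\mapsto f|_{I_L}(1)$ is $(N\cap wN_bw^{-1})$-invariant (since $f(n^{-1})=f(1)$ for $n\in N\cap wN_bw^{-1}$), so $e_\alpha\cdot f(1)=0$. This module-theoretic input is what makes the argument go; it is not captured by the trivial-coefficient Koszul statement. (The paper also offers an alternative weight-theoretic argument via Proposition~\ref{weights in Ds1}, showing the weight $w\cdot\chi_\Omega$ does not occur in degree $l(w)+1$.)

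\emph{Step~3.} The claim that $N$-invariance is ``automatic'' is incorrect. Cartan's formula does show that $\frakn$ acts trivially on $H^\bullet(\frakn,-)$, but that does not entail that the compact group $N$ acts trivially. First, $N\simeq\ZZ_p^{d}$ is not $\exp(\frakn)$. More importantly, a locally analytic (indeed smooth) representation of $N$ can have trivial derived $\frakn$-action but nontrivial $N$-action; e.g.\ a nontrivial locally constant character of $\ZZ_p$ gives $H^0(\frakn,V)=V$ while $V^N=0$. So the identity $H^{l(w)}(\frakn,\DD^s_{w,\Omega})^N=H^{l(w)}(\frakn,\DD^s_{w,\Omega})$ you assert is not forced, and if it were true in general the comparison $H^q(\Gamma,V)\simeq H^q(\frakn,V)^N$ of \S\ref{Kos comp} would collapse. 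What is needed, and what the paper proves, is the narrower statement that the specific classes $[\tilde p^\ast(f)]=[e\otimes\tilde f]$ are $N$-fixed; this is checked directly using the decomposition $w\Nbar_Gw^{-1}\simeq\overline N^1_+\times\overline N_L\times\overline N^1_-$ of Lemma~\ref{decomp wN}, conjugating $n\in N$ across $\bar n_L\in\Nbar_L$ and using that $\tilde f$ is constant along the $\overline N^1_+$ and $N\cap wN_Gw^{-1}$ directions. Your formal consequence $p\circ i=\mathrm{id}$ is fine and matches the paper.
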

\begin{proof}
For the first part, we need to show \begin{enumerate}
  \item \label{1} $d_{l(w)} \circ \tilde{i}=0$.
  \item \label{2} Im($d_{l(w)-1}$) $\cap$ Im($\tilde{i}) = 0$.
  \item \label{3} Image of $i$ is invariant under $N$ action on the cohomology.
\end{enumerate}
\ref{1}: 
We use $\partial_{l(w)}$ to denote the corresponding $l(w)$-th differential map of the Chevalley–Eilenberg homology complex \begin{eqnarray*} \partial_{l(w)}: \bigwedge^{l(w)+1} \frakn \otimes \mathrm{Ind}^{s(,1)}_{w, \Omega} & \to & \bigwedge^{l(w)} \frakn \otimes \mathrm{Ind}^{s(,1)}_{w, \Omega} \\ e_{\alpha_1} \wedge \cdots \wedge e_{\alpha_{l(w)+1}} \otimes f & \mapsto & \sum\limits_{i=1}^{l(w)+1} e_{\alpha_1} \wedge \cdots \hat{e}_{\alpha_i} \cdots \wedge e_{\alpha_{l(w)+1}} \otimes e_{\alpha_i}\cdot f.
\end{eqnarray*} 
The image of $\tilde{i}^\ast \circ \partial_{l(w)}$ is a $I_L$-subrepresentation of the space of $\sO(\Omega)$-valued functions.
It suffices to show that $\tilde{i}^\ast \circ \partial_{l(w)}(e_{\alpha_1} \wedge \cdots \wedge e_{\alpha_{l(w)+1}} \otimes f)$ vanish at identity of $I_L$ for all roots $e_{\alpha_1},\cdots,e_{\alpha_{l(w)+1}}$ and $f \in \mathrm{Ind}^{s(,1)}_{w, \Omega}$. 
By construction of $\tilde{i}^\ast$, this expression vanishes if $\{e_{\alpha_1},\cdots,e_{\alpha_{l(w)+1}}\}$ (up to scalar multiples) does not contain $\Delta^{+,w}$. 
Otherwise let $e_\alpha$ be $\{e_{\alpha_1},\cdots,e_{\alpha_{l(w)+1}}\} - \Delta^{+,w}$.  
The value of $f|_{I_L}$ at identity is $N \cap wN_b w^{-1}$-invariant since $I_L$ stabilizes $N$ and $n\cdot f(1)=f(n^{-1})=f(1)$ for any $n \in N \cap wN_b w^{-1}$. As $e_\alpha \in \mathrm{Lie}(N \cap wN_b w^{-1})$, the value of $f |_{I_L}$ at identity vanishes.

There is another interesting viewpoint to see the highest weight vectors map to zero. 
Under the embedding $\tilde{i}$, the highest weight vectors \{$f^\vee_{h,\vec{0}}, h \in \overline{N}_L/\overline{N}_L^s$\} in $\mathbb{D}_{L, w \cdot \Omega}^{s,1}$ are identified with the combination of weight vectors $e^\ast \otimes f^\vee_{g, \vec{0}}$, where $e^\ast := \wedge_{\alpha \in \Delta^{+,w}} ~e^\ast_{\alpha}$ (\ref{add w}) and for any $g \in w\Nbar_Gw^{-1}/w\Nbar_G^sw^{-1}$. 
These vectors correspond to the weight $w\cdot \chi_{\Omega}$, for which we write as $-\sum\limits_{\alpha \in \Delta^+, w\cdot \alpha \in \Delta^-} \alpha + w\Omega$ where $w\Omega$ is the additive form of the multiplicative weight $\chi_{\Omega}^w$. 
It suffices to show $w\cdot \chi_{\Omega}$ is not a weight in $\bigwedge^{l(w)+1} \frakn^\ast \otimes \mathbb{D}_{w,\Omega}^{s,1}$. 
Weights appearing in $\bigwedge^i \frakn^\ast$ are opposite of sums of $i$ distinct roots in $\frakn$. 
By proposition Prop \ref{weights in Ds1} and Lem \ref{eigenweight}, weights in $\bigwedge^{i} \frakn^\ast \otimes \mathbb{D}_{w,\Omega}^{s,1}$ are of the form $$-\sum_{\substack{\alpha \in \Delta_i, |\Delta_i|=i \\ \Delta_i \subset \Delta_N \subset \Delta^+}} \alpha +\sum_{\alpha \in w\Delta^-} n_{\alpha} \alpha+w\Omega,$$ where $\Delta_N \subset \Delta^+$ is the set of roots in $\frakn$, $\Delta_i$ is a subset of $\Delta_N$ with $i$ elements, $n_{\alpha}$ are natural numbers. 
It remains to show: \begin{eqnarray*}
& & -\sum_{\alpha \in \Delta_i} \alpha +\sum_{\alpha \in w\Delta^-} n_{\alpha} \alpha+w\Omega= -\sum_{\alpha \in \Delta^{+,w}} \alpha + w\Omega \\ & & \Longrightarrow \forall n_\alpha=0, i=l(w), \Delta_i=\Delta^{+,w}.
\end{eqnarray*} The equation is equivalent to $$\sum_{\alpha \in \Delta^{+, w} - \Delta_i} \alpha + \sum_{\alpha \in w\Delta^-} n_{\alpha} \alpha=\sum_{\alpha \in \Delta_i-\Delta^{+,w}} \alpha.$$ 
Notice that $\Delta^{+,w}=\Delta^+ \cap w\Delta^- \subset w\Delta^-$, thus $\Delta_i-\Delta^{+,w} \subset \Delta^+-w\Delta^- \subset w\Delta^+$. 
The left hand side is a nonnegative sum of negative roots in $w\Delta^-$ and at the same time the right hand side is a nonnegative sum of positive roots in $w\Delta^+$. All coefficients in the above equation vanish. $\forall n_\alpha=0, i=l(w), \Delta_i=\Delta^{+,w}$. 

\ref{2}: We have already seen that $\tilde{p}$ gives a section of $\tilde{i}$. To show this part, it suffices to show $\tilde{p} \circ d_{l(w)-1}=0$. It suffices to prove that for any $\delta \in \mathbb{D}^s_{w,\Omega}$ (resp. $\mathbb{D}^{s,1}_{w,\Omega}$) and any $ e^\ast_{\alpha_1} \wedge \cdots \wedge e^\ast_{\alpha_{l(w)-1}}$, wedge of $l(w)-1$ roots in $\frakn^\ast$, $\tilde{p} \circ d_{l(w)-1}(e^\ast_{\alpha_1} \wedge \cdots \wedge e^\ast_{\alpha_{l(w)-1}} \otimes \delta)=0$. 
Pick the weight vectors basis $e_1,\cdots,e_l \in \frak{n}$ and $e_1^\ast,\cdots,e_l^\ast \in \frak{n}^\ast$ such that $e_i^\ast(e_j)=\delta_{ij}$. For any $f \in \mathrm{Ind}^s_{L,w\cdot\Omega}$, 
\begin{eqnarray*}
\tilde{p} \circ d_{l(w)-1}(e^\ast_{\alpha_1} \wedge \cdots \wedge e^\ast_{\alpha_{l(w)-1}} \otimes \delta)(f) &=& d_{l(w)-1}(e^\ast_{\alpha_1} \wedge \cdots \wedge e^\ast_{\alpha_{l(w)-1}} \otimes \delta)(e \otimes \tilde{f}) \\ &=& \sum_{1 \leq \alpha \leq l} (-1)^\alpha (e^\ast_{\alpha} \wedge e^\ast_{\alpha_1} \wedge \cdots \wedge e^\ast_{\alpha_{l(w)-1}})(e) \otimes (e_\alpha\cdot\delta) \tilde{f} \\
 &=& \sum_{1 \leq \alpha \leq l} c(\alpha) \cdot \delta(e_\alpha \cdot \tilde{f}),
\end{eqnarray*} where $c(\alpha)$ is a scalar valued in \{$0,\pm1$\} depending only on $\alpha$. $c(\alpha)$ is non-zero if and only if $e_\alpha$ and $e_{\alpha_1}, \cdots, e_{\alpha_{l(w)-1}}$ correspond exactly to all elements in $\Delta^{+,w}$ defined in \ref{1}. When this is the case, $e_\alpha \in \frakn \cap w\overline{\frakn}w^{-1} \subset \mathrm{Lie}(\overline{N}_+^s)$. $e_\alpha \cdot \tilde{f}=0$ since  $\tilde{f}$ is constant on $\overline{N}_+^s$ fibres.

\ref{3}: It suffices to prove for any $f \in \mathrm{Ind}^s_{L, w \cdot \Omega}$ and $n \in N$, $\tilde{i}( n \cdot \tilde{p}(f) - \tilde{p}(f) ) = 0$.
For any $\bar{n}_L \in N_L$, $\bar{n}_L^{-1} n \bar{n}_L \in N$, we write $\bar{n}_L^{-1} n \bar{n}_L = n_+ \cdot n_b$ such that $n_+ \in \Nbar_+^1$, $n_b \in N \cap wN_G w^{-1}$.
Note that
\[ e \otimes n\cdot \tilde{f}(\bar{n}_L) = e \otimes \tilde{f}(n^{-1} \bar{n}_L) = e \otimes \tilde{f}( (\bar{n}_L n_+ \bar{n}_L^{-1}) \bar{n}_L n_b ) = e \otimes f(\bar{n}_L), \] by the construction of $\tilde{f}$.

\end{proof}

\begin{rem}
\label{19x}
$p$ is defined on $H^{l(w)}(\mathfrak{n}, \mathbb{D}_{w,\Omega}^{s}) \supset H^{l(w)}(\mathfrak{n}, \mathbb{D}_{w,\Omega}^{s})^N$.
\end{rem}

If $\Omega'$ is closed in $\Omega$, there is a specialization map
$\DD^s_{w,\Omega} \twoheadrightarrow \DD^s_{w,\Omega'}$ with respect to $\sO(\Omega) \twoheadrightarrow \sO(\Omega')$.
The analytic induction is compatible with base change $\Ind^s_{w,\Omega} \otimes_{\sO(\Omega)} \sO(\Omega') \simeq \Ind^s_{w,\Omega'}$, yielding
\[ \mathcal{L}_{\mathscr{O}(\Omega)}(\mathrm{Ind}_{w, \Omega}^{s},\mathscr{O}(\Omega)) \twoheadrightarrow \mathcal{L}_{\mathscr{O}(\Omega')}(\mathrm{Ind}_{w, \Omega'}^{s},\mathscr{O}(\Omega')). \]

It remains to prove $p$ is $I^s_{0,L}$-equivariant for completing the proof of Thm \ref{pppp}.
We define $V^\bot, V^{\bot,1} := \ker p$ correspondingly for $H^{l(w)}(\mathfrak{n}$, $\mathbb{D}_{w,\Omega}^{s}), H^{l(w)}(\mathfrak{n}, \mathbb{D}_{w,\Omega}^{s,1})$. For $\forall a \in V^\bot, l \in I^s_{0,L}$, it suffices to show $$x:= p(l\cdot a)=0.$$ If $x \neq 0$, there exists a $f_0 \in \mathrm{Ind}^s_{w, \Omega}$ such that $x(f_0) \neq 0$. Then both $\supp(f_0)$ and $\supp(x(f_0))$ are Zariski open dense in $\Omega$. 
By viewing $f_0 \in C^{s,an}(\overline{N}_L, \sO(\Omega))$, vanishing of $f_0$ means vanishing of all coefficients of $f_0$, we have
$$\supp(f_0) \cap \supp(x(f_0)) \subset \supp(x),$$ 
where $\supp(x)$ is a Zariski open dense subset of $\Omega$. 
To prove $x=0$, it suffices to find a dense subset of $\Omega$ on which $x$ vanishes. 
\bigskip

We say a character $\chi$ of $T^0$ is \emph{generic} if $\chi (w\cdot \chi)^{-1}$  are not locally ($\QQ_p$-)algebraic for all $w \in W_G$.

\begin{lem}\label{wts in CE}
If $\mathfrak{s}$ is a weight in $\bigwedge^\bullet \frakn^\ast \otimes \mathbb{D}_{w,\lambda}^{s,1}$, set $\imath(s):= d (\mathfrak{s}) -d (w\cdot\lambda)$, where $d$ passes a character to Lie algebra. Then $\imath(s)$ is nonpositive with respect to $w\Delta^+$ (Prop \ref{weights in Ds1}). 
\end{lem}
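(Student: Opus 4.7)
The plan is to give a direct bookkeeping argument on weights, so the ``obstacle'' is really only one identity and one combinatorial observation about $W^P$.

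First, I would use Prop \ref{weights in Ds1} to enumerate the weights of $\mathbb{D}_{w,\lambda}^{s,1}$: each such weight is of the form $\lambda^w \cdot \mu^-$ with $d(\mu^-) = \sum_{\beta \in w\Delta^-} n_\beta \beta$ a nonnegative integer combination of roots in $w\Delta^-$. Since $\frakn$ has $T^0$-weights in $\Delta_N \subset \Delta^+$ (the roots of the unipotent radical of the Siegel parabolic $P$), weights in $\bigwedge^\bullet \frakn^\ast$ are of the form $-\sum_{\alpha \in \Delta_k} \alpha$ for finite subsets $\Delta_k \subset \Delta_N$. Combining,
\[ d(\mathfrak{s}) \;=\; d(\lambda^w) \;-\; \sum_{\alpha \in \Delta_k}\alpha \;+\; \sum_{\beta \in w\Delta^-} n_\beta \beta. \]

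Next, using $w\cdot\lambda(t) = \lambda^w(t)\cdot (w\delta-\delta)(t)$, I would substitute the classical identity $w\delta-\delta = -\sum_{\alpha \in \Delta^{+,w}} \alpha$ (with $\Delta^{+,w} = \Delta^+ \cap w\Delta^-$ as in \eqref{add w}), which follows from rewriting $\delta$ and $w\delta$ as half-sums and noting that $w\Delta^+ \cap \Delta^- = -\Delta^{+,w}$. This gives
\[ \imath(\mathfrak{s}) \;=\; d(\mathfrak{s}) - d(w\cdot\lambda) \;=\; \sum_{\alpha \in \Delta^{+,w}} \alpha \;-\; \sum_{\alpha \in \Delta_k} \alpha \;+\; \sum_{\beta \in w\Delta^-} n_\beta \beta. \]

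The key combinatorial step is to observe $\Delta^{+,w} \subset \Delta_N$. Indeed, $w \in W^P$ means $w^{-1}(\Delta^+(L)) \subset \Delta^+$, equivalently $\Delta^+(L) \subset w\Delta^+$, so $\Delta^+(L) \cap w\Delta^- = \emptyset$; hence $\Delta^{+,w} = \Delta^+ \cap w\Delta^- \subset \Delta^+ \setminus \Delta^+(L) = \Delta_N$. With this in hand, split $\Delta^{+,w}$ and $\Delta_k$ into their intersections with each other and regroup:
\[ \imath(\mathfrak{s}) \;=\; \sum_{\alpha \in \Delta^{+,w} \setminus \Delta_k} \alpha \;+\; \sum_{\alpha \in \Delta_k \setminus \Delta^{+,w}} (-\alpha) \;+\; \sum_{\beta \in w\Delta^-} n_\beta \beta. \]

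Finally, I would note that each summand is a nonnegative integer combination of roots in $w\Delta^-$: roots in the first sum lie in $\Delta^{+,w} \subset w\Delta^-$ by definition; for the second sum, $\Delta_k \setminus \Delta^{+,w} \subset \Delta_N \cap w\Delta^+$, so each $-\alpha$ lies in $w\Delta^-$; and the third sum is of the required shape by Prop \ref{weights in Ds1}. This exhibits $\imath(\mathfrak{s})$ as a nonpositive algebraic weight with respect to $w\Delta^+$, which is the desired conclusion. The only non-routine ingredient is the inclusion $\Delta^{+,w} \subset \Delta_N$, which is why the statement crucially requires $w \in W^P$ rather than an arbitrary Weyl element.
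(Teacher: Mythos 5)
Your argument is correct and matches the paper's reasoning: the paper's own proof simply points back to part (1) of Proposition \ref{inj to coh}, which does exactly the same bookkeeping—combine Prop \ref{weights in Ds1} with the identity $w\delta-\delta = -\sum_{\alpha\in\Delta^{+,w}}\alpha$ and the observation that a weight $-\sum_{\alpha\in\Delta_k}\alpha$ of $\bigwedge^\bullet\frakn^\ast$ differs from $w\delta-\delta$ by a nonpositive combination of $w\Delta^+$.

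One small correction to your closing remark: the inclusion $\Delta^{+,w}\subset\Delta_N$, while true for $w\in W^P$, is not actually used anywhere in your computation. The step that makes the middle sum lie in $w\Delta^-$ is $\Delta_k\setminus\Delta^{+,w}\subset\Delta^+\setminus\Delta^{+,w}=\Delta^+\cap w\Delta^+\subset w\Delta^+$, which needs only $\Delta_k\subset\Delta_N\subset\Delta^+$ (i.e.\ that $P\supset B$), not $w\in W^P$. So as far as this lemma is concerned, the nonpositivity of $\imath(\mathfrak{s})$ holds for an arbitrary Weyl element; the condition $w\in W^P$ becomes essential only in later statements (e.g.\ Lemma \ref{Kostant tri} and the identification of the degree-$l(w)$ highest weight in Proposition \ref{inj to coh}), where one genuinely needs $\Delta^{+,w}\subset\Delta_N$ so that the top exterior power $\bigwedge^{l(w)}(\frakn\cap w\bar{\frakn}_bw^{-1})^\ast$ is a nonzero subspace of $\bigwedge^{l(w)}\frakn^\ast$.
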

We endow the partial ordering on weights of $\bigwedge^\bullet \frakn^\ast \otimes \mathbb{D}_{w,\lambda}^{s,1}$ under the map $\imath$ with respect to $w\Delta^+$ as well.
\begin{proof}
This is essentially in (1) of proof of proposition \ref{inj to coh}, which is a combination of Prop \ref{weights in Ds1} and the fact that the difference of any weight in $\bigwedge^\bullet \frakn^\ast$ and $e^\ast=\wedge_{\alpha \in \Delta^{+,w}} ~e^\ast_{\alpha}$ is nonpositive with respect to $w\Delta^+$.
\end{proof}
Viewing $w\Delta^+$ as positive roots, $\imath$ maps any such $\mathfrak{s}$ to non-positive span of them. 
Moreover, $\vec{0}$ must correspond to weights of the form $e^\ast \otimes c$, where $e^\ast = \wedge_{\alpha \in \Delta^{+,w}} ~e^\ast_{\alpha} \subset \bigwedge^{l(w)} \frakn^\ast$ (\ref{add w}) for $e^\ast_{\alpha}$ being an eigenvector corresponding to $\alpha \in \Delta^+$ and $c$ is a distribution vanishing on all non locally constant locally monomial functions for $w\Nbar_Gw^{-1}$. 

\begin{thm}
\label{vv}
If $\lambda=\Omega$ is a generic weight, $$V^{\bot,1}|_{\lambda} \simeq \widehat{\overline{\mathrm{Im} (d_{l(w)-1}^1)}}/\mathrm{Im} (d_{l(w)-1}^1).$$ 
Here $V^{\bot,1}|_{\lambda}$(resp. $V^\bot|_{\lambda}$) is defined to be $\ker(p|_{\lambda} : H^{l(w)}(\mathfrak{n}, \mathbb{D}_{w,\lambda}^{s,1}) \twoheadrightarrow \mathbb{D}_{L, w \cdot \lambda}^{s,1})$(resp. $\ker(p|_{\lambda} : H^{l(w)}(\mathfrak{n}, \mathbb{D}_{w,\lambda}^{s}) \twoheadrightarrow \mathbb{D}_{L, w \cdot \lambda}^{s})$).
\end{thm}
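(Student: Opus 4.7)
The plan is to reduce the claim to a per-$T^s$-weight exactness statement at cohomological degree $l(w)$ for the Chevalley–Eilenberg complex, and to settle this exactness, for generic $\lambda$, by an infinitesimal character argument in the style of \cite{CO}.

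For the easy containment $\widehat{\overline{\mathrm{Im}(d^1_{l(w)-1})}} \subset \ker d^1_{l(w)} \cap \ker\tilde{p}$, note that $\bigwedge^\bullet \frakn^\ast \otimes \mathbb{D}_{w,\lambda}^{s,1}$ is a complex of bounded eigen orthonormalizable $T^s$-Banach modules (Prop \ref{D1 beo}, Lem \ref{tensor nice}) whose differentials $d^1_k$ are nice and $T^s$-equivariant by Lem \ref{nice2}. The projection $\tilde{p}$ is $T^s$-equivariant (the decomposition in Lem \ref{decomp wN} is preserved under $T^s$-conjugation) and continuous, hence nice by Lem \ref{limit nice}. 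Since $\tilde{p}\circ d^1_{l(w)-1} = 0$ by Prop \ref{inj to coh}(2) and $d^2 = 0$, both $\tilde{p}$ and $d^1_{l(w)}$ vanish on $\overline{\mathrm{Im}(d^1_{l(w)-1})}$; Lem \ref{extend nice} together with Lem \ref{nice1} then propagate this vanishing from $\overline{\mathrm{Im}(d^1_{l(w)-1})}$ to its weight completion.

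The reverse inclusion is the heart of the proof. I would apply Lem \ref{direct sum} to $V = \ker d^1_{l(w)}$ with the continuous projection $x \mapsto \tilde{i}(\tilde{p}(x))$ onto $\tilde{i}(\mathbb{D}_{L,w\cdot\lambda}^{s,1})$, which is well defined because $\tilde{p}\circ\tilde{i} = \mathrm{id}$. Its hypothesis, via Lem \ref{rep of t}, reduces on the dense convergent subcomplex to the following per-weight assertion: for every $T^s$-weight $\chi$ appearing, the $\chi$-weight cocycles are exactly the $\chi$-weight coboundaries together with the possible contribution from $\tilde{i}(\mathbb{D}_{L,w\cdot\lambda}^{s,1})_\chi$. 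The weight $\chi = w\cdot\lambda$ case is handled by the proof of Prop \ref{inj to coh}(1) together with Lem \ref{Kostant tri}, which identify the weight-$w\cdot\lambda$ part of $\ker d^1_{l(w)}$ with $\tilde{i}(\mathbb{D}_{L,w\cdot\lambda}^{s,1})$. For every other $T^s$-weight $\chi$, the desired exactness
\[
\ker(d^1_{l(w)})_\chi = \mathrm{Im}(d^1_{l(w)-1})_\chi
\]
becomes, by Def \ref{cons} and the assumption $\lambda=\Omega$, a finite-dimensional $\sO(\Omega)$-linear statement.

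The main obstacle is precisely this per-weight exactness for $\chi \neq w\cdot\lambda$. I expect to settle it through the Casimir/infinitesimal character technique of \cite{CO}: the center $Z(\mathfrak{g})$ acts on $\mathbb{D}_{w,\lambda}^{s,1}$ through the Harish–Chandra infinitesimal character attached to $\lambda$, and its action on $H^{l(w)}(\frakn, \mathbb{D}_{w,\lambda}^{s,1})^N$ factors through $Z(\frakl)$ via the Harish–Chandra homomorphism. Any surviving contribution to degree-$l(w)$ cohomology must therefore have $Z(\frakl)$-infinitesimal character attached to some element $w'\cdot\lambda$ with $w' \in W^P$. For generic $\lambda$ these characters are pairwise distinct, so only the $w' = w$ contribution persists; a distributional analogue of Kostant's theorem identifies this surviving piece with $\tilde{i}(\mathbb{D}_{L,w\cdot\lambda}^{s,1})$, forcing the required exactness on all $\chi \neq w\cdot\lambda$. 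Combining this with the easy containment and Lem \ref{direct sum} yields the claimed isomorphism.
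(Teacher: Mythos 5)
Your overall strategy — reduce to the convergent subcomplex via Lem \ref{rep of t}, apply Lem \ref{direct sum} with the projection $x \mapsto \tilde{i}(\tilde{p}(x))$, and invoke a Casselman--Osborne infinitesimal character constraint to handle generic $\lambda$ — is in fact the paper's strategy, and the easy containment argument via niceness is sound (though $\tilde{p}$ is nice because it is the Banach dual of $\tilde{p}^\ast$, i.e.\ by Lem \ref{dual nice}; Lem \ref{limit nice} only says that nice maps form a closed subspace). However, the heart of your argument has two genuine gaps.

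First, your per-weight reformulation is wrong as stated. You claim that for $\chi \neq w\cdot\lambda$ the required assertion becomes $\ker(d^1_{l(w)})_\chi = \mathrm{Im}(d^1_{l(w)-1})_\chi$, i.e.\ that $\tilde{i}(\mathbb{D}_{L,w\cdot\lambda}^{s,1})$ contributes only at the single weight $w\cdot\lambda$. This is false: by Prop \ref{weights in Ds1}, $\mathbb{D}_{L,w\cdot\lambda}^{s,1}$ is supported on all weights $(w\cdot\lambda)\cdot\mu^-$ for $\mu^-$ a nonpositive combination of the roots of $L$, so $\tilde{i}(\mathbb{D}_L)_\chi$ is nonzero for infinitely many $\chi \neq w\cdot\lambda$. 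The hypothesis you must verify for Lem \ref{direct sum} is the direct sum decomposition $\ker(d^1_{l(w)})^{\mathrm{conv}}_\chi = \tilde{i}(\mathbb{D}_L)_\chi \oplus \overline{\mathrm{Im}(d^1_{l(w)-1})}_\chi$ for \emph{all} $\chi$, not the vanishing of the $\tilde{i}$-term away from $w\cdot\lambda$.

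Second, the infinitesimal character step is not an argument as written. "For generic $\lambda$ these characters are pairwise distinct, so only the $w'=w$ contribution persists" does not follow: pairwise distinctness of $Z(\frakl)$-characters in the $W_G$-orbit would only let one decompose the cohomology into generalized eigenspaces, not conclude that the pieces with $w' \neq w$ vanish. What actually forces vanishing in the paper's proof (Prop \ref{ker dlw}) is the conjunction of the Casselman--Osborne constraint with the \emph{positivity structure} from Lem \ref{wts in CE}: any $T^s$-highest weight in the would-be "extra" piece $\overline{V}$ must be of the form $w'\cdot(w\cdot\lambda)$ with $w' \in W_L$ (so $(w\cdot\lambda)\chi_s^{-1}$ is algebraic), must differ from $w\cdot\lambda$ (since the $e^\ast \otimes c$-type weight vectors already lie in $\tilde{V}$), and must also be $\leq w\cdot\lambda$ in the partial order with respect to $w\Delta^+$; together these force $(w\cdot\lambda)\chi_s^{-1}$ to be a nontrivial algebraic weight, contradicting genericity. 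You neither invoke the nonpositivity constraint on weights in $\bigwedge^{\bullet}\frakn^\ast \otimes \mathbb{D}^{s,1}_{w,\lambda}$ nor make the contradiction with genericity concrete, and the appeal to "a distributional analogue of Kostant's theorem" is not a substitute for this argument.
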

We use Lem \ref{rep of t} and the infinitesimal character argument of \cite{CO} to prove Thm \ref{vv}.

\begin{prop}\label{ker dlw}
For a generic weight $\lambda$, $\ker (d_{l(w)}^1) = \widehat{\overline{\mathbb{D}_{L, w \cdot \lambda}^{s,1} \oplus \mathrm{Im} (d_{l(w)-1}^1)}}$.
\end{prop}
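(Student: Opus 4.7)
The inclusion $\widehat{\overline{\mathbb{D}^{s,1}_{L,w\cdot\lambda} \oplus \mathrm{Im}(d^1_{l(w)-1})}} \subset \ker(d^1_{l(w)})$ is the easy direction. Indeed, the image of $\tilde{i}$ lies in $\ker(d^1_{l(w)})$ by step (1) of the proof of Prop \ref{inj to coh} --- that argument uses only the weight analysis of Prop \ref{weights in Ds1} and hence applies verbatim in the $\mathbb{D}^{s,1}$ setting --- and $\mathrm{Im}(d^1_{l(w)-1}) \subset \ker(d^1_{l(w)})$ since $d^{1,2}=0$. The sum is direct by step (2) of the same proof ($\tilde{p} \circ d^1_{l(w)-1} = 0$ while $\tilde{p} \circ \tilde{i}$ is the identity on the highest-weight subspace). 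Passing to closures and invoking Lem \ref{extend nice} with the closure as $W_1$ and $W_2 = 0 = \hat{W}_2$, which is valid because $d^1_{l(w)}$ is nice by Lem \ref{nice2}, extends the inclusion from the closure to the completion.

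For the reverse inclusion, my plan is to decompose both sides into $T^s$-weight pieces and reduce to a finite-dimensional problem at each weight. By Lem \ref{nice1} the kernel $\ker(d^1_{l(w)})$ is bounded eigen orthonormalizable, and by Lem \ref{rep of t} it decomposes compatibly with the ambient $\bigwedge^{l(w)} \frakn^\ast \otimes \mathbb{D}^{s,1}_{w,\lambda}$; the same applies to $\widehat{\overline{\mathbb{D}^{s,1}_{L,w\cdot\lambda} \oplus \mathrm{Im}(d^1_{l(w)-1})}}$. The key observation is that for every $T^s$-weight $\chi$ the weight space $(\bigwedge^{l(w)} \frakn^\ast \otimes \mathbb{D}^{s,1}_{w,\lambda})_\chi$ is finite-dimensional over $K$: Prop \ref{weights in Ds1} provides uniform finite multiplicity bounds and $\bigwedge^{l(w)} \frakn^\ast$ is itself finite-dimensional. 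So it suffices to prove, for each $\chi$, the finite-dimensional inclusion $(\ker d^1_{l(w)})_\chi \subset (\mathbb{D}^{s,1}_{L,w\cdot\lambda})_\chi + (\mathrm{Im}\, d^1_{l(w)-1})_\chi$.

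At each weight I will invoke the infinitesimal character method of \cite{CO}. Since $\mathbb{D}^{s,1}_{w,\lambda}$ is a continuous $D(wI^s_{0,G}w^{-1}, K)$-module (Prop \ref{aux mod}), $U(\frakg)$ acts and $Z(\frakg)$ acts through the infinitesimal character attached to $\lambda^w$. Via the Harish-Chandra projection $Z(\frakg) \to Z(\frakl)$ this induces a $Z(\frakl)$-action on $H^{l(w)}(\frakn, \mathbb{D}^{s,1}_{w,\lambda})$ whose generalized eigenvalues are indexed up to $W_L$ by the classes $\{w' \cdot \lambda : w' \in W^P\}$. Genericity of $\lambda$ separates these infinitesimal characters; the $\theta^w_\lambda$-component is precisely $\tilde{i}(\mathbb{D}^{s,1}_{L,w\cdot\lambda})$ by the classical Kostant identification of Lem \ref{Kostant tri}, whereas every other generalized eigenspace must consist of coboundaries and hence lies in $\mathrm{Im}(d^1_{l(w)-1})$. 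Reassembling the finite-dimensional weight pieces back into the bounded eigen orthonormalizable picture then yields the reverse inclusion. The principal obstacle I anticipate is verifying that this generalized-eigenspace decomposition is compatible with the bounded eigen orthonormalizable formalism and with the completion operation of Def \ref{cons} --- in particular, that eigenspace projectors for $Z(\frakl)$ preserve weight decompositions and extend across the $\widehat{(\cdot)}$-completion --- so that the finite-dimensional weight-by-weight statement genuinely reassembles to the stated equality.
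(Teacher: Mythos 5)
Your ``easy direction'' ($\widehat{\overline{\mathbb{D}^{s,1}_{L,w\cdot\lambda}\oplus\mathrm{Im}(d^1_{l(w)-1})}}\subset\ker(d^1_{l(w)})$) is fine, and the reduction to a weight-by-weight statement using Lem \ref{nice1} and Lem \ref{rep of t}, together with the observation that individual $T^s$-weight spaces of $\bigwedge^{l(w)}\frakn^\ast\otimes\mathbb{D}^{s,1}_{w,\lambda}$ are finite-dimensional, is also sound. The problem is in the heart of the reverse inclusion: the sentence ``the $\theta^w_\lambda$-component is precisely $\tilde{i}(\mathbb{D}^{s,1}_{L,w\cdot\lambda})$, whereas every other generalized eigenspace must consist of coboundaries'' is not a consequence of Lem \ref{Kostant tri} or of Casselman--Osborne --- it \emph{is} the Kostant-type vanishing/structure theorem for $H^{l(w)}(\frakn,\mathbb{D}^{s,1}_{w,\lambda})$, i.e.\ essentially the content of the proposition you are proving. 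Casselman--Osborne only constrains the possible $Z(\frakl)$-infinitesimal characters to lie in $\{\theta^{w'}_\lambda:w'\in W^P\}$; it does not tell you which ones occur in degree $l(w)$, nor that the $\theta^w_\lambda$-block is exactly the image of $\tilde{i}$. (Also note a terminological slip: a generalized $Z(\frakl)$-eigenspace of \emph{cohomology} cannot ``consist of coboundaries'' --- coboundaries are zero in cohomology. What you presumably mean is that the corresponding generalized eigenspaces of $(\ker d^1_{l(w)})_\chi$ are contained in $(\mathrm{Im}\,d^1_{l(w)-1})_\chi$, but that is precisely the non-obvious assertion.)

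The paper closes this gap differently, and this is the step you are missing. Rather than trying to decompose the cohomology by infinitesimal character first, it forms the quotient $\overline{V}$ of the convergent part of $\ker(d^1_{l(w)})$ by the convergent part of $\overline{\mathbb{D}^{s,1}_{L,w\cdot\lambda}\oplus\mathrm{Im}(d^1_{l(w)-1})}$, and argues by contradiction. If $\overline{V}\neq0$, a weight $s$ maximal (for the partial order of Lem \ref{wts in CE}) among nonvanishing weight spaces of $\overline{V}$ yields a vector annihilated by $\mathrm{Lie}(N_L)$. It is only at \emph{that} point that the infinitesimal character from \cite[Cor 2.7]{CO} is invoked: a highest-weight vector forces $\chi_s=w'\cdot(w\cdot\lambda)$ for some $w'\in W_L$, and combined with the explicit computation that $\overline{V}_{w\cdot\lambda}=0$ (every $w\cdot\lambda$-weight vector is of the form $e^\ast\otimes c$ and already lies in $\tilde{i}(\mathbb{D}^{s,1}_{L,w\cdot\lambda})$), the quantity $(w\cdot\lambda)\cdot\chi_s^{-1}$ is a nonzero algebraic weight, contradicting genericity of $\lambda$. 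The ``maximal weight $\Rightarrow$ highest-weight vector $\Rightarrow$ weight pinned by infinitesimal character'' trick is exactly what replaces the unproved Kostant-type decomposition your argument relies on; without it, you would have to prove a category-$\mathcal{O}$-style vanishing theorem for $\mathbb{D}^{s,1}_{w,\lambda}$ from scratch, which is harder than the proposition itself. You correctly flag the compatibility of generalized-eigenspace projectors with the $\widehat{(\cdot)}$-completion as an obstacle, but the more fundamental obstacle is the one above.
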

\begin{proof}
We know that $d_{l(w)}^1$ is nice from Lem \ref{nice2}. Applying Lem \ref{nice1} to $\ker (d_{l(w)}^1) \hookrightarrow \bigwedge^{l(w)} \frakn^\ast \otimes \mathbb{D}_{w,\lambda}^{s,1}$, 
there exists $\imath: K \subset $ non-positive combinations of $w\Delta^+$ in the weight lattice and $V_k$ for each $k \in K$ corresponding to $(\displaystyle\prod_{k \in K} V_k)^b:=\ker (d_{l(w)}^1)$. 
Let $\tilde{V}:=\overline{\mathbb{D}_{L, w \cdot \lambda}^{s,1} \oplus \mathrm{Im} (d_{l(w)-1}^1)} \cap (\displaystyle\prod_{k \in K} V_k)^c$, apply Lem \ref{rep of t} again to $\tilde{V} \subset (\displaystyle\prod_{k \in K} V_k)^c$ to get $\overline{V}=(\displaystyle\prod_{k \in K} V_k)^c/\tilde{V}$. 
The Lie algebra action on $(\displaystyle\prod_{k \in K} V_k)^b$ is locally finite, therefore preserving $(\displaystyle\prod_{k \in K} V_k)^c$.

Let $\imath$ be as in Lem \ref{wts in CE}.
We want to show $\overline{V}=0$ by contradiction. If $\overline{V} \neq 0$, choose $s \in K$ such that $\imath(s)$ is maximal in the partial order set \{$\imath(s'), s'\in K \big|\overline{V}_{s'} \neq 0$\}. 
Note that $\overline{V}$ is a natural subquotient of $H^{l(w)}(\mathfrak{n}, \mathbb{D}_{w,\lambda}^{s,1})$ as an $I^s_{0,L}$ representation, by \cite[Cor 2.7]{CO}, $\overline{V}$ has the infinitesimal character of $Z(U(\mathrm{Lie}(I_L)))$ extending the infinitesimal character of $Z(U(\frak{g}))$. 
Pick a non-zero element $v$ in $\overline{V}_s \hookrightarrow \overline{V}$. 
Since $\imath(s)$ is maximal in \{$\imath(s'), s'\in K |\overline{V}_{s'} \neq 0$\}, Lie($N_L$) kills it. 
By the infinitesimal character restriction, $$\chi_s=w'\cdot(w\cdot\lambda)$$ for some $w' \in W_L$ considered as characters of Lie algebra of $T^0$. 
On the other hand, $\overline{V}_{w\cdot\lambda}$ must be zero since all the weights which are of the form $e^\ast \otimes c$ all live in $(\displaystyle\prod_{k \in K} V_k)^c$ part of $\mathbb{D}_{L, w \cdot \lambda}^{s,1}$. 
$(w\cdot\lambda) \times \chi_s^{-1}$ is a strictly dominant algebraic weight. 
$\lambda$ can not be generic. 
Therefore we have $\overline{V}=0, \tilde{V}=(\displaystyle\prod_{k \in K} V_k)^c$, and \begin{eqnarray}\label{Vc inclusion sum} (\displaystyle\prod_{k \in K} V_k)^c \subset \overline{\mathbb{D}_{L, w \cdot \lambda}^{s,1} \oplus \mathrm{Im} (d_{l(w)-1}^1)}, \end{eqnarray} which concludes $$\ker (d_{l(w)}^1) = (\displaystyle\prod_{k \in K} V_k)^b= \widehat{\overline{\mathbb{D}_{L, w \cdot \lambda}^{s,1} \oplus \mathrm{Im} (d_{l(w)-1}^1)}}.$$ 
\end{proof}
\begin{proof}[Proof of Theorem \ref{vv}]
Let $(\displaystyle\prod_{k \in K} V_k)^b:=\ker (d_{l(w)}^1)$ as in Prop \ref{ker dlw}. 
For $\forall ~ x \in (\displaystyle\prod_{k \in K} V_k)^c$ such that $\overline{x} \in V^{\bot,1}|_{\lambda}$, one can find a sequence $(a_n,b_n) \in \mathbb{D}_{L, w \cdot \lambda}^{s,1} \oplus \mathrm{Im} (d_{l(w)-1}^1)$, converging to $x$ by (\ref{Vc inclusion sum}).
We have the following commutative diagram with continuous arrows
\[ \begin{tikzcd}[row sep=tiny]
                                    & \ker(d_{l(w)}^1) \arrow[dd] \arrow[dr, "\tilde{p}"] & \\
\mathbb{D}_{L, w \cdot \lambda}^{s,1} \arrow[ur, "\tilde{i}"] \arrow[dr, "i"] &           &  \mathbb{D}_{L, w \cdot \lambda}^{s,1}  \\
                                    & H^{l(w)}(\mathfrak{n}, \mathbb{D}_{w,\Omega}^{s,1}) \arrow[ur, "p"] &
\end{tikzcd} \]
$$\tilde{p}(x)=0 \Longrightarrow \lim_{n \to \infty} a_n=\tilde{p}((a_n,b_n)) \to 0.$$ 
$\tilde{p}$ is the dual map of $\tilde{p}^\ast$, hence nice by Lem \ref{dual nice}.
It sends convergent series to convergent series.
Since $\tilde{p} \circ \tilde{i}=\mathrm{id}$ and both $\tilde{p}, \tilde{i}$ are continuous, $\tilde{i}$ induces equivalent norms on $\mathbb{D}_{L, w \cdot \lambda}^{s,1}$, $$\Longrightarrow (a_n,0) \to 0 \Longrightarrow (0,b_n) \to x \Longrightarrow x \in \overline{\mathrm{Im} (d_{l(w)-1}^1)}.$$ 
This proves that $$(\displaystyle\prod_{k \in K} V_k)^c=(\tilde{i}(\mathbb{D}_{L, w \cdot \lambda}^{s,1}) \cap (\displaystyle\prod_{k \in K} V_k)^c) \oplus (\overline{\mathrm{Im} (d_{l(w)-1}^1)} \cap (\displaystyle\prod_{k \in K} V_k)^c).$$
Now by applying Lem \ref{direct sum} for $\mathbb{D}_{L, w \cdot \lambda}^{s,1}$ as $V_1$, $\overline{\mathrm{Im} (d_{l(w)-1}^1)}$ as $V_2$ and $\ker (d_{l(w)}^1)$ as $(\displaystyle\prod_{s \in S} V_s)^b$ in the lemma, we have $$\ker (d_{l(w)}^1) = \mathbb{D}_{L, w \cdot \lambda}^{s,1} \oplus \widehat{\overline{\mathrm{Im} (d_{l(w)-1}^1)}}, ~ V^{\bot,1}|_{\lambda}=\widehat{\overline{\mathrm{Im} (d_{l(w)-1}^1)}}/\mathrm{Im} (d_{l(w)-1}^1).$$
\end{proof}

\begin{lem}
\label{ccs}
$\widehat{\overline{\mathrm{Im} (d_{l(w)-1}^1)}}$ is $I^s_{0,L}$ stable.
\end{lem}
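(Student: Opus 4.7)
My plan is to reduce the stability of the hat completion $\widehat{\overline{\mathrm{Im}(d_{l(w)-1}^{1})}}$ to the stability of the closure $\overline{\mathrm{Im}(d_{l(w)-1}^{1})}$ by invoking Lem \ref{extend nice}. For any fixed $g \in I^s_{0,L}$, the action map
\[ \rho_g : \bigwedge\nolimits^{l(w)} \frakn^\ast \otimes \mathbb{D}_{w,\Omega}^{s,1} \to \bigwedge\nolimits^{l(w)} \frakn^\ast \otimes \mathbb{D}_{w,\Omega}^{s,1} \]
is nice: by Prop \ref{aux mod}, $g$ acts nicely on $\mathbb{D}_{w,\Omega}^{s,1}$, and Lem \ref{tensor nice} promotes this to the tensor product with the finite free eigen orthonormalizable $\bigwedge^{l(w)} \frakn^\ast$. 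Moreover, setting $W_1 = W_2 = \overline{\mathrm{Im}(d_{l(w)-1}^{1})}$, this is a closed $A[T^s]$-stable subrepresentation (closure of an image of an $A$-linear, $T^s$-equivariant map) which is $\rho_g$-stable by Lem \ref{nice2} and continuity of $\rho_g$. Hence, to conclude via Lem \ref{extend nice}, the one remaining hypothesis is
\[ \overline{\mathrm{Im}(d_{l(w)-1}^{1})} \subset \widehat{\overline{\mathrm{Im}(d_{l(w)-1}^{1})}}. \]

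This last inclusion is the main obstacle and will be the focus of the proof. By the criterion of Lem \ref{l11}, it suffices to show that for every $v = (v_s)_s \in \overline{\mathrm{Im}(d_{l(w)-1}^{1})}$, each nonzero weight component $v_s$ also lies in $\overline{\mathrm{Im}(d_{l(w)-1}^{1})}$. First I will check this on $\mathrm{Im}(d_{l(w)-1}^{1})$ itself: given $v = d_{l(w)-1}^{1}(u)$ and writing $u = (u_s)_s$, the niceness of $d_{l(w)-1}^{1}$ (Lem \ref{nice2}) yields
\[ v_{s'} = \sum_{s} \bigl(d_{l(w)-1}^{1}|_{V_s}(u_s)\bigr)_{s'}, \]
and since $d_{l(w)-1}^{1}$ is $T^s$-equivariant (Lem \ref{nice2}), the term $d_{l(w)-1}^{1}(u_s)$ has the same character $\chi_s$ as $u_s$. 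Because the characters $\chi_s$ are distinct in each bounded eigen orthonormalizable module (Def \ref{cons}), at most one $s$ contributes to a given $s'$, and the sum collapses to $v_{s'} = d_{l(w)-1}^{1}(u_{s(s')})$, manifestly in $\mathrm{Im}(d_{l(w)-1}^{1})$.

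Passing to the closure, if $v \in \overline{\mathrm{Im}(d_{l(w)-1}^{1})}$ is approximated by $v^{(n)} \in \mathrm{Im}(d_{l(w)-1}^{1})$, then the projection onto each weight component $(\,\cdot\,)_{s'}$ is $1$-Lipschitz under the sup norm defining the bounded eigen orthonormalizable structure, so $v^{(n)}_{s'} \to v_{s'}$; combined with the previous step, $v_{s'} \in \overline{\mathrm{Im}(d_{l(w)-1}^{1})}$. This verifies the hypothesis of Lem \ref{l11}, hence the inclusion into the hat completion, and finally Lem \ref{extend nice} applied to $\rho_g$ yields $\rho_g\bigl(\widehat{\overline{\mathrm{Im}(d_{l(w)-1}^{1})}}\bigr) \subset \widehat{\overline{\mathrm{Im}(d_{l(w)-1}^{1})}}$ for every $g \in I^s_{0,L}$, which is the desired stability.
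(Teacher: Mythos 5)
Your proof is correct and follows essentially the same route as the paper's: niceness of $\rho_g$ via Prop \ref{aux mod} and Lem \ref{tensor nice}, $I^s_{0,L}$-stability of $\overline{\mathrm{Im}(d^1_{l(w)-1})}$ from equivariance of the Chevalley--Eilenberg complex (Lem \ref{nice2}), and then Lem \ref{extend nice} with $W_1 = W_2 = \overline{\mathrm{Im}(d^1_{l(w)-1})}$. The only difference is that you re-derive the inclusion $\overline{\mathrm{Im}(d^1_{l(w)-1})} \subset \widehat{\overline{\mathrm{Im}(d^1_{l(w)-1})}}$ from scratch via Lem \ref{l11}, whereas the paper simply cites the last assertion of Lem \ref{nice1} applied to the $T^s$-equivariant nice map $d^1_{l(w)-1}$, which states exactly this.
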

\begin{proof}
The Chevalley–Eilenberg complex for $\mathbb{D}_{w,\lambda}^{s,1}$ is $I^s_{0,L}$-equivariant as a complex of continuous $K[I^s_{0,L}] \subset D(I^s_{0,L}, K)$ modules in the sense of Schneider-Teitelbaum by Prop \ref{aux mod}. $\overline{\mathrm{Im} (d_{l(w)-1}^1)}$ is $I^s_{0,L}$ stable. 
By Prop \ref{aux mod} and Lem \ref{tensor nice}, any group element action of $I^s_{0,L}$ is nice for $\bigwedge^{l(w)}\frak{n}^\ast \otimes \mathbb{D}_{w,\lambda}^{s,1}$. 
Then apply last part of Lem \ref{nice1} to $d_{l(w)-1}^1$ and Lem \ref{extend nice} to $\overline{\mathrm{Im} (d_{l(w)-1}^1)} \hookrightarrow \bigwedge^{l(w)}\frak{n}^\ast \otimes \mathbb{D}_{w,\lambda}^{s,1}$. 
\end{proof}

\begin{rem}
It should be much easier to see $\widehat{\overline{\mathrm{Im} (d_{l(w)-1}^1)}}$ is stable under Lie($I^s_{0,L}$) action. 
But by \cite[Lem 1.2.5, Prop 1.2.8]{Koh07}, the universal enveloping algebra of Lie($I^s_{0,L}$) is only dense in a closed subalgebra of $D(I^s_{0,L}, K)$ while $K [I^s_{0,L}]$ is dense in $D(I^s_{0,L}, K)$ by \cite[Lem 3.1]{ST02}.
\end{rem}

\begin{proof}[Proof of Theorem \ref{pppp}]
By Lem \ref{ccs}, $\widehat{\overline{\mathrm{Im} (d_{l(w)-1}^1)}}$ is $I^s_{0,L}$ stable. So $V^{\bot,1}$ is $I^s_{0,L}$ invariant. We have the following morphism of short exact sequences
\[ \xymatrix{
0 \ar[r] & V^\bot \ar[r] \ar[d] & H^{l(w)}(\mathfrak{n}, \mathbb{D}_{w,\lambda}^{s}) \ar[r] \ar[d] & \mathbb{D}_{L, w \cdot \lambda}^{s} \ar[r] \ar@{^{(}->}[d] & 0\\
0 \ar[r] & V^{\bot,1} \ar[r] & H^{l(w)}(\mathfrak{n}, \mathbb{D}_{w,\lambda}^{s,1}) \ar[r] & \mathbb{D}_{L, w \cdot \lambda}^{s,1} \ar[r] & 0.\\
} \] Since the middle vertical arrow is $I^s_{0,L}$ equivariant and the rightmost arrow is injective, $V^\bot$ is also $I^s_{0,L}$ stable. Let $x$ be the element defined below Rem \ref{19x}.
 $x|_{\lambda}=0$ for all generic weights $\lambda \in \Omega$. Generic weights are dense. $x=0$, $p$ is $I^s_{0,L}$-equivariant.

Lastly, if $U \hookrightarrow V$ is an embedding of characteristic zero representations of a group which splits over a subgroup of finite index, then $U$ is a direct summand.
\end{proof}

\end{document}